\documentclass[a4paper]{article}

\usepackage{amsmath}
\usepackage{amssymb,amsthm} % put amsthm after amsmath for qedhere to work
\usepackage{bbm,enumitem}
\usepackage[mathscr]{euscript}
\usepackage{tikz-cd}
\usepackage[colorlinks=true]{hyperref} 

\hypersetup{
    colorlinks=true,
    linkcolor=[HTML]{0D47A1},		% internal link
    citecolor=[HTML]{0D47A1},		% bibliography
    urlcolor=[HTML]{0D47A1}		    % url
}
\usepackage[capitalize,nameinlink]{cleveref}
\usepackage{fullpage}

\usepackage{tabularx}
\usepackage{multirow}
\usepackage{float}

\usepackage[shortcuts]{extdash}
\newcommand\oo{$\infty$\=/}

\definecolor{mat}{HTML}{ffd6ad}

\definecolor{andre}{HTML}{a0ceff}
\definecolor{eric}{HTML}{ffadad}
\definecolor{georg}{HTML}{000000}

\numberwithin{equation}{subsection}% 

\newtheorem {theorem}[equation]{Theorem}
\newtheorem {corollary}[equation]{Corollary}
\newtheorem {proposition}[equation]{Proposition}
\newtheorem {lemma}[equation]{Lemma}
\theoremstyle{definition}
\newtheorem {definition}[equation]{Definition}
\newtheorem {construction}[equation]{Construction}
\newtheorem {example}[equation]{Example}
\newtheorem {examples}[equation]{Examples}
\newtheorem {remark}[equation]{Remark}

\newtheorem{thmintro}{Theorem}
\newenvironment{thm-intro}[1]
  {\renewcommand\thethmintro{#1}\thmintro\itshape}
  {\endthmintro}
\newenvironment{thm-introref}[2]
  {\renewcommand\thethmintro{#1}\thmintro(#2)\itshape}
  {\endthmintro}
\newtheorem{defnintro}{}
\newenvironment{defn-intro}[1]
  {\renewcommand\thedefnintro{#1}\defnintro\normalfont}
  {\enddefnintro}

\newcommand{\pp}{\,\Box\,}%     
\newcommand{\mc}[1]{\ensuremath{\mathcal{#1}}}%
\newcommand{\mb}[1]{\ensuremath{\mathbbm{#1}}}%
\newcommand{\scr}[1]{\ensuremath{\mathscr{#1}}}%
\newcommand{\xto}[1]{\ensuremath{\xrightarrow{#1}}}%
\newcommand{\cogap}{\operatorname*{cogap}}%
\newcommand{\colim}{\operatorname*{colim}}%
\newcommand{\id}{\operatorname{id}}%
\newcommand{\Id}{\mathrm{Id}}%
\newcommand{\op}{\ensuremath{\mathrm{op}}}%
\newcommand{\pr}{\ensuremath{\mathrm{pr}}}%
\newcommand{\ul}[1]{\underline{#1}}%
\newcommand{\ev}{\ensuremath{\mathrm{ev}}}%
\newcommand{\Fun}{\ensuremath{\mathrm{Fun}}}%
\newcommand{\Fin}{\ensuremath{\rm Fin}}%
\newcommand{\Finp}{\ensuremath{\mathrm{Fin}_{\ast}}}%
\newcommand{\map}{\ensuremath{\mathrm{map}}}%  % 
\newcommand{\nat}{\ensuremath{\mathrm{nat}}}%  % 
\newcommand{\comp}{\ensuremath{\mathsf{c}}}%
\newcommand{\ind}{\ensuremath{\mathrm{Ind}}}%
\newcommand{\one}{\ensuremath{\mb{1}}}%

\newcommand{\fac}[1]{\ensuremath{{|\!|#1|\!|}}}	% factorization //
\newcommand{\sph}{\ensuremath{\mathrm{Sph}}}	% 
\newcommand{\st}{\ensuremath{{{\rm St}}}}%

\usepackage{xspace}
\newcommand\ie{i.e.\xspace}
\newcommand\eg{e.g.\xspace}

\makeatletter
\pgfqkeys{/tikz/commutative diagrams}{
  row sep/.code={\tikzcd@sep{row}{#1}{}},
  column sep/.code={\tikzcd@sep{column}{#1}{}},
  bo row sep/.code={\tikzcd@sep{row}{#1}{between origins}},
  bo column sep/.code={\tikzcd@sep{column}{#1}{between origins}},
  bo column sep/.default=normal,
  bo row sep/.default=normal,
}
\def\tikzcd@sep#1#2#3{% re-defintion of original package macro!
  \pgfkeysifdefined{/tikz/commutative diagrams/#1 sep/#2}%
    {\pgfkeysalso{/tikz/#1 sep={\ifx\\#3\\1*\else1.7*\fi\pgfkeysvalueof{/tikz/commutative diagrams/#1 sep/#2},#3}}}%
    {\pgfkeysalso{/tikz/#1 sep={#2,#3}}}}
\makeatother

\newcommand\sat{^\mathsf{s}}
\newcommand\ac{^\mathsf{a}}
\newcommand\cg{^\mathsf{c}}

\begin{document}

\title{Left exact monoidal localizations from tidy maps}
\author{
Mathieu Anel%
\footnote{Laboratoire J.-A. Dieudonn\'e, Universit\'e C\^ote d'Azur, mathieu.anel@protonmail.com} ,
Georg Biedermann%
\footnote{SFB 1085 Higher Invariants, Universit\"at Regensburg, gbm@posteo.de} ,
Eric Finster%
\footnote{University of Birmingham, e.l.finster@bham.ac.uk} ,
and Andr\'{e} Joyal%
\footnote{CIRGET, UQ\`AM. joyal.andre@uqam.ca} 
}

\maketitle

\begin{abstract}
We put Goodwillie's calculus of functors and Weiss' orthogonal calculus in a unified framework.
We do so in two ways.
On the one hand, the relevant categories are all symmetric monoidal and controlled by their compact objects.
We introduce the notion of a tidy map as a means to generate symmetric monoidal localizations in this setting.
These localizations are always left exact.
Then we show that both the Goodwillie and Weiss towers are generated by such maps.
On the other hand, the relevant categories are also topoi, for which there is a general theory of completion towers of left exact localizations.
We had shown in previous work that the Goodwillie tower is an instance of such a tower.
We show here that the Weiss tower is a completion tower as well, and therefore that the general theory of these towers applies to orthogonal calculus.
\end{abstract}

\setcounter{tocdepth}{2}
\tableofcontents

\section{Introduction}

The purpose of this work is to put Goodwillie's calculus of functors and Weiss' orthogonal calculus in a common framework (thus answering a folkloric question, see the last paragraph of \cite{Arone-Ching:chapter}).
Both calculi produce towers of localizations of presheaf categories, and we will propose two perspectives on these towers:
\begin{itemize}[itemsep=0pt, topsep=3pt]
\item by looking at them as towers of symmetric monoidal localizations, and
\item by looking at them as towers of left exact localizations of \oo topoi, and proving that they are instances of completion towers in the sense of \cite[4.2]{ABFJ:product}.
\end{itemize}
The method to do so will be based on the following facts:
\begin{itemize}[itemsep=0pt, topsep=3pt]
\item all the localizations in these towers are symmetric monoidal localizations generated by inverting a single map having a distinguished property that we call \emph{tidyness}, and
\item localizations associated to a tidy map are always left exact.
\end{itemize}
The theory of completion towers is worked out in detail in our previous paper \cite{ABFJ:product} which the interested reader may consult for details.
The focus of this paper, therefore, is on the development of the other perspective using monoidal localizations, as well as showing that the two perspectives are appropriately compatible. In particular, our methods provide ways to produce localizations that are both monoidal and left exact (\ie monoidal flat).

We will now present our results in more details.

\subsection{Monoidal point of view}
By a symmetric monoidal presentable category we will mean a commutative monoid object in the category of presentable categories and cocontinuous functors.
Recall that an object $x$ in a presentable category $\mc C$ is called \emph{compact} if $\map(x,-):\mc C\to \mc S$ preserves filtered colimits and that $\mc C$ is called \emph{$\omega$\=/presentable} if $\mc C = \ind(\mc C_0)$ where $\mc C_0$ is the full subcategory of compact objects.

\begin{defn-intro}{Definition~\ref{Def-confined}}	
A symmetric monoidal presentable category $\mc V$ is \emph{confined} if 
it is $\omega$\=/presentable, 
its unit object is compact, 
and the tensor of two compact objects is compact. 
A symmetric monoidal and cocontinuous functor between two confined categories is called \emph{confined} if it sends compact objects to compact objects.
\end{defn-intro}

Every such $\mc V$ is in particular \emph{monoidal closed} and we will denote the internal hom by $[-,-]$.
If $\one$ is the monoidal unit, any map $z:Z\to \one$ in $\mc V$ induces 
a natural transformation $t=[z,-]:\Id\to T$ from the identity of $\mc V$ to the cotensor endofunctor $T=[Z,-]$.
The colimit of endofunctors
\[
P:=\colim\ \left(\Id \xto t T\xto{tT} T^2\xto{tT^2} T^3\xto{tT^3}\,\hdots \right)
\]
comes with a natural transformation $p:\Id\to P$. 
We will say that an object $X$ is \emph{$P$\=/closed} if $p(X):X\to P(X)$ is invertible. 
The full subcategory of $P$\=/closed objects is denoted by $\mc V ^P\subset \mc V$.
We introduce the following notion.

\begin{defn-intro}{Definition~\ref{BWlemmadef}}
We will say that the map $z:Z\to \one$ is {\it tidy}, if $Z$ is compact in $\mc V$ and if the map $P(z):P(Z)\to P(\one)$ is an isomorphism.
\end{defn-intro}

The notion of a tidy map is inspired from Weiss' ideas in~\cite{Weiss95,Weiss98} on how to construct reflectors.
The following result is a simplified version of Theorem~\ref{thm:monoidal-localization}.

\begin{thm-intro}{A}
\label{intro:thmA}
If $z:Z\to \one$ is a tidy map in a confined symmetric monoidal category \mc{V}, then $\mc V^P$ is a confined category and 
\begin{enumerate}[label={\rm (\roman*)}]
\item $P$ is idempotent and defines a localization $P':\mc V\to \mc V^P$,
\item $P'$ is left exact,
\item $P'$ is symmetric monoidal for the product on $\mc V^P$ defined by $X\otimes_P Y := P(X\otimes Y)$, and
\item $P'$ is confined.
\end{enumerate}
\end{thm-intro}

We will give several applications of Theorem~\ref{intro:thmA}, summarized in Table~\ref{table:GC}.

\begin{table}[H]
\begin{center}
\caption{Applications of Theorem~\ref{intro:thmA}}
\label[table]{table:GC}
\medskip
\renewcommand{\arraystretch}{1.6}
\begin{tabularx}{\textwidth}{
|>{\hsize=.6\hsize\linewidth=\hsize\centering\arraybackslash}X
|>{\hsize=1.1\hsize\linewidth=\hsize\centering\arraybackslash}X
|>{\hsize=1.1\hsize\linewidth=\hsize\centering\arraybackslash}X
|>{\hsize=1.1\hsize\linewidth=\hsize\centering\arraybackslash}X
|>{\hsize=1.1\hsize\linewidth=\hsize\centering\arraybackslash}X|}
\cline{2-5}
\multicolumn{1}{c|}{} 
& \multirow{2}*{Weiss}
&
\multicolumn{3}{c|}{Goodwillie} \\
\cline{3-5}
\multicolumn{1}{c|}{}
&& unpointed
& pointed-to-unpointed
& pointed\\
\cline{2-5}
\hline
$\mc V$ 
& $\Fun(\mc J,\mc S)$
& $\Fun(\Fin,\mc S)$ 
& $\Fun(\Fin_\ast,\mc S)$ 
& $\Fun_\ast(\Fin_\ast,\mc S_\ast)$
\\
\hline
Special compact object 
& $\sph := \map(\mb R^1,-)$ (unit~sphere functor)
& $\Id := \map(1,-)$ {$\quad$(can. inclusion)} 
& $\Id_\circ := \map_\ast(S^0,-)$ (forgetful functor) 
& $\Id_\ast := \map_\ast(S^0,-)$ (can. inclusion)
\\
\hline
Tensor 
& Day convolution wrt direct sum $\oplus:\mc J^2 \to \mc J$
& Day convolution wrt join~product $\star:\Fin^2 \to \Fin$ 
& Day convolution wrt smash~product $\wedge:\Fin_\ast^2\to \Fin_\ast$ 
& Day convolution wrt smash~product $\wedge:\Fin_\ast^2\to \Fin_\ast$ 
\\
\hline
Unit 
& terminal functor 1
& terminal functor 1
& $\Id_\circ$ 
& $\Id_\ast$ 
\\
\hline
Tidy maps ($n\geq0$)
& $(\sph\to 1)^{\star n+1}$
& $(\Id\to 1)^{\star n+1}$
& $(1\to \Id_\circ)^{\star n+1}$ 
& $(1\to \Id_\ast)^{\star n+1}$
\\
\hline
$P$\=/closed objects 
& $n$\=/polynomial functors $\mc J\to \mc S$
& $n$\=/excisive functors $\Fin\to \mc S$ 
& $n$\=/excisive functors $\Fin_\ast\to \mc S$ 
& $n$\=/excisive functors $\Fin_\ast\to_\ast \mc S_\ast$ 
\\
\hline
\end{tabularx}
\end{center}  
\end{table}

In Table~\ref{table:GC}, $\mc S$ (resp. $\mc S _\ast$) is the \oo category of (pointed) spaces,
$\Fin\subset\mc S$ ($\Fin_\ast\subset\mc S_\ast$) are the subcategories of finite (pointed) spaces, $\mc J$ is the \oo category derived from the topological category of finite dimensional Euclidean vector spaces and linear isometries between them (see Section~\ref{sec:orthogonal-calculus}).
If $\mc C$ is a category with finite limits and finite colimits, the \emph{join product} $A\star B$ of two objects $A$ and $B$ is the pushout of the span $A\leftarrow A\times B\to B$.
When finite colimits are universal in $\mc C$, this defines a (non closed) symmetric monoidal structure on $\mc C$ whose unit is the initial object.
If $A\to C$ and $B\to C$ are two maps in $\mc C$, their \emph{fiberwise join} $(A\to C)\star (B\to C)$ is defined as the join product in the slice category $\mc C/C$ (see Subsection~\ref{sec:fiberwise-join}).
The fiberwise join powers of a map $A\to C$ are denoted $(A\to C)^{\star n}$.

\medskip
Table~\ref{table:GC} shows that Weiss and Goodwillie towers are generated in exactly the same way:
\begin{enumerate}[label=\roman*), itemsep=0pt, topsep=3pt]
\item a certain primitive tidy map is considered (case $n=0$)
\item such that all fiberwise join powers are also tidy (a fact we prove by means of connectivity estimates in Lemma~\ref{r-star-is-On-hence-Pn-equivalence137}, Lemma~\ref{tidy-pointed} and Proposition~\ref{prop:Pn-zetan-WeissA}), and
\item the stages of the tower are the monoidal localizations generated by these tidy maps.
\end{enumerate}
(We do not know if the join powers of a tidy map are always tidy maps, but finding conditions for when this is true is an interesting problem.)

As a consequence, each of the categories of local objects (last row of Table~\ref{table:GC}) inherits a symmetric monoidal structure.
In the pointed Godwillie example, when $n=1$, this recovers the construction of the smash product of spectra by Lydakis~\cite{Lydakis}.
In the Weiss calculus example, these monoidal structures are considered by Hendrian~\cite{hendrian:monoidal-orth-calc}.

Finally, we prove a version of Theorem~\ref{intro:thmA} for modules over the symmetric monoidal category $\mc V$ in Theorem~\ref{thm:module-localization}. This is applied in Theorem~\ref{thm:Goodwillie-all} to derive the Goodwillie localizations $P_n$ on the category $\Fun(\mc{C},\mc{S})$ as a module over $\Fun(\Fin,\mc{S})$ where \mc{C} is a small finitely cocomplete category with a terminal object. 
This also allows us to prove that the image of a tidy map by a confined functor is tidy in Theorem~\ref{thm:imagetidy}, a fact that can be seen to subsume the comparison between the Goodwillie towers in the unpointed and pointed settings (Corollary~\ref{cor:transport-tidy}).

\subsection{Topos point of view}

Our second unification of Goodwillie and Weiss calculi uses the notion of a completion tower of a left exact localization of \oo topoi \cite[4.2]{ABFJ:product}.
Recall from op.~cit. that we call a {\em congruence} the class of maps in an \oo topos inverted by a left exact localization, and that we define a product on congruences.
If $\mc C$ is a congruence in an \oo topos $\mc E$, the powers of $\mc C$ define a decreasing sequence $\dots\subset \mc C^2\subset \mc C$ of congruences and the corresponding tower of localizations $\mc E \to \dots \to \mc E[(\mc C^2)^{-1}]\to \mc E[\mc C^{-1}]$ is called the {\em completion tower} of $\mc C$.

We have already established in \cite[4.3]{ABFJ:product} that the 
Goodwillie towers are completion towers.
Our second main result in this paper is to show that the Weiss tower is a completion tower as well.
Let $\mc C_n$ be the congruence associated to the left exact localization generated by the tidy map $\sph^{\star n+1}\to 1$ of (see first column of Table~\ref{table:GC}).
The following is essentially Theorem~\ref{thm:ortho-is-compl}.

\begin{thm-intro}{B}
The Weiss tower is the completion tower of $\mc C_0$.
In other terms, $\mc C_n = (\mc C_0)^{n+1}$.
\end{thm-intro}

It follows immediately that the Weiss tower has all the structural features of general completion towers.
For example, this recover that its layers are stable $\infty$\=/categories \cite[Theorem~4.2.11]{ABFJ:product}, and it satisfies a generalized Blakers-Massey theorem (see Subsection~\ref{subsec:blakers-massey}).
  
\paragraph{Acknowledgments:} 
We thank Rune Haugseng and Hadrian Heine for sharing with us their expertise on enriched higher category theory. 
We thank Sil Linskens and Marco Giustetto (from whom we took the word ``docile'') for their careful reading an earlier version of the paper.

Mathieu Anel has received funding from the European Research Council (ERC) under the European Union's Ninth Framework Programme Horizon Europe (ERC Synergy Project Malinca, Grant Agreement n. 101167526).
Georg Biedermann would like to acknowledge the support of the SFB 1085 Higher Invariants at the Universit\"at Regensburg and in particular the help and encouragement of Denis-Charles Cisinski.
Eric Finster acknowledges the support of the U.S. Air Force Office of Special Research award FA9550-23-1-0029.

\section{Tools from category theory}
\label{sec:tools}

\subsection{Enriched higher category theory}
\label{subsec:enriched-higher-cats}

Symmetric monoidal structures for ${(\infty,1)}$\=/category theory are developed in 
Lurie~\cite{HA}, and with an alternative approach in Gepner--Haugseng~\cite{GH19}. The theory of modules over symmetric monoidal ${(\infty,1)}$\=/categories (and more general notions) are developped in the work of Heine, see in particular~\cite{Heine:equiv-enriched-inf-cat-inf-cat-weak-action,Heine:bi-enriched}.
In this article all categories are ${(\infty,1)}$\=/categories unless we explicitly say otherwise.

Let \mc{V} be a symmetric monoidal closed category.
If $\mc{M}$ is a \mc{V}\=/category, we write
\[
[-,-]_{\mc{V}}\ \text{ or  simply }\ [-,-]:\mc{M}^{\op}\times\mc{M}\to\mc{V}
\]
for the \mc{V}\=/enrichment of \mc{M}. 
A {\it closed \mc{V}\=/module category} is a \mc{V}\=/enriched category that is tensored and cotensored over \mc{V}.
In this case we write (again) 
\[
-\otimes-: \mc{V}\times\mc{M}\to\mc{M}
\]
for the tensor and denote the cotensor by
\[
\{-,-\}: \mc{V}^{\op}\times\mc{M}\to\mc{M}\,.
\]
Then we have natural isomorphisms
\[
[A\otimes M,N]_{\mc{V}}\cong [A,[M,N]_{\mc{V}}]\cong [M,\{A,N\}]_{\mc{V}}\,,
\]
for $A$ in \mc{V} and $M,N$ in $\mc{M}$.
Obviously, \mc V is a module over itself.

\begin{lemma}\label{fact:1}
Let $A\to B$ be a morphism in \mc{V}. Let \mc{M} be a closed \mc{V}\=/module. Then the induced natural transformation $\{B,-\}\to \{A,-\}$ 
is a \mc{V}\=/enriched natural transformation of \mc{V}\=/enriched functors.
\end{lemma}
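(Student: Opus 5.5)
The plan is to realize the natural transformation $\{B,-\}\to\{A,-\}$ as the image of $f\colon A\to B$ under a $\mc V$\=/enriched functor
\[
\Phi:\mc{V}^{\op}\to \Fun_{\mc V}(\mc M,\mc M),\qquad C\mapsto\{C,-\},
\]
with values in the category of $\mc V$\=/enriched endofunctors of $\mc M$ and $\mc V$\=/enriched natural transformations. A morphism $f$ in $\mc V$ is sent by $\Phi$ to a morphism of $\Fun_{\mc V}(\mc M,\mc M)$, which is by definition an enriched natural transformation, and $\Phi(f)$ is the transformation in the statement. So the task reduces to constructing $\Phi$ and identifying its value on $f$.

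To construct $\Phi$, I will use the closed module structure. The cotensor $\{-,-\}\colon\mc V^{\op}\times\mc M\to\mc M$ is a $\mc V$\=/enriched functor of two variables: it is the right adjoint, in the enriched sense, of the tensor $-\otimes-\colon\mc V\times\mc M\to\mc M$, which is part of the module structure. The enriched adjunction is the natural isomorphism
\[
[M,\{A,N\}]\cong[A\otimes M,N].
\]
In Heine's framework \cite{Heine:bi-enriched}, a $\mc V$\=/enriched bifunctor curries to a $\mc V$\=/enriched functor from $\mc V^{\op}$ into enriched endofunctors, where $\mc V$ is regarded as self-enriched. Currying $\{-,-\}$ in its first variable gives $\Phi$.

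A more hands-on alternative builds the enriched structure of $\{C,-\}$ directly. For each $C$, the enriched structure map
\[
[M,N]\to[\{C,M\},\{C,N\}]
\]
is adjoint, via $[X,\{C,Y\}]\cong[C\otimes X,Y]$, to the composite
\[
C\otimes\{C,M\}\otimes[M,N]\to M\otimes[M,N]\to N,
\]
built from the counit (evaluation) and the enriched composition. Enriched naturality of the map $\{B,-\}\to\{A,-\}$ then means that a certain square with corners $[M,N]$, $[\{B,M\},\{B,N\}]$, $[\{A,M\},\{A,N\}]$ and $[\{B,M\},\{A,N\}]$ commutes. Passing to adjoints, both composites become maps
\[
A\otimes\{B,M\}\otimes[M,N]\to N,
\]
and each reduces to "apply $f\otimes\id$, evaluate, compose". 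The two agree by naturality of the evaluation map $B\otimes\{B,M\}\to M$ in the variable $B$ (dinaturality of the counit).

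The main obstacle is the \oo categorical setting. There, enriched naturality is not a property checked on a single square but structure involving all higher coherences, so the hands-on check is only a heuristic for the $1$\=/categorical shadow. For this reason I would present the curried-functor argument as the actual proof. It rests on two facts from Heine's theory: a closed $\mc V$\=/module has an enriched cotensor bifunctor, and enriched bifunctors curry to enriched functors into enriched functor categories. The square computation would then serve as an explanation of what is being asserted.
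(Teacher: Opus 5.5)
Your argument is sound at the same level of rigor as the paper's proof and rests on the same ingredient, enriched adjoint transfer, but applies it in the opposite order. The paper works one variable at a time. It takes the enrichment of $A\otimes-$, $B\otimes-$ and of the induced transformation $A\otimes-\to B\otimes-$ from the module structure. It then uses Heine's equivalence between enriched left adjoints and enriched right adjoints \cite[Proposition 2.116]{Heine:bi-enriched} to carry this to the mate $\{B,-\}\to\{A,-\}$. In effect it curries first and then passes to adjoints.

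You pass to adjoints first, with a parameter, by asserting that $\{-,-\}$ is an enriched bifunctor. Only then do you curry. That first assertion carries all the content. It is strictly stronger than the lemma, and justifying it needs the paper's adjoint-transfer step in parametrized form, or enriched representability with parameters, since $\{A,N\}$ represents $[A,[-,N]]$. It also needs a currying result for large enriched categories. You do not pin down either of these. The paper only needs the one-variable correspondence, applied to tensors whose enrichment is evident.

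A small simplification: you only need the underlying functor of $\Phi$ into $\Fun_{\mc V}(\mc M,\mc M)$. So there is no need to treat the latter as a $\mc V$\=/category.

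What your route buys is transparency. Your adjoint computation shows that, at the level of the homotopy category, enriched naturality of $\{f,-\}$ is exactly dinaturality of the evaluation $C\otimes\{C,M\}\to M$ in $C$. The paper's mate argument leaves this implicit. You are also right that this check alone does not supply the higher coherences.
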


\begin{proof}
It is shown in~\cite[Proposition 2.116]{Heine:bi-enriched} that there is an equivalence of categories between enriched left adjoints and enriched right adjoints given by mapping them to each other.
Now one observes that tensoring with a fixed object $A\otimes-$ is (lax) monoidal, and hence \mc{V}\=/enriched. Therefore the right adjoint cotensor $\{A,-\}$ 
is \mc{V}\=/enriched and the map $A\to B$, that induces a \mc{V}\=/enriched natural transformation of tensors, in turn induces a \mc{V}\=/enriched natural transformations of cotensors.  
\end{proof}

Let $\Fun_{\mc{V}}(\mc{C},\mc{D})$ denote the category of \mc{V}\=/enriched functors between \mc{V}\=/closed modules with morphisms given by \mc{V}\=/enriched natural transformations.

\begin{proposition}\label{prop:Heine-rules} 
Let \mc{V} be a symmetric monoidal closed category and \mc{C} and \mc{D} two closed \mc{V}\=/modules. Let $u:\Fun_{\mc{V}}(\mc{C},\mc{D})\to \Fun(\mc{C},\mc{D})$ be the functor forgetting the \mc{V}\=/enrichment.
\begin{enumerate}[label={\rm (\roman*)}]
  \item\label{prop:Heine-rules:1}
The functor $u$ is conservative. 
  \item\label{prop:Heine-rules:2}
If \mc{D} is complete, then the category $\Fun_{\mc{V}}(\mc{C},\mc{D})$ is complete and the functor $u$ is continuous. 
  \item\label{prop:Heine-rules:3}
If \mc{D} is cocomplete, then the category $\Fun_{\mc{V}}(\mc{C},\mc{D})$ is cocomplete and the functor $\Fun_{\mc{V}}(\mc{C},\mc{D})\to \Fun(\mc{C},\mc{D})$ forgetting the enrichment is cocontinuous.
\end{enumerate} 
Under all these assumptions, it follows that limits and colimits exist  in $\Fun_{\mc{V}}(\mc{C},\mc{D})$ and are computed objectwise.
\end{proposition}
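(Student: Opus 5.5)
The approach is to reduce every statement to objectwise statements in $\mc D$, using the closed $\mc V$\=/module structure. For a $\mc V$\=/enriched functor $F:\mc C\to\mc D$ between closed $\mc V$\=/modules, the enrichment is the same thing as a coherent family of maps $A\otimes F(c)\to F(A\otimes c)$, natural in $A\in\mc V$ and $c\in\mc C$; equivalently, dually, maps $F(\{A,c\})\to\{A,F(c)\}$. Enriched natural transformations are then natural transformations compatible with these structure maps. To make this precise in the $\infty$\=/setting, I would invoke Heine's identification \cite{Heine:bi-enriched} of $\Fun_{\mc V}(\mc C,\mc D)$ with the category of $\mc V$\=/linear (lax) functors between the corresponding $\mc V$\=/modules. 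Concretely, this exhibits $\Fun_{\mc V}(\mc C,\mc D)$ as a category of lax-equivariant functors, presented as sections or algebras over a diagram built from the action maps $\mc V\times\mc C\to\mc C$ and $\mc V\times\mc D\to\mc D$.

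For (i), the plan is to show that an enriched natural transformation $\alpha:F\to G$ whose underlying transformation is invertible is itself invertible. The compatibility squares with the structure maps $A\otimes F(c)\to F(A\otimes c)$ involve only components of $\alpha$ and of $A\otimes\alpha$. Pointwise inverses therefore automatically make these squares commute up to the coherent homotopies, as in the standard fact that lax-equivariant (or lax-monoidal) natural transformations form a full subcategory-like structure with respect to equivalences. In Heine's model this is the statement that the forgetful functor from algebras or sections to underlying functors is conservative. That holds because equivalences in lax-limit-type categories of this kind are detected on underlying objects.

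For (ii) and (iii), the plan is to construct limits and colimits objectwise and check that they inherit enrichment. For a diagram $F_i$ with $\mc D$ complete, set $L(c)=\lim_i F_i(c)$. The enriched structure is most naturally given in cotensor form, $F(\{A,c\})\to\{A,F(c)\}$. Since $\{A,-\}$ is a right adjoint and so commutes with limits, we obtain
\[
L(\{A,c\})=\lim_i F_i(\{A,c\})\to \lim_i\{A,F_i(c)\}\cong\{A,L(c)\},
\]
and the coherences pass to the limit. Dually, for colimits we use the tensor form, $A\otimes F(c)\to F(A\otimes c)$. Since $A\otimes-$ is a left adjoint on $\mc D$, it commutes with colimits, and this gives
\[
A\otimes \colim_i F_i(c)\cong\colim_i A\otimes F_i(c)\to \colim_i F_i(A\otimes c).
\]
More structurally, $u$ is a forgetful functor from lax-equivariant functors, and such a functor creates whatever limits or colimits the action preserves in the relevant variable. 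The final sentence of the proposition then follows from these constructions together with (i), since a conservative functor that preserves these limits and colimits shows they are computed objectwise.

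The main obstacle is making the "coherences pass to the limit" step rigorous in the $\infty$\=/categorical setting. I would handle it by citing Heine's result that enriched functors correspond to lax $\mc V$\=/linear functors (\cite[Proposition 2.116]{Heine:bi-enriched} and surrounding results), and then applying the general fact that lax-linear functors into a module whose action preserves limits (resp. colimits) in the module variable are closed under objectwise limits (resp. colimits).
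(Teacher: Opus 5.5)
Your proposal is correct in outline, but it takes a different route from the paper.

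The paper never passes to lax linear functors. It specializes Heine's bi-enriched framework (with $\mc W=\emptyset$) and reads each item off a separate result:
(i) from local conservativity of the forgetful $(\infty,2)$-functor \cite[Corollary 2.138]{Heine:bi-enriched};
(ii) from monadicity of $u$ \cite[Theorem 4.43(2)]{Heine:bi-enriched}, which is available because the action on $\mc D$ preserves colimits;
(iii) from cocontinuity of the evaluation functors $\ev_X$ \cite[Lemma 3.74(1)]{Heine:equiv-enriched-inf-cat-inf-cat-weak-action}.

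You instead transport to lax $\mc V$-linear functors and check by hand that objectwise (co)limits inherit the structure. The key point is that each structure map $A\otimes F(c)\to F(A\otimes c)$ involves a single value of $F$, so only (co)continuity of the action in the module variable matters. Your route explains \emph{why} everything is computed objectwise. The paper's route is shorter and gives the stronger conclusion that $u$ is monadic, but it is entirely black-box.

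Two corrections to your version. First, \cite[Proposition 2.116]{Heine:bi-enriched} is the correspondence between enriched left and right adjoints used in Lemma~\ref{fact:1}. It is not the source for identifying enriched functors with lax linear functors; that identification should come from Heine's comparison of enriched $\infty$-categories with $\infty$-categories carrying a weak $\mc V$-action \cite{Heine:equiv-enriched-inf-cat-inf-cat-weak-action}. Second, the cotensor form in (ii) is an unnecessary detour, and it would need an $\infty$-categorical calculus of mates. The composite $A\otimes\lim_i F_i(c)\to\lim_i (A\otimes F_i(c))\to\lim_i F_i(A\otimes c)$ already provides the structure with no hypothesis on the action, exactly as limits of lax monoidal functors are computed pointwise.

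What remains to supply is the coherence step you flag, together with the analogous detection of equivalences in (i). This is of the same nature as the standard results on limits and colimits of algebras and modules in \cite{HA}.
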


\begin{proof}
Heine's results in~\cite{Heine:bi-enriched} are proved in a more general setting of (weakly) left, right and bienriched ${(\infty,1)}$\=/categories over two non-symmetric $\infty$\=/operads \mc{V} and \mc{W}. It is shown that ${(\infty,1)}$\=/categories left enriched in any two non-symmetric $\infty$\=/operads \mc{V} and \mc{W} form an $(\infty,2)$\=/category, which is denoted by $\mbox{}_\mc{V}{\rm LEnr}_{\mc{W}}$ by~\cite[Remark 2.136]{Heine:bi-enriched}.
We can specialize to our setting: if \mc{V} is a symmetric monoidal ${(\infty,1)}$\=/category and \mc{W} is the initial $\infty$\=/operad, whose space of colors is empty and which is denoted by $\emptyset$, then left $(\mc{V},\emptyset)$\=/enriched ${(\infty,1)}$\=/categories are precisely \mc{V}\=/enriched ${(\infty,1)}$\=/categories by~\cite[Remark 2.140]{Heine:bi-enriched} and the $(\infty,2)$\=/category $\mbox{}_\mc{V}{\rm LEnr}_{\emptyset}$ is the $(\infty,2)$\=/category of \mc{V}\=/enriched ${(\infty,1)}$\=/categories by the same remark. 

In~\cite[Remark 2.136]{Heine:bi-enriched} the functor $U: \mbox{}_{\mc{V}}{\rm LEnr}_{\mc{W}} \to {\rm Cat}_{(\infty,1)}$ forgetting the enrichment is constructed as a functor of $(\infty,2)$\=/categories. 
This forgetful functor $U$ is locally conservative, \ie induces on morphism-${(\infty,1)}$\=/categories conservative functors by~\cite[Corollary 2.138]{Heine:bi-enriched}. Therefore our functor $u$ is conservative, proving~\ref{prop:Heine-rules:1}.

By \cite[Theorem 4.43\,(2)]{Heine:bi-enriched} applied to enrichment, the forgetful functor $u$ is monadic if \mc{D} is (left) tensored over \mc{V} and has colimits preserved by the left action. The latter is covered since in the closed \mc{V}\=/module \mc{D} the tensor has a right adjoint.
This implies~\ref{prop:Heine-rules:2}. Since limits in $\Fun(\mc{C},\mc{D})$ are computed objectwise, the same now holds for the category $\Fun_{\mc{V}}(\mc{C},\mc{D})$.

For~\ref{prop:Heine-rules:3} we can specialize~\cite[Lemma 3.74\,(1)]{Heine:equiv-enriched-inf-cat-inf-cat-weak-action} and conclude that the category $\Fun_{\mc{V}}(\mc{C},\mc{D})$ is cocomplete and for every object $X$ in \mc{C} the evaluation functor
\begin{align*}
\ev_X : \Fun_{\mc{V}}(\mc{C},\mc{D}) &\longrightarrow\mc{D} \\
F&\longmapsto \ev_X(F)=F(X)	
\end{align*}
preserves colimits. We deduce that the functor $u$ is cocontinuous and that colimits in $\Fun_{\mc{V}}(\mc{C},\mc{D})$ are computed objectwise. 
\end{proof}

We would like to stress the importance of the enriched theory in the higher categorical context for our work since it is essential in the proof of Proposition~\ref{prop:Weiss-trick}. 

\subsection{\texorpdfstring{$\omega$\=/Presentable}{Omega-presentable} categories}
\label{subsec:compact}

Let $\mc{E}$ be a category with filtered colimits. An object $K$ in \mc{E} is called {\it compact}, or more accurately {\it $\omega$\=/compact}, if the functor $\map(K,-):\mc{E}\to \mc{S}$ preserves filtered colimits. Here \mc{S} is the category of spaces, aka $\infty$\=/groupoids.
We denote the full subcategory of compact objects of $\mc{E}$ by $\comp(\mc{E})$.
This subcategory is closed under finite colimits and retracts~\cite[Corollary 5.3.4.15, Remark 5.3.4.16]{HTT}. 

\begin{definition}[{\cite[Definition 20.4.1.7]{Lurie:sag}}] \label{dense-subcategory}
A small full subcategory $\mc{D}$ of a category $\mc{E}$ is said to be {\it dense} if the identity functor of \mc{E} can be given as a left Kan extension of the inclusion $\mc{D}\subset\mc{E}$.
\end{definition} 

\begin{lemma}\label{lem:nerve-colimit}
Let \mc{D} be a small full subcategory of \mc{E}. For an object $E$ in \mc{E} let $\mc{D}/E$ denote the slice category.
The following are equivalent:
\begin{enumerate}[label={\rm (\roman*)}]
  \item
The subcategory \mc{D} is dense in \mc{E}.
  \item 
For every object $E$ in \mc{E} we have $E\cong\colim\limits_{\mc{D}/E} D$.
  \item
The restricted Yoneda functor $\mc{E}\to [\mc{D}^{\op}, \mc{S}]$ is fully faithful. 
\end{enumerate}
\end{lemma}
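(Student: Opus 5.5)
The plan is to pass through the left Kan extension formula and the Yoneda lemma, proving (i)$\Leftrightarrow$(ii) and (ii)$\Leftrightarrow$(iii). Write $\iota:\mc D\subset\mc E$ for the inclusion and $N:\mc E\to[\mc D^{\op},\mc S]$, $N(E)=\map(\iota(-),E)$, for the restricted Yoneda functor. Since $\mc D$ is small, the pointwise formula for left Kan extensions gives $(\mathrm{Lan}_\iota\iota)(E)=\colim_{\mc D/E}D$, whenever these colimits exist. The tautological cone $D\to E$ over $\mc D/E$ gives a canonical comparison map $\colim_{\mc D/E}D\to E$, and this map is the component at $E$ of the canonical transformation $\mathrm{Lan}_\iota\iota\to\Id$.

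\textbf{(i)$\Leftrightarrow$(ii).} Density says exactly that this transformation exhibits $\Id$ as the left Kan extension of $\iota$. Pointwise this means each comparison map $\colim_{\mc D/E}D\to E$ is a colimit cone. So (i) and (ii) agree, provided (ii) is read as ``the canonical cone is a colimit''. This is the reading I will use throughout.

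\textbf{(ii)$\Rightarrow$(iii).} Take $E,E'\in\mc E$. Using (ii) for $E$,
\[
\map(E,E')\simeq\lim_{(D\to E)\in(\mc D/E)^{\op}}\map(D,E')=\lim_{(\mc D/E)^{\op}}N(E')(D).
\]
On the other side, $N(E)$ is the colimit of representables indexed by its category of elements. Since $\mc D$ is full in $\mc E$, that category of elements is $\mc D/E$. Hence, by Yoneda,
\[
\map(N(E),N(E'))\simeq\lim_{(\mc D/E)^{\op}}N(E')(D).
\]
It remains to check that the map induced by $N$ is compatible with these two identifications. Both are built from restriction along the same cone, so this is naturality of the Yoneda equivalence. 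Therefore $N$ is fully faithful.

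\textbf{(iii)$\Rightarrow$(ii).} In the presheaf category, $N(E)\simeq\colim_{\mc D/E}y(D)$, where $y$ is the Yoneda embedding. Since $\mc D$ is full, $y(D)=N(D)$, so $N(E)\simeq\colim_{\mc D/E}N(D)$. To show the cone $(D\to E)$ is a colimit in $\mc E$, test against an arbitrary $E'$:
\[
\map(E,E')\simeq\map(N(E),N(E'))\simeq\lim_{(\mc D/E)^{\op}}\map(N(D),N(E'))\simeq\lim_{(\mc D/E)^{\op}}\map(D,E').
\]
The first and last equivalences use that $N$ is fully faithful, and the middle one uses the colimit description of $N(E)$. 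Composed, these maps are restriction along the cone, which is precisely the universal property of a colimit.

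I expect the main obstacle to be coherence rather than content. In the $\infty$-categorical setting one must verify that the composite equivalences really are the canonical maps induced by $N$ and by the cone, not merely some abstract equivalence. I would handle this by citing the pointwise Kan extension formula (HTT~4.3.2) and the density of representables (HTT~5.1.5.3), or simply refer to \cite[20.4.1]{Lurie:sag}, where this equivalence is recorded.
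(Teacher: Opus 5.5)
Your argument is correct and matches the paper's proof, which consists only of the citation \cite[Remarks 20.4.1.2 and 20.4.1.5]{Lurie:sag}. Your (i)$\Leftrightarrow$(ii) via the pointwise Kan extension formula, your (ii)$\Leftrightarrow$(iii) via $N(E)\simeq\colim_{\mc D/E}y(D)$ and Yoneda, and your reading of (ii) as ``the canonical cone over $\mc D/E$ is a colimit cone'' (not an abstract isomorphism) are precisely the content of those remarks.
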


\begin{proof}
This is proved in \cite[Remarks 20.4.1.2 and 20.4.1.5]{Lurie:sag}.
\end{proof}

\begin{lemma} \label{lem:detectinvertible}
Let $\mc{D}\subset \mc{E}$ be a small dense full subcategory of a category $\mc{E}$ with filtered colimits.
Then a morphism $f:X\to Y$ in \mc{E} is invertible if and only if $\map(D,f):\map(D,X)\to \map(D,Y)$ is invertible for every object $D$ in $\mc{D}$.
\end{lemma}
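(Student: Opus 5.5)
The plan is to reduce the statement to the Yoneda lemma via the fully faithful restricted Yoneda functor. The forward implication is immediate: if $f$ is invertible, then so is $\map(D,f)$ for every $D$, since $\map(D,-)$ is a functor. Only the converse needs an argument.

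For the converse, we invoke Lemma~\ref{lem:nerve-colimit}. Since $\mc D$ is dense, the restricted Yoneda functor
\[
N:\mc E\to [\mc D^{\op},\mc S],\qquad E\mapsto \map(-,E)|_{\mc D},
\]
is fully faithful. Fully faithful functors are conservative. Concretely, if $N(f)$ has an inverse $g'$ in the presheaf category, then $g'=N(g)$ for a unique-up-to-equivalence $g:Y\to X$. The composites $gf$ and $fg$ are then sent to identities, so by full faithfulness (equivalences on mapping spaces) they are homotopic to identities.

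It therefore remains to check that $N(f)$ is invertible. A natural transformation of presheaves on $\mc D$ (of $\mc S$-valued functors) is invertible if and only if it is invertible objectwise. Its component at $D$ is exactly $\map(D,f)$, which is invertible by hypothesis. Hence $N(f)$ is invertible, and so $f$ is invertible.

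There is no real obstacle here. The only points to state explicitly are that a fully faithful functor of $\infty$-categories reflects equivalences, and that equivalences in functor $\infty$-categories are detected pointwise. The latter is \cite[Theorem 3.1.2.1 or Corollary 5.1.2.3]{HTT}-type standard material. The hypothesis that $\mc E$ has filtered colimits plays no role beyond the ambient setting.
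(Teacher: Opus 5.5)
Your proposal is correct and follows the paper's proof: density makes the restricted Yoneda functor $\mc E\to\Fun(\mc D^{\op},\mc S)$ fully faithful, hence conservative. You also spell out the pointwise detection of equivalences in the presheaf category and correctly observe that the filtered-colimit hypothesis is not used; the paper leaves both points implicit.
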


\begin{proof} 
The (restricted) Yoneda functor  $ \mc{E}\to \Fun(\mc{D}^{\op},\mc{S})$
is fully faithful, since the subcategory $\mc{D}$ is dense.
But any fully faithful functor is conservative.
\end{proof}

\begin{definition} \label{omega-pres}
A category $\mc{E}$ is said to be {\it $\omega$\=/presentable} if it is cocomplete and its full subcategory of compact objects $\comp(\mc{E})\subset \mc{E}$ is small and dense in $\mc{E}$.
\end{definition}

An $\omega$\=/presentable category is complete and cocomplete.
\begin{example}
Let $\mc{D}$ be a small category and consider the category $\Fun(\mc{D},\mc{S})$. In it every representable functor is compact and any functor is a colimit of representable ones. Thus the category $\Fun(\mc{D},\mc{S})$ is $\omega$\=/presentable. 
We will say that a functor is {\it finitely presentable} if  it is a finite colimit of representable functors. It is compact if and only if it is a retract of a finitely presentable functor.
\end{example}

\begin{lemma} \label{lem:objects-are-filtered-colimits}
If the category \mc{E} is $\omega$\=/presentable, then every object in \mc{E} is a filtered colimit of compact objects.
\end{lemma}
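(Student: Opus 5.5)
By Lemma~\ref{lem:nerve-colimit}, density of $\comp(\mc{E})$ gives, for every object $E$ of $\mc{E}$,
\[
E\cong\colim\limits_{\comp(\mc{E})/E} K .
\]
The plan is to show that the indexing category $\comp(\mc{E})/E$ is filtered. This gives the statement directly, since the displayed colimit is then a filtered colimit of compact objects.

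First I check that $\comp(\mc{E})/E$ is filtered, using the criterion that a category is filtered if every finite diagram in it admits a cocone (\cite[Section~5.3.1]{HTT}). A finite diagram in $\comp(\mc{E})/E$ is a finite diagram $p\colon I\to\comp(\mc{E})$ together with a cocone $\alpha$ from $p$ to $E$ in $\mc{E}$. We argued that $\comp(\mc{E})$ is closed under finite colimits in $\mc{E}$ (\cite[Corollary 5.3.4.15]{HTT}), so $K:=\colim_I p$ is compact. The cocone $\alpha$ induces a map $K\to E$, so $(K\to E)$ is an object of $\comp(\mc{E})/E$. The colimit cocone $p\to K$ is compatible with the maps to $E$, so it lifts to a cocone on the diagram inside $\comp(\mc{E})/E$. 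Hence every finite diagram in $\comp(\mc{E})/E$ has a cocone, and the category is filtered.

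The main point needing care is the $\infty$\=/categorical meaning of "the cocone lifts to the slice." This is the standard fact that a diagram $I\to\mc{C}/E$ is the same as a diagram $I^{\triangleright}\to\mc{C}$ with cone point sent to $E$. So a cocone in the slice corresponds to a diagram $(I^{\triangleright})^{\triangleright}\to\mc{C}$ extending the given one. We build it by taking the colimit over $I$ and using the universal property to factor $\alpha$ through $K$. Because $K$ is a colimit, the space of such factorizations is contractible, so the extension exists. This is the only step where something beyond definitions is needed.

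An alternative route, if one prefers to avoid checking filteredness, is to note that $\comp(\mc{E})$ is small, idempotent complete and has finite colimits. By \cite[Proposition 5.3.5.11]{HTT}, density together with compactness gives $\mc{E}\simeq\ind(\comp(\mc{E}))$. Every object of an $\ind$\=/category is a filtered colimit of objects of the generating subcategory, which again gives the statement.
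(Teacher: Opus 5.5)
Your main argument is correct and is essentially the paper's proof: density gives $E\simeq\colim_{\comp(\mc{E})/E}K$, and closure of $\comp(\mc{E})$ under finite colimits makes $\comp(\mc{E})/E$ filtered, a step you justify (lifting the colimit cocone to the slice) in more detail than the paper does. One caution about your alternative route: \cite[Proposition 5.3.5.11]{HTT} yields $\ind(\comp(\mc{E}))\simeq\mc{E}$ only under the extra hypothesis that compact objects generate $\mc{E}$ under filtered colimits, which is essentially the statement being proved. As cited it is therefore circular, unless you add that the induced functor $\ind(\comp(\mc{E}))\to\mc{E}$ is fully faithful and preserves all small colimits (because $\comp(\mc{E})\subset\mc{E}$ preserves finite colimits), so its essential image is closed under colimits and hence, by density, is all of $\mc{E}$.
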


\begin{proof}
Since \mc{E} is $\omega$\=/presentable, for an arbitrary object $E$ in \mc{E}, we have $E\cong\colim_{\comp(\mc{E})/E} D$ by Lemma~\ref{lem:nerve-colimit}. 
Compact objects are closed under finite colimits and colimits in $\comp(\mc{E})/E$ are computed in $\comp(\mc{E})$.
Thus, the slice category $\comp(\mc{E})/E$ is filtered.
\end{proof}

Consider a natural transformation $f:A\to B$ between two increasing sequences 
\begin{equation*}
\begin{tikzcd} 
&&&&&  \colim A\ar[ddddd,"{\colim f}"]  \\
&&&&&  \\
A_0 \ar[d,"{f_0}"'] \ar[r,"{a_0}"] \ar[uurrrrr, "\alpha_0", bend left] 
& A_1\ar[r,"{a_1}"] \ar[d,"{f_1}"'] \ar[uurrrr, "\alpha_1", bend left] 
& A_2 \ar[d,"{f_2}"'] \ar[r,"{a_2}"]  \ar[uurrr, "\alpha_2", bend left]  
& A_3 \ar[d,"{f_3}"'] \ar[uurr, "\alpha_3", bend left] \ar[r]  & \cdots & 
 \\
B_0 \ar[ddrrrrr, "\beta_0"', bend right]  \ar[r,"{b_0}"'] 
& B_1  \ar[r,"{b_1}"'] \ar[ddrrrr, "\beta_1", bend right]  
& B_2\ar[r,"{b_2}"']  \ar[ddrrr, "\beta_2", bend right]
& B_3 \ar[ddrr, "\beta_3", bend right] \ar[r] & \cdots &  \\
&&&&&  \\
&&&&&  \colim B
\end{tikzcd}
\end{equation*}
in \mc{E} and write $\alpha_n:A_n\to \colim A$ and $\beta_n:B_n\to \colim B$ for the conical maps.

\begin{lemma} \label{Whiteheadcolimit17}
Let $\mc{E}$ be an $\omega$\=/presentable category and let $f:A\to B$ be a natural transformation between two sequences in \mc{E}. If the square 
\begin{equation*}
\begin{tikzcd}
A_n \ar[d,"f_n"'] \ar[r,"{\alpha_n}"] & \colim(A)
\ar[d,"{\colim(f)}"]
 \\
B_n\ar[r,"{\beta_n}"] & \colim(B)
\end{tikzcd}
\end{equation*}
has a diagonal filler for every $n\geq 0$, then the map $\colim(f)$ is invertible.
\end{lemma}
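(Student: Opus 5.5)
We reduce to testing against compact objects and then build an explicit inverse from the fillers. By Lemma~\ref{lem:detectinvertible}, applied to the dense subcategory $\comp(\mc E)$, it suffices to show that for every compact $K$ the map
\[
\map(K,\colim f):\map(K,\colim A)\to\map(K,\colim B)
\]
is invertible. Since $K$ is compact and sequential colimits are filtered, this map is identified with
\[
\colim_n \map(K,f_n):\colim_n\map(K,A_n)\to\colim_n\map(K,B_n).
\]
So the statement reduces to the analogous one in $\mc S$, where sequential colimits are computed levelwise on homotopy groups.

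The fillers are the key input. Write $d_n:B_n\to\colim A$ for a filler, so that $d_n f_n\simeq\alpha_n$ and $\colim(f)\,d_n\simeq\beta_n$.

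For surjectivity on $\pi_0$:
\begin{itemize}
\item A point of $\map(K,\colim B)$ factors as $\beta_n\circ y$ for some $y:K\to B_n$, by compactness.
\item Then $d_n y:K\to\colim A$ factors as $\alpha_m\circ x$ with $x:K\to A_m$, again by compactness.
\item We get $\colim(f)\,\alpha_m x \simeq \colim(f)\,d_n y\simeq\beta_n y$, so the point is hit.
\end{itemize}

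For injectivity on $\pi_0$, suppose $x,x':K\to A_m$ satisfy $\beta_m f_m x\simeq\beta_m f_m x'$.
\begin{itemize}
\item Then $\alpha_m x\simeq d_m f_m x\simeq d_m f_m x'\simeq\alpha_m x'$.
\item Hence $x$ and $x'$ already become equal in $\map(K,\colim A)$.
\end{itemize}
The same argument, run at all basepoints and with $K$ replaced by the compact objects $K\otimes S^k$ (the tensoring of $K$ with the finite spaces $S^k$, i.e.\ the finite colimit, which is compact), handles the higher homotopy groups. Indeed, $\pi_k$ of $\map(K,-)$ at a point is computed through $\map(K\otimes S^k,-)$ relative to the basepoint.

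The main obstacle is this bookkeeping for higher homotopy and basepoints. A cleaner route avoids it. The fillers give maps $\map(K,d_n):\map(K,B_n)\to\map(K,\colim A)$. Compatibility with $b_n$ does not hold on the nose, so we do not assemble them into a map out of the colimit. Instead we use the criterion that a map $g:X\to Y$ of spaces is an equivalence if every square from $S^{k-1}\to D^k$ to $g$ admits a diagonal filler up to homotopy. Since $S^{k-1}$ and $D^k$ are finite, maps from them into $\colim_n$ factor through a finite stage, by compactness of finite spaces in $\mc S$. The given fillers $d_n$ then supply the required lifts, exactly as in the $\pi_0$ argument above.

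The one point needing care is making the two resulting homotopies on $S^{k-1}$ agree. I would handle this by passing to a further stage $m$ where both homotopies are defined, again using compactness. This is where I expect the real work to lie.
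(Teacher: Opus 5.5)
Your ``cleaner route'' is essentially the paper's proof: reduce via Lemma~\ref{lem:detectinvertible} and compactness of $K$ to the case $\mc E=\mc S$, then use the Whitehead lifting criterion against $S^n\to 1$ (equivalently $S^{n-1}\to D^n$), factoring each test square through some $f_k$ and composing with the filler $d_k$. The homotopy bookkeeping you single out as the real work is handled in the paper by noting that $S^n\to 1$ is a map between compact spaces, so the whole square, homotopy included, factors through a single stage at once. Your $\pi_0$-injectivity step needs exactly this: $\beta_m f_m x\simeq\beta_m f_m x'$ does not yield $d_m f_m x\simeq d_m f_m x'$ until you pass to some $j\geq m$ where the images of $f_m x$ and $f_m x'$ in $B_j$ already agree, and then apply $d_j$.
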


\begin{proof} 
Consider first the case $\mc{E}=\mc{S}$. Here the classical Whitehead Theorem shows, that the map $\colim(f)$ is invertible if and only if every square
\begin{equation*}
\begin{tikzcd}
S^n \ar[d] \ar[r,"{x}"] &  \colim(A) 
\ar[d,"{\colim(f)}"]
 \\
1 \ar[r,"{y}"] &  \colim(B)
\end{tikzcd}
\end{equation*}
has a diagonal filler. 
But this square  can be factored as follows:
\begin{equation*}
\begin{tikzcd}
S^n \ar[d] \ar[r,"{x'}"] &  A_k \ar[d,"f_k"'] \ar[r,"{\alpha_k}"] & \colim(A) \ar[d,"{\colim(f)}"] \\
1 \ar[r,"{y'}"]  & B_k\ar[r,"{\beta_k}"] & \colim(B)\,,
\end{tikzcd}
\end{equation*}
for some $k\geq 0$, since $S^n\to 1$ is a map between compact spaces.
The composite square has a diagonal filler since the right hand square has a diagonal filler by the hypothesis. This proves the lemma in the the case where $\mc{E}=\mc{S}$. 

Let us now return to the general case of an $\omega$\=/presentable category $\mc{E}$. By Lemma~\ref{lem:detectinvertible} it suffices to show that $\map(K,\colim(f))$ is invertible for every compact object $K$ in \mc{E}.
But the functor $\map(K,-):\mc{E}\to \mc{S}$ preserves filtered colimits, since $K$ is compact. Hence $\map(K,\colim(f))$ is the colimit of the natural transformation $\map(K,f): \map(K,A)\to \map(K, B)$,
\begin{equation*} 
\begin{tikzcd}
\map(K,A_n) \ar[d,"{\map(K,f_n)}"'] \ar[rr,"{\map(K,\alpha_n)}"] && \map(K,\colim(A))
\ar[d,"{\map(K,\colim(f))}"]
 \\
\map(K,B_n)\ar[rr,"{\map(K,\beta_n)}"] &&\map(K,\colim(B))\,.
\end{tikzcd}
\end{equation*}
For every $n\geq 0$ this square obtains an induced diagonal filler from the square above. Hence the map $\map(K,\colim(f))$ is invertible by the first part of the proof. It follows that $\colim(f)$ is invertible.
\end{proof}

\begin{lemma}\label{lem:filt-colim-lex}
In an $\omega$\=/presentable category filtered colimits are left exact.
\end{lemma}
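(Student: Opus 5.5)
We reduce to the case of spaces by means of compact objects, as in the proof of Lemma~\ref{Whiteheadcolimit17}. Concretely, we must show the following. Let $I$ be a filtered category and let $X,Y,Z:I\to\mc E$ be diagrams with natural maps $X\to Z\leftarrow Y$. Then the canonical map
\[
\colim_I (X_i\times_{Z_i} Y_i)\longrightarrow \colim_I X_i\times_{\colim_I Z_i}\colim_I Y_i
\]
is invertible. Finite limits reduce to pullbacks and the terminal object, and a constant diagram has colimit its value because $I$ is filtered and hence weakly contractible. So it suffices to treat pullbacks.

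By Lemma~\ref{lem:detectinvertible}, applied to the small dense subcategory $\comp(\mc E)$, it is enough to check that $\map(K,-)$ sends this map to an equivalence for every compact $K$. Since $K$ is compact, $\map(K,-)$ commutes with filtered colimits. As a representable functor it also preserves pullbacks. Applying $\map(K,-)$ therefore identifies the map, up to equivalence, with the comparison map
\[
\colim_I\big(\map(K,X_i)\times_{\map(K,Z_i)}\map(K,Y_i)\big)\longrightarrow \colim_I\map(K,X_i)\times_{\colim_I\map(K,Z_i)}\colim_I\map(K,Y_i)
\]
in $\mc S$. The problem thus reduces to the statement that filtered colimits commute with pullbacks in $\mc S$.

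The main obstacle is this last step, the case $\mc E=\mc S$. We invoke the standard fact that filtered colimits are left exact in $\mc S$ \cite[Proposition 5.3.3.3]{HTT}. Alternatively, it can be argued directly. $\mc S$ is an \oo topos, so colimits are universal and pulling back along a map into a colimit commutes with it. One first pulls back along $\colim X\to\colim Z$ to rewrite the target as $\colim_{i}(X_i\times_{\colim Z}\colim Y)$. Expanding $\colim Y$ in the same way gives a colimit over $I\times I$. One then uses that the diagonal $I\to I\times I$ is cofinal when $I$ is filtered to collapse it back to a colimit over $I$. The remaining comparison of $X_i\times_{Z_i}Y_i$ with $X_i\times_{\colim Z}Y_i$ in the colimit is handled by the same cofinality argument applied to the path spaces of $Z_i$.

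If one prefers to stay with sequential diagrams, as in the surrounding text, there is a third route. Both sides are colimits of sequences, and one checks the lifting criterion of Lemma~\ref{Whiteheadcolimit17}, lifting spheres stage by stage using compactness of $S^n$. This is the same compactness argument used in the proof of that lemma.
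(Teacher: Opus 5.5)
Your proof is correct and follows the paper's argument. You detect invertibility on compact objects via Lemma~\ref{lem:detectinvertible}, use that $\map(K,-)$ preserves filtered colimits (by compactness) and limits (as a representable), and conclude from the case of spaces \cite[Proposition 5.3.3.3]{HTT}. The only differences are minor: you first reduce to pullbacks and the terminal object, whereas the paper handles an arbitrary finite diagram $\mc D$ at once, and your two alternative routes for $\mc S$ are unnecessary and, as written, only sketches.
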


\begin{proof}
Let \mc{F} be a filtered category and \mc{D} be a finite category. The claim is that for any functor $G:\mc{F}\times\mc{D}\to\mc{V}$ the canonical map
\[
A:=\colim_{\mc{F}}\lim_{\mc{D}} G(f,d) \xto{\sigma} \lim_{\mc{D}}\colim_{\mc{F}} G(f,d)=:B
\]
is an isomorphism. The result is proven in \cite[Proposition 5.3.3.3]{HTT} for $\mc{V}=\mc{S}$. The claim follows from Lemma~\ref{lem:detectinvertible} once we show that $\map(K,\sigma):\map(K,A)\to\map(K,B)$ is an isomorphism for arbitrary compact $K$ in \mc{V}. 
But both sides turn out to be canonically isomorphic to the space $\colim_{\mc{F}}\lim_{\mc{D}}\map(K,G(f,d))$.
\end{proof}

\section{Confined categories}
\label{sec:confined-cats}

\subsection{Confined functors}

\begin{definition} \label{def:confinedfunctor}
A functor betwen $\omega$\=/presentable categories is {\it confined} if it is cocontinuous and takes compact objects to compact objects.
\end{definition}

\begin{remark}
\label{rem:confined}
The category of $\omega$\=/presentable categories and confined functors is equivalent to that of idempotent complete and finitely cocomplete categories and functors preserving finite colimits. 
The equivalence is given by extracting the compact objects in one direction and by Ind-completion in the other.
\end{remark}

Recall from~\cite[Corollary 5.5.2.9]{HTT} that every cocontinuous functor between presentable categories  $\phi:\mc{E}\to \mc{F}$ has a right adjoint $\phi_\star:\mc{F}\to \mc{E}$.

\begin{proposition} \label{prop:confinedbasic2} 
Let $L:\mc{E}\to\mc{F}$ be a cocontinuous functor between presentable categories and let $R:\mc{F}\to\mc{E}$ its right adjoint. 
Let $\mc{D}\subset\comp(\mc{E})$ be a small full subcategory of the compact objects of \mc{E} that is dense in \mc{E}. 
Then the following are equivalent:
\begin{enumerate}[label={\rm (\roman*)}]
   \item\label{prop:confinedbasic2:1} The functor $L$ is confined.
   \item\label{prop:confinedbasic2:2} For every $D$ in $\mc{D}$ the object $L(D)$ is compact. 
   \item\label{prop:confinedbasic2:3} The functor $R$ preserves filtered colimits.
\end{enumerate}
\end{proposition}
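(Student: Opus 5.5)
We prove the cycle \ref{prop:confinedbasic2:1}$\Rightarrow$\ref{prop:confinedbasic2:2}$\Rightarrow$\ref{prop:confinedbasic2:3}$\Rightarrow$\ref{prop:confinedbasic2:1}. The implication \ref{prop:confinedbasic2:1}$\Rightarrow$\ref{prop:confinedbasic2:2} is immediate, since $\mc D\subset\comp(\mc E)$ and a confined functor sends compact objects to compact objects. Throughout we use the adjunction equivalence $\map(L(D),Y)\cong\map(D,R(Y))$, natural in $D$ and $Y$.

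For \ref{prop:confinedbasic2:2}$\Rightarrow$\ref{prop:confinedbasic2:3}, let $Y:I\to\mc F$ be a filtered diagram. We must show that the comparison map $\colim_I R(Y_i)\to R(\colim_I Y_i)$ is invertible. Since $\mc D$ is small and dense in $\mc E$, which is cocomplete and hence has filtered colimits, Lemma~\ref{lem:detectinvertible} reduces this to showing that $\map(D,-)$ of the comparison map is invertible for every $D\in\mc D$. Because $D$ is compact, $\map(D,\colim_I R(Y_i))\cong\colim_I\map(D,R(Y_i))\cong\colim_I\map(L(D),Y_i)$. On the other side, $\map(D,R(\colim_I Y_i))\cong\map(L(D),\colim_I Y_i)$. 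The first is identified with the second precisely because $L(D)$ is compact. One has to check that the composite identification is the map induced by the comparison map, but this is a routine naturality check of the adjunction isomorphism.

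For \ref{prop:confinedbasic2:3}$\Rightarrow$\ref{prop:confinedbasic2:1}, the functor $L$ is cocontinuous by hypothesis. Let $K\in\comp(\mc E)$ and let $Y:I\to\mc F$ be filtered. Then
\[
\map(L(K),\colim_I Y_i)\cong\map(K,R(\colim_I Y_i))\cong\map(K,\colim_I R(Y_i))\cong\colim_I\map(K,R(Y_i))\cong\colim_I\map(L(K),Y_i),
\]
using \ref{prop:confinedbasic2:3} and then compactness of $K$. Again one checks that this composite is the canonical map, so $L(K)$ is compact.

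The only step requiring care is \ref{prop:confinedbasic2:2}$\Rightarrow$\ref{prop:confinedbasic2:3}. The difficulty is not conceptual: we need density of $\mc D$ to test invertibility only against objects of $\mc D$ via Lemma~\ref{lem:detectinvertible}, and we need $\mc D$ to consist of compact objects so that $\map(D,-)$ commutes with the filtered colimit on the $\mc E$ side. Both are exactly the hypotheses on $\mc D$. The remaining bookkeeping is compatibility of the canonical comparison maps with the adjunction equivalences, which I expect to be straightforward.
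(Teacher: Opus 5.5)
Your proposal is correct and follows the paper's proof essentially verbatim. It uses the same cycle (i)$\Rightarrow$(ii)$\Rightarrow$(iii)$\Rightarrow$(i): (ii)$\Rightarrow$(iii) via Lemma~\ref{lem:detectinvertible} and $\map(D,R(-))\cong\map(L(D),-)$, and (iii)$\Rightarrow$(i) via $\map(LK,-)\cong\map(K,R(-))$. You also spell out the use of compactness of $D$ on the $\mc E$ side, which the paper leaves implicit.
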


\begin{proof} 
Obviously~\ref{prop:confinedbasic2:1} implies~\ref{prop:confinedbasic2:2}. 
Now assume~\ref{prop:confinedbasic2:2} and let us show that $R$ preserves filtered colimits. 
By Lemma~\ref{lem:detectinvertible} it suffices to show that $\map(D,R(-))$ does so for all $D$ in \mc{D}, since \mc{D} is dense in \mc{E}. But this is true since $\map(D,R(-))\cong\map(L(D),-)$ and $L(D)$ is compact by assumption. So~\ref{prop:confinedbasic2:2} implies~\ref{prop:confinedbasic2:3}.
Suppose that $R$ preserves filtered colimits and let $K$ in \mc{E} be compact. Then the functor $\map (LK,-)\cong\map(K,R(-)):\mc{F}\to \mc{S}$ preserves filtered colimits. Hence $LK$ is compact. So~\ref{prop:confinedbasic2:3} implies~\ref{prop:confinedbasic2:1}.
\end{proof} 

Let $\mc{E}$ be a presentable category.
If $\mc{A}$ is a small category, then every functor $\phi:\mc{A}^{\op}\to \mc{E}$ has a left Kan extension $\phi_!:\Fun(\mc{A},\mc{S})\to  \mc{E}$ along the Yoneda functor $y:\mc{A}^{\op}\to\Fun(\mc{A},\mc{S})$.

\begin{corollary}\label{cor:shriek-confined}
If $\phi(\mc{A}^{\op})\subset \comp(\mc{E})$, then the functor $\phi_!:\Fun(\mc{A},\mc{S})\to  \mc{E}$ is confined.
\end{corollary}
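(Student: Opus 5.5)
We deduce this from Proposition~\ref{prop:confinedbasic2}, using the implication \ref{prop:confinedbasic2:2}$\Rightarrow$\ref{prop:confinedbasic2:1}, applied with $L=\phi_!$. The ambient data are as follows. The source category is $\Fun(\mc A,\mc S)$. The dense subcategory $\mc D$ is the full subcategory of representable functors, i.e.\ the essential image of the Yoneda functor $y:\mc A^{\op}\to\Fun(\mc A,\mc S)$. Since $\mc A$ is small, this subcategory is small.

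We check the hypotheses of Proposition~\ref{prop:confinedbasic2} in order.
\begin{enumerate}[label={\rm (\alph*)}]
\item $\phi_!$ is cocontinuous between presentable categories. A left Kan extension along the Yoneda embedding is a left adjoint: its right adjoint sends $E$ to the presheaf $a\mapsto \map(\phi(a),E)$. Hence $\phi_!$ preserves colimits, and $\Fun(\mc A,\mc S)$ and $\mc E$ are presentable.
\item $\mc D\subset\comp(\Fun(\mc A,\mc S))$. Representables are compact by the Example after Definition~\ref{omega-pres}; concretely, $\map(y(a),F)\cong F(a)$ by Yoneda, and filtered colimits of functors are computed objectwise.
\item $\mc D$ is dense. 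Every functor is canonically the colimit of representables over its category of elements, which is condition (ii) of Lemma~\ref{lem:nerve-colimit}. More precisely, the restricted Yoneda functor along $\mc D$ is the identity of $\Fun(\mc A,\mc S)$, so it is fully faithful, giving condition (iii) of Lemma~\ref{lem:nerve-colimit}.
\item Condition \ref{prop:confinedbasic2:2} holds. Since $y$ is fully faithful, the left Kan extension satisfies $\phi_!\circ y\simeq\phi$. Hence $\phi_!(y(a))\simeq\phi(a)$, which lies in $\comp(\mc E)$ by hypothesis. Compactness is invariant under equivalence, so every object of $\mc D$ is sent to a compact object.
\end{enumerate}
Proposition~\ref{prop:confinedbasic2} then gives that $\phi_!$ is confined.

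Nothing here is a real obstacle. The only point needing care is to justify the two standard facts about left Kan extension along Yoneda: that $\phi_!$ is cocontinuous, and that $\phi_!\circ y\simeq\phi$ because $y$ is fully faithful. Both can be cited from \cite{HTT} (the universal property of presheaf categories, Theorem~5.1.5.6). Definition~\ref{def:confinedfunctor} is stated for $\omega$\=/presentable categories, while Proposition~\ref{prop:confinedbasic2} only uses presentability and compactness. Conclusion \ref{prop:confinedbasic2:1} there means precisely cocontinuity together with preservation of compact objects, which is what the corollary asserts.
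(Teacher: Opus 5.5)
Your proposal is correct and follows essentially the same route as the paper: representables form a small dense subcategory of compact objects, $\phi_!(y(a))\simeq\phi(a)$ is compact by hypothesis, and then Proposition~\ref{prop:confinedbasic2} gives that $\phi_!$ is confined. Your explicit checks of cocontinuity and of density (via Lemma~\ref{lem:nerve-colimit}) are details the paper leaves implicit.
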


\begin{proof} 
The subcategory of representable functors is dense and every object in it is compact.
Moreover the functor $\phi_!$ takes every representable functor to a compact object in \mc{E}, since $\phi_!(y(a))\cong \phi(a)$ is compact for every $a$ in $\mc{A}$, $y$ the Yoneda functor.
Then Proposition~\ref{prop:confinedbasic2} implies that $\phi_!$ is confined.
\end{proof}

\begin{example}\label{exam:lKan-confined}
If $\phi:\mc{A}\to \mc{B}$ is functor between small categories, then the functor $\phi^*: \Fun( \mc{B},\mc{S})\to \Fun( \mc{A},\mc{S})$ has a left adjoint $\phi_!: \Fun( \mc{A},\mc{S})\to \Fun( \mc{B}, \mc{S})$ which is confined.
\end{example}

\subsection{Confined symmetric monoidal categories}
\label{subsec:csmc-cat}

\begin{definition} \label{Def-confined} 
We will say that a symmetric monoidal category is {\it confined} if 
\begin{enumerate}[label={\rm (\roman*)}]
\item it is $\omega$\=/presentable, 
\item the unit object is compact, and
\item the monoidal product of two compact objects is compact.
\end{enumerate}
\end{definition}

If $\mc C$ is a confined monoidal category, the monoidal structure interacts well with the $\omega$\=/presentability.
In particular, the assumption that the monoidal product of two compact objects is compact is useful to derive Lemma~\ref{lem:compact-cotensors-good} showing that the internal hom from a compact object preserves filtered colimits.

Morphisms between confined symmetric monoidal categories are taken to be the symmetric monoidal functors which are confined in the sense of Definition~\ref{def:confinedfunctor}.
Note that confined symmetric monoidal categories are automatically closed.
However these morphisms need not preserve the internal hom.

\begin{remark}
\label{rem:confined-monoidal}
The category of confined symmetric monoidal categories and confined symmetric monoi\-dal functors is equivalent to the category of symmetric monoids in $\omega$\=/presentable categories and confined functors.
Recall from Remark~\ref{rem:confined} that the latter category is equivalent to idempotent complete and finitely cocomplete categories and functors preserving finite colimits. 
Therefore, the category of confined symmetric monoidal categories is equivalent to the category of symmetric monoids in idempotent complete and finitely cocomplete categories.
The equivalence is given by extracting the compact objects in one direction and by Ind-completion in the other.
\end{remark}

\begin{lemma} \label{sufficientforconfinedtensor}
Let \mc{V} be a symmetric monoidal category.
Suppose that \mc{V} is $\omega$\=/presentable, and that $\mc{D}\subset \mc{V}$ is a dense subcategory of compact objects of \mc{V}.
Assume also that $\one$ is compact and that for all  $X,Y$ in $\mc{D}$ the tensor $X\otimes Y$ is compact. Then \mc{V} is confined.
\end{lemma}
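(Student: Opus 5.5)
We need to show that $K\otimes L$ is compact for arbitrary compact $K,L$; conditions (i) and (ii) of Definition~\ref{Def-confined} hold by hypothesis. The plan is to reduce from $\mc{D}$ to all compact objects in two steps: first fix $X\in\mc{D}$ and let $L$ vary, then let $K$ vary. The key input is that $\otimes$ preserves colimits separately in each variable (as it does for a presentably symmetric monoidal category, where $X\otimes-$ is a left adjoint).

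\textbf{Step 1: compact objects are retracts of finite colimits of objects of $\mc{D}$.} Let $\mc{D}'$ be the closure of $\mc{D}$ in $\mc{V}$ under finite colimits and retracts. It consists of compact objects, since $\comp(\mc{V})$ is closed under finite colimits and retracts. Now let $K$ be compact. By Lemma~\ref{lem:nerve-colimit}, $K\cong\colim_{\mc{D}/K}D$. Rewrite this colimit as a filtered colimit of finite colimits of objects of $\mc{D}$: index by the finite subdiagrams of $\mc{D}/K$, or equivalently use that the Ind-extension of $\mc{D}'$ recovers $\mc{V}$. Compactness of $K$ makes $\id_K$ factor through one stage $F$, which lies in $\mc{D}'$. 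Hence $K$ is a retract of $F$, and so $\mc{D}'=\comp(\mc{V})$.

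This is the step I expect to be the main obstacle. The difficulty is being careful in the $\infty$\=/categorical setting that the comma-category colimit can be rewritten as a filtered colimit of finite colimits. To avoid it, I would instead argue with Ind-completion: $\mc{V}\simeq\ind(\comp(\mc{V}))$, and the full subcategory $\ind(\mc{D}')\subset\mc{V}$ is closed under colimits and contains the dense subcategory $\mc{D}$, so it is all of $\mc{V}$. Then \cite[5.4.2.4]{HTT} gives that the compact objects of $\ind(\mc{D}')$ are the retracts of objects of $\mc{D}'$.

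\textbf{Step 2: fix $X\in\mc{D}$.} Let $\mc{C}_X$ be the full subcategory of objects $Y$ such that $X\otimes Y$ is compact. It contains $\mc{D}$ by hypothesis. It is closed under finite colimits, because $X\otimes-$ preserves colimits and compact objects are closed under finite colimits. It is closed under retracts, because $X\otimes-$ preserves retracts and compact objects are closed under retracts. Therefore $\mc{C}_X\supset\mc{D}'=\comp(\mc{V})$.

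\textbf{Step 3: fix an arbitrary compact $L$.} Let $\mc{C}'_L$ be the full subcategory of objects $X$ such that $X\otimes L$ is compact. By Step 2 it contains $\mc{D}$, and by the same closure argument, now applied to $-\otimes L$, it contains $\mc{D}'=\comp(\mc{V})$. Hence $K\otimes L$ is compact for all compact $K$ and $L$, and $\mc{V}$ is confined.
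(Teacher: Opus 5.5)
Your proof is correct. Its skeleton is exactly the paper's: fix $X\in\mc{D}$ and vary the second variable, then fix a compact $L$ and vary the first. The only difference is how you upgrade ``$F$ sends $\mc{D}$ to compact objects'' to ``$F$ sends $\comp(\mc{V})$ to compact objects'' for $F=X\otimes-$ or $-\otimes L$.

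The paper does this with Proposition~\ref{prop:confinedbasic2}. The right adjoint $R$ of $F$ preserves filtered colimits, because each $\map(D,R(-))\simeq\map(F(D),-)$ does and $\mc{D}$ detects equivalences (Lemma~\ref{lem:detectinvertible}). Hence $\map(F(K),-)\simeq\map(K,R(-))$ preserves filtered colimits for every compact $K$. This uses only density and an adjunction, and never needs a description of $\comp(\mc{V})$.

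You instead identify $\comp(\mc{V})$ with the closure $\mc{D}'$ of $\mc{D}$ under finite colimits and retracts, and then run a closure argument. Your Ind version of Step~1 is sound, but you should say why the image of $\ind(\mc{D}')\to\mc{V}$ is closed under colimits. The functor is fully faithful because $\mc{D}'$ consists of compact objects. It preserves all small colimits because it preserves filtered colimits and its restriction to $\mc{D}'$ preserves finite colimits.

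Your route costs more $\infty$\=/categorical input from \cite{HTT}, but it buys two things. It yields the reusable fact that every compact object is a retract of a finite colimit of objects of $\mc{D}$, a precise form of Remark~\ref{rem:confined}. And your closure steps only need $F$ to preserve finite colimits, not to have a right adjoint.
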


\begin{proof} 
For any object $A$ the functor $A\otimes -$ is cocontinuous,  since it has a right adjoint $[A,-]$.
If $A$ is in $\mc{D}$, then $A\otimes \mc{D}\subset \comp(\mc{V})$, since $\mc{D} \otimes  \mc{D}  \subset \comp(\mc{V})$ by hypo\-the\-sis.
Now it follows from Proposition~\ref{prop:confinedbasic2} that the functor $A\otimes -: \mc{V}\to \mc{V}$ is confined for every $A$ in \mc{D}. We need to extend this to any $A$ in $\comp(\mc{V})$.

We have just proved that $\mc{D}\otimes B \subset \comp(\mc{V})$ for every object $B$ in $\comp(\mc{V})$. Hence the functor $-\otimes B: \mc{V}\to \mc{V}$ is confined for every object $B$ in $\comp(\mc{V})$ by Proposition~\ref{prop:confinedbasic2}.
Thus, $\comp(\mc{V})\otimes B\subset \comp(\mc{V})$ for every object $B$ in $\comp(\mc{V})$.
Thus, $\comp(\mc{V})\otimes \comp(\mc{V})\subset \comp(\mc{V})$ and \mc{V} is a confined symmetric monoidal category.
\end{proof}

\begin{examples}\label{exam:confined-sym-mon-cats}
A short list of examples follows.
\begin{enumerate}[label={\rm (\roman*)}]
  \item\label{exam:confined-sym-mon-cats:1}
The category of spaces equipped with the cartesian monoidal structure is confined.
  \item\label{exam:confined-sym-mon-cats:2}
The category of pointed spaces with the smash product is confined.
  \item\label{exam:confined-sym-mon-cats:3}
The category of spectra equipped with the smash product is confined.
   \item\label{exam:confined-sym-mon-cats:4}
The category of small categories is cartesian closed symmetric monoidal and confined.
  \item\label{exam:confined-sym-mon-cats:5}
The category of simplicial spaces is cartesian closed and confined. 
The subcategory of representable functors satisfies the conditions of Lemma~\ref{sufficientforconfinedtensor}.
   \item\label{exam:confined-sym-mon-cats:6}
If $\mb{V}$ is a small symmetric monoidal category, then the functor category $\Fun(\mb{V},\mc{S})$ equipped with the Day convolution product is a ${(\infty,1)}$\=/sym\-met\-ric monoidal closed category, see \cite[Example 2.2.6.17,  Corollary 4.8.1.12, Remark 4.8.1.13]{HA} or~\cite{Glasman:day-conv}.
It is confined by Lemma~\ref{sufficientforconfinedtensor} using the representable functors as the small dense subcategory \mc{D}.
\end{enumerate}
\end{examples}

\subsection{Confined \texorpdfstring{\mc{V}\=/}{V-}modules}
\label{subsec:confined-modules}

\begin{definition} 
Let \mc{M} be a closed \mc{V}\=/module over a confined symmetric monoidal category  \mc{V}. We will say that \mc{M} is a {\it confined \mc{V}\=/module} if the category \mc{M} is $\omega$\=/presentable and the tensor product $A\otimes M$ of a compact object $M$ in \mc{M} with a compact object $A$ in $\mc{V}$ is compact in \mc M.
\end{definition}

\begin{example}\label{exam:module-over-itself}
Every confined symmetric monoidal category \mc{V} is confined as a closed \mc{V}\=/module over itself. 
\end{example}

\begin{example}\label{exam:conf-sym-mon-fun-yields-conf-mod}
If $\phi:\mc{V}\to \mc{E}$ is a cocontinuous symmetric monoidal functor between presentable symmetric monoidal categories, then the category \mc{E} has the structure of a closed \mc{V}\=/module by defining
\begin{equation*} 
  [E,F]_{\mc{V}}:=\phi_{*} [E,F]_{\mc{E}},\quad  A\otimes E:=\phi(A)\otimes E \quad {\rm and} \quad \{A,F\}:=[\phi(A),F]_{\mc{E}}\,,
\end{equation*}
for $E,F$ in \mc{E} and $A$ in \mc{V}. Here $\phi_{*}$ is the right adjoint to $\phi$.
If $\phi:\mc{V}\to \mc{E}$ is a confined symmetric monoidal functor between confined symmetric monoidal categories, then \mc{E} becomes a confined \mc{V}\=/module.
\end{example}

\begin{example} \label{ex:sym-mon-conf}
Let $\sigma: \mb{V}\to \mb{E}$ be a symmetric monoidal functor between small symmetric monoidal categories.
Then the functor $\sigma^*:\Fun(\mb{E},\mc{S})\to \Fun(\mb{V},\mc{S})$ has a left adjoint $\sigma_!:\Fun(\mb{V},\mc{S})\to \Fun(\mb{E},\mc{S})$ obtained by left Kan extension along $\sigma$. With respect to the Day convolution products on both functor categories, $\sigma_!$ is symmetric monoidal and confined by Example~\ref{exam:lKan-confined}. By Example~\ref{exam:conf-sym-mon-fun-yields-conf-mod} the category $\Fun(\mb{E},\mc{S})$ becomes a confined $\Fun(\mb{V},\mc{S})$\=/module.
\end{example}

\begin{example}\label{ex:Day-action-from-action} 
Let  $\mathbbm{V}$ be a small symmetric monoidal category and $\mathbbm{M}$ be a category equipped with an action 
\[
\oplus: \mathbbm{V}\times \mathbbm{M}\to \mathbbm{M}
\]
which is coherently associative and unital.
Then the functor category $\mc{M}=\Fun(\mb{M},\mc{S})$ has the structure of a closed module over the symmetric monoidal closed category $\mc{V}= \Fun(\mb{V},\mc{S})$. 
For $F$ in \mc{V} and $M$ in \mc{M} the {\it tensor product} $F\otimes M: \mathbbm{M} \to \mc{S}$ is calculated by the formula
\[
(F\otimes M)(n)=\int^{a\in \mathbbm{V},m\in \mathbbm{M}} F(a)\times M(m)\times \map(a\oplus m,n)\,,
\]
for every $n$ in $\mathbbm{M}$. If write $R^x=\map(x,-)$ for covariant representable functors, then $R^a\otimes R^m=R^{a\oplus m}$ for every $a$ in $\mathbbm{V}$ and $m$ in $\mathbbm{V}$.
The enrichment $[M,N]_{\mc{V}}$ between $M$ and $N$ in $\mc{M}$ is calculated by the formula
\[
[M,N]_{\mc{V}}(a)=\nat(M, N(a\oplus -))\,,
\]
for every $a$ in $\mathbbm{V}$.
In particular, $[R^m, N]_{\mc{V}}=N(-\oplus m):\mathbbm{V}\to \mc{S}$. 
The {\it cotensor product}  $\{F,M\}: \mathbbm{M} \to \mc{S}$ is calculated by the formula 
\[
\{F,M\}(m)=\nat(F,M(-\oplus m))\,,
\]
for every $m$ in $\mathbbm{M}$. In particular, $\{R^a,M\}=M(a\oplus -): \mc{M}\to  \mc{S}$ for every $a$ in $\mathbbm{V}$. 
The formula $R^a\otimes R^m=R^{a\oplus m}$ shows that the \mc{V}\=/module \mc{M} is confined.
\end{example}

\subsection{Docile functors}

\begin{definition}\label{def:preserve-cotensor}
A \mc{V}\=/functor $F:\mc{V}\to \mc{V}$ preserves {\it compact cotensors} if the coassembly map $\gamma(Z,X):F[Z,X]\to [Z, FX]$ is invertible for all $X$ and every compact $Z$.
\end{definition}

Compact cotensors can be regarded as a \mc{V}\=/enriched version of finite limits and should therefore be taken into account in an enriched version of left exactness.
\begin{definition} 
We say that a \mc{V}\=/functor is
\begin{enumerate}[label={\rm (\roman*)}]
   \item
\mc{V}\=/{\it left exact} if it preserves finite limits and compact cotensors. 
   \item
{\it docile} if it is \mc{V}\=/left exact and preserves filtered colimits.
\end{enumerate}
\end{definition}

The reason for introducing confined categories is to prove the following.
\begin{lemma}\label{lem:compact-cotensors-good}
Let \mc{V} be a confined symmetric monoidal category and \mc{M} a confined \mc{V}\=/module. If $C$ is a compact object of \mc{V}, then the functors $[C,-]:\mc{V}\to\mc{V}$ and $\{C,-\}:\mc{M}\to\mc{M}$ are docile.
\end{lemma}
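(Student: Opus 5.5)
We must show that $\{C,-\}$ preserves finite limits, compact cotensors and filtered colimits. The case $[C,-]$ is the special case $\mc M=\mc V$ (Example~\ref{exam:module-over-itself}), so we treat a general confined $\mc V$\=/module $\mc M$.

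\emph{Finite limits.} The functor $\{C,-\}:\mc M\to\mc M$ is right adjoint to $C\otimes-:\mc M\to\mc M$, because
\[
\map(M,\{C,N\})\cong\map(C\otimes M,N),
\]
which comes from taking global points of the enriched adjunction. Right adjoints preserve all limits, so in particular finite ones.

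\emph{Compact cotensors.} For compact $Z$ in $\mc V$, the coassembly map is
\[
\{C,\{Z,X\}\}\to\{Z,\{C,X\}\}.
\]
Both sides are canonically isomorphic to $\{C\otimes Z,X\}\cong\{Z\otimes C,X\}$, by associativity of the action and symmetry of $\otimes$. We need to check that under these identifications the coassembly map is the canonical isomorphism. The cleanest way is to note that $\{C,-\}$ is a $\mc V$\=/functor whose enrichment comes from the enrichment of its left adjoint $C\otimes-$, as in the proof of Lemma~\ref{fact:1}. Its coassembly map is then the mate of the assembly map
\[
Z\otimes(C\otimes M)\to C\otimes(Z\otimes M),
\]
which is the symmetry-and-associativity isomorphism. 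A mate of an isomorphism is an isomorphism. In fact this holds for every $Z$, not only compact ones.

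\emph{Filtered colimits.} By Yoneda, $\{C,-\}$ is the right adjoint of $C\otimes-$, which is cocontinuous. By Proposition~\ref{prop:confinedbasic2} (applied to $\mc E=\mc F=\mc M$, using the dense subcategory $\comp(\mc M)$), $\{C,-\}$ preserves filtered colimits as soon as $C\otimes K$ is compact for every compact $K$ in $\mc M$. This is exactly the definition of a confined $\mc V$\=/module, and for $\mc M=\mc V$ it is axiom (iii) of Definition~\ref{Def-confined}. This is where confinedness is used.

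The main obstacle is the coherence check in the compact-cotensor step: that the coassembly map really is the canonical isomorphism in the $\infty$\=/categorical enriched setting. I would handle it by invoking Heine's equivalence between enriched left and right adjoints, as in Lemma~\ref{fact:1}, and passing to mates, rather than checking coherences by hand.
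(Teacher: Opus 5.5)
Your proposal is correct and follows the paper's proof. Limits hold because $\{C,-\}$ is a right adjoint. All cotensors, not just compact ones, are preserved because the coassembly map is identified with the composite of canonical isomorphisms through $\{C\otimes Z,X\}$. Filtered colimits are preserved by Proposition~\ref{prop:confinedbasic2} applied to the confined functor $C\otimes-$.

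Your conjugation argument is a more explicit justification of the coherence step that the paper simply asserts. One small correction: by Yoneda, the coassembly map corresponds to a map $C\otimes(Z\otimes M)\to Z\otimes(C\otimes M)$. That is the inverse of the assembly isomorphism rather than the assembly itself, which does not affect your conclusion.
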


\begin{proof}
The functors $[C,-]$ and $\{C,-\}$ are \mc{V}\=/enriched by Lemma~\ref{fact:1} and preserve all limits, not just finite ones. They also preserves all cotensors , not just compact ones: for all $B$ in \mc{V} the coassembly map $\gamma(B,X):\{B, \{C,X\}\} \to \{C,\{B,X\}\}$ is composed of the natural isomorphisms
\[
\gamma(B,X): \{B, \{C,X\}\} \xto{\cong}  \{C\otimes B, X\} \xto{\cong} \{C,\{B,X\}\}\,.
\]
Hence the map $\gamma(B,X)$ is invertible for all objects $B,C$ in \mc{V} and $X$ in \mc{M}. And similarly for $[C,-]$.  

Since \mc{V} and \mc{M} are assumed to be confined, the tensor $C\otimes -$ is a confined functor, both as endofunctor of \mc{V} and of \mc{M}. So by Proposition~\ref{prop:confinedbasic2} the respective right adjoints $[C,-]$ and $\{C,-\}$ preserves filtered colimits.
\end{proof}

Let $\Fun_{\mc{V}}(\mc{C},\mc{D})$ denote the category of \mc{V}\=/enriched functors between \mc{V}\=/closed modules and morphisms given by \mc{V}\=/enriched natural transformations.
\begin{proposition}\label{prop:filt-colim-good} 
Let \mc{V} be a confined symmetric monoidal category and \mc{M} a confined \mc{V}\=/module.
Then the full subcategory of docile functors in $\Fun_{\mc{V}}(\mc{M},\mc{M})$ is closed under finite limits, filtered colimits and composition.
\end{proposition}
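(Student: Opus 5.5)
We prove the three closure properties one at a time. Each time we check the three conditions in the definition of docile: preserving finite limits, preserving compact cotensors, and preserving filtered colimits. By Proposition~\ref{prop:Heine-rules}, limits and colimits in $\Fun_{\mc{V}}(\mc{M},\mc{M})$ exist and are computed objectwise. So a finite limit $F=\lim_{d\in\mc D} F_d$, or a filtered colimit $F=\colim_{i\in\mc I} F_i$, of docile functors is again a $\mc V$\=/functor, and its value at $X$ is the objectwise limit or colimit.

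\emph{Composition.} Take docile functors $F$ and $G$. Then $G\circ F$ preserves finite limits and filtered colimits because each factor does. For the coassembly map, I would use that coassembly maps compose: $\gamma_{GF}(Z,X)$ factors as
\[
GF\{Z,X\}\xto{G\gamma_F(Z,X)} G\{Z,FX\}\xto{\gamma_G(Z,FX)}\{Z,GFX\}.
\]
Both maps are invertible when $Z$ is compact. The only thing to check is this compatibility of coassembly with composition of enriched functors, which is a formal property of the enrichment.

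\emph{Finite limits.} Let $F=\lim_d F_d$ with each $F_d$ docile. Finite limits commute with finite limits, so $F$ preserves finite limits. For cotensors, $\{Z,-\}$ is a right adjoint and so preserves all limits. The coassembly map for $F$ is therefore the limit of the coassembly maps
\[
\lim_d F_d\{Z,X\}\to\lim_d\{Z,F_dX\}\cong\{Z,\lim_d F_dX\}.
\]
This requires naturality of $\gamma$ in the functor variable. Each component is invertible, so the limit is invertible. Filtered colimits commute with finite limits by Lemma~\ref{lem:filt-colim-lex}, since $\mc M$ is $\omega$\=/presentable. Hence $\lim_d F_d(\colim_i X_i)\cong\lim_d\colim_i F_dX_i\cong\colim_i\lim_d F_dX_i$, so $F$ preserves filtered colimits.

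\emph{Filtered colimits.} Let $F=\colim_i F_i$ with each $F_i$ docile. Filtered colimits commute with each other, so $F$ preserves filtered colimits. By Lemma~\ref{lem:filt-colim-lex}, filtered colimits commute with finite limits, so $F$ preserves finite limits. For compact $Z$, Lemma~\ref{lem:compact-cotensors-good} says that $\{Z,-\}$ preserves filtered colimits. The coassembly map of $F$ is then identified with
\[
\colim_i F_i\{Z,X\}\to\colim_i\{Z,F_iX\}\cong\{Z,\colim_i F_iX\},
\]
which is a colimit of isomorphisms and hence invertible.

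The main obstacle is the coherence: we must know that the coassembly map of an objectwise limit or colimit of $\mc V$\=/functors is the limit or colimit of the coassembly maps, and that coassembly is compatible with composition. I would derive both from naturality of $\gamma$ in the $\mc V$\=/functor variable, which comes from Heine's framework underlying Proposition~\ref{prop:Heine-rules}. The remaining steps are formal commutations of limits and colimits.
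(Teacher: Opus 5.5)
Your proposal is correct and follows essentially the paper's route: limits and colimits in $\Fun_{\mc{V}}(\mc{M},\mc{M})$ are computed objectwise by Proposition~\ref{prop:Heine-rules}, left exactness of filtered colimits comes from Lemma~\ref{lem:filt-colim-lex}, and compact cotensors commute with filtered colimits by Lemma~\ref{lem:compact-cotensors-good}. You go slightly further than the paper by writing out the finite-limit case it leaves to the reader, and by making explicit the naturality of the coassembly map in the functor variable, which the paper uses tacitly.
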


\begin{proof}
We need to show that a filtered colimit of docile functors is left exact and preserves compact cotensors. We start by showing left exactness.
Let \mc{F} be a filtered category and \mc{D} be a finite category and $G$ a $\mc{F}\times\mc{D}$\=/diagram of docile functors. We need to show that the canonical map
\[
\colim_{\mc{F}}\lim_{\mc{D}} G(f,d) \xto{\sigma} \lim_{\mc{D}}\colim_{\mc{F}} G(f,d)
\]
is an isomorphism in $\Fun_{\mc{V}}(\mc{M},\mc{M})$. By Proposition~\ref{prop:Heine-rules} the forgetful functor to $\Fun(\mc{M},\mc{M})$ preserves limits and colimits and sends $\sigma$ to the corresponding canonical map $\sigma'$ in $\Fun(\mc{M},\mc{M})$. Since colimits are computed objectwise in $\Fun(\mc{M},\mc{M})$, and filtered colimits are left exact by Lemma~\ref{lem:filt-colim-lex}, the map $\sigma'$ is invertible. But the functor forgetting the enrichment is conservative by Proposition~\ref{prop:Heine-rules}, it follows that $\sigma$ is invertible.

We proceed to show that filtered colimits commute with compact cotensors. The comparison map $\colim_{\mc{F}}[C,G(f)]\to[C,\colim_{\mc{F}}G(f)]$ for a compact $C$ in \mc{V} is an isomorphism if and only if for every compact object $K$ of \mc{V} the induced map
\[
[K,\colim_{\mc{F}}[C,G(f)]]\to[K,[C, \colim_{\mc{F}}G(f)]]
\]
is an isomorphism. The cotensor $[K,-]$ commutes with filtered colimits by Lemma~\ref{lem:compact-cotensors-good}.
Now the isomorphism can be checked directly.
We have shown that the subcategory of docile functors is closed under filtered colimits. 

Closure under finite limits is similar, but easier. It is left to the reader.
It is clear that the composition of docile functors is again docile. 
\end{proof}

\begin{construction}\label{def:PZ} 
We fix a map $z:Z\to \one$ in \mc{V}, where $\one$ is the monoidal unit and $Z$ is an object of \mc{V}. We obtain a functor 
\[
T:=[Z,-]:\mc{V}\to \mc{V}
\]
and a natural transformation $t:=[z,-]:\Id\to T$.
Now let $P:\mc{V} \to \mc{V}$ be the colimit of the sequence 
\[
P:=\colim \bigl( \Id\xto{t} T\xto{tT} T^2\xto{tT^2} T^3\xto{tT^3} T^4\to \hdots\ \bigr)
\]
in the category $\Fun_{\mc{V}}(\mc{V},\mc{V})$ and let $p:\Id \to P$ be the canonical natural transformation.
\end{construction}

\begin{lemma}\label{lem:Qisgood} 
Let \mc{V} be a confined symmetric monoidal category.
In the situation of Construction~\ref{def:PZ} the following holds:
\begin{enumerate}[label={\rm (\roman*)}]
   \item\label{lem:Qisgood:1} 
The functor $T$ is a \mc{V}\=/functor and $t$ is a \mc{V}\=/enriched natural transformation. 
   \item\label{lem:Qisgood:2}
The (underlying) functor $P$ is isomorphic to the objectwise colimit of the defining sequence above. It is a \mc{V}\=/functor and the natural transformation $p$ is \mc{V}\=/enriched. 
\end{enumerate}
If the object $Z$ is compact, then the endofunctors $T$ and $P$ are docile. 
\end{lemma}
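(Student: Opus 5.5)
The plan is to read everything off the results of Section~\ref{sec:tools} and the previous subsection, treating $\mc V$ as a confined \mc{V}\=/module over itself (Example~\ref{exam:module-over-itself}).

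\emph{Part \ref{lem:Qisgood:1}.} Since $T=[Z,-]$ is the cotensor $\{Z,-\}$ of $\mc V$ regarded as a closed module over itself, it is a \mc{V}\=/functor. The transformation $t=[z,-]$ is induced by the morphism $z:Z\to\one$ in \mc V, and $[\one,-]\cong\Id$. Lemma~\ref{fact:1} therefore says that $t:\{\one,-\}\to\{Z,-\}$ is a \mc{V}\=/enriched natural transformation of \mc{V}\=/functors.

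\emph{Part \ref{lem:Qisgood:2}.} First, the maps $tT^k:T^k\to T^{k+1}$ are \mc{V}\=/enriched. This holds because whiskering an enriched natural transformation with an enriched functor stays in the $(\infty,2)$\=/category of \mc{V}\=/categories, and composites of \mc{V}\=/functors are \mc{V}\=/functors. Hence the defining sequence lives in $\Fun_{\mc V}(\mc V,\mc V)$. Next, \mc V is cocomplete, so Proposition~\ref{prop:Heine-rules}\,\ref{prop:Heine-rules:3} applies: the colimit $P$ exists in $\Fun_{\mc V}(\mc V,\mc V)$, the forgetful functor $u$ preserves it, and the colimit is computed objectwise. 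Consequently the underlying functor of $P$ is the objectwise sequential colimit. The map $p$ is the colimit leg at stage $0$, which is a morphism in $\Fun_{\mc V}(\mc V,\mc V)$ and hence enriched.

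\emph{Docility when $Z$ is compact.} By Lemma~\ref{lem:compact-cotensors-good}, $T=[Z,-]$ is docile. The identity functor is obviously docile. By Proposition~\ref{prop:filt-colim-good}, docile functors are closed under composition, so every $T^k$ is docile. The same proposition gives closure under filtered colimits, and a sequential colimit is filtered, so $P$ is docile.

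The only genuinely delicate point is the enrichment of the whiskered transformations $tT^k$ and the identification of the colimit in $\Fun_{\mc V}$ with the objectwise one. I would handle this by invoking Heine's $(\infty,2)$\=/categorical framework exactly as in the proof of Proposition~\ref{prop:Heine-rules}. Everything else is a direct citation of earlier results.
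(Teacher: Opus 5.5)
Your proposal is correct and follows essentially the same route as the paper. You use Lemma~\ref{fact:1} for (i), Proposition~\ref{prop:Heine-rules} to see that the sequence of enriched transformations has a colimit in $\Fun_{\mc V}(\mc V,\mc V)$ computed objectwise (with $p$ enriched), and Lemma~\ref{lem:compact-cotensors-good} together with Proposition~\ref{prop:filt-colim-good}, viewing $\mc V$ as a confined module over itself, for the docility of the $T^n$ and of $P$.
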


\begin{proof}
Lemma~\ref{fact:1} gives~\ref{lem:Qisgood:1}. 

As a composition of \mc{V}\=/functors, for all $n$ the functor $T^n$ is a \mc{V}\=/functor and the natural transformations in the defining diagram of $P$ are \mc{V}\=/enriched. Hence, the defining colimit of $F$ is indeed a diagram in the category $\Fun_{\mc{V}}(\mc{V},\mc{V})$ of \mc{V}\=/enriched functors and \mc{V}\=/enriched natural transformations. 
Note that \mc{V} is cocomplete since it is a presentable category.
Thus the claim~\ref{lem:Qisgood:2} follows entirely from Proposition~\ref{prop:Heine-rules}. The colimit $P$ exists as a \mc{V}\=/functor, but can also be computed objectwise. The map $p$ is a colimit of \mc{V}\=/enriched natural transformations and is therefore itself \mc{V}\=/enriched.

Lemma~\ref{lem:compact-cotensors-good} says that $T=[Z,-]$ is a docile functor if $Z$ is compact. The remaining claims follows from Proposition~\ref{prop:filt-colim-good} and Example~\ref{exam:module-over-itself}. For all $n$ the functor $T^n$ is docile as a composition of docile functors.
The functor $P$ is docile since it is a sequential (filtered) colimit of docile functors in the category $\Fun_{\mc{V}}(\mc{V},\mc{V})$.
\end{proof}

\section{Tidy maps and their localizations}
\label{sec:Quahog}

\subsection{Tidy maps and the Weiss trick}

The following idea is due to Weiss~\cite[e.12]{Weiss98}. It leads us to the notion of a tidy map in Definition~\ref{BWlemmadef} and will be used in Lemma~\ref{rem-nat-trans25}.

\begin{proposition}[Weiss trick]
\label{prop:Weiss-trick} 
Let \mc{V} be a symmetric monoidal closed category, $R:\mc{V}\to \mc{V}$ a \mc{V}\=/functor and $r:\Id\to R$ a \mc{V}\=/enriched natural transformation. For a map $f:A\to B$ in \mc{V} the square 
\begin{equation} \label{Weisslemma}
\begin{tikzcd}
{[B,X]} \ar[rr,"{[B,rX]}"] \ar[d,"{[f,X]}"'] && {[B,RX]} \ar[d,"{[f, RX]}"]
 \\
{[A,X]} \ar[rr,"{[A,rX]}"']  && {[A,RX]}
\end{tikzcd}
\end{equation}
commutes for every object $X$ in \mc{V}. If the map $Rf:RA\to RB$ is invertible, then the square has a diagonal filler $\delta(X):[A,X]\to [B,RX]$ which is \mc{V}\=/enriched natural in $X$. 
\end{proposition}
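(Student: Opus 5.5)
Commutativity of the square is naturality: both composites equal the map $[f,rX]:[B,X]\to[A,RX]$, since $[-,-]$ is a bifunctor. The real content is the construction of $\delta$. The plan is to use the $\mc V$\=/enrichment of $R$ to produce a strength map
\[
\rho_{A,X}:[A,X]\longrightarrow [RA,RX],
\]
which is the adjoint of the enrichment structure map $[A,X]_{\mc V}\to[RA,RX]_{\mc V}$ (recall $\mc V$ is enriched over itself via the internal hom). This map is natural in $A$ (contravariantly and covariantly, since it comes from the enrichment) and $\mc V$\=/natural in $X$. I then define
\[
\delta(X):[A,X]\xrightarrow{\rho_{A,X}}[RA,RX]\xrightarrow{[(Rf)^{-1},RX]}[RB,RX]\xrightarrow{[rB,RX]}[B,RX].
\]

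Next I check the two triangles. The key identity is the $\mc V$\=/enriched naturality of $r$, expressed internally: for every $A$ the composite
\[
[A,X]\xrightarrow{\rho}[RA,RX]\xrightarrow{[rA,RX]}[A,RX]
\]
equals $[A,rX]$. This is the internal-hom form of the enriched naturality square for $r$, and I expect to extract it from Lemma~\ref{fact:1}-style manipulations, or directly from the definition of an enriched natural transformation.

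For the lower triangle, I use the naturality of $r$ at $f$, namely $Rf\circ rA=rB\circ f$. Then $[f,RX]\circ\delta$ equals $[rA,RX]\circ[(Rf)^{-1},RX]\circ\ldots$ after rewriting $[f,RX]\circ[rB,RX]=[rA,RX]\circ[Rf,RX]$. The factor $[Rf,RX]$ cancels against $[(Rf)^{-1},RX]$, leaving $[rA,RX]\circ\rho_{A,X}=[A,rX]$.

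For the upper triangle, $\delta\circ[f,X]$ uses naturality of $\rho$ in the first variable: $\rho_{A,X}\circ[f,X]=[Rf,RX]\circ\rho_{B,X}$. The inverse then cancels, and the triangle reduces to $[rB,RX]\circ\rho_{B,X}=[B,rX]$, which is the key identity again.

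Enriched naturality of $\delta$ in $X$ follows because each constituent is $\mc V$\=/natural in $X$: $\rho$ is by enrichment, and the other two maps are precompositions, which are enriched natural by Lemma~\ref{fact:1}.

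The main obstacle is the homotopy-coherent bookkeeping in the $\infty$\=/setting. One has to make sure that $\rho$ exists as a map natural in both variables and that the internal naturality identity for $r$ holds as an identification, not just on points. I would handle this by working in the $(\infty,2)$\=/category of $\mc V$\=/categories from Heine's framework (as in Proposition~\ref{prop:Heine-rules}), where enriched functors induce maps of internal homs functorially. If that proves too delicate, a fallback is to verify the triangles after applying $\map(K,-)$, using the adjunction $\map(K,[A,X])\simeq\map(K\otimes A,X)$. Only the existence of the fillers then needs this check, since naturality comes for free from the construction.
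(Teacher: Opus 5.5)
Your proposal is correct and is the paper's proof. Your $\rho_{A,X}$ is the paper's enrichment map $\theta(A,X)$; no adjunction is needed here, since the self-enrichment of $\mc V$ is the internal hom itself. The paper defines the same filler $\delta(X)=[rB,RX]\circ[(Rf)^{-1},RX]\circ\theta(A,X)$ by factoring the square through the invertible map $[Rf,RX]$, using exactly the two identities you verify, and your explicit check of both triangles spells out what the paper leaves implicit.
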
 

\begin{proof} 
Both $R$ and $r$ are \mc{V}\=/enriched. Therefore the square~\eqref{Weisslemma} commutes. Let $\theta$ denote the \mc{V}\=/enrichment of $R$. Then the square 
\begin{equation*} 
\begin{tikzcd}
{[B,X]} \ar[rr,"{\theta(B,X)}"] \ar[d,"{=}"'] && {[RB,RX]} \ar[d,"{[rB, RX]}"]
 \\
{[B,X]} \ar[rr,"{[B,rX]}"']  && {[B,RX]}
\end{tikzcd}
\end{equation*}
commutes as it expresses the fact that $r:\Id\to R$ is a \mc{V}\=/enriched natural transformation.
Thus, $[B,rX]=[rB,RX]\circ\theta (B,X)$, and similarly $[A,rX]=[rA,RX]\circ\theta (A,X)$.
Hence the square~\eqref{Weisslemma} can be factored in the following way:
\begin{equation*} 
\begin{tikzcd}
{[B,X]} \ar[r,"{\theta}"] \ar[d,"{[f,X]}"']  &{[RB,RX]}  \ar[d,"{[Rf,RX]}"']   \ar[rr,"{[rB,RX]}"]                  && {[B,RX]}\ar[d,"{[f,RX]}"] 
 \\
{[A,X]} \ar[r,"{\theta}"']  & {[RA,RX]}  \ar[rr,"{[rA,RX]}"'] \ar[u, "{[g,RX]}"', bend right] &&{[A,RX]}\,.
\end{tikzcd}
\end{equation*}
But the middle map $[Rf,RX]$ is invertible, since $Rf$ is invertible by hypothesis. It follows that the composed square has a diagonal filler.
More precisely, if $g:=(Rf)^{-1}$, then $[g,RX]\cong [Rf,RX]^{-1}$ and the map 
\[
\delta(X)=[rB,RX]\circ [g,RX]\circ \theta(A,X):[A,X]\to [RA,RX]\to [RB,RX] \to [B,RX]
\]
is a diagonal filler of the square~\eqref{Weisslemma}. Moreover, note that $g$, as an inverse of an enriched natural transformation, is itself \mc{V}\=/enriched. Hence the map $\delta(X)$ is a \mc{V}\=/natural transformation in $X$, since it is a composition of \mc{V}\=/natural transformations.
\end{proof}

\begin{definition}[Tidyness]
\label{BWlemmadef} 
Suppose that the symmetric monoidal category \mc{V} is confined.
We will say that a map $z:Z\to \one$ in \mc{V} is {\it tidy} if the object $Z$ is compact and the map $Pz:PZ\to P\one$ is invertible, where $P$ is the endofunctor of \mc{V} from Construction~\ref{def:PZ}.
\end{definition}

The invertibility condition is of course inspired by the Weiss trick in the previous lemma and it is going to be essential in the proofs of Lemma~\ref{rem-nat-trans25}, Lemma~\ref{BWlemma}, Lemma~\ref{lem:tensorlocalnew} and consequently in the proof of the main Theorem~\ref{thm:monoidal-localization}. The compactness of $Z$ is necessary to ensure that the functors $T$ and $P$ commute with filtered colimits. This will be used in the constructions of Diagrams~\eqref{thetPdiagram} and~\eqref{basic-conePTn}, in the proofs of Lemma~\ref{BWlemma}, Proposition~\ref{prop:pPandoO} and everything onwards.

\begin{example}
Not every map of the form $Z\to \one$, $Z$ compact, is tidy. Take the category of spaces as a cartesian closed category. It is confined. The map $S^0\to 1$ is not tidy: the associated cotensor is $T(X)=[S^0,X]=X\times X$, and for the resulting $P=\colim_n T^n$ we have $P(S^0)\neq 1=P(1)$. This $P$ is not idempotent and therefore not a localization. In fact, in $(\mc{S},\times,1)$ the only tidy maps are $0\to 1$ and $1=1$. This follows since \mc{S} admits only two left exact localizations: the one that inverts every map and the identity functor of \mc{S}.
\end{example}

We expect tidy maps to be rare. Because of the self-referential nature of their definition, it is usually difficult to decide whether a given map is tidy or not. However, see Propositions~\ref{r-star-is-On-hence-Pn-equivalence137} and~\ref{prop:Pn-zetan-WeissA}.

\begin{lemma}  \label{rem-nat-trans25} 
If \mc{V} is confined and $z:Z\to \one$ is tidy, the square 
\begin{equation*} 
  \begin{tikzcd}[bo column sep=large]
   \Id \ar[r, "p"] \ar[d, "t"']   & P\ar[d, "tP"]  \\
    T \ar[r, "Tp" ]   & TP 
   \end{tikzcd}
   \end{equation*}
of \mc{V}\=/natural transformations has a \mc{V}\=/enriched diagonal filler $T\to P$. 
\end{lemma}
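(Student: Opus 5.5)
We apply the Weiss trick, Proposition~\ref{prop:Weiss-trick}, with $R:=P$, $r:=p$ and $f:=z:Z\to\one$. The hypotheses hold. By Lemma~\ref{lem:Qisgood}, $P$ is a \mc{V}\=/functor and $p:\Id\to P$ is a \mc{V}\=/enriched natural transformation. By tidyness (Definition~\ref{BWlemmadef}), $Pz:PZ\to P\one$ is invertible. The proposition therefore produces a diagonal filler
\[
\delta(X):[Z,X]\longrightarrow[\one,PX]
\]
of the square~\eqref{Weisslemma}, and this filler is \mc{V}\=/enriched natural in $X$.

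Next we identify that square with the square in the statement. Since $\one$ is the monoidal unit, $[\one,-]\cong\Id$ canonically and \mc{V}\=/naturally. Under this identification:
\begin{itemize}
\item the top map $[\one,pX]$ becomes $p(X):X\to PX$;
\item the left map $[z,X]$ is $t(X):X\to TX$, by the definition of $t$ in Construction~\ref{def:PZ};
\item the right map $[z,PX]$ is $t(PX)=(tP)(X)$;
\item the bottom map $[Z,pX]$ is $T(pX)=(Tp)(X)$.
\end{itemize}
So the Weiss square for $X$ is exactly the $X$\=/component of the square $\Id\to P\to TP$, $\Id\to T\to TP$ in the statement. The transformation $\delta:T\to P$ then satisfies $\delta\circ t=p$ and $tP\circ\delta=Tp$, which is what a diagonal filler means.

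The one point needing care is enrichment. We need the identification $[\one,-]\cong\Id$ to be \mc{V}\=/enriched, and the whiskerings $tP$ and $Tp$ to be \mc{V}\=/enriched transformations. The first holds by the unit isomorphism of the closed structure. For the second, $t$ is \mc{V}\=/enriched by Lemma~\ref{fact:1}, $T$ is a \mc{V}\=/functor, and whiskering enriched transformations by enriched functors stays enriched. With these in place, $\delta$ is a \mc{V}\=/enriched transformation $T\to P$, and both triangles commute in $\Fun_{\mc V}(\mc V,\mc V)$.

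Compactness of $Z$ is not needed for this argument. Confinedness of \mc{V} is used only so that $P$ exists as a \mc{V}\=/functor through Lemma~\ref{lem:Qisgood}.
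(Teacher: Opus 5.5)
Your proposal is correct and follows the paper's proof: the paper applies the Weiss trick (Proposition~\ref{prop:Weiss-trick}) with $A\to B$ taken to be $z:Z\to\one$ and $R=P$. Your identification of the resulting square through $[\one,-]\cong\Id$, together with your checks on enrichment, spells out what the paper leaves implicit, and your remark that compactness of $Z$ is not used at this step is accurate.
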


\begin{proof} 
Apply Proposition~\ref{prop:Weiss-trick} to $(z:Z\to \one)=(A\to B)$ and $R=P$.
\end{proof}

\subsection{Constructing the localization}
\label{subsec:constructing-loc}

Let \mc{V} be a confined symmetric monoidal category. We are heading towards Theorem~\ref{thm:monoidal-localization} stating that $P$ from Construction~\ref{def:PZ} is a symmetric monoidal left exact localization of \mc{V} under the assumption that the map $Z\to \one$ is tidy. First, it is necessary to examine carefully the map $tP$ with the goal of showing that it is invertible in Lemma~\ref{BWlemma}.

By construction, we have a colimit cone
\begin{equation} \label{basic-coneP}
\begin{tikzcd}[bo column sep=large]
{\Id} \ar[r,"{t}"] \ar[dddrrr,"{p:=p_0}"'] & 
{T} \ar[r,"{tT}"] \ar[dddrr,"{p_{1}}"] & 
{T^{2}} \ar[r,"{tT^2}"] \ar[dddr,"{p_{2}}"] &
{T^{3}} \ar[r,"{tT^3}"] \ar[ddd,"{p_{3}}"] & 
{T^{4}}  \ar[dddl,"{p_{4}}"] \ar[r,"{tT^4}"] & \cdots   \\\
&&&& & \\
&&&& & \\
&&& P \,.
\end{tikzcd}
\end{equation}
with conical maps $p_n:T^n\to P$ that are $ \mc{V}$\=/natural transformations for every $n\geq 0$. 
In preparation for the proof of Theorem~\ref{thm:monoidal-localization} we postcompose Diagram~\eqref{basic-coneP} by $T$:
\begin{equation*}
\begin{tikzcd}[bo column sep=large]
{T} \ar[r,"{Tt}"] \ar[dddrrr,"{Tp=Tp_0}"'] & 
{T^2} \ar[r,"{TtT}"] \ar[dddrr,"{Tp_{1}}"] & 
{T^{3}} \ar[r,"{TtT^2}"]  \ar[dddr,"{Tp_{2}}"] &
{T^{4}} \ar[r,"{TtT^3}"] \ar[ddd,"{Tp_{3}}"] & 
{T^{5}}  \ar[dddl,"{Tp_{4}}"] \ar[r,"{TtT^4}"] & \cdots   \\\
&&&& & \\
&&&& & \\
&&& TP \,.
\end{tikzcd}
\end{equation*}
Putting the previous diagram back to back with Diagram~\eqref{basic-coneP} we obtain the following diagram that commutes by naturality of the map $t:\Id \to T$:
\begin{equation}
\begin{tikzcd} \label{thetPdiagram}
&&&&&  P \ar[ddddd,"{tP}"]  \\
&&&&&  \\
\Id \ar[d,"{t}"'] \ar[r,"{t}"] \ar[uurrrrr, "p_0", bend left] 
& T\ar[r,"{tT}"] \ar[d,"{tT}"'] \ar[uurrrr, "p_1", bend left] 
& T^2 \ar[d,"{tT^2}"'] \ar[r,"{tT^2}"]  \ar[uurrr, "p_2", bend left]  
& T^3 \ar[d,"{tT^3}"'] \ar[uurr, "p_3", bend left] \ar[r] 
& \cdots &
 \\
T \ar[ddrrrrr, "Tp=Tp_0"', bend right]  \ar[r,"{Tt}"'] 
& T^2 \ar[r,"{TtT}"'] \ar[ddrrrr, "Tp_1", bend right]  
& T^3 \ar[r,"{TtT^2}"']  \ar[ddrrr, "Tp_2", bend right]
& T^4 \ar[ddrr, "Tp_3", bend right] \ar[r]
& \cdots &   \\
&&&&&  \\
&&&&&  TP\,.
\end{tikzcd}
\end{equation}

\begin{lemma}  \label{lem:tP-as-colim-of-tTk} 
If $Z$ is compact in \mc{V}, then the natural transformation $tP:P\to TP$ is the filtered  colimit of the natural transformations $tT^n: T^n\to T^{n+1}$ of Diagram~\eqref{thetPdiagram}.
\end{lemma}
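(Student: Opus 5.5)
The plan is to identify the bottom row of Diagram~\eqref{thetPdiagram}, together with its cone $Tp_n:T^{n+1}\to TP$, as a colimit cone. The vertical maps $tT^n$ form a natural transformation between the top and bottom sequences. Once both rows are colimit cones, the induced map on colimits is $tP$, and that is the statement.

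\textbf{The bottom row is a colimit cone.} The bottom row is $T$ applied to the sequence $(T^n, tT^n)$, whose colimit cone is $p_n:T^n\to P$. By Lemma~\ref{lem:Qisgood}\ref{lem:Qisgood:2}, $P$ is computed objectwise as a sequential colimit. Since $Z$ is compact, $T=[Z,-]$ preserves filtered colimits by Lemma~\ref{lem:compact-cotensors-good}. Hence, for each object $X$, the cone $T p_n(X):T^{n+1}X\to TPX$ is a colimit cone. By Proposition~\ref{prop:Heine-rules}, colimits in $\Fun_{\mc V}(\mc V,\mc V)$ are computed objectwise and the forgetful functor is conservative. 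So the cone $Tp_n: T^{n+1}\to TP$ is a colimit cone of $\mc V$\=/functors, with transition maps $TtT^n$.

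\textbf{The diagram commutes.} I check the two kinds of squares in Diagram~\eqref{thetPdiagram}. The vertical maps $tT^n:T^n\to T^{n+1}$ give a map of sequences, since $TtT^n\circ tT^n = tT^{n+1}\circ tT^n$. This holds by naturality of $t:\Id\to T$ applied to the morphism $tT^n$, that is, the interchange law. The squares involving the cones also commute: $Tp_n\circ tT^n = tP\circ p_n$, again by naturality of $t$, now applied to $p_n:T^n\to P$.

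\textbf{Conclusion.} By the universal property of the top colimit $P=\colim T^n$, the map induced on colimits by the transformation $(tT^n)_n$ is the unique map $\phi:P\to TP$ with $\phi\circ p_n = Tp_n\circ tT^n$. The second naturality identity shows that $tP$ satisfies these equations. Therefore $tP=\colim_n tT^n$.

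\textbf{Main obstacle.} The only nontrivial input is that $T$ preserves the sequential colimit. This is exactly where compactness of $Z$ is needed. It also has to be carried out in the enriched functor category, which the objectwise computation of Proposition~\ref{prop:Heine-rules} handles. The remaining steps are coherence checks, which in the $\infty$\=/setting follow from the functoriality of whiskering a natural transformation.
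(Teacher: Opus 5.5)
Your proposal is correct and follows the same route as the paper: the bottom cone of Diagram~\eqref{thetPdiagram} is $T$ applied to the colimit cone defining $P$, so it is again a colimit cone because $T=[Z,-]$ preserves filtered colimits when $Z$ is compact, and $tP$ is therefore the induced map on colimits. You also spell out the naturality checks and the passage to the enriched functor category via Proposition~\ref{prop:Heine-rules}, which the paper leaves implicit.
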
 

\begin{proof}
The top cone of the diagram is a colimit cone by definition of $P$.
The bottom cone is obtained by applying the functor $T$ to the top cone.
But the functor $T$ preserves filtered  colimits by Lemma~\ref{lem:Qisgood}, since $Z$ is compact. It follows that the bottom cone is also a colimit cone. Hence the map $tP$ is the colimit
of the sequence of maps $tT^n: T^n\to T^{n+1}$.
\end{proof}

In preparation for the proof of Lemma~\ref{BWlemma} we precompose Diagram~\eqref{basic-coneP} with $T^n$ to obtain a new colimit diagram:
\begin{equation} 
\label{basic-conePTn}
\begin{tikzcd}[bo column sep=large]
&&& PT^n & & \\
&&&& & \\
&&&& & \\
&&&& & \\
{T^n} \ar[r,"{tT^n}"'] \ar[uuuurrr,"{pT^n=p_0T^n}"] & 
{T^{n+1}} \ar[r,"{tT^{n+1}}"'] \ar[uuuurr,"{p_{1}T^n}"'] & 
{T^{n+2}} \ar[r,"{tT^{n+2}}"']  \ar[uuuur,"{p_{2}T^n}"'] &
{T^{n+3}} \ar[r,"{tT^{n+3}}"'] \ar[uuuu,"{p_{3}T^n}"'] & 
{T^{n+4}}  \ar[uuuul,"{p_{4}T^{n+4}}"'] \ar[r,"{tT^{n+4}}"'] & \cdots   
\end{tikzcd}
\end{equation}
Observe that the bottom line of Diagram~\eqref{basic-conePTn} is a cofinal sequence of the top line of Diagram~\eqref{basic-coneP}. It follows that there is a unique isomorphism $\sigma_n: PT^n\to P$ such that 
\begin{equation} \label{colimshift}
\sigma_n p_kT^{n}=p_{n+k}\,,
\end{equation}
for every $k\ge 0$. This is depicted in the diagram:
\[
\begin{tikzcd} 
&&&&  PT^n \ar[dddd,"{\sigma_n}"]  \\
&&&&  \\
T^n \ar[ddrrrr, "  p_n "', bend right]  \ar[r,"{tT^n}"] \ar[uurrrr, "pT^n=p_0T^n", bend left] 
& T^{n+1}  \ar[r,"{tT^{n+1}}"] \ar[ddrrr, "p_{n+1} ", bend right] \ar[uurrr, "p_1T^n"', bend left]  
& T^{n+2}\ar[r,"{tT^{n+2}}"]  \ar[ddrr, "p_{n+2}", bend right] \ar[uurr, "  p_2T^n "', bend left]  & T^{n+3} \ar[ddr, "p_{n+3}", bend right]\ar[uur, " p_3T^n "', bend left] \cdots &  \\
&&&&  \\
&&&&  P\,.
\end{tikzcd}
\]
Note that the map $\sigma_n$ goes "backwards" and needed to be constructed carefully. It is a crucial ingredient in the next proof. 

\begin{lemma} \label{BWlemma} 
Let \mc{V} be confined and $z:Z\to \one$ tidy. Then the \mc{V}\=/natural transformation $tP: P\to TP$ is invertible.
\end{lemma}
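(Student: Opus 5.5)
By Lemma~\ref{lem:tP-as-colim-of-tTk}, the transformation $tP:P\to TP$ is the colimit of the ladder of Diagram~\eqref{thetPdiagram}, whose vertical maps are $tT^n:T^n\to T^{n+1}$. The plan is to apply Lemma~\ref{Whiteheadcolimit17} objectwise. Colimits in $\Fun_{\mc V}(\mc V,\mc V)$ are computed objectwise by Proposition~\ref{prop:Heine-rules}, and the forgetful functor is conservative, so it suffices to show that $tP(X)$ is invertible for each $X$. The category $\mc V$ is $\omega$\=/presentable. It is therefore enough to produce, for every $n\geq 0$, a diagonal filler of the square
\[
\begin{tikzcd}
T^n \ar[r,"p_n"] \ar[d,"tT^n"'] & P \ar[d,"tP"] \\
T^{n+1} \ar[r,"Tp_n"'] & TP\,.
\end{tikzcd}
\]
Here the bottom conical map of the lower cone is $Tp_n$.

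For $n=0$ this square is exactly the square of Lemma~\ref{rem-nat-trans25}, and that lemma gives a $\mc V$\=/enriched filler $\delta:T\to P$ coming from the Weiss trick and tidiness. For general $n$, I will whisker this square on the right by $T^n$. That yields the square with vertices $T^n$, $PT^n$, $T^{n+1}$, $TPT^n$, with maps $pT^n$, $tT^n$, $tPT^n$ and $TpT^n$, and it has the filler $\delta T^n:T^{n+1}\to PT^n$.

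Next I will transport this square along the isomorphism $\sigma_n:PT^n\to P$. By \eqref{colimshift} with $k=0$ we have $\sigma_n\circ pT^n=p_n$, so the top edge becomes $p_n$. On the right edge, I will use naturality of $t$ with respect to $\sigma_n$, namely $tP\circ\sigma_n=T\sigma_n\circ tPT^n$, so the right edge becomes $tP$ once the target $TPT^n$ is identified with $TP$ via $T\sigma_n$. On the bottom edge, $T\sigma_n\circ TpT^n=T(\sigma_n\circ p_0T^n)=Tp_n$. Hence $\sigma_n\circ\delta T^n:T^{n+1}\to P$ is a filler of the required square: the upper triangle commutes because $\sigma_n\circ\delta T^n\circ tT^n=\sigma_n\circ pT^n=p_n$, and the lower triangle because $tP\circ\sigma_n\circ\delta T^n=T\sigma_n\circ tPT^n\circ\delta T^n=T\sigma_n\circ TpT^n=Tp_n$.

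I expect the main obstacle to be bookkeeping rather than any new idea. One must check that the whiskered square really matches the ladder maps of Diagram~\eqref{thetPdiagram}, that is, that the lower cone maps are $Tp_n$ and not some shifted variant. One must also check that $\sigma_n$, being defined via the cofinal shift, is compatible with $T$ applied to the cone. Evaluating all of this at an object $X$ then gives the fillers objectwise, and Lemma~\ref{Whiteheadcolimit17} concludes that each $tP(X)$ is invertible. Enrichedness of the inverse then follows from conservativity in Proposition~\ref{prop:Heine-rules}.
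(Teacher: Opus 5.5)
Your proof is correct and follows the same route as the paper's proof. You factor the required square through the $T^n$\=/whiskered square of Lemma~\ref{rem-nat-trans25} and the shift isomorphism $\sigma_n$, then conclude with Lemma~\ref{Whiteheadcolimit17}; your explicit objectwise reduction via Proposition~\ref{prop:Heine-rules} adds a bit of care that the paper leaves implicit.
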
 

\begin{proof}
By Lemma~\ref{lem:tP-as-colim-of-tTk}, this map is the filtered colimit of the Diagram~\eqref{thetPdiagram} of maps $tT^n: T^n\to T^{n+1}$. 
By Lemma~\ref{Whiteheadcolimit17}, we can prove that the map $tP:P\to TP$ is invertible by showing that the square
\begin{equation*} 
  \begin{tikzcd}[bo column sep=large]
   T^n \ar[r, "p_n"] \ar[d, "tT^n"']   & P\ar[d, "tP"] \\
    T^{n+1} \ar[r, "Tp_n" ]   & TP
  \end{tikzcd}
\end{equation*}
has a diagonal filler for every $n\geq 0$.
If $\sigma_n:PT^n\to P$ is the isomorphism defined in Equation~\eqref{colimshift}, then we have $p_n=\sigma_n (pT^n)$. Hence the square above is the composite of the two commutative squares:
\begin{equation*}
  \begin{tikzcd}[bo column sep=large]
   T^n \ar[r, "{pT^n}"] \ar[d, "{tT^n}"']   & 
   PT^n\ar[d, "{tPT^n}"] \ar[r,"{\sigma_n}"] & P \ar[d,"{tP}"] \\
       T^{n+1} \ar[r, "{TpT^n}" ]   & TPT^n \ar[r,"{T\sigma_n}"] & TP\,.
  \end{tikzcd}
\end{equation*}
But the left hand square of this diagram is obtained by precomposing the square in Lemma~\ref{rem-nat-trans25} with $T^n$. Hence the left hand square has a diagonal filler, since the square in Lemma~\ref{rem-nat-trans25} has a diagonal filler.
It follows that the composite square has a diagonal filler for every $n\geq 0$ proving that $tP:P\to TP$ is invertible. 
\end{proof}

\begin{lemma}\label{lem:com570}
Suppose the \mc{V}\=/functor $F:\mc{V}\to \mc{V}$ preserves compact cotensors. 
Then we have a commutative diagram 
\begin{equation*} 
\begin{tikzcd}
& F \ar[dl,"Ft"']\ar[dr,"tF"] \\
FT  \ar[rr,"\cong"',"\gamma"] && TF  
\end{tikzcd}
\end{equation*}
of \mc{V}\=/natural transformations, where $\gamma=\gamma(Z,X):F[Z,X] \to  [Z,FX]$ is the coassembly map of the functor $F$. 
\end{lemma}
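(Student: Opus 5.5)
The plan is to unwind the definition of the coassembly map and reduce the claim to the naturality of the \mc{V}\=/enrichment of $F$ with respect to the map $z:Z\to\one$. Recall that for a \mc{V}\=/functor $F$ with enrichment $\theta(A,B):[A,B]\to[FA,FB]$, the coassembly map $\gamma(Z,X):F[Z,X]\to[Z,FX]$ is the adjoint of a composite $Z\otimes F[Z,X]\to F(Z\otimes[Z,X])\to FX$. Equivalently, using the tensor-hom adjunction of the closed structure, it is obtained from the enriched action of $F$ on the evaluation $Z\otimes[Z,X]\to X$. I would take the following description: $\gamma(Z,X)$ is adjoint to the map $Z\to[F[Z,X],FX]$ given by the composite of $Z\to[[Z,X],X]$, the coevaluation-type map adjoint to evaluation, with $\theta([Z,X],X)$. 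Naturality of $\gamma$ in the variable $Z$, namely that $\gamma(-,X)$ is a \mc{V}\=/natural transformation $F[-,X]\Rightarrow[-,FX]$ of functors $\mc V^{\op}\to\mc V$, is then a formal consequence of the naturality of the map $Z\to[[Z,X],X]$ in $Z$.

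First I apply this naturality of $\gamma(-,X)$ to the morphism $z:Z\to\one$. This gives a commutative square
\[
[z,FX]\circ\gamma(\one,X)=\gamma(Z,X)\circ F[z,X]
\]
as maps $F[\one,X]\to[Z,FX]$. Second I identify the coassembly map at the unit: under $[\one,X]\cong X$ and $[\one,FX]\cong FX$, $\gamma(\one,X)$ is the identity. This is the unit axiom of the enrichment, which says that $\theta$ sends the unit $\one\to[X,X]$ to the unit $\one\to[FX,FX]$. Under these identifications $F[z,X]=F(tX)$ and $[z,FX]=t(FX)$. The square therefore reads $tF=\gamma\circ Ft$, which is the asserted triangle. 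The map $\gamma=\gamma(Z,-)$ is invertible because $Z$ is compact. Here I am reading the lemma in the context of Construction~\ref{def:PZ}, where $Z$ is assumed compact, so the hypothesis on $F$ applies. All three arrows are \mc{V}\=/natural in $X$: $tF$ and $Ft$ are whiskerings of the \mc{V}\=/natural $t$ by a \mc{V}\=/functor, using Lemma~\ref{fact:1}, and the coassembly map is \mc{V}\=/natural since it is built from the enrichment and the adjunction data.

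The main obstacle is making the naturality of $\gamma$ in the cotensoring variable $Z$ rigorous in the $\infty$\=/categorical enriched setting. I would get it by invoking the correspondence between enriched left and right adjoints from \cite[Proposition 2.116]{Heine:bi-enriched}. Specifically, $\gamma(Z,-)$ is the mate of the assembly map $Z\otimes F(-)\to F(Z\otimes -)$ under the adjunctions $Z\otimes-\dashv[Z,-]$. The assembly map is the lax \mc{V}\=/linear structure of $F$ and is natural in $Z$ as part of the datum of a \mc{V}\=/functor. Taking mates is functorial in the morphism $z$, as in the proof of Lemma~\ref{fact:1}, so naturality transfers. The unit compatibility $\gamma(\one,-)\simeq\id$ comes the same way from the unitality of the assembly map.
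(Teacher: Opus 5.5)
Your proposal is correct and follows the paper's proof: the triangle is exactly the naturality square of the coassembly map $\gamma(-,X)$ with respect to $z:Z\to\one$, using $\gamma(\one,X)=\id_{FX}$, $F[z,X]=F(tX)$, $[z,FX]=t(FX)$, and the invertibility of $\gamma(Z,X)$ coming from the compactness of $Z$. Your additional description of $\gamma$ as the mate of the assembly map justifies the naturality in $Z$ and the unit identification, both of which the paper simply asserts.
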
  

\begin{proof} 
From the map $z:Z\to \one$ we obtain a commutative square of \mc{V}\=/natural transformations in the variable $X$ in \mc{V}:
\begin{equation*} 
\begin{tikzcd}
{F[\one,X]}  \ar[d,"{F[z,X]}"']  \ar[rrr,"{\gamma(\one,X)=\id_{FX}}"]
&&&  {[\one,FX]}  \ar[d,"{[z,FX]}"]  \\
{F[Z,X]}  \ar[rrr,"{\gamma(Z,X)}"] &&& {[Z,FX]}\,.
\end{tikzcd}
\end{equation*}    
The coassembly map $\gamma(Z,X):F[Z,X] \to  [Z,FX]$ is invertible, since the functor $F$ preserves compact cotensors and $Z$ is compact.
\end{proof}

\begin{proposition}\label{prop:pPandoO}
If \mc{V} is confined and $z:Z\to \one$ is tidy, 
then the natural transformations 
\[
p P:P\to P^2\ \qquad{\rm and}\qquad\ Pp:P\to P^2
\]
invertible.
\end{proposition}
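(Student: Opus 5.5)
We prove both statements by writing $pP$ and $Pp$ as sequential colimits of maps already known to be invertible. The main inputs are Lemma~\ref{BWlemma}, which says that $tP$ is invertible, and Lemma~\ref{lem:com570} applied to the docile functor $P$ (Lemma~\ref{lem:Qisgood}).

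\emph{The map $pP$.} By Lemma~\ref{lem:Qisgood}, $P$ is computed objectwise as a sequential colimit. Hence $pP:P\to P^2$ is the canonical map from the first term of the sequence
\[
P\xto{tP} TP\xto{tTP} T^2P\xto{tT^2P}\cdots
\]
into its colimit. The transition maps $tT^kP$ are equal to $T^k(tP)$, because $tT^kP$ and $T^k(tP)$ coincide once $T^k$ commutes with $T$ on $P$. I would justify this by naturality: $tT^k P = T^k tP$ up to the swap $tT\simeq Tt$ on $T$-local objects. I expect that identification to be the delicate point. A cleaner route is to use Lemma~\ref{BWlemma} directly together with the factorization $tT^{k}P = (tT^{k-1}(TP))$ and an induction on $k$. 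The induction shows that every $T^kP$ satisfies ``$t$ is invertible at $T^kP$'': since $tP$ is invertible, $TP\cong P$ compatibly with $t$, so $t_{TP}$ is identified with $T(t_P)$ via Lemma~\ref{lem:com570} with $F=T$. Concretely, Lemma~\ref{lem:com570} with $F=T$ gives $Tt\cong tT$ through the coassembly isomorphism $\gamma$, and for $T=[Z,-]$ this $\gamma$ is just the symmetry $[Z,[Z,-]]\cong[Z\otimes Z,-]\cong[Z,[Z,-]]$. With this in hand, $t_{T^kP}=T^k(t_P)$ up to isomorphism, and every transition map is invertible. A sequential colimit of isomorphisms has invertible leg from the first term, so $pP$ is invertible.

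\emph{The map $Pp$.} Apply Lemma~\ref{lem:com570} with $F=P$, which preserves compact cotensors by Lemma~\ref{lem:Qisgood}. This gives $Pt\cong tP$ through the coassembly isomorphism $\gamma:PT\to TP$. Iterating, $PT^k\cong T^kP$ compatibly with the maps $PtT^k$ and $tT^kP$. Since $P$ preserves sequential colimits, $Pp:P\to P^2$ is the colimit leg of the sequence
\[
P\xto{Pt}PT\xto{PtT}PT^2\to\cdots.
\]
Under the isomorphisms above, this sequence becomes the sequence $tT^kP$ from the first part, which consists of isomorphisms. Hence $Pp$ is invertible.

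An alternative for $Pp$ avoids the commutation bookkeeping. Since $tP$ is invertible, $Tp\circ t= tP\circ p$ shows that the diagonal filler of Lemma~\ref{rem-nat-trans25} is forced to be $(tP)^{-1}Tp$. One then checks that $P$ applied to $t$ is inverted by comparing colimits via Lemma~\ref{Whiteheadcolimit17}.

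The main obstacle is the coherent identification of $tT^kP$ with $T^k(tP)$, or equivalently of $PT^k$ with $T^kP$. This requires that the coassembly isomorphisms are compatible with the $t$'s, which I would handle with Lemma~\ref{lem:com570} and induction on $k$.
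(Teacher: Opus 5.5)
Your proposal is correct and follows the paper's proof. For $pP$ you identify $tT^kP$ with $T^k(tP)$ up to the coassembly/symmetry isomorphism and invoke Lemma~\ref{BWlemma}; for $Pp$ you use Lemma~\ref{lem:com570} with $F=P$ to see that $Pt$ is invertible. The extra identification of $PT^k$ with $T^kP$ is unnecessary: invertibility of $Pt$ already gives invertibility of $PtT^k$ objectwise, and that is exactly how the paper concludes.
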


\begin{proof} 
Let us show that the natural transformation $p P:P\to P^2$ is invertible.
If we precompose the defning colimit of $P$ with $P$, we obtain a colimit
\begin{equation*} 
\colim\left(
\begin{tikzcd}[bo column sep=large]
{P} \ar[r,"{tP}"] & 
{TP} \ar[r,"{tTP}"] & 
{T^{2}P} \ar[r,"{tT^2P}"]  &
{T^{3}P} \ar[r,"{tT^3P}"] & \cdots  
\end{tikzcd}
\right) = P^2\,
\end{equation*}
and the transfinite composition of the maps in the colimit is $pP:P\to P^2$.
The map $tT^n$ is isomorphic to the map $T^nt$, since the symmetric monoidal structure on \mc{V} yields (canonical) isomorphisms between the maps $z\otimes Z^{\otimes n}$ and $Z^{\otimes n}\otimes z$.
Hence the map $tT^nP$ is isomorphic to $T^ntP$, which is invertible, since the map $tP$ is invertible by Lemma~\ref{BWlemma}.
So $pP$ is invertible, since the transfinite composition of isomorphisms is an isomorphism. 

Let us show that the map $Pp:P\to P^2$ is invertible.
The functor $P$ preserves filtered colimits by Lemma~\ref{lem:Qisgood}.
If we postcompose the defining colimit of $P$ with $P$, we therefore obtain a colimit
\begin{equation*} 
\colim\left(
\begin{tikzcd}[bo column sep=large]
{P} \ar[r,"{Pt}"] & 
{PT} \ar[r,"{PtT}"] & 
{PT^{2}} \ar[r,"{PtT^2}"]  &
{PT^{3}} \ar[r,"{PtT^3}"] & \cdots
\end{tikzcd}
\right) = P^2\,.
\end{equation*}

The map $tP:P\to TP$ is invertible by Lemma~\ref{BWlemma}, since the map $z:Z\to \one$ is tidy. 
It follows that the map $Pt:P\to PT$ is invertible by Lemma~\ref{lem:com570}, since $P$ is docile.
Hence the map $PtT^{n}:PT^n\to PT^{n+1}$  is invertible for every $n\geq 0$.
So $P^2$ is the colimit of an increasing sequence of isomorphisms and the conical map $Pp$ is invertible.
\end{proof}

\begin{definition}\label{def:STPQ-closed}
Let $L:\mc{C}\to\mc{C}$ be an endofunctor of a category \mc{C} with a coaugmentation $\ell:\Id\to L$. 
We say that an object $X$ in $\mc{C}$ is $L$\=/{\it closed} if the map $\ell X:X\to LX$ is invertible.
We say that a map $f:X\to Y$ in \mc{C} is {\it $L$\=/\it closed} if the following naturality square is cartesian:
\[
\begin{tikzcd}
    X \ar[r, "\ell X"]\ar[d, "f"'] & LX \ar[d, "Lf"] \\
    Y \ar[r, "\ell Y"] & LY\,.
\end{tikzcd}
\]
Let $\mc{C}^L$ denote the full subcategory of \mc{C} formed by $L$\=/closed objects.
\end{definition}

\begin{lemma}\label{lem:Tn-local-Pn-local-mapsA}
Let \mc{V} be confined and $z:Z\to \one$ tidy. Then we have:
\begin{enumerate}[label={\rm (\roman*)}]
  \item\label{lem:Tn-local-Pn-local-mapsA:1}
A map $f:X\to Y$ in $ \mc{V}$  is $T$\=/closed if and only if it is $P$\=/closed.
  \item\label{lem:Tn-local-Pn-local-mapsA:2}
An object in $ \mc{V}$ is $T$\=/closed if and only if it is $P$\=/closed.
  \item \label{lem:Tn-local-Pn-local-mapsA:3}
The object $PX$ is $P$\=/closed for every $X$ in \mc{V}. 
\end{enumerate}
\end{lemma}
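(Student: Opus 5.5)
We prove (i) first; (ii) is the case of maps to the terminal object, or is handled by the same argument directly, and (iii) follows from (ii).

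For (i), suppose first that $f$ is $T$\=/closed, so the naturality square for $t$ at $f$ is cartesian. Applying $T^n$ and using that $T=[Z,-]$ preserves all limits, the square for $tT^n$ at $f$,
\[
T^nX\to T^{n+1}X,\qquad T^nY\to T^{n+1}Y,
\]
is cartesian. Here we use the identification $tT^n\cong T^nt$ from the proof of Proposition~\ref{prop:pPandoO}. Pasting these squares, the square from $f$ to $T^nf$ is cartesian for every $n$. The square for $p$ at $f$ is the sequential, hence filtered, colimit of these squares. Filtered colimits commute with pullbacks in $\mc V$ by Lemma~\ref{lem:filt-colim-lex}, so it is cartesian, i.e.\ $f$ is $P$\=/closed.

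Conversely, suppose $f$ is $P$\=/closed. Use the commutative square of Lemma~\ref{rem-nat-trans25}, $tP\circ p = Tp\circ t$, viewed as a map of squares. Apply it to $f$ and consider the naturality cube relating $t$ at $f$, $p$ at $f$, $Tp$ at $f$ and $tP$ at $f$. The face for $tP$ at $f$ is cartesian because $tP$ is invertible by Lemma~\ref{BWlemma}. The face for $p$ is cartesian by hypothesis, and the face for $Tp$ is cartesian because $T$ preserves pullbacks. By pasting, the composite $X\to TPX$ over $Y\to TPY$ is cartesian both when factored through $PX$ and when factored through $TX$. Since the $Tp$ square is cartesian, the pasting lemma (right square cartesian and composite cartesian) gives that the $t$ square is cartesian. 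So $f$ is $T$\=/closed.

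For (ii), apply (i) to $X\to 1$. Since $T1=1$ and $P1=1$ (as $T$ preserves the terminal object), the square is cartesian exactly when $tX$, respectively $pX$, is invertible. Alternatively, argue directly. If $tX$ is invertible, then every $tT^nX\cong T^n tX$ is invertible, so $pX$ is invertible. If $pX$ is invertible, then from $tP\circ p=Tp\circ t$ with $tP$ and $pX$ invertible we get that $tX$ has a left inverse. In fact $tX$ is a retract of $tPX$ via naturality, and a retract of an isomorphism is an isomorphism.

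For (iii), Proposition~\ref{prop:pPandoO} gives that $pP$ is invertible, so $p(PX)$ is invertible and $PX$ is $P$\=/closed.

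The main obstacle is the converse direction of (i): making sure the cube pasting is legitimate, i.e.\ that all the relevant faces commute coherently as \mc{V}\=/natural transformations. The plan is to rely only on the commutative square of Lemma~\ref{rem-nat-trans25} and the standard pasting lemma for pullbacks.
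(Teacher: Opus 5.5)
Your proof is correct and follows essentially the same route as the paper. Your $P$\=/closed $\Rightarrow$ $T$\=/closed direction is the paper's cube argument, with the $tP$ face cartesian by Lemma~\ref{BWlemma} and the $Tp$ face cartesian because $T$ preserves limits. Your $T$\=/closed $\Rightarrow$ $P$\=/closed direction is the paper's filtered-colimit argument; there you identify $tT^n$ with $T^nt$ via the symmetry instead of via the coassembly isomorphism of Lemma~\ref{lem:com570}, which amounts to the same thing. Deducing (iii) from Proposition~\ref{prop:pPandoO} is the alternative the paper itself mentions.
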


\begin{proof} 
To prove~\ref{lem:Tn-local-Pn-local-mapsA:1}, let $f:X\to Y$ be $P$\=/closed. Consider the commutative cube:
  \[
  \begin{tikzcd}[bo column sep=large]
    & PX \ar[rr, "tPX"] \ar[dd, "Pf"', near end] & & TPX \ar[dd, "TPf"] \\
    X \ar[rr, crossing over,"tX", near end] \ar[dd,"f"'] \ar[ur, "pX", near end] & & TX \ar[ur, "TpX"', near start] & \\
    & PY \ar[rr, "tPY", near start] & & TPY \\
    Y \ar[rr, "tY"] \ar[ur, "pY", near end] & & T
    Y \ar[ur, "TpY"', near start]\ar[from=uu, crossing over] & 
  \end{tikzcd}
  \]
The left hand face of the cube is cartesian, since $f$ is $P$\=/closed. The right hand face is also cartesian, since the functor $T$ preserves limits. But the horizontal maps of the back face, $tPX$ and $tPY$, are isomorphism by Lemma~\ref{BWlemma}. It follows that the front face is cartesian. Thus, $f$ is $T$\=/closed. 

Conversely, let $f:X\to Y$ be $T$\=/closed. We need to show that the following square is cartesian:
\begin{equation*} 
\begin{tikzcd} 
   X \ar[r, "pX"] \ar[d, "f"']   & PX \ar[d, "Pf"] \\
   Y \ar[r, "pY" ]   & PY\,.
   \end{tikzcd}
   \end{equation*}
But this square is the infinite composition of the sequence:
\begin{equation*}
\begin{tikzcd}
X \ar[d,"{f}"'] \ar[r,"{tX}"] &TX \ar[r,"{tTX}"] \ar[d,"{Tf}"'] &T^2X \ar[d,"{T^2f}"'] \ar[r,"{tT^2X}"]  & T^3X
\ar[d,"{T^3f}"'] \ar[r]  & \cdots
 \\
Y  \ar[r,"{tY}"'] & TY  \ar[r,"{tTY}"'] & T^2Y \ar[r,"{tT^2Y}"'] & T^3Y \ar[r]  & \cdots
\end{tikzcd}
\end{equation*}
It suffices to show that each square in the sequence is cartesian, since filtered colimits preserves finite limits in \mc{V}.
An individual square looks like this:
\begin{equation}\label{eq:square-n} 
  \begin{tikzcd} 
     T^{n} X \ar[rr, "{tT^{n} X}"] \ar[d, "{T^{n}f}"] && T(T^{n}X) \ar[d, "{T(T^{n}f)}"] \\
     T^{n}T \ar[rr, "{tT^{n}Y}"] && T(T^{n}Y) 
  \end{tikzcd} 
\end{equation} 
The case $n=0$ is clear, since the map $f$ is $T$\=/closed by assumption.
In general the square above is the composition: 
\begin{equation*}
  \begin{tikzcd} 
    T^{n}X \ar[rr, "{T^{n}tX}"] \ar[d, "{T^{n}f}"] && T^{n}(TX) \ar[d, "{T^{n}(Tf)}"]  \ar[rr, "\gamma "] &&  T(T^{n}X)\ar[d, "{T(T^{n}f)}"] \\   
    T^{n}Y \ar[rr, "{T^{n}tY}"] && T^{n}(TY)  \ar[rr, "\gamma "] &&  T(T^{n}Y)\,,
  \end{tikzcd}
\end{equation*}
where $\gamma$ is the coassembly isomorphism from Lemma~\ref{lem:com570} with $F:=T^n$. The left hand square is the image by $T^n$ of the case $n=0$ considered before. 
So the left hand square is cartesian, since the functor $T^n$ preserves limits.  
The right hand square is also cartesian, since its horizontal maps are invertible.
It follows that for all $n\ge 0$ the square~\eqref{eq:square-n} is cartesian. 
Hence the map $f$ is $P$\=/closed.

Statement~\ref{lem:Tn-local-Pn-local-mapsA:2} is a special case of~\ref{lem:Tn-local-Pn-local-mapsA:1}.

For~\ref{lem:Tn-local-Pn-local-mapsA:3} observe that $PX$ is $T$\=/closed by Lemma~\ref{BWlemma}. Thus, $PX$ is $P$\=/closed by~\ref{lem:Tn-local-Pn-local-mapsA:2}. This can also be deduced from Proposition~\ref{prop:pPandoO} as the condition is equivalent to the fact that $pP$ is an isomorphism.
\end{proof}

\begin{definition}
We say that a map $\rho:X\to Y$ is a {\it \mc{V}\=/reflection into the full subcategory of $P$\=/closed objects $\mc{V}^P$} if $Y$ belongs to $\mc{V}^P$ and the map $[\rho,W]:[Y,W]\to [X,W]$ is invertible for every $W$ in $ \mc{V}^P$.
\end{definition}

\begin{lemma}\label{lem:tensorlocalnew} 
Suppose that \mc{V} is confined and that $z:Z\to \one$ is tidy. Then the map $pX:X \to PX$ is a \mc{V}\=/reflection into $\mc{V}^P$ for every object $X$ in \mc{V}.
\end{lemma}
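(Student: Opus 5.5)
We need to show two things: that $PX$ lies in $\mc{V}^P$, and that $[pX,W]:[PX,W]\to[X,W]$ is invertible for every $P$\=/closed $W$. The first is Lemma~\ref{lem:Tn-local-Pn-local-mapsA}\ref{lem:Tn-local-Pn-local-mapsA:3}. For the second, we first reduce to $T$. By Lemma~\ref{lem:Tn-local-Pn-local-mapsA}\ref{lem:Tn-local-Pn-local-mapsA:2}, $W$ is $T$\=/closed, so $tW:W\to[Z,W]$ is invertible. We will show that each map $[tT^nX,W]:[T^{n+1}X,W]\to[T^nX,W]$ is invertible. Then $[PX,W]=\lim_n[T^nX,W]$ is a limit of a tower of isomorphisms, since $[-,W]$ sends colimits to limits and $P$ is computed objectwise (Lemma~\ref{lem:Qisgood}). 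Its projection to the $0$\=/th term is $[pX,W]$, which is therefore invertible.

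For the key step, since $T^n$ is built from $Z^{\otimes n}$, we have $T^nX=[Z^{\otimes n},X]$ and $tT^n\cong T^nt$ up to the symmetry isomorphism, as in the proof of Proposition~\ref{prop:pPandoO}. It therefore suffices to prove that $[tY,W]:[TY,W]\to[Y,W]$ is invertible for every $Y$ and every $T$\=/closed $W$, and then to apply this with $Y=T^nX$. To do this, apply the Weiss trick, Proposition~\ref{prop:Weiss-trick}, with $R=P$ and $f=z$, exactly as in Lemma~\ref{rem-nat-trans25}. This gives a $\mc{V}$\=/natural filler $\delta:T\to P$ with $\delta\circ t=p$ and $tP\circ\delta=Tp$. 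Now consider the composite
\[
[TY,W]\xto{[tY,W]}[Y,W]\cong[PY,W]\ ?
\]
A cleaner route is the following. Because $W\cong PW$ and $PW\to TPW$ is invertible (Lemma~\ref{BWlemma}), any map $g:Y\to W$ induces a map $Pg\circ\delta Y:TY\to PY\to PW\cong W$. Using $\delta t=p$, precomposing this map with $tY$ gives back $g$ up to the identification $pW$. This produces a section of $[tY,W]$, and it can be made $\mc{V}$\=/enriched since $\delta$ and $P$ are enriched. For the other composite, we must show that a map $h:TY\to W$ agrees with $Ph\circ P(tY)\circ\ldots$. Here we use $tP\circ\delta=Tp$ together with the naturality of $t$ and the invertibility of $tW$ to identify $h$ with the map reconstructed from $h\circ tY$.

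Alternatively, and perhaps more robustly, we can argue directly that $[tY,W]\cong[Y,[tZ\text{-stuff}]]$ via adjunction: $[TY,W]\to[Y,W]$ corresponds under the internal hom to maps involving $[Z,-]$. We would try the Weiss-trick route first.

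The main obstacle is the second half of the key step: checking that the section built from $\delta$ is a two-sided inverse at the level of internal homs rather than merely on mapping spaces. This is where the enriched naturality of $\delta$ (Proposition~\ref{prop:Weiss-trick}) and the $\mc{V}$\=/naturality of $p$ are used. If that gets messy, the fallback is to prove invertibility of $\map(K,[pX,W])=\map(K\otimes X\to K\otimes PX,W)$ for all compact $K$, using Lemma~\ref{lem:detectinvertible}. This reduces the problem to unenriched mapping spaces, at the cost of needing $K\otimes PX\to P(K\otimes X)$ compatibilities.
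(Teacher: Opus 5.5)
There is a genuine gap, and it is where you flag it. Your reduction is fine: $[PX,W]=\lim_n[T^nX,W]$, so it suffices that $[t_Y,W]$ is invertible for $Y=T^nX$. Your map $s(g)=(p_W)^{-1}\circ Pg\circ\delta_Y$ is indeed a section of $[t_Y,W]$, using $\delta\circ t=p$. But the other composite $s\circ[t_Y,W]=\id$ is never established. The ingredients you name (naturality of $t$, invertibility of $t_W$, $tP\circ\delta=Tp$) do not close it by themselves. Unwinding them only reduces $s([t_Y,W](h))=h$ to comparing $T(t_Y)$ with $t_{TY}$. These are two maps $TY\to T^2Y$ that differ by the symmetry of $Z\otimes Z$, and comparing them is equivalent to the injectivity you want.

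The missing ingredient is the coassembly isomorphism $\gamma_Y:P[Z,Y]\cong[Z,PY]$ of the docile functor $P$. Lemma~\ref{lem:com570}, the filler equation and the $\mc{V}$\=/naturality of $p$ give $\gamma_Y\circ P(t_Y)\circ\delta_Y=t_{PY}\circ\delta_Y=T(p_Y)=\gamma_Y\circ p_{TY}$. Hence $P(t_Y)\circ\delta_Y=p_{TY}$, and so $s([t_Y,W](h))=(p_W)^{-1}\circ Ph\circ p_{TY}=h$. To pass from mapping spaces to internal homs, use Lemma~\ref{lem:detectinvertible} with $\map(K,[t_Y,W])\cong\map(t_Y,[K,W])$. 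Here $[K,W]$ is again $T$\=/closed by the argument of Theorem~\ref{thm:monoidal-localization}\ref{thm:monoidal-localization:2}, which does not depend on this lemma. So no compatibility between $K\otimes-$ and $P$ is needed.

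The paper avoids this by applying the Weiss trick (Proposition~\ref{prop:Weiss-trick}) with $R=P$ not to $z$ but directly to the map to be inverted, $f=pX$. This is allowed because $P(pX)$ is invertible by Proposition~\ref{prop:pPandoO}. The trick yields a $\mc{V}$\=/natural diagonal filler $d:[X,W]\to[PX,PW]$ in the square with horizontal maps $[PX,pW]$, $[X,pW]$ and vertical maps $[pX,W]$, $[pX,PW]$. Since $pW$ is invertible, both horizontal maps are invertible. Then $d$ has a right inverse, because $d\circ[pX,W]$ is invertible, and a left inverse, because $[pX,PW]\circ d$ is invertible. So all four maps, in particular $[pX,W]$, are invertible. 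This settles both composites at once, at the enriched level, and makes the tower reduction unnecessary. If you want to keep your tower, the same trick works with $f=t_Y$: $\gamma_Y\circ P(t_Y)=t_{PY}$ is invertible by Lemmas~\ref{lem:com570} and~\ref{BWlemma}, so $P(t_Y)$ is invertible.
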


\begin{proof} By Lemma~\ref{lem:Tn-local-Pn-local-mapsA} $PX$ is in $\mc{V}^P$. It remains to show that the map $[pX,W]:[PX,W]\to [X,W]$ is invertible for every $P$\=/closed $W$. The square 
\begin{equation*}  
  \begin{tikzcd} 
   {[PX,W]} \ar[rr, "{[PX, pW]}"] \ar[d, "{[pX,W]} "']   && {[PX,PW]} \ar[d, "{[pX, PW]} "] \\
  {[X,W]}\ar[rr, "{[X,pW]}" ]  & & {[X,PW]}
   \end{tikzcd}
\end{equation*}  
commutes by applying the natural transformation $[pX,-]$ to the map $pW$. The Weiss trick (Proposition~\ref{prop:Weiss-trick}) can be applied to this square.
For the benefit of the reader we supply the factorization of the square:
\begin{equation*}  
  \begin{tikzcd} 
   {[PX,W]} \ar[rr, "{\theta}"] \ar[d, "{[pX,W]} "'] && {[P^2X,PW]} \ar[rr, "{[pX,PW]}"]\ar[d, "{[P(pX),PW]}"', "\cong"]  && {[PX,PW]} \ar[d, "{[pX, PW]}"] \\
  {[X,W]}\ar[rr, "{\theta}" ]  && {[PX,PW]} \ar[rr, "{[pX,PW]}"] && {[X,PW]}\,.
   \end{tikzcd}
\end{equation*}
Here $\theta$ is (part of) the \mc{V}\=/enrichment of $P$.  
The map $P(pX)$ is invertible by Proposition~\ref{prop:pPandoO}. Using the inverse of the middle vertical map, the square above has a diagonal filler $d$ 
\begin{equation*}  
  \begin{tikzcd}
   {[PX,W]} \ar[rr, "{[PX, pW]}"] \ar[d, "{[pX,W]} "']   && {[PX,PW]} \ar[d, "{[pX, PW]} "] \\
  {[X,W]}\ar[rr, "{[X,pW]}"' ] \ar[rru, "d" ]  & & {[X,PW]}
   \end{tikzcd}
\end{equation*}
by Proposition~\ref{prop:Weiss-trick}. But $pW$ is invertible, since $W$ in $\mc{V}^P$. It follows that all maps in this diagram are invertible proving that the map $[pX,W]$ is invertible. 
\end{proof}

\begin{definition} 
We will say that a map $f:X\to Y$ in \mc{V} is a $P$\=/{\it equivalence}
if the map $P(f): PX\to PY$ is invertible.
\end{definition}

\begin{lemma} \label{P-equiv}
A map $f:X\to Y$ is a $P$\=/equivalence if and only if the map $[f,W]:[Y,W]\to [X,W]$ is invertible for every $W$ in $\mc{V}^P$.
\end{lemma}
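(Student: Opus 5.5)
We deduce the statement from Lemma~\ref{lem:tensorlocalnew}, which says that $pX:X\to PX$ is a $\mc{V}$\=/reflection into $\mc{V}^P$, together with the naturality of $p$. Fix $f:X\to Y$ and $W$ in $\mc{V}^P$. Applying $[-,W]$ to the naturality square $pY\circ f = Pf\circ pX$ gives a commutative square
\[
\begin{tikzcd}
{[PY,W]} \ar[r,"{[Pf,W]}"] \ar[d,"{[pY,W]}"'] & {[PX,W]} \ar[d,"{[pX,W]}"] \\
{[Y,W]} \ar[r,"{[f,W]}"'] & {[X,W]}\,.
\end{tikzcd}
\]
Both vertical maps are invertible by Lemma~\ref{lem:tensorlocalnew}. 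Hence $[f,W]$ is invertible if and only if $[Pf,W]$ is invertible.

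It remains to show that $Pf$ is invertible if and only if $[Pf,W]$ is invertible for every $W$ in $\mc{V}^P$.

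For the forward direction, if $Pf$ is invertible then so is $[Pf,W]$, and the claim follows.

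For the converse, $PX$ and $PY$ are themselves in $\mc{V}^P$ by Lemma~\ref{lem:Tn-local-Pn-local-mapsA}. We apply the hypothesis and pass to underlying spaces. Since $\map(\one,[A,B])\cong\map(A,B)$, the functor $\map(\one,-)$ turns the enriched invertibility into invertibility of
\[
\map(Pf,W):\map(PY,W)\to\map(PX,W)
\]
for all $W$ in $\mc{V}^P$. The ordinary Yoneda lemma in the full subcategory $\mc{V}^P$, which contains $PX$ and $PY$, then shows that $Pf$ is an isomorphism. Concretely, one takes $W=PX$ to produce an inverse candidate $g:PY\to PX$ with $g\circ Pf=\id$, and then $W=PY$ to check that $Pf\circ g=\id$.

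The main obstacle is therefore only the passage from enriched to unenriched Yoneda. This is routine via $\map(\one,-)$, so I expect no real difficulty. The substantive input is Lemma~\ref{lem:tensorlocalnew}, which is already available.
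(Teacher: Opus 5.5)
Your proposal is correct and follows the paper's proof essentially verbatim: apply $[-,W]$ to the naturality square of $p$, use Lemma~\ref{lem:tensorlocalnew} to see that $[pX,W]$ and $[pY,W]$ are invertible, and conclude by the Yoneda lemma in $\mc{V}^P$. Your reduction of the enriched Yoneda step to the unenriched one via $\map(\one,-)$ only makes explicit what the paper covers with ``By Yoneda''.
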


\begin{proof}
By definition $f$ is a $P$\=/equivalence if and only if the map $P(f)$ is invertible. The image of the commutative square
\begin{equation*}  
   \begin{tikzcd}[bo column sep=large]
   {X} \ar[r, "{pX}"] \ar[d, "{f}"']  & PX\ar[d, "{P(f)}"] \\
   {Y} \ar[r, "{pY}" ]  & {PY}
    \end{tikzcd}
   \end{equation*}
by the contravariant functor $[-,W]$ is the commutative square:
\begin{equation*} 
   \begin{tikzcd}[bo column sep=large]
    {[X,W]}                     && \ar[ll, "{[pX,W]}"']  [PX,W] \\
    {[Y,W]} \ar[u, "{[f,W]}"]   && \ar[ll, "{[pY,W]}"] \ar[u, "{[P(f),W]}"'] [PY,W]  \,.
   \end{tikzcd}
\end{equation*} 
By Yoneda, the map $P(f)$ in $\mc{V}^P $ is invertible if and only if the map $[P(f),W]$ in \mc{V} is invertible for every object $W$ in $\mc{V}^P$.
But the horizontal maps of the lower square are invertible, since the maps $pX:X\to PX$ and $pY:Y\to PY$ are \mc{V}\=/reflecting into $\mc{V}^P$ by Lemma~\ref{lem:tensorlocalnew}. Hence the map $P(f)$ is invertible if and only if the map $[f,W]$ is invertible  for every object $W$ in $\mc{V}^P$.
\end{proof}

\begin{lemma}\label{lem:P-equivalences}
For any $P$\=/equivalence $f$ and any object $V$ the map $V\otimes f$ is a $P$\=/equivalence. 
\end{lemma}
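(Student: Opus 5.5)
We use the characterization of $P$\=/equivalences from Lemma~\ref{P-equiv}: $f:X\to Y$ is a $P$\=/equivalence if and only if $[f,W]:[Y,W]\to[X,W]$ is invertible for every $W$ in $\mc{V}^P$. So it suffices to show that $[V\otimes f,W]:[V\otimes Y,W]\to[V\otimes X,W]$ is invertible for every $P$\=/closed $W$.

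By the closed monoidal structure we have natural isomorphisms $[V\otimes X,W]\cong[X,[V,W]]$, and similarly for $Y$. Under these isomorphisms the map $[V\otimes f,W]$ corresponds to $[f,[V,W]]$. Hence, by Lemma~\ref{P-equiv} applied to $f$, it is enough to show that $[V,W]$ is $P$\=/closed whenever $W$ is.

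This closure statement is the main point. By Lemma~\ref{lem:Tn-local-Pn-local-mapsA}, $P$\=/closed is the same as $T$\=/closed, so we need $t[V,W]:[V,W]\to T[V,W]=[Z,[V,W]]$ to be invertible. Using the symmetry of the tensor, $[Z,[V,W]]\cong[Z\otimes V,W]\cong[V,[Z,W]]$. Since $t=[z,-]$, this identification carries $t[V,W]=[z,[V,W]]$ to $[V,[z,W]]=[V,tW]$; checking that these maps agree under the isomorphisms is routine naturality of the adjunction in the first variable. Now $tW$ is invertible because $W$ is $T$\=/closed, so $[V,tW]$ is invertible, and therefore $[V,W]$ is $T$\=/closed, hence $P$\=/closed.

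Combining the three steps, $[V\otimes f,W]$ is invertible for all $W$ in $\mc{V}^P$, so $V\otimes f$ is a $P$\=/equivalence. The only delicate point is the identification of $t[V,W]$ with $[V,tW]$ in the third step. If it caused trouble, one could instead argue through the Yoneda lemma: $\map(U,[z,[V,W]])$ corresponds to $\map(U\otimes V, [z,W])$ for every $U$, and the latter is an equivalence because $[z,W]$ is invertible.
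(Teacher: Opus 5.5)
Your argument is correct, but it takes a detour that the paper avoids. The paper curries the other way, $[V\otimes f,W]\cong[V,[f,W]]$. Since $[f,W]$ is already invertible for every $W$ in $\mc{V}^P$ by Lemma~\ref{P-equiv}, applying the functor $[V,-]$ keeps it invertible, and Lemma~\ref{P-equiv} again finishes the proof in one line. Because you curry as $[f,[V,W]]$ instead, you need to know that $[V,W]$ is again $P$\=/closed. That is exactly Theorem~\ref{thm:monoidal-localization}\,\ref{thm:monoidal-localization:2}. Your third step reproduces the paper's proof of that part: you identify $t[V,W]$ with $[V,tW]$ through the adjunction isomorphisms and then invoke the equivalence of $T$\=/closed and $P$\=/closed from Lemma~\ref{lem:Tn-local-Pn-local-mapsA}. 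So your route costs an additional lemma and a naturality check. What it buys is the closure of $\mc{V}^P$ under cotensors, which the paper does need later for the closedness of $\otimes_P$, but which is unnecessary for this lemma.
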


\begin{proof}
Let $V$ be an object in \mc{V} and $W$ be in $\mc{V}^P$. Then the map $[f,W]$ is invertible by Lemma~\ref{P-equiv}. Hence the map $[V\otimes f,W]=[V,[f,W]]$ is invertible. So, again by Lemma~\ref{P-equiv}, the map $V\otimes f$ is a $P$\=/equivalence. 
\end{proof}

From this lemma it follows that for any two $P$\=/equivalences $f,f'$ in \mc{V} their tensor product $f\otimes f'$ is a $P$\=/equivalence since $f\otimes f'=(f\otimes Y')(X\otimes f')$.
  
\begin{theorem}\label{thm:monoidal-localization}
Let \mc{V} be a confined symmetric monoidal category and let $z:Z\to \one$ be a tidy map. Define $T:=[Z,-]$, $t:=[z,-]:\Id \to T$, $P=\colim_n T^n$ and $p:\Id \to P$ as in \emph{Construction~\ref{def:PZ}}. Let $\mc{V}^P$ be the subcategory of $P$\=/closed objects of \mc{V} and let $\comp(\mc{V})$ be the subcategory of compact objects of \mc{V}. Then:
\begin{enumerate}[label={\rm (\roman*)}]
  \item\label{thm:monoidal-localization:1}
An object of \mc{V} is $T$\=/closed if and only if it is $P$\=/closed.
  \item\label{thm:monoidal-localization:2}
For every $A$ in \mc{V} and $X$ in $\mc{V}^P$ the cotensor $[A,X]$ is in $\mc{V}^P$.
  \item\label{thm:monoidal-localization:3}
The subcategory $\mc{V}^P$ is \mc{V}\=/reflective, and the natural transformation $p:\Id\to P$ is \mc{V}\=/reflecting into $\mc{V}^P$. The reflector $P:\mc{V}\to \mc{V}^P$ is \mc{V}\=/left exact.
  \item\label{thm:monoidal-localization:4}
The category $\mc{V}^P$ is symmetric monoidal closed with tensor $X\otimes_P Y:=P(X\otimes Y)$ for every $X,Y$ in $ \mc{V}^P$. Its unit object is $\one_P=P(\one)$. The localization functor $P:\mc{V} \to \mc{V}^P$ is symmetric monoidal.
  \item\label{thm:monoidal-localization:5}
The symmetric monoidal category $(\mc{V}^P,\otimes_P, P(\one))$ is confined and the reflector $P:\mc{V} \to \mc{V}^P$ is confined. 
Every compact object of $\mc{V}^P$ is a retract of an object in $P(\comp(\mc{V}))$. The subcategory $P(\comp(\mc{V}))$ is dense in $\mc{V}^P$.
\end{enumerate}
\end{theorem}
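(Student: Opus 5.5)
Most of the work is already done in the preceding lemmas, so the plan is to assemble them item by item.

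\emph{Items (i) and (ii).} Item~(i) is Lemma~\ref{lem:Tn-local-Pn-local-mapsA}\,(ii). For item~(ii), take $X$ in $\mc V^P$ and $A$ in $\mc V$. I will use the characterization of $\mc V^P$ as the objects $W$ that are local with respect to $P$\=/equivalences. This characterization follows from Lemmas~\ref{lem:tensorlocalnew} and~\ref{P-equiv}: an object $W$ is $P$\=/closed exactly when $[pW,W]$ being invertible forces $pW$ to be invertible. Then for any $P$\=/equivalence $f$ we have $[f,[A,X]]\cong[A\otimes f,X]$. By Lemma~\ref{lem:P-equivalences} the map $A\otimes f$ is again a $P$\=/equivalence, so this map is invertible. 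Applying this with $f=p[A,X]$, which is a $P$\=/equivalence by Proposition~\ref{prop:pPandoO}, shows that $[p[A,X],[A,X]]$ is invertible. Hence $p[A,X]$ admits a retraction, and $[A,X]$ is a retract of $P[A,X]$. Since $\mc V^P$ is closed under retracts (being defined by invertibility of a natural map), $[A,X]$ lies in $\mc V^P$.

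\emph{Item (iii).} $\mc V$\=/reflectivity is Lemma~\ref{lem:tensorlocalnew}, combined with Proposition~\ref{prop:pPandoO}, which gives idempotence. For $\mc V$\=/left exactness, Lemma~\ref{lem:Qisgood} shows that $P$ is docile. In particular $P$ preserves finite limits and compact cotensors as an endofunctor of $\mc V$. Finite limits and cotensors in $\mc V^P$ are computed in $\mc V$, because $\mc V^P$ is closed under them: it is closed under limits as a reflective subcategory, and under cotensors by item~(ii). So the reflector $\mc V\to\mc V^P$ is $\mc V$\=/left exact.

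\emph{Item (iv).} This is the standard Day-type argument for a monoidal reflection whose local equivalences are stable under tensoring, which is Lemma~\ref{lem:P-equivalences}. Because $P$\=/equivalences are closed under $\otimes$, the map $X\otimes Y\to PX\otimes PY$ is a $P$\=/equivalence, so $P(X\otimes Y)\cong P(PX\otimes PY)$. I will invoke \cite[Prop.~2.2.1.9]{HA}: a localization compatible with the symmetric monoidal structure makes $\mc V^P$ symmetric monoidal with product $P(-\otimes-)$ and unit $P\one$, and makes $P$ symmetric monoidal. Closedness comes from item~(ii): the internal hom of $\mc V$ restricts to $\mc V^P$, and $\map(P(X\otimes Y),W)\cong\map(X,[Y,W])$ for $W$ in $\mc V^P$.

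\emph{Item (v).} The inclusion $\mc V^P\subset\mc V$ is right adjoint to $P$ and preserves filtered colimits, because $P$ commutes with filtered colimits by Lemma~\ref{lem:Qisgood}. A filtered colimit of $P$\=/closed objects is therefore $P$\=/closed, so filtered colimits in $\mc V^P$ are computed in $\mc V$. By the argument of Proposition~\ref{prop:confinedbasic2}, $P$ then sends compact objects to compact objects.
\begin{itemize}
\item \emph{Density and compact generation.} Every $X$ in $\mc V^P$ is a filtered colimit $\colim K_i$ of compacts of $\mc V$ (Lemma~\ref{lem:objects-are-filtered-colimits}). Hence $X\cong PX\cong\colim PK_i$, and $P(\comp(\mc V))$ generates $\mc V^P$ under filtered colimits.
\item \emph{Retract statement.} A compact $C$ of $\mc V^P$ is a filtered colimit of objects of $P(\comp(\mc V))$. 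By compactness, the identity of $C$ factors through some stage, so $C$ is a retract of an object of $P(\comp(\mc V))$.
\item \emph{$\omega$\=/presentability and the monoidal axioms.} It follows that $\mc V^P$ is $\omega$\=/presentable, and density follows via Lemma~\ref{lem:nerve-colimit}. The unit $P\one$ is compact, and $PK\otimes_P PL\cong P(K\otimes L)$ is compact for compact $K,L$. Lemma~\ref{sufficientforconfinedtensor}, applied with $\mc D=P(\comp(\mc V))$, then shows that $\mc V^P$ is confined and that $P$ is a confined functor.
\end{itemize}
The main obstacle is that this density argument needs $P(\comp(\mc V))$ to be essentially small and dense in the precise sense of Definition~\ref{dense-subcategory}, not merely generating under filtered colimits. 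I will handle this by identifying $\mc V^P$ with $\ind$ of the idempotent completion of $P(\comp(\mc V))$.
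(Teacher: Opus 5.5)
Your proposal is correct. For items (i), (iii), (iv) and (v) it follows the paper's proof essentially step for step; the real difference is in item (ii).

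\textbf{Item (ii).} The paper works at the level of $T$. For $T$-closed $X$, the coassembly isomorphisms $[Z,[A,X]]\cong[Z\otimes A,X]\cong[A\otimes Z,X]\cong[A,[Z,X]]$ identify $t[A,X]$ with $[A,tX]$. Hence $[A,X]$ is $T$-closed, and item (i) turns this into $P$-closedness.

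You instead argue by locality. The map $[p[A,X],[A,X]]\cong[A\otimes p[A,X],X]$ is invertible, because $A\otimes p[A,X]$ is a $P$-equivalence (Proposition~\ref{prop:pPandoO} and Lemma~\ref{lem:P-equivalences}) and $X$ is local (Lemma~\ref{P-equiv}). So $[A,X]$ is a retract of the $P$-closed object $P[A,X]$. This is valid and not circular, since none of Lemmas~\ref{lem:tensorlocalnew}, \ref{P-equiv}, \ref{lem:P-equivalences} uses (ii). Your one-line statement of the characterization is misphrased: the implication ``$[pW,W]$ invertible $\Rightarrow$ $pW$ invertible'' holds for every $W$. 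The argument you actually run, however, is the right one.

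\textbf{What each route buys.} The paper's route is more elementary and proves more. $T$-closed objects are stable under all cotensors using only the symmetric closed structure, and tidiness enters only through (i). Yours is the general principle that local objects of a $\mc{V}$-reflective localization whose equivalences are stable under $A\otimes-$ are stable under $[A,-]$. It is independent of the explicit form of $T$, but it needs the reflection, and hence tidiness, to be in place first.

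\textbf{Item (v).} The obstacle you flag, which the paper simply passes over, is not a real difficulty. The objects of $P(\comp(\mc{V}))$ are compact in $\mc{V}^P$, and every $X\in\mc{V}^P$ is a filtered colimit $\colim_i PK_i$ of them. Writing $h$ for the restricted Yoneda functor, one therefore has $\map(X,Y)\cong\lim_i\map(PK_i,Y)\cong\map(h(X),h(Y))$. So $h$ is fully faithful, and $P(\comp(\mc{V}))$ is dense by Lemma~\ref{lem:nerve-colimit}. Essential smallness is inherited from $\comp(\mc{V})$. This is the content of \cite[Proposition 5.3.5.11]{HTT}, which gives your identification $\mc{V}^P\simeq\ind(P(\comp(\mc{V})))$ directly.
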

 
\begin{proof}
\ref{thm:monoidal-localization:1} This was proved in Lemma~\ref{lem:Tn-local-Pn-local-mapsA}.                     

\smallskip
\noindent\ref{thm:monoidal-localization:2} Let $X$ be $T$\=/closed. We will show that the object $[A,X]$ is $T$\=/closed for every $A$ in \mc{V}. The claim then follows from~\ref{thm:monoidal-localization:1}. Consider the diagram:
\begin{equation*} 
\begin{tikzcd}
{[\one,[A,X]]} \ar[d,"{t[A,X]=[z,[A,X]]}"']  \ar[r,"\cong"]
& {[\one \otimes A,X]} \ar[d,"{[z\otimes A,X]}"']  \ar[r,"\cong"]
& {[A\otimes \one ,X]} \ar[r,"\cong"]   \ar[d,"{[A\otimes z,X]}"']
& {[A,[\one,X]]}    \ar[d,"{[A, [z,X]]=[A, tX]}"]   \\   
 {[Z,[A,X]]}  \ar[r,"\cong"]
& {[Z \otimes A,X]}   \ar[r,"\cong"]
& {[A\otimes Z ,X]} \ar[r,"\cong"]
& {[A,[Z,X]]}\,.
\end{tikzcd}
\end{equation*} 
The horizontal maps are coassembly maps $\gamma(\one,X)$ and $\gamma(Z,X)$. They are invertible, since the functor $[A,-]$ preserves all cotensors as explained in the proof of Lemma~\ref{lem:compact-cotensors-good}. Now the map $[A,tX]$ is invertible, since the map $tX$ is invertible by assumption. So $t[A,X]$ is invertible and $[A,X]$ is $T$\=/closed.

\smallskip
\noindent\ref{thm:monoidal-localization:3}
The first and second statement are proved in Lemma~\ref{lem:tensorlocalnew}. As a right adjoint the inclusion $\mc{V}^P\subset \mc{V}$ preserves finite limits and it preserves cotensors by~\ref{thm:monoidal-localization:2}. The endofunctor $P:\mc{V}\to \mc{V}$ is docile by Lemma~\ref{lem:Qisgood}. So the localization functor $P:\mc{V}\to \mc{V}^P$ preserves finite limits and compact cotensors and is therefore \mc{V}\=/left exact.

\smallskip
\noindent\ref{thm:monoidal-localization:4}
Lemma~\ref{lem:P-equivalences} allows to apply \cite[Proposition 4.1.7.4]{HA}: the functor $\otimes : \mc{V}\times \mc{V} \to \mc{V}$ induces a functor $\otimes^P : \mc{V}^P\times \mc{V}^P\to \mc{V}^P$ yielding a symmetric monoidal structure on $\mc{V}^P$  such that
\[
X\otimes_P Y=P(X)\otimes_P P(Y)=P(X\otimes Y)\,,
\]
for all  $X,Y$ in $\mc{V}^P$. Its unit is $\one_P=P(\one)$ and the symmetry is $\sigma_P(X,Y)=P(\sigma(X,Y)): P(X\otimes Y)\cong P(Y\otimes X)$. Clearly, the functor $P:\mc{V}\to \mc{V}^P$ is symmetric monoidal.

It remains to prove that this structure is closed. For every $X$ in \mc{V} and every $W$ in $\mc{V}^P$, the cotensor $[X,W]$ is in $ \mc{V}^P$ by~\ref{thm:monoidal-localization:2}.
Moreover, $[P(X\otimes Y), W]= [X\otimes Y, W]$ for every $X,Y$ in \mc{V} by Lemma~\ref{lem:tensorlocalnew}.
Thus, for all $X,Y,W$ in $\mc{V}^P$: 
\[
[X\otimes_P Y, W]= [X\otimes Y, W]= [Y,[X, W]]\,.
\]

\smallskip
\noindent\ref{thm:monoidal-localization:4}
As a reflective subcategory of \mc{V}, the category $\mc{V}^P$ is cocomplete and the localization functor $P:\mc{V}\to \mc{V}^P$ is cocontinous, since it is left adjoint to the inclusion functor $i: \mc{V}^P\to  \mc{V}$.
As an endofunctor, $P:\mc{V}\to \mc{V}$ is docile by Lemma~\ref{lem:Qisgood}.
It follows that filtered colimits are preserved by the inclusion $i:\mc{V}^{P}\subset \mc{V}$. Therefore, by Proposition~\ref{prop:confinedbasic2}, the localization functor $P:\mc{V}\to \mc{V}^P$ is confined.
(Beware that a compact object in $\mc{V}^P$ may not be compact in \mc{V}.)
In particular, $P(\comp(\mc{V})) \subset\comp(\mc{V}^P)$.

\smallskip
\noindent\ref{thm:monoidal-localization:5}
Since \mc{V} is $\omega$\=/presentable, for every object $X$ in \mc{V} we have $X=\colim_{\comp(\mc{V})/X} F(D)$ by Lemma~\ref{lem:objects-are-filtered-colimits} and the category $\comp(\mc{V})/X$ of compact object over $X$ is filtered. 
Suppose now that $X$ is $P$\=/closed. Then $X\cong PX\cong\colim_{\comp(\mc{V})/X} PF(D)$ is the corresponding colimit reflected into $P(\comp(\mc{V}))\subset \comp(\mc{V}^P)$. This implies that $P(\comp(\mc{V}))$ is dense in $\mc{V}^P$, that $\mc{V}^P$ is $\omega$\=/presentable and that every object in $\comp(\mc{V}^P)$ is a retract of an object in $P(\comp(\mc{V}))$.

It remains to show that the symmetric monoidal category $\mc{V}^P$ is confined. Since $\one$ is compact and $P$ is confined, the unit $\one_P=P(\one)$ is compact. 
Further:
\[
P(\comp(\mc{V}))\otimes_P P(\comp(\mc{V}))=P\bigl(\comp(\mc{V})\otimes \comp(\mc{V})\bigr)\subset P(\comp(\mc{V})) \subset \comp(\mc{V}^P)
\]
by~\ref{thm:monoidal-localization:4}. Since $P(\comp(\mc{V}))$ is dense in $\mc{V}^P$, 
it follows from Lemma~\ref{sufficientforconfinedtensor} that the symmetric monoidal closed category $\mc{V}^P$ is confined. 
\end{proof}

It follows that the localization $\mc{V}\to\mc{V}^P$ is a morphism of confined symmetric monoidal functors with the additional property that it preserves internal homs.

\subsection{The associated factorization system}
\label{sec:associated-fs}
The notion of a saturated class was defined in~\cite[Definition 5.5.5.1]{HTT}. See also \cite[Definition 3.1.12]{ABFJ:higher-sheaves} and \cite[Proposition 3.1.14]{ABFJ:higher-sheaves}.
The left class of a factorization system is always saturated. If \mc{V} is presentable and $\mc{R}=S^{\perp}$ is the class of maps right orthogonal to a set $S$ of morphisms of \mc{V}, then $\mc{L}={}^\perp(S^\perp)=:S\sat$ is the saturated closure of $S$ and $(\mc{L},\mc{R})$ form a factorization system in \mc{E}. See \eg \cite[Proposition 5.5.5.7]{HTT} or \cite[Proposition 3.1.18]{ABFJ:higher-sheaves}.

If \mc{V} has finite limits, then we define a {\it left exact modality}~\cite[Definition 4.1.1]{ABFJ:higher-sheaves} as a factorization system whose left class \mc{L} is the class of morphisms that is inverted by a left exact localization of \mc{V}. Equivalently, \mc{L} is a saturated class closed under finite limits~\cite[Lemma 4.1.2]{ABFJ:higher-sheaves}. We call such a class of morphisms a {\it congruence}~\cite[Definition 4.2.1]{ABFJ:higher-sheaves}.

\begin{proposition}\label{prop:enrichedsatgen}
Suppose that \mc{V} is a confined symmetric monoidal category and that $\mc{D}$ is a small dense subcategory of \mc{V}. 
Let $z:Z\to \one$ be a tidy map and $S:=\{z\otimes D :Z\otimes D \to D\,|\, D\in \mc{D}\}$.
Further, let $\mc{L}$ be the class of $P$\=/equivalences and $\mc{R}$ the class of $P$\=/closed maps. Then:
\begin{enumerate}[label={\rm (\roman*)}]
   \item\label{prop:enrichedsatgen:1}
The pair $(\mc{L}, \mc{R})$ is a left exact modality in \mc{V}, \ie \mc{L} is a congruence.
   \item\label{prop:enrichedsatgen:2}
$\mc{R}=S^{\perp}$ and $\mc{L}={}^\perp(S^\perp)=S\sat$.
\end{enumerate}
\end{proposition}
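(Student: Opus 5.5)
The plan is to prove \ref{prop:enrichedsatgen:2} first and then deduce \ref{prop:enrichedsatgen:1}. The key identification is that a map $f:X\to Y$ lies in $S^\perp$ if and only if $f$ is $T$\=/closed. Unwinding orthogonality, $f\in S^\perp$ means that for every $D\in\mc{D}$ the square of mapping spaces
\[
\map(D,X)\to\map(Z\otimes D,X),\qquad \map(D,Y)\to\map(Z\otimes D,Y)
\]
is cartesian. Using $\map(D,[A,X])\cong\map(A\otimes D,X)$, this square is $\map(D,-)$ applied to the naturality square of $t$ at $f$, which has rows $tX:X\to TX$ and $tY:Y\to TY$. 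Since $\map(D,-)$ preserves pullbacks and $\mc{D}$ is dense, Lemma~\ref{lem:detectinvertible} applies. I will use it in the form that the gap map $X\to Y\times_{TY}TX$ is invertible if and only if it is so after $\map(D,-)$ for all $D\in\mc{D}$. Hence $f\in S^\perp$ if and only if $f$ is $T$\=/closed, and by Lemma~\ref{lem:Tn-local-Pn-local-mapsA}\ref{lem:Tn-local-Pn-local-mapsA:1} this holds if and only if $f$ is $P$\=/closed. This gives $\mc{R}=S^\perp$.

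Since \mc{V} is presentable and $S$ is a set, $({}^\perp(S^\perp),S^\perp)$ is a factorization system with left class $S\sat$, as recalled from \cite[Proposition 5.5.5.7]{HTT}. It then remains to show ${}^\perp\mc{R}=\mc{L}$, i.e.\ that the $P$\=/equivalences are exactly the maps left orthogonal to the $P$\=/closed maps. I will argue this directly.

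\emph{$P$\=/equivalences are in ${}^\perp\mc{R}$.} Let $f:A\to B$ be a $P$\=/equivalence and $g:X\to Y$ be $P$\=/closed. Since $g$ is the pullback of $Pg$ along $pY$, lifting problems against $g$ reduce to lifting problems against $Pg:PX\to PY$. This map lies between $P$\=/closed objects. Because $P$ is a reflection (Lemma~\ref{lem:tensorlocalnew}), $\map(B,PX)\to\map(A,PX)$ is invertible, and likewise for $PY$. Hence the lifting space is contractible.

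\emph{Maps in ${}^\perp\mc{R}$ are $P$\=/equivalences.} If $f\in{}^\perp\mc{R}$, then $f$ is left orthogonal to every map $W\to 1$ with $W$ $P$\=/closed. Such a map is $P$\=/closed, since $P1=1$ because $P$ is left exact. So $\map(f,W)$ is invertible. Applying this to $[V,W]$, which is $P$\=/closed by Theorem~\ref{thm:monoidal-localization}\ref{thm:monoidal-localization:2}, upgrades it to invertibility of $[f,W]$. Lemma~\ref{P-equiv} then shows $f$ is a $P$\=/equivalence. This gives $\mc{L}={}^\perp(S^\perp)=S\sat$.

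For \ref{prop:enrichedsatgen:1}, $\mc{L}$ is the class of maps inverted by the localization $P:\mc{V}\to\mc{V}^P$, which is left exact by Theorem~\ref{thm:monoidal-localization}\ref{thm:monoidal-localization:3}. Since $(\mc{L},\mc{R})$ is a factorization system, it is a left exact modality by definition, and so $\mc{L}$ is a congruence by \cite[Lemma 4.1.2]{ABFJ:higher-sheaves}.

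The step I expect to need the most care is the first one: checking that orthogonality against $z\otimes D$ for $D$ only in the dense subcategory $\mc{D}$, rather than all of \mc{V}, suffices to force the naturality square of $t$ to be cartesian. I will handle it by passing to the gap map and applying Lemma~\ref{lem:detectinvertible}, using that each $\map(D,-)$ commutes with the pullback and with the adjunction isomorphism.
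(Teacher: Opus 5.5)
Your proof is correct. Your identification $\mc R=S^\perp$ is exactly the paper's argument: the adjunction $\map(D,[A,X])\cong\map(A\otimes D,X)$, density via Lemma~\ref{lem:detectinvertible}, and then $T$-closed $\Leftrightarrow$ $P$-closed for maps by Lemma~\ref{lem:Tn-local-Pn-local-mapsA}. Passing explicitly to the gap map is just a more careful phrasing of the paper's ``this square is cartesian iff \dots\ for every $D$''.

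You differ from the paper in how you obtain that $(\mc L,\mc R)$ is a factorization system with $\mc L={}^\perp\mc R$. The paper proves (i) first. Since $P$ is a left exact reflector, it cites \cite[Proposition 4.1.6]{ABFJ:higher-sheaves} to conclude that $P$-equivalences and $P$-closed maps form a factorization system. Part (ii) then only needs $\mc R=S^\perp$, because $\mc L={}^\perp\mc R$ is already known.

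You instead get the factorization system from the small object argument for the set $S$ (\cite[Proposition 5.5.5.7]{HTT}), once $\mc R=S^\perp$ is established. You then check ${}^\perp\mc R=\mc L$ by hand. In one direction, $P$-equivalences are orthogonal to maps between $P$-closed objects, hence to their base changes. In the other, orthogonality to $W\to 1$ for $P$-closed $W$ already forces $Pf$ to be invertible. This second step holds for any reflective localization, since the terminal object is always local. So your route isolates what the tidy hypothesis contributes, namely the existence of factorizations via $\mc R=S^\perp$. It uses left exactness only at the end, to call the factorization system a \emph{left exact} modality. The paper's route is shorter but delegates the factorization to the general theory of lex localizations.

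A minor point: your enriched upgrade through $[V,W]$ and Lemma~\ref{P-equiv} is not needed. Invertibility of $\map(f,W)$ for all $W\in\mc V^P$ already gives $Pf$ invertible, by the reflection property and Yoneda in $\mc V^P$.
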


The point of this proposition is that the congruence \mc{L} is generated by $S$ as a saturated class.

\begin{proof}
For~\ref{prop:enrichedsatgen:1} note that the functor $P$ is a left exact reflector by Theorem~\ref{thm:monoidal-localization}. 
It then follows by~\cite[Proposition 4.1.6]{ABFJ:higher-sheaves} that $(\mc{L}, \mc{R})$ is a factorization system and therefore a left exact modality.

Let us prove~\ref{prop:enrichedsatgen:2}.
A map $f:X\to Y$  in \mc{V} is $P$\=/closed if and only if it is $T$\=/closed by Lemma~\ref{lem:Tn-local-Pn-local-mapsA}. By definition, $f:X\to Y$ is $T$\=/closed if and only if the following square is cartesian:
\begin{equation*} 
   \begin{tikzcd}[bo column sep=large]
   X \ar[r, "{[z, X]}"] \ar[d, "f"']   & {[Z,X]} \ar[d, "{ [Z,f] }"] \\
   Y \ar[r, "{[z, Y]}" ]   & {[Z,Y]}\,.
   \end{tikzcd}
\end{equation*} 
But, by Lemma~\ref{lem:detectinvertible}, this square is cartesian if and only if the square
\begin{equation*} 
   \begin{tikzcd}[bo column sep=large]
   \map(D,X) \ar[rrr, "{\map(D,{[z, X]})}"] \ar[d,"{\map(D,f)}"']   &&&  \map(D, {[Z,X]}) \ar[d, "{\map(D, {[Z,f]})}"]  \\
   \map(D,Y) \ar[rrr, "{\map(D,{[z, Y]})}" ]   &&& \map(D, {[Z,Y]})
   \end{tikzcd}
\end{equation*}
is cartesian, for every object $D$ in $\mc{D}$. This square is isomorphic to the square
\begin{equation*}
   \begin{tikzcd}[bo column sep=large]
   \map(D,X) \ar[rrr, "{\map(z\otimes D,X)}"]  \ar[d,"{\map(D,f)}"']     &&&  \map(Z\otimes D, X) \ar[d, "{\map(Z\otimes D, f)}"] \\
   \map(D,Y) \ar[rrr, "{\map(z\otimes D,Y)}" ]   &&& \map(Z\otimes D,Y)\,,
   \end{tikzcd}
\end{equation*} 
which is cartesian if and only if the maps $z\otimes D:Z\otimes D \to D$ are left orthogonal to the map $f:X\to Y$ for all $D$. In summary, a map $f$ is $P$\=/closed if and only it is right orthogonal to every map in $S$.

This shows that $\mc{R}=S^{\perp}$. But $\mc{L}=\mbox{}^{\perp}\mc{R}$, since the pair $(\mc{L},\mc{R})$ forms a factorization system by the first part of the proof. Thus, $\mc{L}=\mbox{}^{\perp}\mc{R}=\mbox{}^{\perp}(S^{\perp})=S\sat$. 
\end{proof}

\section{Intermezzo on the fiberwise join of maps} 
\label{Section:Intermezzo}

For our applications in the next section, we need to recall some facts about the pushout product and the join powers of maps in a category $\mc C$ with finite limits and universal finite colimits.

\subsection{Cocartesian gap maps and pushout products}

Consider the sets $\ul{n} =\{1,\hdots, n\}$ and the poset $[1]=(0<1)$.
An {\it $n$\=/cube} in the category \mc{C} is a functor $\chi:[1]^n\to \mc{C}$ or equivalently a functor $\chi:\mc{P}(\ul{n})\to \mc{C}$, where $\mc{P}(\ul{n})$ is the poset of subsets of $\ul{n}$.
For a cube $\chi:\mc P(\ul{n}) \to \mc{C}$ we call 
\[
\cogap(\chi):\colim_{U\subsetneq \ul{n}} \chi(U) \to \chi(\ul{n})
\]
its {\it cocartesian gap map}. 

\begin{definition}
\label{def:pp}
If \mc{C} admits finite products, then we define for two maps $f:A\to B$ and $g:C\to D$ in \mc{C} their {\it pushout product} $f\pp g$ to be the cocartesian gap map of the square
\[
\begin{tikzcd} 
A\times C \ar[rr,"{\id_A\times g}"] \ar[d,"{f\times\id_C}"'] && A\times D \ar[d,"{f\times\id_D}"]      \\
B\times C \ar[rr, "{\id_B\times g}"] && B\times D\,.
 \end{tikzcd}
\]
\end{definition}

\begin{remark}
\label{rem:pp}
The previous definition of the pushout product is relative the the cartesian product of the category $\mc C$. 
We could have used any monoidal structure on $\mc C$, but in this paper, we will only consider the pushout product with respect to cartesian structure.
\end{remark}

The {\it external cartesian product} $\chi\boxtimes \psi$ of two cubes $\chi:[1]^m\to \mc{C}$ and $\psi:[1]^n\to \mc{C}$ is the cube $\chi\boxtimes \psi:[1]^{m+n}\to \mc{C}$ defined by putting $(\chi\boxtimes \psi)(a,b)=\chi(a)\times \psi(b)$ for every $(a,b)$ in $[1]^m\times [1]^n$. 
It is easy to check that  $\cogap(\chi\boxtimes \psi)=\cogap(\chi)\pp \cogap(\psi)$. Then for any sequence $(f_1:X_1\to Y_1, \ldots, f_n:X_n\to Y_n)$ of maps, aka $1$\=/cubes, in $\mc{C}$, it follows that 
\begin{equation}\label{eq:fat-wedge}
   f_1\pp \cdots \pp f_n=\cogap (f_1\boxtimes \cdots \boxtimes f_n):{\rm FW}\to Y_1\times\hdots\times Y_n ,
\end{equation}
where FW is a relative fat wedge, because in the case of $X_1=\hdots= X_n=1$ it is really the fat wedge of $Y_1,\hdots,Y_n$. 

\subsection{The join product}

For two spaces $A$ and $B$, their join product $A\star B$ is defined as the codomain of the map $(A\to 1)\pp (B\to 1)=(A\star B\to 1)$. If $A$ and $B$ are finite spaces then $A\star B$ is finite as well.
The empty space $\emptyset$ is a unit for the the join product.

\begin{lemma}\label{lem:join-is-inf-sym-mon}
The category of finite spaces equipped with the join product is a symmetric monoidal ${(\infty,1)}$\=/category.
\end{lemma}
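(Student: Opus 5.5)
Rather than check the $\infty$\=/categorical coherences of $\star$ by hand, I would realise the join product as the transport of a structure we already control. The join $A\star B$ is the pushout of $A\leftarrow A\times B\to B$. Equivalently, it is the codomain of $(A\to 1)\pp(B\to 1)$, i.e.\ of the cocartesian gap map of the external product of the two $1$\=/cubes $A\to 1$ and $B\to 1$.

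The first step is to use the cartesian symmetric monoidal structure on $\mc S$, which exists by \cite[2.4.1]{HA}. I would show that the pushout product $\pp$ makes the arrow category $\Fun([1],\mc S)$ symmetric monoidal. This is Day convolution on $\Fun([1],\mc S)$, with $[1]$ carrying the monoidal structure $\max$ (or rather its opposite, so that the cube $[1]^2\to[1]$ computes the cogap). Concretely, for a presentably symmetric monoidal $\mc C$ and a small symmetric monoidal $\infty$\=/category $K$, the Day convolution on $\Fun(K,\mc C)$ is symmetric monoidal. For $K=([1],\max)$, the Day convolution of $f$ and $g$ evaluated at $0$ is $A\times C$. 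At $1$ it is the colimit over the triple $\{(0,1),(1,0),(1,1)\}$ of pairs with $\max=1$, which is exactly the pushout defining $f\pp g$. The unit is $\emptyset\to 1$: it is the left Kan extension of the representable at $0$ tensored with $1$, i.e.\ $1\to 1$ would give the wrong answer. I will need to check that the correct orientation of $[1]$ yields the unit $\emptyset\to 1$ and not $1=1$. With the representable at the unit object $0$ of $([1],\max)$ and colimits over $\{a\leq\max\}$, the unit is $(1\xto{=}1)$; so I would instead use $([1],\min)$ with unit $1$, whose representable $\map(1,-)$ is $(\emptyset\to 1)$. Verifying this orientation is a routine calculation.

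Next I would restrict to the full subcategory $\mc A\subset\Fun([1],\mc S)$ of maps with target $1$, identified with $\mc S$ via the domain. This subcategory contains the unit $\emptyset\to 1$. It is closed under $\pp$, since the target of $f\pp g$ is $B\times D=1$. Hence $\mc A$ inherits a symmetric monoidal structure \cite[2.2.1]{HA}, and on objects its tensor is $A\star B$. Transporting this along the equivalence $\mc A\simeq\mc S$ gives the join as a symmetric monoidal structure on $\mc S$ with unit $\emptyset$.

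Finally, I would restrict to finite spaces. The empty space is finite. Finite spaces are closed under finite products and pushouts, so $A\star B$ is finite whenever $A$ and $B$ are. Thus $\Fin$ is a full subcategory containing the unit and closed under the tensor, and it is symmetric monoidal by the same restriction principle. The main obstacle is the first step: confirming that Day convolution on $\Fun([1],\mc S)$, with the right ordering, really computes $\pp$ and has unit $\emptyset\to 1$, since everything after that is formal.
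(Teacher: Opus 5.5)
Your proposal is the paper's argument. You realize $\pp$ as Day convolution on $\Fun([1],\mc S)$ with $\min$ as the monoidal product on $[1]$, with unit $\map(1,-)=(\emptyset\to 1)$, restrict to the full subcategory of arrows with target $1$ (which contains the unit and is closed under $\pp$), and then restrict further to $\Fin$. Your final choice of $\min$ is the right one, but discard the intermediate claim that $\max$ computes the pushout: with $\max$ the comma category over $1$ is all of $[1]^2$ with terminal object $(1,1)$, so Day convolution is just the levelwise product of arrows, with unit $1\to 1$.
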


\begin{proof}

Example~\ref{exam:confined-sym-mon-cats}\ref{exam:confined-sym-mon-cats:6} 
explains that the Day convolution product yields a symmetric monoidal ${(\infty,1)}$\=/category. The pushout product on the category of morphisms $\mc{S}^{[1]}$ coming from the cartesian product on \mc{S} can be seen as a Day convolution where one employs the minimum on the poset $[1]$ as the monoidal product. Therefore the pushout product is a symmetric monoidal ${(\infty,1)}$\=/structure on $\mc{S}^{[1]}$. 

The inclusion of the full subcategory of morphisms of the form $A\to 1$ into the category of all morphisms is symmetric monoidal. The join product is by definition the restriction of the pushout product. Hence it is itself a symmetric monoidal ${(\infty,1)}$\=/structure.
Now one can restrict further from the category \mc{S} to $\Fin$ since the join of two finite spaces is again finite.
\end{proof}

For a space $K$ and a set $U$ let $K^U$ be the ordinary cotensor, in other words the $|U|$\=/fold cartesian power of $K$. 
\begin{lemma}\label{lem:calculate-zeta0}
For every object $K$ in $\mc{S}$ and every $n\geq 0$ we have
\[
K^{\star n}= \colim_{\emptyset \neq U\subset \ul{n}}  K^{U}\,.
\]
\end{lemma}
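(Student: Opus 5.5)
We argue by induction on $n$, using the description of the pushout product as the cocartesian gap map of an external product of cubes, equation~\eqref{eq:fat-wedge}. By definition $K^{\star n}$ is the codomain of the $n$\=/fold pushout product $(K\to 1)^{\pp n}$. With the convention that $K^{\star 0}=\emptyset$ is the unit, the case $n=0$ holds because the colimit over the empty poset of nonempty subsets of $\ul{0}$ is the initial object $\emptyset$.

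For the general case, apply~\eqref{eq:fat-wedge} to $f_1=\dots=f_n=(K\to 1)$. This exhibits $(K\to 1)^{\pp n}$ as the cocartesian gap map of the $n$\=/cube $\chi=(K\to 1)^{\boxtimes n}:\mc P(\ul n)\to\mc S$. Index the cube so that a subset $V\subset \ul n$ records the coordinates where the terminal object is placed. Then
\[
\chi(V)=\prod_{i\notin V} K\times\prod_{i\in V}1 = K^{\ul n\setminus V}.
\]
The codomain of the gap map is therefore $\colim_{V\subsetneq \ul n}K^{\ul n\setminus V}$, and the map goes to $\chi(\ul n)=1$. The join $K^{\star n}$ is this domain, since $(K^{\star n}\to 1)$ is by definition the gap map itself.

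Substituting $U=\ul n\setminus V$ gives an isomorphism of posets from $\{V\subsetneq \ul n\}$ to $\{U\subset \ul n : U\neq\emptyset\}^{\op}$. Under this substitution the diagram becomes $U\mapsto K^U$, with the projections $K^{U}\to K^{U'}$ for $U'\subset U$ as transition maps. This yields the stated formula, reading the colimit as taken over nonempty $U$ with these contravariant restriction maps.

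Only two points need checking, and I expect the first to be the main obstacle. First, the identification $\cogap(f_1\boxtimes\cdots\boxtimes f_n)=f_1\pp\cdots\pp f_n$ must hold coherently in the $\infty$\=/setting. The paper asserts that $\cogap(\chi\boxtimes\psi)=\cogap(\chi)\pp\cogap(\psi)$. Proving this by induction amounts to computing a colimit over the punctured cube $[1]^{m+n}\setminus\{\top\}$ as an iterated pushout. That in turn uses that products in $\mc S$ preserve colimits in each variable, i.e.\ universality of colimits. Second, one must confirm that the associativity of the join from Lemma~\ref{lem:join-is-inf-sym-mon} agrees with the iterated pushout product. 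This holds because the join is defined as the restriction of the pushout product to maps with codomain $1$, and that subcategory is closed under $\pp$.
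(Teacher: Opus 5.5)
Your argument is correct and is essentially the paper's proof: you use \eqref{eq:fat-wedge} to identify $(K\to 1)^{\pp n}$ with the cocartesian gap map of the cube $(K\to 1)^{\boxtimes n}$, read off the vertices $K^{\ul n\setminus V}$, and reindex by complements. Two small points: no induction is actually used, and where you write ``codomain of the gap map'' you mean its domain, which is how the rest of your argument treats it.
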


\begin{proof} 
The map $K^{\star n}\to 1\cong (K\to 1)^{\Box n}$ is the cocartesian gap map of the $n$\=/cube $\chi:=(K\to 1)^{\boxtimes n}$. By construction, $\chi(U)= K^{\complement{U}}$ for every subset $U\subsetneq \ul{n}$.
Hence 
\begin{equation*} 
K^{\star n}=\colim_{U\subsetneq \ul{n}}  K^{\complement{U}}=\colim_{\emptyset \neq U\subset \ul{n}}  K^{U}
\end{equation*}
for every $n\geq 0$. 
\end{proof}

\subsection{Fiberwise joins}
\label{sec:fiberwise-join}
For two maps $f:F\to B$ and $g:G\to B$ in \mc{S} the square  
\begin{equation*}
\begin{tikzcd} 
F\times_B G\ar[r,"{\pr_2}"] \ar[d,"{\pr_1}"']& G \ar[d,"{g}"]      \\
F  \ar[r, "{f}"] & B  
 \end{tikzcd}
\end{equation*}
is cartesian and we write $f\boxtimes_B g$ for it. 
We call its cocartesian gap map 
\[
f\star_B g=\cogap(f\boxtimes_B g): F\star_B G\to B
\]
the {\it fiberwise join} of $f$ and $g$.
More details are given in~\cite[Example 2.3.1\,(v)]{ABFJ:gbm2}.
Since colimits are universal in \mc{S}, the square
\[
\begin{tikzcd}
F\sqcup_{F\times_B G}G\ar[r] \ar[d,"{f\star_B g}"']& (F\times B) \sqcup_{(F\times G)} (B\times G) \ar[d,"{f\pp g}"]       \\
B  \ar[r, "{\Delta(B)}"] & B\times B 
\end{tikzcd}
\]
is cartesian where $\Delta(B)$ is the diagonal map of $B$.

An $n$\=/cube is {\it strongly cartesian} if every 2-dimensional face is a cartesian square.

\begin{lemma}\label{lem:fiberwise-join-is-base-change}
Let $(f_1,\ldots,f_n)$ be a sequence of maps $f_i:F_i\to B$ in \mc{S}. Then:
\begin{enumerate}[label={\rm (\roman*)}]
   \item\label{lem:fiberwise-join-is-base-change:1} 
   The $n$\=/cube $f_1\boxtimes \cdots \boxtimes f_n$ is strongly cartesian and its base change along the diagonal map $B\to B^n$  is the strongly cartesian $n$\=/cube $f_1\boxtimes_B \cdots \boxtimes_B f_n$.
   \item\label{lem:fiberwise-join-is-base-change:2}
The pushout product $f_1\pp \cdots \pp f_n$ is the cocartesian gap map of the $n$\=/cube $f_1\boxtimes \cdots \boxtimes f_n$.
   \item\label{lem:fiberwise-join-is-base-change:3}
The fiberwise join product $f_1\star_B\cdots \star_B f_n$ is the cocartesian gap map of the $n$\=/cube $f_1\boxtimes_B \cdots \boxtimes_B f_n$.
   \item\label{lem:fiberwise-join-is-base-change:4}
   The map $f_1\star_B\cdots \star_B f_n$ is the base change of $f_1\pp \cdots \pp f_n$ along the diagonal $B\to B^n$.
\end{enumerate}
\end{lemma}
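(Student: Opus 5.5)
The plan is to prove the four items in the order (ii), (i), (iii), (iv). Items (ii) and (iii) are essentially definitions, while (i) and (iv) follow from universality of colimits and the pasting law for cartesian squares.

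Item (ii) is the identity \eqref{eq:fat-wedge} applied to the $1$\=/cubes $f_i$, so nothing needs to be done there. Item (iii) also holds by definition, once the iterated fiberwise join is identified with the cogap map of the $n$\=/cube $f_1\boxtimes_B\cdots\boxtimes_B f_n$. That cube sends $U\subset\ul{n}$ to the fiber product over $B$ of the $F_i$ with $i\notin U$, with the convention that an empty fiber product is $B$. I would check this identification by induction on $n$, using the binary case. The point is that $(f_1\star_B\cdots\star_B f_{n-1})\star_B f_n$ is the cogap map of the external product in the slice $\mc S/B$ of an $(n-1)$\=/cube with a $1$\=/cube. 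The relation $\cogap(\chi\boxtimes\psi)=\cogap(\chi)\pp\cogap(\psi)$, applied in the slice (where the product is $\times_B$), gives exactly this. It holds because $\times_B$ preserves colimits in each variable, by universality of colimits in $\mc S$.

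For (i), a vertex of $f_1\boxtimes\cdots\boxtimes f_n$ is a product $\prod_i X_i$ with $X_i\in\{F_i,B\}$. A $2$\=/face varies only the coordinates $i$ and $j$, so it is the square $f_i\times f_j$ multiplied by a fixed object. Such a square is cartesian, since products of cartesian squares are cartesian and $f_i\times f_j$ is the product of the cartesian squares ($f_i$ versus $\id$) and ($\id$ versus $f_j$). Next I would compute the base change of each vertex along $\Delta:B\to B^n$. The pullback of $\prod_i X_i\to B^n$ along the diagonal is the fiber product over $B$ of those $X_i$ equal to $F_i$, which is exactly the corresponding vertex of $f_1\boxtimes_B\cdots\boxtimes_B f_n$. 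Since pullback along $\Delta$ preserves limits, and cartesianness of squares over $B^n$ is preserved, the base-changed cube is strongly cartesian.

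For (iv), base change along $\Delta$ preserves colimits by universality of colimits in $\mc S$. It therefore commutes with the colimit over $U\subsetneq\ul{n}$. Applying it to the cogap map of $f_1\boxtimes\cdots\boxtimes f_n$, which is $f_1\pp\cdots\pp f_n$ by (ii), yields the cogap map of the base-changed cube from (i), which is $f_1\star_B\cdots\star_B f_n$ by (iii). The only step I expect to need care is the identification in (iii) of iterated binary fiberwise joins with a single cube. I would handle it as sketched above, by viewing $\star_B$ as the join in $\mc S/B$, where it is symmetric monoidal as in Lemma~\ref{lem:join-is-inf-sym-mon}.
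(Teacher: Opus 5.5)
Your proposal is correct and follows essentially the same route as the paper, which treats (i)–(iii) as direct inspection and deduces (iv) from universality of colimits in $\mc S$. Your extra details simply spell out that inspection: the $2$-faces as products of cartesian squares, the vertexwise base change along the diagonal, and the induction in $\mc S/B$ via $\cogap(\chi\boxtimes\psi)=\cogap(\chi)\pp\cogap(\psi)$.
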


\begin{proof}
Statements~\ref{lem:fiberwise-join-is-base-change:2},~\ref{lem:fiberwise-join-is-base-change:2} and~\ref{lem:fiberwise-join-is-base-change:3} follow by direct inspection.
Statement~\ref{lem:fiberwise-join-is-base-change:4} follows from the universality of colimits in \mc{S}.
\end{proof}

\section{Goodwillie calculus revisited} 
\label{section:Goodcalcrev}
We denote by $\Fin\subset\mc{S}$ the full subcategory of finite spaces. It is the free finitely cocomplete category on one generator. 
The category $\Fin$ together with the join product $-\star-$ becomes a symmetric monoidal structure as formulated in Lemma~\ref{lem:join-is-inf-sym-mon}.
The Day convolution product makes $\Fun(\Fin, \mc{S})$ into a symmetrice monoidal category, which is confined by Example~\ref{exam:confined-sym-mon-cats}\,\ref{exam:confined-sym-mon-cats:6}.
By slight abuse of notation we write $\Id$ for the inlcusion functor $\Fin\subset\mc{S}$.
For each $n\ge 0$ we choose $z_{n+1}:\Id^{\star n+1}\to 1=(\Id\to 1)^{\square n+1}$ as our map $z:Z\to \one$ to start the machinery of Section~\ref{sec:Quahog}. The fact that $z_{n+1}$ is tidy, proved in Lemma~\ref{r-star-is-On-hence-Pn-equivalence137}, yields all the good properties of the reflector $P_n$.

\subsection{The reflector \texorpdfstring{$P_n$}{Pn}}

\begin{definition}
Define $T_n$ as an endofunctor of $\Fun(\Fin, \mc{S})$ by cotensoring $T_n(F):=[\Id^{\star n+1} ,F]$. 
\end{definition}

\begin{lemma}\label{lem:calculate-zeta2}
For every $n\geq 0$ we have  
\begin{equation*}
  \Id^{\star n+1}= \colim_{\emptyset \neq U\subset \ul{n+1}} \map(U, -)
\end{equation*}
In particular, the ${(n+1)}$\=/fold join power $\Id^{\star n+1}$ is a finitely presentable object in $\Fun(\Fin, \mc{S})$.
\end{lemma}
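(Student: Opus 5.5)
The plan is to mimic the proof of Lemma~\ref{lem:calculate-zeta0} inside the Day convolution category $\Fun(\Fin,\mc{S})$. The key point is that the Day convolution of representables is representable: with $R^K=\map(K,-)$ we have $R^K\otimes R^L=R^{K\star L}$. The tensor product of $\Fun(\Fin,\mc S)$ is only relevant through the definition of $\Id^{\star n+1}$, which is the $(n+1)$\=/fold fiberwise join power, \ie the join in $\Fun(\Fin,\mc S)$ computed objectwise from the join product in $\mc S$. Indeed, the map $z_{n+1}:\Id^{\star n+1}\to 1$ is defined as $(\Id\to 1)^{\square n+1}$, the pushout product for the cartesian structure (Remark~\ref{rem:pp}). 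Since limits and colimits in $\Fun(\Fin,\mc S)$ are computed pointwise, evaluating at a finite space $K$ gives $\Id^{\star n+1}(K)=K^{\star n+1}$.

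Next I apply Lemma~\ref{lem:calculate-zeta0} pointwise. It gives $K^{\star n+1}=\colim_{\emptyset\neq U\subset \ul{n+1}} K^{U}$, and this identification is natural in $K$, since it comes from the cube $(\Id\to1)^{\boxtimes n+1}$ whose vertex at $U\subsetneq\ul{n+1}$ is the functor $K\mapsto K^{\complement U}$. So at the level of functors,
\[
\Id^{\star n+1}\cong\colim_{\emptyset\neq U\subset\ul{n+1}}\Id^{U},
\]
where $\Id^U$ denotes $K\mapsto K^U$. Treating $U$ as a finite discrete space, we have $K^U=\map(U,K)$, so $\Id^U=\map(U,-)$ is the representable functor $R^U$ (with $U\in\Fin$). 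The transition maps of the diagram are induced by the inclusions $U\subset U'$ and are restrictions $\map(U',-)\to\map(U,-)$; this is exactly the Yoneda image of the inclusions, which confirms that the diagram is a diagram of representables.

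For finite presentability, the indexing poset of nonempty subsets of $\ul{n+1}$ is finite, and each term is representable. Hence $\Id^{\star n+1}$ is a finite colimit of representables, which is finitely presentable in the sense of the Example following Definition~\ref{omega-pres}, and in particular compact.

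The main obstacle is the first step: being careful that the join power in the statement is the pointwise (cartesian) join, not a Day convolution power. I also need to check that the naturality in $K$ of Lemma~\ref{lem:calculate-zeta0} holds coherently. I would handle the latter by noting that the cube $(\Id\to 1)^{\boxtimes n+1}$ is itself a functor $\mc P(\ul{n+1})\to\Fun(\Fin,\mc S)$, and that its cogap map is computed objectwise by Proposition~\ref{prop:Heine-rules}.
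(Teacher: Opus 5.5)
Your proof is correct and is essentially the paper's argument, which simply applies Lemma~\ref{lem:calculate-zeta0} objectwise and reads off $K^U=\map(U,K)$ to get a finite colimit of representables. You are right to read $\Id^{\star n+1}$ as the pointwise join $(\Id\to 1)^{\square n+1}$ rather than a Day convolution power, so your opening remark about Day convolution of representables is never actually used.
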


\begin{proof} 
This follows from Lemma~\ref{lem:calculate-zeta0}.
\end{proof}

Since the internal hom here is taken with respect to the Day convolution product coming from the join on $\Fin$, and is not the usual internal hom on $\Fun(\Fin, \mc{S})$. For every $F:\Fin\to \mc{S}$ we have 
\begin{equation*} 
  T_nF=[\Id^{\star n+1}, F]= \lim_{\emptyset\neq U\subset \ul{n+1}} F(U\star -)
\end{equation*}
and the canonical map $z_{n+1}:\Id^{\star n+1}\to 1$ yields a natural transformation
\[
t_n(F)=[z_{n+1},F]: F \to T_n(F)\,.
\]
As in Construction~\ref{def:PZ}  let $P_n$ be the endofunctor of $\Fun(\Fin, \mc{S})$ defined by 
\begin{equation}\label{def:Good-Pn} 
   P_n:=\colim \bigl( \Id \xto{t_n} T_n\xto{t_nT_n} T_n^2\xto{t_nT_n^2} T_n^3\xto{t_nT_n^3} T_n^4\to \hdots \bigr) 
\end{equation} 
and let $p_n:\Id \to P_n$ be the canonical map. 

\subsection{The map \texorpdfstring{$z_{n+1}$}{zn+1} is tidy}
\label{subsection:unpointed-Goodwillie}

This will be proved in Lemma~\ref{r-star-is-On-hence-Pn-equivalence137} and follows from connectivity estimates. We define a map to be $n$-connected if it is in the acyclic class generated by the map $S^{n+1}\to 1$. In the category \mc{S} this is equivalent to requiring that every homotopy fiber is $n$-connected as a space, see~\cite[Section 3.3]{ABFJ:gbm}. 
{\it This differs from the classical definition in homotopy theory!}

\begin{definition}[{\cite[Definition 1.2]{Good03}}]
\label{def:On}
A map $\alpha:F\to G$ in $\Fun(\Fin, \mc{S})$ is said to satisfy condition $O_n(c,\kappa)$ for $c$ in $\mb{Z}$ and $\kappa \geq -2$  if the connectivity of the map $\alpha(K):F(K)\to G(K)$ is $\geq (n+1)k-c$ for every $K$ in $\Fin$ of connectivity $k \geq \kappa$.
\end{definition}

\begin{proposition}[{\cite[Proposition 1.6]{Good03}}]
\label{prop:Pn-and-On}
If a map $\alpha:F\to G$ in $\Fun(\Fin, \mc{S})$ satisfies condition $O_n(c, \kappa)$ for some $c$ and $\kappa$, then $P_n(\alpha):P_nF\to P_nG$ is invertible. 
\end{proposition}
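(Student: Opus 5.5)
The plan is Goodwillie's classical argument: the connectivity of $\alpha$ improves by a fixed amount under each application of $T_n$, so it tends to infinity in the colimit defining $P_n$.

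\textbf{Step 1: how $T_n$ changes connectivity and the connectivity range.} By the formula for $T_n$ computed above, $T_nF(K)=\lim_{\emptyset\neq U\subset \ul{n+1}} F(U\star K)$. For $K$ of connectivity $k$, each join $U\star K$ with $U$ nonempty of cardinality $j$ has connectivity at least $k+1$; indeed $U\star K$ is a wedge of $j-1$ copies of $\Sigma K$. Let $\alpha$ satisfy $O_n(c,\kappa)$. Then every component $\alpha(U\star K)$ is at least $((n+1)(k+1)-c)$-connected. A limit over the punctured $(n+1)$-cube of $(n+1)$-fold intersections loses at most $n$ in connectivity, by the standard estimate that a holim over the punctured cube of $m$-connected maps is $(m-n)$-connected. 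Hence $T_n\alpha(K)$ has connectivity at least
\[
(n+1)(k+1)-c-n=(n+1)k-(c-1).
\]
This estimate holds as soon as $k+1\geq\kappa$, i.e.\ for $k\geq\kappa-1$. So $T_n\alpha$ satisfies $O_n(c-1,\kappa-1)$. Iterating, $T_n^r\alpha$ satisfies $O_n(c-r,\kappa-r)$. In particular, for fixed $K$ its connectivity at $K$ grows without bound in $r$: it exceeds $r-c+\text{const}$ once $\kappa-r\leq -2$.

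\textbf{Step 2: pass to the colimit.} $P_n\alpha$ is the sequential colimit of the maps $T_n^r\alpha$, computed objectwise by Lemma~\ref{lem:Qisgood}. A sequential colimit of maps whose connectivities at a fixed $K$ tend to infinity is an equivalence, because homotopy groups of the fibers commute with filtered colimits. Concretely, any lifting problem against $S^m\to 1$ factors through a finite stage, and there the map is $m$-connected. Hence $P_n\alpha(K)$ is invertible for every $K$, so $P_n\alpha$ is invertible.

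\textbf{Main obstacle.} The delicate step is the cube estimate in Step 1: the connectivity loss of the limit of the punctured $(n+1)$-cube. One must also adapt the index conventions to the paper's non-classical notion of $n$-connectedness. I would argue via fibers: the fiber of $T_n\alpha(K)$ is a finite limit over the punctured cube, which has dimension $n$, of the fibers of the $\alpha(U\star K)$. A limit indexed by a category of dimension $n$ lowers connectivity by at most $n$. The exact additive constants do not matter, only that the shift is uniform in $r$.
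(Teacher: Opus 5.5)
Your proposal is correct and is essentially the argument the paper defers to Goodwillie for, and which it mirrors in its own Lemma~\ref{prop:PnisoW} for orthogonal calculus: $T_n$ carries $O_n(c,\kappa)$ to $O_n(c-1,\kappa-1)$ because each $U\star K$ is $(k+1)$-connected and the punctured $(n+1)$-cube costs at most $n$, and then connectivity tends to infinity along the objectwise sequential colimit. Your dimension-count bound on the limit is the same estimate the paper's quoted cube lemmas give: Lemma~\ref{lem:cubes-cartesian17} shows that the map of cartesian cubes with initial vertices $T_nF(K)\to T_nG(K)$ is cartesian, and the $(n+1)$-cube version of Lemma~\ref{lem:cubes-cartesian2} then applies.
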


Although this was stated in~\cite{Good03}, the proof relies only on connectivity estimates that Goodwillie had developped earlier. Note that if $\alpha:\mc{X}\to\mc{Y}$ is a map of $n$\=/cubes, then it can be viewed as an ${(n+1)}$\=/cube which we denote $[\alpha]$.
\begin{lemma}[{\cite[Proposition 1.6]{Good92}}]  
\label{lem:cubes-cartesian17}
Let $\alpha:\mc{X}\to\mc{Y}$ be a map of $n$\=/cubes in $\mc{S}$.
\begin{enumerate}[label={\rm (\roman*)}]
  \item
If the ${(n+1)}$\=/cube $[\alpha]$ is $k$\=/cartesian and $\mc{Y}$ is $k$\=/cartesian, then $\mc{X}$ is $k$\=/cartesian.
  \item
If $\mc{X}$ is $k$\=/cartesian and $\mc{Y}$ is ${(k+1)}$\=/cartesian, then $[\alpha]$ is $k$\=/cartesian.
\end{enumerate}
\end{lemma}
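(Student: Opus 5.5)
The plan is to reduce both statements to elementary facts about the connectivity of composites of maps, by factoring the cartesian gap map of the smaller cube through that of $[\alpha]$. Recall that an $m$\=/cube $\chi:\mc P(\ul m)\to\mc S$ is $k$\=/cartesian when its cartesian gap map $\chi(\emptyset)\to\lim_{\emptyset\neq U\subset\ul m}\chi(U)$ is $k$\=/connected. Here $k$\=/connected is meant in the paper's sense: every homotopy fiber is $k$\=/connected.

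First I will view $[\alpha]$ as an $(n+1)$\=/cube indexed by pairs $(U,i)$ with $U\subset\ul n$ and $i\in\{0,1\}$, where $[\alpha](U,0)=\mc X(U)$ and $[\alpha](U,1)=\mc Y(U)$. Its initial vertex is $\mc X(\emptyset)$. Write $\lambda\mc X:=\lim_{U\neq\emptyset}\mc X(U)$, and similarly $\lambda\mc Y$. A cofinality / iterated-limit computation over the punctured cube $\mc P(\ul{n+1})\setminus\{\emptyset\}$ identifies the limit of the punctured $(n+1)$\=/cube with the pullback
\[
Q:=\mc Y(\emptyset)\times_{\lambda\mc Y}\lambda\mc X .
\]
Here one splits the index poset into the part with $i=0$ and the part with $i=1$. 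With this identification, the gap map of $[\alpha]$ is the canonical map $g:\mc X(\emptyset)\to Q$.

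Next I will note that the gap map of $\mc X$, namely $\mc X(\emptyset)\to\lambda\mc X$, factors as $b\circ g$. Here $b:Q\to\lambda\mc X$ is the projection, and $b$ is the base change of the gap map $\mc Y(\emptyset)\to\lambda\mc Y$ along $\lambda\mc X\to\lambda\mc Y$. Since base change preserves fibers, $b$ has connectivity at least that of the gap map of $\mc Y$. It then suffices to prove two statements about a composite $b\circ g$:
\begin{itemize}
\item if $g$ and $b$ are $k$\=/connected, then $b\circ g$ is $k$\=/connected;
\item if $b\circ g$ is $k$\=/connected and $b$ is $(k+1)$\=/connected, then $g$ is $k$\=/connected.
\end{itemize}

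Both follow from the fiber sequence $\mathrm{fib}(g)\to\mathrm{fib}(b g)\to\mathrm{fib}(b)$, taken over each basepoint, together with its long exact sequence of homotopy groups. Equivalently, they follow from the fact that $k$\=/connected maps form the left class of a factorization system with the usual cancellation property. For the second statement, the sequence $\Omega\,\mathrm{fib}(b)\to\mathrm{fib}(g)\to\mathrm{fib}(bg)$ has outer terms $k$\=/connected, so the middle term is $k$\=/connected.

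The main obstacle is the identification of the limit of the punctured $(n+1)$\=/cube with $Q$ in a clean $\infty$\=/categorical way. I would handle it by writing the punctured cube poset as a pushout of posets and applying the fact that limits over such a decomposition are pullbacks of limits. Alternatively, I can cite the corresponding iterated-total-fiber argument in \cite{Good92}. Some care is also needed with basepoints when fibers are empty, but the convention that $k\geq -2$, together with fiberwise reasoning, takes care of this.
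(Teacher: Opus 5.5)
The paper gives no proof of this lemma; it quotes it from \cite[Proposition~1.6]{Good92}. Your argument is correct and self-contained, and it is the standard argument for that proposition. You factor the gap map of $\mc{X}$ as $b\circ g$ through $Q=\mc{Y}(\emptyset)\times_{\lambda\mc{Y}}\lambda\mc{X}$, where $g$ is the gap map of $[\alpha]$ and $b$ is a base change of the gap map of $\mc{Y}$. Both statements then reduce to connectivity of composites.

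Your proposed decomposition of the punctured cube works as stated. The poset $\mc{P}(\ul{n+1})\setminus\{\emptyset\}$ is the union of $(\mc{P}(\ul{n})\setminus\{\emptyset\})\times[1]$ and $\mc{P}(\ul{n})\times\{1\}$, with intersection $(\mc{P}(\ul{n})\setminus\{\emptyset\})\times\{1\}$. Since $(\emptyset,1)$ is incomparable with every $(U,0)$, every chain lies in one of the two pieces. So the nerve is a pushout of simplicial sets along a monomorphism, and the limit is the pullback $\lambda\mc{X}\times_{\lambda\mc{Y}}\mc{Y}(\emptyset)$, as you want.

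One correction: your second bullet does not follow from the usual cancellation property of a left class. That property says $gf\in\mc{L}$ and $f\in\mc{L}$ imply $g\in\mc{L}$. Here you need the degree shift: the diagonal of a $(k+1)$\=/connected map is $k$\=/connected. This is exactly what your fiber sequence $\Omega\,\mathrm{fib}(b)\to\mathrm{fib}(g)\to\mathrm{fib}(bg)$ supplies, including nonemptiness of $\mathrm{fib}(g)$ when $k=-1$. So rely on that argument rather than on the ``equivalently'' clause.

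Finally, both statements are invariant under a uniform shift of $k$. It therefore does not matter whether you use the paper's convention for $k$\=/connected maps or Goodwillie's classical one.
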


The next statement is Goodwillie's \cite[Theorem 1.20]{Good92} simplified to the case $T=\ul{1}$.
\begin{lemma} \label{lem:cubes-cartesian2}
Let $\alpha:\mc{X}\to\mc{Y}$ be a morphism of $n$\=/cubes in \mc{S}. Suppose that the ${(n+1)}$\=/cube $[\alpha]$ is $k$\=/cartesian and that the map $\alpha(U)$ is ${(k+|U|-1)}$\=/connected for every non-empty subset $U\subset \ul{n}$. Then the map 
$\alpha(\emptyset)$ is $k$\=/connected.
\end{lemma}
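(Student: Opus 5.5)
The plan is to split $\alpha(\emptyset)$ into two maps: the cartesian gap map of $[\alpha]$, which the first hypothesis controls, and a base change of the induced map on limits of punctured cubes, which the connectivity hypotheses on the $\alpha(U)$ control. Throughout, ``$k$\=/connected'' is taken in the paper's convention (all homotopy fibers $k$\=/connected). Write $\mc{P}_0(\ul{n})$ for the poset of non-empty subsets of $\ul{n}$, and set $L_{\mc X}:=\lim_{U\in\mc{P}_0(\ul n)}\mc X(U)$ and $L_{\mc Y}:=\lim_{U\in\mc{P}_0(\ul n)}\mc Y(U)$. Let $\lambda:L_{\mc X}\to L_{\mc Y}$ be the map induced by $\alpha$.

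\textbf{Step 1: factor $\alpha(\emptyset)$.} The punctured $(n+1)$\=/cube of $[\alpha]$ has limit $\mc Y(\emptyset)\times_{L_{\mc Y}} L_{\mc X}$. Hence the cartesian gap map of $[\alpha]$ is the canonical map
\[
g:\mc X(\emptyset)\to \mc Y(\emptyset)\times_{L_{\mc Y}} L_{\mc X}.
\]
This gives the factorization $\alpha(\emptyset)=\pi\circ g$, where $\pi$ is the projection to $\mc Y(\emptyset)$. The map $\pi$ is the base change of $\lambda$ along $\mc Y(\emptyset)\to L_{\mc Y}$. The map $g$ is $k$\=/connected by hypothesis. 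Base change preserves $k$\=/connected maps, since fibers are unchanged, and composites of $k$\=/connected maps are $k$\=/connected. So it suffices to show that $\lambda$ is $k$\=/connected.

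\textbf{Step 2: a claim about punctured cubes.} I will prove the following by induction on $n$, for all $k$ simultaneously. If a map of $n$\=/cubes has $\alpha(U)$ $(k+|U|-1)$\=/connected for every non-empty $U$, then $\lambda:L_{\mc X}\to L_{\mc Y}$ is $k$\=/connected. For $n=0$ both limits are the terminal object, so $\lambda$ is invertible.

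For the inductive step, split $\mc P_0(\ul n)$ according to whether $n\in U$. Since $\{n\}$ is initial among the subsets containing $n$, this yields, naturally in the cube, a pullback
\[
L_{\mc X}\cong A_{\mc X}\times_{C_{\mc X}} \mc X(\{n\}).
\]
Here $A_{\mc X}=\lim_{\emptyset\neq V\subset\ul{n-1}}\mc X(V)$ and $C_{\mc X}=\lim_{\emptyset\neq V\subset \ul{n-1}}\mc X(V\cup\{n\})$. The same holds for $\mc Y$. The three component maps are then controlled as follows.
\begin{enumerate}[label={\rm (\alph*)}]
\item The map $A_{\mc X}\to A_{\mc Y}$ is $k$\=/connected by induction applied to the restricted $(n-1)$\=/cube.
\item The map $\mc X(\{n\})\to \mc Y(\{n\})$ is $k$\=/connected by hypothesis, since $|\{n\}|=1$.
\item The map $C_{\mc X}\to C_{\mc Y}$ is $(k+1)$\=/connected. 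Indeed, for non-empty $V$ the map $\alpha(V\cup\{n\})$ is $((k+1)+|V|-1)$\=/connected, so induction applies with $k+1$ in place of $k$.
\end{enumerate}

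\textbf{Step 3: the main obstacle.} The one remaining ingredient is the standard gluing estimate for pullbacks. Given a map of cospans $A\to C\leftarrow B$ whose maps on $A$ and $B$ are $k$\=/connected and whose map on $C$ is $(k+1)$\=/connected, the induced map of pullbacks is $k$\=/connected. To prove it, take the fiber over a point of $A'\times_{C'}B'$. It is a pullback of the fibers $F_A\to F_C\leftarrow F_B$, where $F_C$ is a fiber over a point of the path space, i.e.\ the fiber of $\Omega$\=/type data. Concretely, factor the comparison as $A\times_C B\to A\times_{C'}B\to A'\times_{C'}B'$. The second map is a product of base changes of the maps on $A$ and $B$, hence $k$\=/connected. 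The first map is a base change of the diagonal-type map $C\to C\times_{C'} C$, whose fibers are loop spaces of the fibers of $C\to C'$, hence $k$\=/connected.

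Granting this, Step 2 gives that $\lambda$ is $k$\=/connected, and Step 1 then gives that $\alpha(\emptyset)$ is $k$\=/connected. The connectivity bookkeeping in Step 3 is where I expect to have to be careful with the paper's indexing convention, so the main work is checking it there.
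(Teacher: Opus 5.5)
Your argument is correct. The paper gives no proof of this lemma; it imports it as the case $T=\ul{1}$ of Goodwillie's \cite[Theorem 1.20]{Good92}. So your route is a genuinely self-contained alternative. You factor $\alpha(\emptyset)$ as the cartesian gap map of $[\alpha]$ followed by a base change of the map $\lambda$ between limits of punctured cubes. You then prove, by induction on $n$ and for all $k$ at once, that $\lambda$ is $k$\=/connected, using the splitting $L_{\mc X}\simeq A_{\mc X}\times_{C_{\mc X}}\mc X(\{n\})$ and the pullback gluing estimate. The index shift in (c), applying the inductive hypothesis with $k+1$ to the face $V\mapsto\mc X(V\cup\{n\})$, is exactly what the hypotheses $(k+|U|-1)$ are designed for.

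What the citation buys is brevity and Goodwillie's more general statement for arbitrary $T$. What your argument buys is transparency: it uses only three facts. These are closure of $k$\=/connected maps under base change, closure under composition, and that the diagonal of a $(k+1)$\=/connected map is $k$\=/connected. All three hold in any \oo topos with connectivity defined through the acyclic class of $S^{k+1}\to 1$, as in the paper's framework, so your proof transfers there verbatim.

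Your closing worry about indexing is unfounded. Each of the three facts holds with the same indices in the classical convention and in the paper's fiberwise convention. The statement of the lemma is itself invariant under the global shift $k\mapsto k-1$ between the two conventions.

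Some cleanup in Step 3:
\begin{itemize}
\item Delete the first, garbled sentence about ``$\Omega$\=/type data'' and keep only the factorization argument.
\item The map $A\times_{C'}B\to A'\times_{C'}B'$ is a composite, not a product, of a base change of $A\to A'$ and a base change of $B\to B'$.
\item Identifying the fibers of the diagonal $C\to C\times_{C'}C$ with loop spaces of the fibers of $C\to C'$ uses that those fibers are connected. This holds as soon as $k\geq -1$; for $k=-2$ there is nothing to prove.
\end{itemize}
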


\begin{lemma}\label{r-star-is-On-hence-Pn-equivalence137}
The map $z_{n+1}:\Id^{\star n+1} \to 1$ is tidy. 
\end{lemma}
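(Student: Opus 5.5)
Tidyness has two parts: compactness of $Z=\Id^{\star n+1}$ and invertibility of $P_n(z_{n+1})$.

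Compactness is immediate from Lemma~\ref{lem:calculate-zeta2}. There $\Id^{\star n+1}$ is written as a finite colimit of the representables $\map(U,-)$, so it is finitely presentable and hence compact in $\Fun(\Fin,\mc S)$.

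For the invertibility of $P_n(z_{n+1})$, where $P_n$ is the functor of~\eqref{def:Good-Pn}, I will use Proposition~\ref{prop:Pn-and-On}. It suffices to show that $z_{n+1}:\Id^{\star n+1}\to 1$ satisfies condition $O_n(c,\kappa)$ for some $c$ and $\kappa$. Evaluated at a finite space $K$, the map is $K^{\star n+1}\to 1$. By Lemma~\ref{lem:calculate-zeta0}, $K^{\star n+1}$ is the iterated join of $n+1$ copies of $K$.

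The key step is the classical connectivity estimate for joins. If $K$ is $k$\=/connected in the classical sense, then $K\star L$ is $(k+l+2)$\=/connected, since $A\star B\simeq\Sigma(A\wedge B)$ for pointed connected spaces. Induction then gives that $K^{\star n+1}$ is $((n+1)k+2n)$\=/connected classically. For $K$ empty, the join is empty, and we exclude this case by taking $\kappa\ge -1$.

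We then translate this to the paper's convention for connectivity of maps, which is shifted relative to the classical one (see the warning before Definition~\ref{def:On}). With that translation, the map $K^{\star n+1}\to 1$ has connectivity at least $(n+1)k-c$ for a fixed constant $c$ (for instance $c=1$, or any sufficiently large $c$) whenever $K$ has connectivity $k\ge\kappa$ with $\kappa=0$. Thus $z_{n+1}$ satisfies $O_n(c,0)$.

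Proposition~\ref{prop:Pn-and-On} then shows that $P_n(z_{n+1})$ is invertible. This requires checking that the functor $P_n$ of~\eqref{def:Good-Pn} coincides with Goodwillie's $P_n$, or at least that Goodwillie's argument applies to it. That argument needs the formula $T_nF=\lim_{\emptyset\neq U\subset\ul{n+1}}F(U\star-)$, and $U\star K$ is the fiberwise-join cube Goodwillie uses; this matches his $T_n$ exactly.

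The main obstacle is this identification of $P_n$ with Goodwillie's construction, together with careful bookkeeping of the connectivity conventions; the estimate itself is routine. If the identification were problematic, I would instead rerun Goodwillie's argument directly, using Lemmas~\ref{lem:cubes-cartesian17} and~\ref{lem:cubes-cartesian2}. The plan there is to show that $T_n$ raises the $O_n$ constant: if $\alpha$ satisfies $O_n(c,\kappa)$, then $T_n\alpha$ satisfies $O_n(c-1,\kappa-1)$. Passing to the colimit over the sequence then makes $P_n\alpha(K)$ $\infty$\=/connected, hence invertible.
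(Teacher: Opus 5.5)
Your proposal is correct and is essentially the paper's proof. The paper also evaluates $z_{n+1}$ at a $k$-connected finite space $K$, uses that $K^{\star n+1}\to 1$ is $((n+1)k+2n)$-connected, and concludes by Proposition~\ref{prop:Pn-and-On}, which it applies directly to the $P_n$ of~\eqref{def:Good-Pn}, so the identification you flag as the main obstacle is not one. In the paper's convention the map $X\to 1$ has exactly the connectivity of the space $X$, so no translation of conventions is needed. The differences are cosmetic: you make the compactness of $\Id^{\star n+1}$ explicit via Lemma~\ref{lem:calculate-zeta2}, which the paper leaves implicit, and you use $O_n(1,0)$ where the paper records the sharper $O_n(-2n,-1)$, but any constants suffice.
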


\begin{proof} 
When we evaluate the map $z_{n+1}: \Id^{\star n+1}\to 1$ at a $k$\=/connected finite space $K$, the resulting map $K^{\star n+1}\to 1$ is $((n+1)k+2n)$\=/connected. Hence the map $z_{n+1}$ satisfies condition $O_n(-2n,-1)$. By Proposition~\ref{prop:Pn-and-On} the map $P_n(z_{n+1})$ is invertible.
\end{proof}

\begin{theorem}[Goodwillie] \label{thm:Pn-reflects-onto-Tn} 
The functor $P_n$ defined in~\eqref{def:Good-Pn} is a left exact reflector onto the subcategory of $P_n$\=/closed objects of $\Fun(\Fin,\mc{S})$. The $P_n$\=/closed objects are exactly the $n$\=/excisive functors. 
\end{theorem}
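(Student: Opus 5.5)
The plan is to combine the tidiness of $z_{n+1}$ (Lemma~\ref{r-star-is-On-hence-Pn-equivalence137}) with Theorem~\ref{thm:monoidal-localization}, and then identify the closed objects by hand. Since $\Fun(\Fin,\mc{S})$ with Day convolution of the join product is confined (Example~\ref{exam:confined-sym-mon-cats}\ref{exam:confined-sym-mon-cats:6}) and $z_{n+1}$ is tidy, Theorem~\ref{thm:monoidal-localization}\ref{thm:monoidal-localization:3} gives at once that $P_n$ is idempotent and reflects onto the $P_n$\=/closed objects. It also shows that the reflector is \mc{V}\=/left exact, so in particular it preserves finite limits. This settles the first sentence of the statement, and nothing more is needed there.

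For the second sentence, Theorem~\ref{thm:monoidal-localization}\ref{thm:monoidal-localization:1} reduces $P_n$\=/closedness to $T_n$\=/closedness. So $F$ is $P_n$\=/closed if and only if
\[
t_n(F): F \longrightarrow T_nF=\lim_{\emptyset\neq U\subset \ul{n+1}} F(U\star -)
\]
is invertible. Evaluated at $K$ in $\Fin$, this is the cartesian gap map of the strongly cocartesian $(n+1)$\=/cube $U\mapsto U\star K$, which sends $U$ to the join of $K$ with $|U|$ points. I will use the formula for $T_nF$ displayed before Definition~\ref{def:On}, obtained from Lemma~\ref{lem:calculate-zeta2} together with the enriched Yoneda identity $[\map(U,-),F]=F(U\star -)$ for Day convolution.

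The direction "$n$\=/excisive implies $T_n$\=/closed" is then immediate. The cube $U\mapsto U\star K$ is strongly cocartesian, because each edge $U\star K\to (U\sqcup\{i\})\star K$ is a pushout of $K\to 1$ along $U\star K \leftarrow K$. So an $n$\=/excisive functor takes it to a cartesian cube, and hence $t_n(F)$ is invertible.

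The converse is the main obstacle: $T_n$\=/closedness only asks that $F$ send these particular "fiberwise cone" cubes to cartesian ones, and I must show this forces $F$ to send every strongly cocartesian $(n+1)$\=/cube to a cartesian one. I would follow Goodwillie's argument \cite{Good03}. The first step is that a $T_n$\=/closed $F$ equals $P_nF$, and $P_nF$ is the sequential colimit $\colim_k T_n^kF$. The second step is Goodwillie's lemma that for a strongly cocartesian cube $\mc{X}$, the map of cubes $F\mc{X}\to T_nF\mc{X}$ factors through a cartesian cube. This uses that $\mc{X}(S)\star U$ can be rewritten by gluing cones on the $\mc{X}(\emptyset)\to\mc{X}(\{i\})$. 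The third step is that the filtered colimit defining $P_n$ preserves the finite limit computing the cartesian gap map (Lemma~\ref{lem:filt-colim-lex}). Putting these together, $P_nF$ sends strongly cocartesian cubes to cartesian cubes.

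If that combinatorial factorization proves awkward in this $\infty$\=/categorical setting, I would fall back on comparing with Goodwillie's own $P_n$. Both reflectors are built from the same $T_n$ (his $T_nF(K)=\lim F(U\star K)$), so $n$\=/excisiveness of $P_nF$ can be quoted directly from \cite[Lemma 1.8]{Good03}.
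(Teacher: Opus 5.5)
Your proposal is correct and follows the paper's proof. Tidiness of $z_{n+1}$ together with Theorem~\ref{thm:monoidal-localization} gives the left exact reflection, and part~\ref{thm:monoidal-localization:1} reduces $P_n$\=/closedness to $T_n$\=/closedness; the paper then simply cites the identification of $T_n$\=/closed with $n$\=/excisive functors (primarily \cite[Theorem 4.4.5]{ABFJ:product}, alternatively Goodwillie's \cite[Lemma 1.9]{Good03}), whereas you sketch the easy direction and Goodwillie's factorization-through-a-cartesian-cube argument, which is exactly the ``original way'' the paper mentions.
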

 
\begin{proof} 
Lemma~\ref{r-star-is-On-hence-Pn-equivalence137} states that the map $z_{n+1}$ is tidy, so Theorem~\ref{thm:monoidal-localization} applies. 
It remains to identify the $P_n$\=/closed objects with the $n$\=/excisive functors. By Theorem~\ref{thm:monoidal-localization}\,\ref{thm:monoidal-localization:1} a functor in $\Fun(\Fin,\mc{S})$ is $P_n$\=/closed if and only if it is $T_n$\=/closed.
The fact that $T_n$\=/closed objects coincide with $n$\=/excisive functors is proved in~\cite[Theorem 4.4.5]{ABFJ:product} based on the observation in~\cite[Lemma 4.4.2]{ABFJ:product} that all strongly cocartesian cubes are obtained from free cocartesian cubes by cobase change. 

The original way to show that $T_n$\=/closed objects and $n$\=/excisive functors are the same, is Goodwillie's~\cite[Lemma 1.9]{Good03}, see also~\cite{Rezk13}, based on clever manipulations with cubical diagrams.
\end{proof}

Using Lemma~\ref{lem:calculate-zeta2}, we found
\begin{align*}
   \map(K,-) \otimes z_{n+1}  &=\map(K,-)\otimes ( \Id^{\star n+1} \to 1) \\
            &=\left( \map(K,-)\otimes\colim_{\emptyset \neq U\subset \ul{n+1}} \map(U, -) \to \map(K,-) \right) \\               
            &=\left( \colim_{\emptyset \neq U\subset \ul{n+1}} \map(K\star U, -) \to\map(K,-) \right)\,.
\end{align*} 
By Proposition~\ref{prop:enrichedsatgen},
the right orthogonal class of these maps is the class of $T_n$\=/closed maps. 
The local objects are then the $n$\=/excisive functors. This calculation has a nice parallel in orthogonal calculus in~\eqref{eq:nice-parallel}.

\subsection{\texorpdfstring{$\Fun(\Finp,\mc{S})$}{Semi-pointed Goodwillie calculus}}

Let $\Finp$ denote the category of pointed finite spaces together with the smash product.
By Examples~\ref{exam:confined-sym-mon-cats}\,\ref{exam:confined-sym-mon-cats:6}, the category $\Fun(\Finp,\mc{S})$ is a confined symmetric monoidal category for the Day convolution product.

We write $\map_{\ast}(A, -)$ for the functor represented by the pointed space $A$. The functor $\Id_\circ:\Finp\to\mc{S}$ forgetting the base point is represented by the pointed space $S^0$.
Since $S^0$ is the unit of the smash product, $\Id_\circ$ is the unit of the Day convolution product.
The functor $\map_\ast(1,-) = 1$ is the terminal functor $\Finp\to\mc{S}$.
The map $S^0\to 1$ induces a natural transformation 
\[
z_0:1=\map_\ast(1,-)\to\map_{\ast}(S^0, -)=\Id_\circ\,.
\]
For each $n\ge 0$ the square
\begin{equation}\label{eq:def-zn+1}
\begin{tikzcd}
 \Gamma_{n+1}(\Id_\circ)  \ar[rr] 
 \ar[d,"{z_{n+1}:=z_0^{\star n+1}}"'] && {\rm FW}_{n+1}(\Id_\circ) \ar[d,"{z_0^{\square n+1}}"] \\
     \Id_\circ \ar[rr,"\Delta"] && \Id_\circ^{n+1}
\end{tikzcd}
\end{equation}
is cartesian square by Lemma~\ref{lem:fiberwise-join-is-base-change}\,\ref{lem:fiberwise-join-is-base-change:4}. 
We will use this map $z_{n+1}$ to start the machinery of Section~\ref{sec:Quahog}. The tidyness of $z_{n+1}$ is proved in Lemma~\ref{tidy-pointed}.

\begin{lemma}
\label{lem:pointed:calculate-zeta2}
For every $n\geq 0$, the domain of the map $z_{n+1}:(1\to \Id_\circ)^{\star n+1}$ is the functor
\begin{equation*}
   \Gamma_{n+1}(\Id_\circ) = \colim_{\emptyset \neq U\subset \ul{n+1}} \map_{\ast}(\Sigma U,-) \ .
\end{equation*}
In particular, it is a finitely presentable object in $\Fun(\Finp,\mc{S})$.
\end{lemma}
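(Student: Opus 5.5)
The plan is to compute the domain $\Gamma_{n+1}(\Id_\circ)$ objectwise, using the cartesian square~\eqref{eq:def-zn+1}. Colimits and limits in $\Fun(\Finp,\mc S)$ are computed pointwise, so $\Gamma_{n+1}(\Id_\circ)$ evaluated at a pointed finite space $K$ is the fiberwise join over $K$ of $n+1$ copies of the base point inclusion $1\to K$. By Lemma~\ref{lem:fiberwise-join-is-base-change}\,\ref{lem:fiberwise-join-is-base-change:3}, this fiberwise join is the colimit over proper subsets $U\subsetneq\ul{n+1}$ of the strongly cartesian cube $(1\to K)^{\boxtimes_K n+1}$. The value of this cube at $U$ is the iterated fiber product over $K$ of $|\complement U|$ copies of $1$ and $|U|$ copies of $K$.

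The first key step is to identify these values. Whenever $\complement U\neq\emptyset$, the value is $(\Omega K)^{|\complement U|-1}$: a fiber product $1\times_K\cdots\times_K 1$ of $m\ge1$ points is the $(m-1)$\=/fold power of the loop space, and the extra factors of $K$ contribute nothing. Reindexing by $V=\complement U\neq\emptyset$, we get
\[
\Gamma_{n+1}(\Id_\circ)(K)=\colim_{\emptyset\neq V\subset\ul{n+1}} \map_\ast(S^0,K)\times_{K^{V}}1 .
\]
Here the $V$-term is the space of maps from $V$ to $K$ that are constant after composing with the diagonal.

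The second step is to recognise each term as a representable functor. A point of the $V$-term is a tuple of paths between the base points, all indexed compatibly over $V$. This is precisely a pointed map from the unreduced suspension of the finite set $V$, pointed at one cone point, into $K$. That suspension is $\Sigma V$, so the $V$-term is $\map_\ast(\Sigma V,K)$.

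The third step is to check naturality. The maps of the cube, which forget paths, correspond to the maps $\Sigma V\to\Sigma V'$ for $V\subset V'$, and everything is natural in $K$. The Yoneda lemma then assembles these pointwise identifications into an equivalence of diagrams $V\mapsto\map_\ast(\Sigma V,-)$ in $\Fun(\Finp,\mc S)$, which gives the stated formula.

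I expect this naturality to be the main obstacle. The identification must be made coherently in the $\infty$\=/categorical sense, not just termwise. The cleanest way to do this is to note that the cube $V\mapsto\Sigma V$ in $\Finp$ is the pushout cube $S^0\leftarrow V_+\to 1$, taken along the maps $V_+\to S^0$ and $V_+\to 1$. Applying $\map_\ast(-,K)$ to this cube turns pushouts into pullbacks, and the resulting cube is exactly the fiber product cube above. So the coherence comes for free from the functoriality of $\map_\ast(-,K)$.

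Finally, finite presentability is immediate: the colimit is indexed by the finite poset of nonempty subsets of $\ul{n+1}$, and each term is representable.
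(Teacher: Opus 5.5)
Your proposal is correct and takes essentially the paper's route: in both, each vertex of the cube is identified, via Yoneda, with the functor represented by a pushout in $\Finp$, and that pushout is recognised as the unreduced suspension $\Sigma U$ pointed at a cone point. The differences are minor. You reach the cube through Lemma~\ref{lem:fiberwise-join-is-base-change}\,\ref{lem:fiberwise-join-is-base-change:3} and use the smaller pushout $S^0\sqcup_{V_+}1$, whereas the paper base-changes the fat wedge along the diagonal (part~\ref{lem:fiberwise-join-is-base-change:4} plus universality of colimits) and uses the pushout of $(\ul{n+1}\setminus U)_+\leftarrow \ul{n+1}_+\to 1_+$; your coherence argument makes explicit a point the paper leaves implicit.
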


\begin{proof} 
By~\eqref{eq:fat-wedge} the codomain of the iterated pushout product $(1\to \Id_\circ)^{\pp n+1}$ is the fat wedge:
\[
  {\rm FW}_{n+1}(\Id_\circ)= \colim_{\emptyset \neq U\subset \ul{n+1}} \Id_\circ^{\complement U}\ =\ \colim_{\emptyset \neq U\subset \ul{n+1}} \map_{\ast}((\complement U)_+,\Id_\circ)\ .
\]
The codomain of the iterated join $z_{n+1}=(1\to \Id_\circ)^{\star n+1}$ is by Diagram~\eqref{eq:def-zn+1} the base change of this colimit along the diagonal $\Id_\circ \to (\Id_\circ)^{n+1}$. To compute the pullback observe: for $U\subset\ul{n+1}$ the pullback of $(\Id_\circ)^{\ul{n+1} - U}\to (\Id_\circ)^{n+1}\leftarrow \Id_\circ$ is the Yoneda image of the span of pointed spaces $(\ul{n+1} \setminus U)_+\leftarrow \ul{n+1}_+ \to 1_+$, whose pushout is $\Sigma U$. 
This is the unreduced suspension of the unpointed set $U$, pointed at the north pole say.
So the codomain of $z_{n+1}$ is $\Id_\circ^{\Sigma U}$.
Then, by universality of colimits, the codomain of $z_{n+1}$ is $\colim_{\emptyset \neq U\subset \ul{n+1}} \Id_\circ^{\Sigma U}$.
This is a finite colimit of representable functors and proves the last statement.
\end{proof}

As in Construction~\ref{def:PZ} one gets $T_n$ as an endofunctor of $\Fun(\Fin, \mc{S})$ by cotensoring:
\begin{align*}
   T_n(F) &:=\left[\colim_{\emptyset \neq U\subset \ul{n+1}} \map_{\ast}(\Sigma U,-), F\right]\\
         & = \lim_{\emptyset \neq U\subset \ul{n+1}} F(\Sigma U\wedge -) \\
         & = \lim_{\emptyset \neq U\subset \ul{n+1}} F(U\star -). 
\end{align*}
using the natural isomorphism $A\wedge \Sigma U = A\star U$ for a pointed space $A$ and an unpointed space $U$. Again Construction~\ref{def:PZ} yields $P_n$ which takes the same form as in the previous section.

\begin{lemma}
\label{tidy-pointed}
The map $z_{n+1}:(1\to \Id_\circ)^{\star n+1}$ is tidy. 
\end{lemma}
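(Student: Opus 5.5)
The plan mirrors the proof of Lemma~\ref{r-star-is-On-hence-Pn-equivalence137}. Two things must be checked: that the domain $\Gamma_{n+1}(\Id_\circ)$ is compact, and that $P_n(z_{n+1})$ is invertible. Compactness is already settled by Lemma~\ref{lem:pointed:calculate-zeta2}, since it shows $\Gamma_{n+1}(\Id_\circ)$ is a finite colimit of representables $\map_\ast(\Sigma U,-)$. The whole task is therefore to show that $P_n(z_{n+1})$ is an isomorphism.

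For this I would use an analogue of Goodwillie's criterion, Proposition~\ref{prop:Pn-and-On}, for functors $\Finp\to\mc S$. As computed above, $T_n(F)=\lim_{\emptyset\neq U\subset\ul{n+1}}F(U\star -)$ has exactly the same shape as in the unpointed case. Goodwillie's argument (\cite[Proposition 1.6]{Good03}) only uses that $T_n$ improves connectivity estimates via Lemma~\ref{lem:cubes-cartesian17} and Lemma~\ref{lem:cubes-cartesian2}, applied to the cubes $U\mapsto K\star U$ and their evaluations. It therefore transfers verbatim to the pointed-domain setting. Granting that, I need to check that $z_{n+1}$ satisfies a condition $O_n(c,\kappa)$ in the pointed sense.

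To do so, evaluate at a $k$-connected pointed finite space $K$. By the cartesian square \eqref{eq:def-zn+1}, $z_{n+1}(K)$ is the fiberwise join over $K$ of $n+1$ copies of the map $1\to K$ picking the basepoint. Its homotopy fiber over a point of $K$ is the join of $n+1$ copies of the homotopy fiber of $1\to K$, which is the path space, i.e. $\Omega K$ up to homotopy. The fiber of $1\to K$ is $(k-1)$-connected. A join of $n+1$ spaces that are each $(k-1)$-connected is $((n+1)(k-1)+2n)=((n+1)k+n-1)$-connected. Hence $z_{n+1}(K)$ is $((n+1)k+n-1)$-connected, so $z_{n+1}$ satisfies $O_n(1-n,0)$, or $O_n(1-n,-1)$ with the conventions of the paper's connectivity. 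Proposition~\ref{prop:Pn-and-On}, in its pointed form, then gives that $P_n(z_{n+1})$ is invertible, so $z_{n+1}$ is tidy.

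I expect the main obstacle to be justifying the pointed version of Proposition~\ref{prop:Pn-and-On}, since the paper only quotes it for $\Fun(\Fin,\mc S)$. If a direct transcription is unconvincing, I would first try to reduce to the unpointed case. Restriction along $\Fin\to\Finp$, $K\mapsto K_+$, does not intertwine the $T_n$. Instead I would use the forgetful functor $\Finp\to\Fin$: the formula $T_nF(K)=\lim_U F(U\star K)$ depends only on the underlying space of $K$ together with its canonical basepoint in $U\star K$, so the estimates of \cite{Good03} apply pointwise in $K$. A secondary, minor point is the bookkeeping of connectivity conventions: the paper shifts indices relative to classical homotopy theory, so the constants $c,\kappa$ must be adjusted accordingly. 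Their exact values are irrelevant, since Proposition~\ref{prop:Pn-and-On} only needs some $c$ and $\kappa$.
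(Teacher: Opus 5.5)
Your proposal is correct and follows essentially the same route as the paper. Both arguments invoke Proposition~\ref{prop:Pn-and-On}, justified because $T_n$ (hence $P_n$) has the same form $\lim_{U}F(U\star -)$ as in the unpointed case, and both deduce from the $(k-1)$-connectivity of $1\to K$ that $(1\to K)^{\star n+1}$ is $((n+1)k+n-1)$-connected, i.e.\ that $z_{n+1}$ satisfies $O_n(1-n,-1)$; your only addition is recording compactness of the domain via Lemma~\ref{lem:pointed:calculate-zeta2}, which the paper leaves implicit, and your fallback via the forgetful functor is not needed.
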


\begin{proof} 
Proposition~\ref{prop:Pn-and-On} applies here as well since the $P_n$ obtained from $z_{n+1}$ here is the same as in the previous section. Let $K$ be a $k$\=/connected space for $k\geq -1$.
The map $1\to K$ is ${(k-1)}$\=/connected since its fiber is $\Omega K$.
So the map $(1\to K)^{\star n+1}$ is $\ell$\=/connected for 
\[
\ell \ =\ (n+1)(k-1)+2n \ =\ (n+1)k + n-1
\]
Hence the map $z_{n+1}$ satisfies condition $O_n(-n+1,-1)$ and $P_n(z_{n+1})$ is invertible.
\end{proof}

The result is the analogous of Theorem~\ref{thm:Pn-reflects-onto-Tn}: 
\begin{theorem}[Goodwillie]
\label{thm:pointed-Pn-reflects-onto-Tn} 
Goodwillie's functor $P_n$ is a left exact reflector onto the subcategory of $n$\=/excisive functors in $\Fun(\Finp,\mc{S})$. 
\end{theorem}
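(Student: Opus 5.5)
The plan mirrors the proof of Theorem~\ref{thm:Pn-reflects-onto-Tn}. By Lemma~\ref{tidy-pointed} the map $z_{n+1}=(1\to\Id_\circ)^{\star n+1}$ is tidy in the confined symmetric monoidal category $\Fun(\Finp,\mc{S})$ with Day convolution for the smash product. Theorem~\ref{thm:monoidal-localization} therefore applies verbatim. It shows that the associated $P_n=\colim_k T_n^k$ is a $\mc{V}$\=/left exact, in particular left exact, reflector onto the full subcategory of $P_n$\=/closed objects. By part~\ref{thm:monoidal-localization:1}, these are exactly the $T_n$\=/closed objects.

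We must then check two things. The first is that this $P_n$ is Goodwillie's functor. The computation preceding Lemma~\ref{tidy-pointed}, which uses Lemma~\ref{lem:pointed:calculate-zeta2}, gives
\[
T_nF=\lim_{\emptyset\neq U\subset\ul{n+1}}F(U\star-).
\]
So $t_n:F\to T_nF$ is induced by the cones $K\to U\star K$, and the sequential colimit is Goodwillie's construction, exactly as in the unpointed case.

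The second and main step is to identify the $T_n$\=/closed functors $\Finp\to\mc{S}$ with the $n$\=/excisive ones. One direction is easy. If $F$ is $n$\=/excisive, then for each $K$ the $(n+1)$\=/cube $U\mapsto U\star K$ (with $\emptyset\star K=K$) is strongly cocartesian, since it is built from the cones $K\to C K$ by pushouts. It therefore remains strongly cocartesian in $\Finp$, because pushouts of pointed spaces are computed underlying. Hence its image under $F$ is cartesian, which says precisely that $t_n(F)(K)$ is invertible.

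The converse is the obstacle. I would reproduce the argument of \cite[Lemma 4.4.2, Theorem 4.4.5]{ABFJ:product}: every strongly cocartesian $(n+1)$\=/cube in $\Finp$ is a cobase change of a free one of the form $U\mapsto U\star K$, and $T_n$\=/closedness propagates along such cobase changes. Alternatively, I would invoke Goodwillie's cube manipulations \cite[Lemma 1.9]{Good03}, which work in any category of finite pointed spaces. The point to verify is that the cobase changes can be taken in $\Finp$. This holds because the cubes $U\star K$ and the pushouts involved are formed underlying, with the base point carried along from $K$.

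Combining the two steps gives that $P_n$ is a left exact reflector onto the $n$\=/excisive functors in $\Fun(\Finp,\mc{S})$.
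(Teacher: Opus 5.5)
Your outline matches the paper's, which obtains this theorem as the analogue of Theorem~\ref{thm:Pn-reflects-onto-Tn}: Lemma~\ref{tidy-pointed} plus Theorem~\ref{thm:monoidal-localization} give a left exact reflector onto the $T_n$-closed functors, and \cite[Theorem 4.4.5]{ABFJ:product} or \cite[Lemma 1.9]{Good03} identifies these with the $n$-excisive functors. Your easy direction is correct.

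Your first route for the converse, however, rests on a false claim. Not every strongly cocartesian cube in $\Finp$ is a cobase change of a cube $U\mapsto U\star K$.

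\begin{itemize}
\item Such a cobase change along $K\to Z$ has initial vertex $Z$ and edges $Z\to Z\cup_K CK$.
\item For $Z=\ast$ the target is $\Sigma K$, which is connected because $K\neq\emptyset$.
\item Hence a cube whose edges are $\ast\to S^0$ is not of this form. For $n=0$ this is just the map $\ast\to S^0$.
\end{itemize}

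What does hold is the following. A strongly cocartesian cube with edges $f_i\colon X\to X_i$ is the cobase change, along the fold map $\bigvee_i X\to X$, of the coproduct cube $U\mapsto\bigvee_{i\in U}X_i\vee\bigvee_{i\notin U}X$. Under Yoneda, the associated map becomes the base change along a diagonal of $\map_\ast(f_1,-)\pp\cdots\pp\map_\ast(f_{n+1},-)$, as in Lemma~\ref{lem:fiberwise-join-is-base-change}.

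So on that route you must still show that these pushout products, or directly the associated fiberwise joins, are $P_n$-equivalences. One way is a connectivity estimate combined with Proposition~\ref{prop:Pn-and-On}, as in the proof of Theorem~\ref{thm:ortho-is-compl}. It does not follow from the generators $U\star K$ by base change alone.

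Your fallback via \cite[Lemma 1.9]{Good03} is unaffected by this and completes the argument.
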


\subsection{\texorpdfstring{$\Fun_\ast(\Finp,\mc{S}_{\ast})$}{Pointed Goodwillie calculus}}

We denote by $\mc{S}_{\ast}$ the category of pointed spaces.
We write $\map_{\ast}(-,-)$ for the pointed mapping space.
Let $\Fun_\ast(\Finp,\mc{S}_{\ast})$ be the category of $\mc{S}_{\ast}$\=/enriched functors from finite pointed spaces to pointed spaces. This category is itself $\mc{S}_\ast$\=/enriched and we write ${\rm nat}_{\ast}(-,-)$ for this pointed space of $\mc{S}_\ast$\=/enriched natural transformations.
If we equip both categories $\Finp$ and $\mc{S}_\ast$ with the smash product, $\Fun_\ast(\Finp,\mc{S}_{\ast})$ with the associated Day convolution is a symmetric monoidal closed category. Its unit is the functor $\Id_\ast:=\map_{\ast}(S^0, -)=\one$.

In Subsection~\ref{subsec:csmc-cat} the concept of confined symmetric monoidal category was defined with respect to unpointed mapping spaces. To prove that $\Fun_\ast(\Finp,\mc{S}_{\ast})$ is confined, we consider it as being enriched over \mc{S}. This is possible since forgetting the base point $u:\mc{S}_\ast\to\mc{S}$ is a lax symmetric monoidal and therefore yields an enrichment of $\Fun_{\ast}(\Finp,\mc{S}_{\ast})$ over \mc{S}. It is given by $u({\rm nat}_{\ast}(-,-))$.

\begin{lemma}
The category $\Fun_\ast(\Finp,\mc{S}_{\ast})$ is a confined symmetric monoidal category.
\end{lemma}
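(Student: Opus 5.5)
The plan is to verify the three conditions of Definition~\ref{Def-confined} via Lemma~\ref{sufficientforconfinedtensor}, with the representable functors $\map_\ast(A,-)$, $A\in\Finp$, as the dense subcategory $\mc{D}$. First I would show that $\Fun_\ast(\Finp,\mc{S}_\ast)$, with its underlying $\mc{S}$\=/enrichment $u(\nat_\ast(-,-))$, is $\omega$\=/presentable. I would identify it with the category of reduced functors $\Finp\to\mc{S}$, namely those $F$ with $F(1)\simeq 1$. Since $\Finp$ has a zero object, an $\mc{S}_\ast$\=/enriched functor is the same as an ordinary functor sending the zero object to the point. This full subcategory of $\Fun(\Finp,\mc{S})$ is closed under limits and filtered colimits, because $F\mapsto F(1)$ preserves both and the terminal functor is preserved. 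It is therefore an accessible localization, hence presentable. Its left adjoint is $F\mapsto F\sqcup_{F(1)}1$, the cofiber of the constant functor $F(1)\to F$.

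Next I would show that the representables $\map_\ast(A,-)$ are compact and dense. By the enriched Yoneda lemma, the underlying space of maps is
\[
u\,\nat_\ast(\map_\ast(A,-),F)\simeq F(A)
\]
for reduced $F$. Evaluation at $A$ commutes with filtered colimits, since these are computed objectwise in the reflective subcategory, so each representable is compact. Density follows because a natural transformation $F\to G$ which is invertible after applying $\map(\map_\ast(A,-),-)$ for all $A$ is an objectwise equivalence. By Lemma~\ref{lem:detectinvertible} and Lemma~\ref{lem:nerve-colimit}, the restricted Yoneda functor is fully faithful, so the representables form a small dense subcategory of compact objects and the category is $\omega$\=/presentable.

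Finally, the unit $\Id_\ast=\map_\ast(S^0,-)$ is representable, hence compact. For the tensor, the Day convolution of representables satisfies $\map_\ast(A,-)\otimes\map_\ast(B,-)\simeq\map_\ast(A\wedge B,-)$. This follows from the coend formula exactly as in Example~\ref{ex:Day-action-from-action}, using the enriched Yoneda lemma in the $\mc{S}_\ast$\=/enriched setting. Since $A\wedge B$ is again a finite pointed space, the tensor of two objects of $\mc{D}$ is representable and thus compact. Lemma~\ref{sufficientforconfinedtensor} then gives confinedness. That lemma only needs cocontinuity of $A\otimes-$, which holds because the Day convolution is closed.

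The main obstacle is the interplay between the two enrichments. Compactness and density are defined using the underlying $\mc{S}$\=/valued mapping spaces, while Yoneda and Day convolution live in the $\mc{S}_\ast$\=/enriched world. I would handle this by applying $u$: $u$ preserves limits and filtered colimits, since filtered colimits of pointed spaces are computed on underlying spaces. Hence $u(F(A))$ is the $\mc{S}$\=/valued corepresented value, and everything reduces to the reduced-functor description of the category.
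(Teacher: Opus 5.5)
Your proposal follows essentially the same route as the paper. You identify $\Fun_\ast(\Finp,\mc{S}_\ast)$ with the reduced functors in $\Fun(\Finp,\mc{S})$, observe that this full subcategory is closed under filtered colimits so the representables $\map_\ast(A,-)$ are compact, and conclude with Lemma~\ref{sufficientforconfinedtensor}; you also spell out the presentability, density, unit and Day-convolution-of-representables checks that the paper leaves implicit.

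One small fix is needed in the density step. Density holds because, by your own Yoneda computation $F\mapsto(A\mapsto F(A))$, the restricted Yoneda functor along the representables is exactly the fully faithful inclusion of reduced functors into $\Fun(\Finp,\mc{S})$. It does not follow from conservativity: Lemma~\ref{lem:detectinvertible} derives conservativity from density, not the converse.
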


\begin{proof}
The enriched Yoneda functor $\Finp^{\op}\to\Fun_\ast(\Finp,\mc{S}_{\ast})$ induces a functor
\[
N: \Fun_\ast(\Finp,\mc{S}_{\ast}) \to \Fun(\Finp,\mc{S})\,.
\]
A functor $F$ is reduced if $F(1)=1$ and we denote the full subcategory of reduced functors by a superscript $(-)^{\rm red}$.
Now $N$ can be factored in the following way 
\begin{align*}
  \Fun_\ast(\Finp,\mc{S}_{\ast}) = \Fun(\Finp,\mc{S}_{\ast})^{\rm red} = \Fun(\Finp,\mc{S})^{\rm red}\subset\Fun(\Finp,\mc{S}),
\end{align*}
where we have first two equivalences of categories and then the inclusion of a full subcategory. This inclusion preserves filtered colimits. Hence $N$ preserves filtered colimits. Therefore every representable functor $\map_{\ast}(A,-)$, $A$ pointed, is compact in $\Fun_\ast(\Finp,\mc{S}_{\ast})$.
Now the result follows from Lemma~\ref{sufficientforconfinedtensor}.
\end{proof}

Notice that the maps $z_{n+1}:(1\to \Id_\ast)^{\star n+1}$ of Lemma~\ref{lem:pointed:calculate-zeta2} in $\Fun(\Fin_\ast, \mc S)$ belong the subcategory
$\Fun_\ast(\Fin_\ast, \mc S_\ast)$.
The analogue of the cartesian square~\eqref{eq:def-zn+1} coming from Lemma~\ref{lem:fiberwise-join-is-base-change}\,\ref{lem:fiberwise-join-is-base-change:4} implies that there is an isomorphism
\[
z_{n+1}:(1\to \Id_\ast)^{\star n+1}=\left( \colim_{\emptyset\neq U\subset\ul{n+1}}\map_\ast(\Sigma U,-)\to\Id_\ast  \right)
\]
of maps where $\Sigma$ is the unreduced suspension. Further for any finite pointed space $K$:
\[
\map_\ast(K,-)\otimes z_{n+1} = \left( \colim_{\emptyset\neq U\subset\ul{n+1}}\map_\ast(K\star U,-)\to\map_\ast(K,-)   \right)\,.
\]
It is now clear that the endofunctors $T$ and $P$ of $\Fun(\Finp,\mc{S}_{\ast})$  associated to $z_{n+1}$ described in Construction~\ref{def:PZ} are the functors $T_n$ and $P_n$ constructed by Goodwillie in~\cite{Good03}. 
Now Goodwillie's connectivity estimate in Proposition~\ref{prop:Pn-and-On} can be used to prove the following:

\begin{theorem}\label{thm:very-pointed-Goodwillie} 
The maps $z_{n+1}:(1\to \Id_\ast)^{\star n+1}$ are tidy. Theorem~\ref{thm:monoidal-localization} applies and $P=P_n$ is a confined symmetric monoidal left exact localization. The reflector $P_n$ is Goodwillie's $P_n$.
\end{theorem}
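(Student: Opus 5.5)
We reduce tidyness in $\Fun_\ast(\Finp,\mc{S}_\ast)$ to the semi-pointed case, where Lemma~\ref{tidy-pointed} already settles it. First we check that $z_{n+1}:\Gamma_{n+1}(\Id_\ast)\to\Id_\ast$ has compact domain. By the displayed formula, $\Gamma_{n+1}(\Id_\ast)=\colim_{\emptyset\neq U\subset\ul{n+1}}\map_\ast(\Sigma U,-)$ is a finite colimit of representables. Representables are compact by the proof that $\Fun_\ast(\Finp,\mc{S}_\ast)$ is confined, and compact objects are closed under finite colimits.

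Next we identify the endofunctors. For a reduced functor $F$, the cotensor with a representable is computed as in Example~\ref{ex:Day-action-from-action}: $[\map_\ast(A,-),F]=F(A\wedge -)$. Hence
\[
T_n F=[\Gamma_{n+1}(\Id_\ast),F]=\lim_{\emptyset\neq U\subset\ul{n+1}}F(\Sigma U\wedge -)=\lim_{\emptyset\neq U\subset \ul{n+1}}F(U\star -),
\]
with $t_n$ induced by $z_{n+1}$. This is exactly the formula for $T_n$ on $\Fun(\Finp,\mc{S})$, restricted to reduced functors. The key check is that the forgetful functor $N:\Fun_\ast(\Finp,\mc{S}_\ast)\simeq\Fun(\Finp,\mc{S})^{\rm red}\subset\Fun(\Finp,\mc{S})$ commutes with $T_n$ and $t_n$. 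It does, because the limit formula is objectwise and a finite limit of reduced functors is reduced. Since $N$ also preserves sequential (filtered) colimits, which are computed objectwise, it intertwines the $P_n$ of Construction~\ref{def:PZ} in the two categories. This identifies $P_n$ with Goodwillie's construction from~\cite{Good03}.

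To prove tidyness, we need $P_n(z_{n+1})$ invertible in $\Fun_\ast(\Finp,\mc{S}_\ast)$. Since $N$ is conservative (it is fully faithful) and commutes with $P_n$, it suffices to show that $P_n(Nz_{n+1})$ is invertible in $\Fun(\Finp,\mc{S})$. This follows from the connectivity argument of Lemma~\ref{tidy-pointed}. Evaluated at a $k$-connected finite pointed $K$, the map $Nz_{n+1}(K)$ is the fiberwise join $(1\to K)^{\star n+1}$, which is $((n+1)k+n-1)$-connected. So $Nz_{n+1}$ satisfies $O_n(-n+1,-1)$, and Proposition~\ref{prop:Pn-and-On} applies.

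Theorem~\ref{thm:monoidal-localization} then yields that $P_n$ is a confined symmetric monoidal left exact localization. The main obstacle is the bookkeeping in the middle step. The Day convolution internal hom in the pointed category must be shown to agree with the formula transported along $N$. This requires care because $N$ is not monoidal: the units $\Id_\ast$ and $\Id_\circ$ are identified, but the tensors differ. I would handle it by checking the cotensor on representables via Yoneda and then extending by limits, since $[-,F]$ turns colimits into limits.
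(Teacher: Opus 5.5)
Your argument is correct and is essentially the paper's: the paper likewise regards $z_{n+1}$ as a map of reduced functors inside $\Fun(\Finp,\mc{S})$, identifies $T$ and $P$ with Goodwillie's $T_n$ and $P_n$ by computing on representables, and deduces tidyness from Goodwillie's connectivity estimate (Proposition~\ref{prop:Pn-and-On}) exactly as in Lemma~\ref{tidy-pointed}. Your explicit transport along the fully faithful $N$, which commutes with $T_n$, $t_n$ and sequential colimits, simply spells out that reduction, and your closing worry can be dropped because nothing in the argument depends on whether $N$ is monoidal.
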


\section{Orthogonal calculus revisited}
\label{sec:orthogonal-calculus}

The construction of the orthogonal tower given in Subsection~\ref{subsec:orth-con-est} is really the one from~\cite{Weiss98}, but we are recasting it in the framework of confined symmetric monoidal categories from Section~\ref{sec:Quahog}. Our main motivation though is to prove that orthogonal calculus is a special case of a completion tower developped in~\cite{ABFJ:product}. This is done in Subsection~\ref{subsec:ortho-tower-is-completion-tower}. As a bonus we obtain Blakers-Massey theorems for orthogonal calculus in Subsection~\ref{subsec:blakers-massey}.  

Let us point out that the symmetric monoidal structures appearing in orthogonal calculus were already studied by Hendrian~\cite{hendrian:monoidal-orth-calc}.

A warning on notation: we are using a uniform notation to highlight the parallels between the constructions of orthogonal calculus and Goodwillie calculus, but this conflicts with Weiss' choice who denotes our functor $T_n$ by $\tau_n$, and our $P_n$ by $T_n$. 

Let $\mc{J}$ be the category of finite dimensional Euclidean vector spaces. The space of morphisms in \mc{J} is the Stiefel manifold of linear isometries from $U$ to $V$. We are going to denote this by $\mc{J}(U,V)=\st(U,V)$ following Weiss. 
The orthogonal sum equips $\mc{J}$ with a symmetric monoidal structure. This yields an ${(\infty,1)}$\=/symmetric monoidal category (the interested reader can consult~\cite[Proposition/Definition 4.1.1.4]{hendrian:monoidal-orth-calc}).
The category $\Fun(\mc{J},\mc{S})$ can be given the corresponding Day convolution product with the terminal functor $\st(0,-)=1$ as unit. This symmetric monoidal closed category is confined by Examples~\ref{exam:confined-sym-mon-cats}\,\ref{exam:confined-sym-mon-cats:6}.

In \mc{J} there is the inclusion $i_k:\mb{R}^k\to\mb{R}^{k+1}$ using zero for the last coordinate. 
We denote $j_k:\st(\mb{R}^{k+1},-)\to\st(\mb{R}^{k},-)$ the induced maps on representable functors. 
We put $\sph(-)=\st(\mb{R},-):\mc{J} \to \mc{S}$ since this functor sends a vector space to its unit sphere. We feed the map
\[
j_0^{\Box n+1}:\sph(-)^{\star n+1}\to 1
\]
into the setup of Section~\ref{sec:Quahog}. This map is tidy by Proposition~\ref{prop:Pn-zetan-WeissA}. This relies on connectivity estimates proved by Weiss~\cite{Weiss98}.
As prescribed in Construction~\ref{def:PZ}, the natural transformation $t_n(F)=[z_{n+1},F]: F \to T_n(F)$ gives rise to the endofunctor 
\begin{equation*}
   P_n:=\colim \bigl( \Id \xto{t_n} T_n\xto{t_nT_n} T_n^2\xto{t_nT_n^2} T_n^3\xto{t_nT_n^3} T_n^4\to \hdots \bigr) 
\end{equation*} 
of $\Fun(\mc{J},\mc{S})$ with $p_n:\Id \to P_n$ the canonical map to the colimit.

\subsection{Stiefel combinatorics} 
\label{subsec:constructing-orth-tower}

\begin{lemma}\label{lem:product-fin-pres}
For every $k\ge 0$ there is a pushout
\[
\begin{tikzcd} 
   \st(\mb{R}^{k+1},V)\times \mathbb{S}^{k-1} \ar[rr,"{j_k\times\id}"] \ar[d,"{\pr_1}"']  && \st(\mb{R}^{k},V)\times \mathbb{S}^{k-1}  \ar[d] \\
   \st(\mb{R}^{k+1},V) \ar[rr] && \st(\mb{R}^k,-)\times\st(\mb{R},-)
\end{tikzcd}
\]
in $\Fun(\mc{J},\mc{S})$. In particular, the functor $\st(\mb{R}^k,-)\times\st(\mb{R},-)$ is finitely presentable.
\end{lemma}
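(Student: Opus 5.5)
The plan is to check the pushout objectwise, using the elementary geometry of Stiefel manifolds. Colimits in $\Fun(\mc{J},\mc{S})$ are computed pointwise, so it suffices to produce, naturally in $V$, a pushout square of spaces
\[
\begin{tikzcd}
\st(\mb{R}^{k+1},V)\times \mathbb{S}^{k-1} \ar[r] \ar[d] & \st(\mb{R}^{k},V)\times \mathbb{S}^{k-1} \ar[d] \\
\st(\mb{R}^{k+1},V) \ar[r] & \st(\mb{R}^k,V)\times \sph(V)\,.
\end{tikzcd}
\]

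The first step is to recognise the lower right corner as a bundle. The projection $\st(\mb{R}^k,V)\times\sph(V)\to\st(\mb{R}^k,V)$ has fibre $\sph(V)$ over a frame $x$. Decompose $\sph(V)$ as the join $\mathbb{S}(x(\mb{R}^k))\star\mathbb{S}(x(\mb{R}^k)^\perp)$; concretely, a unit vector $v$ is written uniquely as $\cos\theta\,a+\sin\theta\,b$ with $a\in\mathbb{S}(x\mb{R}^k)$ and $b\in\mathbb{S}(x\mb{R}^k{}^\perp)$. The first sphere is identified with $\mathbb{S}^{k-1}$ via $x$. The second sphere, varying with $x$, is the fibre of $j_k:\st(\mb{R}^{k+1},V)\to\st(\mb{R}^k,V)$, since a $(k+1)$-frame is a $k$-frame together with a unit vector orthogonal to it.

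Taking this join fibrewise over $\st(\mb{R}^k,V)$, the product $\st(\mb{R}^k,V)\times\sph(V)$ becomes the fibrewise join of $\st(\mb{R}^k,V)\times\mathbb{S}^{k-1}\to\st(\mb{R}^k,V)$ with $j_k$. That fibrewise join is the pushout of
\[
\st(\mb{R}^k,V)\times\mathbb{S}^{k-1}\leftarrow (\st(\mb{R}^k,V)\times\mathbb{S}^{k-1})\times_{\st(\mb{R}^k,V)}\st(\mb{R}^{k+1},V)\to \st(\mb{R}^{k+1},V),
\]
and the fibre product in the middle is $\st(\mb{R}^{k+1},V)\times\mathbb{S}^{k-1}$. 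This is exactly the stated square, with the maps $j_k\times\id$ and $\pr_1$. The right vertical map is $(x,a)\mapsto (x,x(a))$, and the bottom map sends $y$ to $(y|_{\mb{R}^k},y(e_{k+1}))$.

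The main obstacle is doing this in the $\infty$-categorical setting rather than only with point-set homeomorphisms. First, the decomposition $v=\cos\theta\,a+\sin\theta\,b$ exhibits the fibre as the quotient of the cylinder $\mathbb{S}^{k-1}\times\mathbb{S}(W^\perp)\times[0,\pi/2]$. That quotient is the homotopy pushout, because the collapse maps at the endpoints are cofibrations: the standard double-mapping-cylinder model of the join. Second, the construction must be natural in isometries $V\to V'$. This holds because everything is defined by orthogonal decomposition, which is preserved by isometric embeddings. So we get a pointwise homotopy pushout of topological functors, hence a pushout in $\Fun(\mc{J},\mc{S})$. Alternatively, one can invoke Lemma~\ref{lem:fiberwise-join-is-base-change} and universality of colimits to reduce to the fibre computation over each $x$.

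Finite presentability then follows. The three other corners are finite colimits of representables, since $\mathbb{S}^{k-1}$ is a finite space and $\st(\mb{R}^m,-)\times\mathbb{S}^{k-1}$ is a finite colimit of copies of $\st(\mb{R}^m,-)$. Therefore the pushout $\st(\mb{R}^k,-)\times\st(\mb{R},-)$ is finitely presentable.
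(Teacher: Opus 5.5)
Your proposal is correct and follows essentially the same route as the paper. The paper replaces $\pr_1$ by the cofibration $\st(\mb{R}^{k+1},V)\times\mathbb{S}^{k-1}\to\st(\mb{R}^{k+1},V)\times\mb{D}^k$ and identifies the resulting point-set pushout with $\st(\mb{R}^k,V)\times\st(\mb{R},V)$ via $(f;t)\mapsto(f_1,\dots,f_k;\,t+(1-\fac{t}^2)^{1/2}f_{k+1})$, with a Gram--Schmidt inverse; this is exactly your fibrewise join decomposition $v=\cos\theta\,a+\sin\theta\,b$ with $t=\cos\theta\,a$, and your double-mapping-cylinder model, which also supplies the homotopy making your explicitly named maps commute, is an equally valid model.
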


\begin{proof}
In this proof we are working in the $1$\=/category of topological spaces and continuous maps.
Let us evaluate on a vector space $V$ of dimension $\ge k$, since otherwise the values are empty. 
Consider the pushout:
\[
\begin{tikzcd} 
   \st(\mb{R}^{k+1},V)\times \mathbb{S}^{k-1} \ar[rr,"{j_k\times\id}"] \ar[d,"{\id\times{\rm incl}}"']  && \st(\mb{R}^{k},V)\times \mathbb{S}^{k-1}  \ar[d] \\
   \st(\mb{R}^{k+1},V)\times \mb{D}^{k} \ar[rr] && Q 
\end{tikzcd}
\]
A linear isometry $f$ in $\st(\mb{R}^m, V)$ can be represented as an orthonormal frame $(f_1,\hdots,f_m)$. Therefore, set-theoretically, $Q$ is defined as the quotient of
\[
\bigl( \st(\mb{R}^{k+1},V) \times \mb{D}^{k}\bigr) \sqcup \bigl( \st(\mb{R}^{k},V)\times \mathbb{S}^{k-1} \bigr)
\]
by the relation
\[
\bigl((f_1,\hdots, f_k, f_{k+1}), (t_1,\hdots,t_k)\bigr) \sim \bigl((g_1,\hdots, g_k), (t_1,\hdots,t_k)\bigr)
\]
if and only if $f_i=g_i$ for all $1\le i\le k$ and $\fac{(t_1,\hdots,t_k)}=1$. Since the left vertical map is a cofibration, this square is a homotopy pushout.
We want to show that $Q$ is homeomorphic to $\st(\mb{R}^k,-)\times\st(\mb{R},-)$. 

Let us describe a map $q:Q\to \st(\mb{R}^k,V)\times\st(\mb{R},V)$. To construct the map we use the fact that $Q$ is a pushout. 
First let
\[
q_1: \st(\mb{R}^{k},V)\times \mathbb{S}^{k-1} \to \st(\mb{R}^k,V)\times\st(\mb{R},V)
\]
be given by
\[
(f_1,\hdots, f_k; t)\mapsto (f_1,\hdots, f_k\ ;\ \sum_{i=1}^{k}t_if_i )\,.
\]
Note that $\fac{\sum_{i=1}^{k}t_if_i}=\fac{t}=1$, so $\sum_{i=1}^{k}t_if_i$ is a vector on the boundary of the unit disc of $U={\rm span}(f_1,\hdots,f_k)$. Therefore the map $q_1$ is well-defined. Note also that $U$ varies continuously with $(f_1,\hdots,f_k)$, so $q_1$ is continuous.

On the part of $Q$ involving $\st(\mb{R}^{k+1},-)\times\mb{D}^k$ the idea is to use $t$ in $\mb{D}^k$ in the fiber over $(f_1,\hdots,f_k,f_{k+1})$, to tilt $f_{k+1}$ in such a way that its orthogonal projection into the span $U$ of $(f_1,\hdots, f_k)$ becomes $t$. The map
\[
q_2: \st(\mb{R}^{k+1},V)\times \mb{D}^{k} \to \st(\mb{R}^k,V)\times\st(\mb{R},V)
\]
is defined as follows: 
\[
(f_1,\hdots, f_k, f_{k+1}\ ;\ t)\mapsto (f_1,\hdots, f_k\ ;\ t+(1-\fac{t}^2)^{1/2}\,f_{k+1})
\]
with $t:=\sum_{i=1}^{k}t_if_i$ in $U$. 
Observe that
\begin{align*}   
   \fac{t+ \bigl(1-\fac{t}^2\bigr)^{1/2}\,f_{k+1}} =1 
\end{align*}
since $t\cdot f_{k+1}=0$ and $\fac{f_{k+1}}=1$. So $q_2$ is well-defined and continuous.

To obtain a map on the pushout $Q$ we need to check that $q_1(j_k\times\id)=(\id\times{\rm incl})q_2$. This is true because of the fact that, if $\fac{t}=1$, then $t+(1-\fac{t}^2)^{1/2}f_{k+1}=t=\sum_{i=1}^{k}t_if_i$.

Now let us define a map in the other direction.
An element in $\st(\mb{R}^k,V)\times\st(\mb{R},V)$ is represented by an orthonormal frame $(f_1,\hdots,f_k)$ and a single unit vector $e$ in $V$. Let $U$ be the span of the frame. If $p$ denotes the orthogonal projection of $e$ into $U$, then $\fac{p}\le\fac{e}=1$, so $p$ is in the unit disc of $U$ using the basis $(f_1,\hdots,f_k)$. Explicitly, $p=\sum_{i}(e\cdot f_i)f_i$ where the dot denotes the scalar product. Note $p$ in $ \mathbb{S}^{k-1}$ if and only if $e=p$ in $ U$. Now the map
\[
s:\st(\mb{R}^k,V)\times\st(\mb{R},V)\to Q
\]
is defined by applying the Gram-Schmidt process whenever possible. So let $s$ be given by
\[   \bigl((f_1,\hdots,f_k),e\bigr)\mapsto \left\{
          \begin{array}{cl}
            (f_1,\hdots,f_k, (e-p)/\fac{e-p}\ ;\ p) &\text{ if } e\notin U , \\ 
                           & \\
            (f_1,\hdots,f_k\ ;\ p)                         &\text{ if } e\in U .
          \end{array}\right. 
\]
Note that in the case $e\notin U$ we have $e\neq p$ and $(f_1,\hdots,f_k, (e-p)/\fac{e-p})$ is an orthonormal ${(k+1)}$\=/frame. So the image of $s$ lies in $\st(\mb{R}^{k+1},V)\times (\mb{D}^k - \mathbb{S}^{k-1})\subset Q$. In the case $e$ in $ U$ the image of $s$ lies in $\st(\mb{R}^k,V)\times\mathbb{S}^{k-1}\subset Q$. The map $s$ is continuous by the glueing that occurs in $Q$.

Now it is elementary to check that $q$ and $s$ are mutually inverse. 
\end{proof}

\begin{remark}
The pushout of Lemma~\ref{lem:product-fin-pres} says that the projection $\st(\mb{R}^k,-)\times\st(\mb{R},-)\to \st(\mb{R}^k,-)$ is the pushout product $j_k\Box (\mathbb{S}^{k-1}\to 1)$.
\end{remark}

\begin{lemma}\label{lem:join-with-comp-is-comp}
For every compact $F$ in $\Fun(\mc{J},\mc{S})$ the functor $\sph(-)\star F$ is compact.
\end{lemma}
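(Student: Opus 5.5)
We have to show that $\sph\star F$ is compact whenever $F$ is compact in $\Fun(\mc{J},\mc{S})$. Here the join is taken objectwise: $\sph\star F$ is the pushout of $\sph\leftarrow \sph\times F\to F$, with the product being the objectwise cartesian product. The plan is to reduce to representable $F$ and then apply Lemma~\ref{lem:product-fin-pres}.

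Compact objects are closed under finite colimits and retracts. So the first step is to show that the functor $\Phi:=\sph\star -$ preserves these, and that it sends representables to compact objects.
\begin{itemize}
\item \emph{Retracts.} Every functor preserves retracts, so $\Phi$ does as well.
\item \emph{Colimits.} The functor $\Phi$ does not preserve colimits on the nose: for instance $\sph\star\emptyset=\sph$. Instead we use the product functor $\sph\times -$. It preserves all colimits because colimits are universal in the presheaf topos $\Fun(\mc{J},\mc{S})$.
\end{itemize}
Since pushouts of compact objects are compact, and $\sph=\st(\mb{R},-)$ is representable and hence compact, we get: $\sph\star F$ is compact as soon as $\sph\times F$ and $F$ are. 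Thus it suffices to show that $\sph\times F$ is compact for every compact $F$.

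Next we reduce to representables. The functor $\sph\times-$ is cocontinuous. By Proposition~\ref{prop:confinedbasic2} it is therefore confined if and only if it sends each representable functor $\st(V,-)$ to a compact object; the representables form a dense subcategory of compact objects. Choosing an isometry $V\cong\mb{R}^k$ gives $\st(V,-)\cong\st(\mb{R}^k,-)$. Lemma~\ref{lem:product-fin-pres} then says that $\st(\mb{R}^k,-)\times\st(\mb{R},-)$ is finitely presentable, hence compact. The edge case $k=0$ is immediate: $\st(\mb{R}^0,-)=1$, so the product is just $\sph$, which is representable.

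Consequently $\sph\times-$ preserves compact objects. Combined with the pushout description above, this shows that $\sph\star F$ is compact for every compact $F$.

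The main obstacle is the geometric identification in Lemma~\ref{lem:product-fin-pres}, which we are allowed to assume. The one remaining point to verify is that the objectwise pushout there is a homotopy pushout in $\Fun(\mc{J},\mc{S})$, i.e.\ that it is natural in $V$. This holds because the maps $q$ and $s$ are defined equivariantly under isometries $V\to V'$.
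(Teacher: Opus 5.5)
Your proof is correct and follows essentially the paper's route. You reduce to compactness of $\sph\times F$ via the pushout $\sph\leftarrow\sph\times F\to F$, use universality of colimits to commute $\sph\times-$ with colimits, and settle the representable case with Lemma~\ref{lem:product-fin-pres}. The only cosmetic difference is how general compact $F$ is handled: you invoke Proposition~\ref{prop:confinedbasic2}, whereas the paper writes $F$ as a retract of an explicit finite colimit of representables.
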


\begin{proof}
Because $F$ is compact, it is the retract of a finitely presentable functor $G$. Thus it suffices to show that the join with $G$ is compact. Since $G$ is of the form $\colim_{c\in C}\st(\mb{R}^{k(c)},-)$ with $C$ a finite category, the join $\sph(-)\star G$ is the pushout of the diagram:
\[
\sph(-)\leftarrow \colim_{C} \left(\sph(-)\times\st(\mb{R}^{k(c)},-)\right) \to \colim_{C}\st(\mb{R}^{k(c)},-)\,.
\]
The two ends of the pushout are finitely presentable, and the center is a finite colimit of finitely presentable functors by Lemma~\ref{lem:product-fin-pres}. The claim follows.
\end{proof}

\begin{corollary}\label{cor:S-tothe-n+1-is-compact}
The functor $\sph(-)^{\star n+1}$ is compact. 
\end{corollary}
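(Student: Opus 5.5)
The plan is to argue by induction on $n$, using Lemma~\ref{lem:join-with-comp-is-comp} as the inductive step. Here $\star$ denotes the join product in $\Fun(\mc{J},\mc{S})$, so that $A\star B$ is the pushout of $A\leftarrow A\times B\to B$; this is the same construction used in the proof of Lemma~\ref{lem:join-with-comp-is-comp}. The iterated join is then $\sph(-)^{\star n+1}=\sph(-)\star\sph(-)^{\star n}$, with $\sph(-)^{\star 1}=\sph(-)$.

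For the base case $n=0$, the functor $\sph(-)^{\star 1}=\sph(-)=\st(\mb{R},-)$ is representable by $\mb{R}^1$ in $\mc{J}$. Representable functors are compact in $\Fun(\mc{J},\mc{S})$, so the base case holds.

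For the inductive step, assume that $\sph(-)^{\star n}$ is compact for some $n\geq 1$. Lemma~\ref{lem:join-with-comp-is-comp}, applied with $F=\sph(-)^{\star n}$, shows that $\sph(-)\star\sph(-)^{\star n}=\sph(-)^{\star n+1}$ is compact.

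The one step that needs checking is the identification $\sph(-)^{\star n+1}=\sph(-)\star\sph(-)^{\star n}$, which depends on associativity of the join. I would verify it from Lemma~\ref{lem:fiberwise-join-is-base-change}, applied pointwise since colimits in $\Fun(\mc{J},\mc{S})$ are universal. That lemma identifies the $(n+1)$-fold join, i.e.\ the domain of $j_0^{\Box n+1}$, with the cocartesian gap of the cube $(\sph\to 1)^{\boxtimes n+1}$. It also identifies this with the iterated pushout product, and the pushout product is associative. A more direct alternative is to use the colimit formula of Lemma~\ref{lem:calculate-zeta0}, which writes $\sph(-)^{\star n+1}$ as the finite colimit of the products $\sph(-)^U$ over $\emptyset\neq U\subset\ul{n+1}$. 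One would then need all the finite products $\sph(-)^U$ to be compact. This follows by iterating Lemma~\ref{lem:product-fin-pres} (which handles products with $\st(\mb{R},-)$) together with the fact that products of Stiefel representables are finite colimits of representables. Since the induction avoids this, I would present the inductive argument.
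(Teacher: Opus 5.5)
Your proposal is correct and is the paper's own argument: the paper proves this by induction using Lemma~\ref{lem:join-with-comp-is-comp}, starting from the representable $\sph(-)=\st(\mb{R},-)$. The associativity identification $\sph(-)^{\star n+1}=\sph(-)\star\sph(-)^{\star n}$ that you flag is harmless, since the join is computed pointwise in spaces, where it is associative.
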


\begin{proof}
This follows inductively from Lemma~\ref{lem:join-with-comp-is-comp}.
\end{proof}

\begin{lemma}[Weiss] \label{Zn-finite}
In $\Fun(\mc{J},\mc{S})$ there is for all $n,k\geq 0$ a natural isomorphism 
\begin{equation*}
   \underbrace{\ j_k\star_B\hdots\star_B j_k\, }_{n+1} =\left(\colim_{0\neq U \subset \mb{R}^{n+1}} \st(\mb{R}^k\oplus U,-)\to B \right) 
\end{equation*}
of maps, where $B:=\st(\mb{R}^{k},-)$ and $\star_B$ is the fiberwise join $($see \emph{Subsection~\ref{sec:fiberwise-join}}$)$.
\end{lemma}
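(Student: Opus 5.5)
By Lemma~\ref{lem:fiberwise-join-is-base-change}\,\ref{lem:fiberwise-join-is-base-change:3}, the $(n+1)$\=/fold fiberwise join $j_k\star_B\cdots\star_B j_k$ is the cocartesian gap map of the strongly cartesian $(n+1)$\=/cube $j_k\boxtimes_B\cdots\boxtimes_B j_k$. Colimits in $\Fun(\mc{J},\mc{S})$ are computed objectwise. The plan is therefore to identify this cube, naturally in $V$, with the cube $U\mapsto \st(\mb{R}^k\oplus U,V)$ indexed by the coordinate subspaces $U\subset\mb{R}^{n+1}$. The gap map of the latter is
\[
\colim_{U\subsetneq\mb{R}^{n+1}}\st(\mb{R}^k\oplus \mb{R}^{\ul{n+1}\setminus U},-)\to B
\]
after complementing indices, and the target is $\st(\mb{R}^k,-)=B$ at the empty face.

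\textbf{Step 1: the vertices.} The vertex of the cube at a subset $S\subset\ul{n+1}$ is the iterated fiber product over $B$ of $|S|$ copies of $\st(\mb{R}^{k+1},V)\to\st(\mb{R}^k,V)$. A point of it is a $k$\=/frame $f=(f_1,\dots,f_k)$ together with unit vectors $e_s$, $s\in S$, each orthogonal to $\mathrm{span}(f)$. The $e_s$ are \emph{not} required to be mutually orthogonal. The fiber over $f$ is therefore $\sph(f^\perp)^S$, a product of spheres.

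Consequently the cube is not literally a cube of Stiefel spaces, and the identification can only hold after passing to the colimit. The lemma should instead be read as an identification of gap maps, that is, of the colimits over the punctured cube. Fiberwise over $B$, by universality of colimits in $\mc{S}$ (Lemma~\ref{lem:fiberwise-join-is-base-change}\,\ref{lem:fiberwise-join-is-base-change:4}), the source is the join $\sph(W)^{\star n+1}$ with $W=f^\perp$.

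\textbf{Step 2: the join of spheres.} So the claim reduces to a natural equivalence, over $\st(\mb{R}^k,V)$,
\[
\sph(W)^{\star n+1}\;\simeq\;\colim_{0\neq U\subset\mb{R}^{n+1}}\st(U,W),
\]
where the colimit is taken over the topological poset of nonzero subspaces $U\subset\mb{R}^{n+1}$ (the indexing category of the lemma), and where $\st(\mb{R}^k\oplus U,V)\to\st(\mb{R}^k,V)$ has fiber $\st(U,f^\perp)$.

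To prove it, I would use Weiss' description of the join. A point of $\sph(W)^{\star n+1}$ is a formal convex combination $\sum t_i e_i$ with $\sum t_i=1$ and $e_i$ unit vectors in $W$. This is the same as a linear map $\mb{R}^{n+1}\to W$, $x\mapsto\sum x_i\sqrt{t_i}\,e_i$, modulo the relevant identifications, which matches Weiss' model of $\sph^{\star n+1}$ in \cite{Weiss95,Weiss98}.

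Concretely, the plan is to construct a continuous map from the colimit to the join, and to check fiberwise over each orthonormal frame that it is an equivalence. In the spirit of Lemma~\ref{lem:product-fin-pres}, the check is done by building an explicit inverse via Gram--Schmidt, or alternatively by recognizing both sides as homotopy equivalent to $\sph(\mb{R}^{n+1}\otimes W)$. The second recognition rests on two facts. First, $\sph(W)^{\star n+1}\simeq \sph(W^{\oplus n+1})$, because the join of spheres is the sphere of the direct sum. Second, the colimit over nonzero $U\subset\mb{R}^{n+1}$ of $\st(U,W)$ computes the unit sphere of $\mathrm{Hom}(\mb{R}^{n+1},W)$ via polar decomposition: a nonzero linear map $g$ is determined by its image of $(\ker g)^\perp$ together with positive data that is contractible.

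\textbf{Main obstacle.} The homotopy-coherent naturality of Step 2 is the hard part: it must hold in $V$ (hence in $f$) and be compatible with the maps to $B$. I would handle it the way Lemma~\ref{lem:product-fin-pres} does. That means working in the 1\=/category of spaces with a strict $O(n+1)$\=/equivariant model. The polar decomposition $g=h\circ|g|$ is continuous and natural for isometries of $V$, and the final step is to check that the relevant point-set colimit is a homotopy colimit using cofibrancy of the inclusions.
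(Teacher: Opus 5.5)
Your final argument (Steps 1--2) is correct and is essentially the paper's. Both identify the two sides, over $B$, with the unit sphere bundle of $\mb{R}^{n+1}\otimes\mathrm{coker}(f)$. The join side uses that the sphere of a direct sum is the join of the spheres, which is the paper's $\gamma_{n+1}\cong\bigoplus_{n+1}\gamma_1$ with $S\gamma_1\to B$ identified with $j_k$. The colimit side, taken over the topological poset of all nonzero $U$, uses Weiss' Proposition~4.2 of \cite{Weiss95}, whose spectral/polar-decomposition proof your sketch only gestures at.

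The paper phrases everything globally as sphere bundles over $B$, which makes naturality in $V$ and compatibility with the maps to $B$ automatic. Your abandoned opening plan with the cube of coordinate subspaces would indeed have failed, as you noticed in Step 1.
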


\begin{proof}
Using Weiss' notation in \cite[Proposition 4.2]{Weiss95} there is a natural isomorphism over $B:=\st(\mb{R}^{k},-)$
\[
\colim_{0\neq U \subset \mb{R}^{n+1}} \st(\mb{R}^k\oplus U,-) \xto{\cong} S\gamma_{n+1}(\mb{R}^k, -)\,,
\]
where the right hand side is the total space of the unit sphere bundle over $B$ obtained from the vector bundle $\gamma_{n+1}(\mb{R}^k, -)$ whose fiber at $f$ in $B$ is $\mb{R}^{n+1}\otimes\ {\rm coker}f$. 
Then in the proof of \cite[Proposition 5.4]{Weiss95} the author notes that $\gamma_{n+1}(\mb{R}^k,-)$ is in fact isomorphic as a vector bundle over $B$ to the Whitney sum $\bigoplus_{n+1} \gamma_{1}(\mb{R}^k,-)$. 
For the corresponding unit sphere bundle, one obtains an identification of the structure map $S\gamma_{1}(\mb{R}^k,-)\to B$ with the map $j_k:\st(\mb{R}^{k+1},-)\to\st(\mb{R}^k,-)$. For the unit sphere bundle the Whitney sum translates into a fiberwise join
\[
\bigl(\, S\gamma_{n+1}(\mb{R}^k,-)\to B\,\bigr)\cong \underbrace{\ j_k\star_B\hdots\star_B j_k\, }_{n+1}\,,
\]
proving the claim.
\end{proof}

For $k=0$ the target space is $B=\st(0,-)=1$, so the map in Lemma~\ref{Zn-finite} reduces to 
\begin{align*} 
   \left(\colim_{0\neq U \subset \mb{R}^{n+1}} \st(U,-)\to 1 \right)= \underbrace{\ j_0\star_B\hdots\star_B j_0\, }_{n+1}=j_0^{\Box n+1} =    z_{n+1}\,.
\end{align*}
Since the internal hom here is taken with respect to the Day convolution product coming from the orthogonal sum on $\mc{J}$, for every $F:\mc{J}\to \mc{S}$ we have 
\begin{equation*} 
   T_nF= \left[\sph(-)^{\star n+1}, F\right]=\left[\colim_{0\neq U \subset \mb{R}^{n+1}} \st(\mb{R}^k\oplus U,-),F\right]=\lim_{0\neq U \subset \mb{R}^{n+1}} F(U\oplus -)\,.
\end{equation*}

\subsection{Weiss' connectivity estimates}
\label{subsec:orth-con-est}

\begin{lemma}[{\cite[Lemma e.3]{Weiss98}}]\label{lem:e.3} 
Let $\alpha:F\to G$ be a morphism in $\Fun(\mc{J},\mc{S})$. Suppose that there exists an integer $b$ such that $\alpha(W):F(W)\to G(W)$ is $((n + 1)\dim(W)- b)$\=/connected for all $W$ in \mc{J}. Then $T_n(\alpha) : T_n F(W)\to T_n G(W)$ is ${((n + 1)\dim(W)-b + 1)}$\=/connected for all $W$.
\end{lemma}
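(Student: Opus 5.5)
We would use the explicit formula for $T_n$ from the end of the previous subsection:
\[
T_nF(W)=\lim_{0\neq U\subset \mb{R}^{n+1}} F(U\oplus W).
\]
This is a limit over the topological poset (in fact a space-indexed diagram) of nonzero subspaces of $\mb{R}^{n+1}$. By the descent/fibration argument of Weiss, it is the space of sections of a fibration over the projective-type space of nonzero subspaces. More concretely, it can be written as the homotopy limit over the sphere bundle picture: $T_nF(W)\simeq \nat(\sph^{\star n+1}\otimes R^W, F)$, which is the space of sections over $S\gamma_{n+1}(0,W)$ (cf.\ Lemma~\ref{Zn-finite}).

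The key step is to filter the indexing by dimension of $U$. By the standard Weiss description, $T_nF(W)$ is a homotopy limit built out of the pieces $F(U\oplus W)$ for $\dim U=i$, $1\le i\le n+1$. Each piece is parametrized by the Grassmannian $\mathrm{Gr}_i(\mb{R}^{n+1})$, and the pieces are glued along the strata.

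We then apply the connectivity lemma for homotopy limits of sections: if a map of fibrations over a CW base $B$ of dimension $d$ is fiberwise $m$-connected, the induced map on section spaces is $(m-d)$-connected. For $\dim U=i$ the fibers $F(U\oplus W)\to G(U\oplus W)$ are $((n+1)(\dim W+i)-b)$-connected by hypothesis. The relative cell structure contributed by stratum $i$ (the space of $i$-planes in $\mb{R}^{n+1}$, together with the simplices of the flag nerve) has dimension at most $i(n+1-i)+(i-1)$. Since
\[
(n+1)i-i(n+1-i)-(i-1)=i^2-i+1\ge 1
\]
for $i\ge 1$, each stratum contributes a map that is at least $((n+1)\dim W-b+1)$-connected. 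An induction over the finite skeletal filtration, using that connectivity of maps is preserved by pullback and that a map between fiber sequences whose fiber and base maps are $m$-connected is $m$-connected, gives the claim.

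The main obstacle is making the dimension count for the indexing category precise, i.e.\ identifying $\lim_{0\neq U}$ with a section space over a finite-dimensional space (the unit sphere bundle of $\mb{R}^{n+1}\otimes W^\perp$-type, or the realized topological poset) and controlling its cell dimensions. To settle this, we would first follow Weiss's proof of e.3 and use the description of $T_nF(W)$ as the limit over the topological category of nonzero subspaces with its natural stratification. If that proves delicate, an alternative is to use the sphere $S(\mb{R}^{n+1}\otimes W)$ as a model. That space has dimension $(n+1)\dim W-1$, and the section-space estimate then yields the extra $+1$ directly.
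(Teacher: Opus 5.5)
The paper gives no proof of this lemma; it imports it from Weiss's erratum. So your argument has to stand on its own. Its main line is workable: write $\lim_{0\neq U\subset\mathbb{R}^{n+1}}F(U\oplus W)$ over the topological poset $\mathcal{P}$ of nonzero subspaces as a totalization, and estimate stratum by stratum with the section-space lemma. However, the dimension count you give is false.

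\textbf{The correct setup.} The limit is $\mathrm{Tot}$ of the cosimplicial space whose degree-$k$ term is the space of sections, over the space $N_k\mathcal{P}$ of flags $U_0\subseteq\cdots\subseteq U_k$, of the bundle with fibre $F(U_k\oplus W)$. The relative (matching) map in degree $k$ is a relative section space over the pair (all $k$-flags, degenerate $k$-flags). If that map is $(c+k)$-connected in every degree, then the map on $\mathrm{Tot}$ is $c$-connected. Here $\mathrm{Tot}$ is finite, since there are no nondegenerate flags in degrees $>n$.

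\textbf{The miscount.} Your bound $i(n+1-i)+(i-1)$ for the stratum with $\dim U_k=i$ is wrong. In degree $k=i-1$ the nondegenerate simplices are the complete flags ending in an $i$-plane, a manifold of dimension $i(n+1-i)+\binom{i}{2}$; you forgot that the intermediate subspaces move. Complete flags maximize dimension plus degree, so the correct bound is $i(n+1-i)+\binom{i}{2}+(i-1)$. The margin is then
\[
(n+1)i-i(n+1-i)-\tbinom{i}{2}-(i-1)=\tbinom{i}{2}+1\ \ge\ 1 ,
\]
not $i^2-i+1$. Equality holds only for $i=1$ (the $\mathbb{RP}^n$ of lines), which is where the sharp $+1$ comes from. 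So the conclusion survives, but only because there is slack; as written, your key inequality rests on a wrong count. Note also that only values $F(U\oplus W)$ with $\dim(U\oplus W)\ge\dim W+1$ enter. The corrected argument therefore also gives the refinement stated after the lemma (hypothesis only for $\dim\ge\kappa$, conclusion for $\dim W\ge\kappa-1$).

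\textbf{What to discard.} Drop the remarks identifying $T_nF(W)$ with the sections of a single fibration, whether over a projective-type space or over $S\gamma_{n+1}(0,W)=S(\mathbb{R}^{n+1}\otimes W)$. Drop the fallback as well. A limit over a diagram that does not invert its morphisms is in general not a section space of one fibration, and this one is not. For $F$ constant with value $Y$ you get $T_nF(W)\simeq Y$, because $\mathcal{P}$ has a terminal object. Sections over $S(\mathbb{R}^{n+1}\otimes W)$ would instead give $\mathrm{map}(S^{(n+1)\dim W-1},Y)$. Moreover, a base of dimension $(n+1)\dim W-1$ would cancel exactly the $\dim W$-dependence you need to keep. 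That route therefore cannot yield connectivity $(n+1)\dim W-b+1$.
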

It is important and not difficult to note that the conclusion of the lemma remains true even if the assumption on $\alpha(W)$ is only made for all $W$ with $\dim W\ge \kappa$ for some $\kappa\ge 0$. 

\begin{lemma}\label{prop:PnisoW}
Let $\alpha:F\to G$ be a natural transformation in $\Fun(\mc{J},\mc{S})$ such that the connectivity of the map $\alpha(W):F(W)\to G(W)$ is $\geq (n+1)\dim(W)-b$ for all $W$ in $ \mc{J}$ of dimension $\ge\kappa$. Then $P_n(\alpha): P_nF\to P_n$ is invertible.
\end{lemma}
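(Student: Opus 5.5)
The plan is to iterate Lemma~\ref{lem:e.3} and pass to the colimit using compactness of spheres. First, using the remark after Lemma~\ref{lem:e.3}, the hypothesis only needs to hold for $\dim W\ge\kappa$. Applying Lemma~\ref{lem:e.3} $m$ times gives
\[
T_n^m(\alpha)(W):T_n^mF(W)\to T_n^mG(W)
\]
with connectivity $\ge (n+1)\dim(W)-b+m$ for all $W$ with $\dim W\ge\kappa$. For $m\ge 1$ this conclusion actually holds for all $W$, but I will only use the range $\dim W\ge\kappa$.

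Next we pass to the colimit. Evaluation at $W$ commutes with colimits in $\Fun(\mc{J},\mc{S})$, since colimits are objectwise by Lemma~\ref{lem:Qisgood}. Hence
\[
P_n(\alpha)(W)=\colim_m T_n^m(\alpha)(W)
\]
is a sequential colimit of maps of spaces. For fixed $W$ the connectivities tend to infinity as $m\to\infty$. A sequential colimit of maps whose connectivities tend to infinity is an equivalence: every lifting problem against $S^{j}\to 1$ factors through a finite stage because spheres are compact, and it can be solved there once $m$ is large. This is the same argument as in Lemma~\ref{Whiteheadcolimit17}. So $P_n(\alpha)(W)$ is invertible for every $W$ with $\dim W\ge\kappa$.

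The main obstacle is the low-dimensional $W$ with $\dim W<\kappa$, where no estimate is available. I would handle them by the shift built into $T_n$. Write $\colim_m T_n^m$ through its cofinal shifted sequence, and use that $T_n^{m}F(W)$ is an iterated limit of values $F(W\oplus U_1\oplus\cdots\oplus U_m)$ with each $U_i\neq 0$, via the formula for $T_n$ after Lemma~\ref{Zn-finite}. Then $T_n^m(\alpha)(W)$ is a finite limit of maps $\alpha(W')$ with $\dim W'\ge\dim W+m$. Once $m\ge\kappa$, every $W'$ appearing satisfies $\dim W'\ge\kappa$.

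To control the connectivity of these maps I will rerun Weiss' argument from Lemma~\ref{lem:e.3}, which estimates limits over the poset of nonzero $U\subset\mb{R}^{n+1}$. Alternatively, replace $\alpha$ by $T_n^{\kappa}(\alpha)$. Because $P_n(T_n^\kappa\alpha)\cong P_n(\alpha)$, via the isomorphism $\sigma_\kappa$ and the cofinality identity~\eqref{colimshift}, it suffices to treat $T_n^\kappa(\alpha)$. This map satisfies a connectivity bound of the form $(n+1)\dim(W)-b'$ for every $W$, with a new constant $b'$, by iterating the Weiss estimate. With the bound now valid for all $W$, the colimit argument of the previous paragraph finishes the proof. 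Checking this uniform estimate for $T_n^\kappa(\alpha)$ is the delicate step.
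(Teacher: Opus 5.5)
Your strategy is the paper's: iterate Lemma~\ref{lem:e.3}, show that the connectivity of $T_n^\ell(\alpha)(W)$ tends to infinity, and pass to the sequential colimit. Your colimit step (compactness of spheres, as in Lemma~\ref{Whiteheadcolimit17}) is correct, and more explicit than the paper's. But the proposal stops exactly where an argument is needed. As you set it up, the iteration only controls $T_n^m(\alpha)$ on $\dim W\ge\kappa$. So ``iterating the Weiss estimate'' does not by itself give the uniform bound for $T_n^\kappa(\alpha)$, which you assert and then leave unchecked.

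Your parenthetical claim that for $m\ge 1$ the bound holds for all $W$ is also false. Take $n=0$, $b=0$, $\kappa\ge 2$, $G=1$, and $F$ empty in dimensions $<\kappa$ and a point otherwise. The hypothesis holds, and $T_0F(W)=F(\mb{R}\oplus W)$. But $T_0(\alpha)(0)=\alpha(\mb{R})$ is $\emptyset\to 1$, which is not $1$-connected.

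The missing point is that the relaxed form of Lemma~\ref{lem:e.3} descends by exactly one dimension. Its estimate at $W$ uses $\alpha$ only at the spaces $U\oplus W$ with $0\ne U\subset\mb{R}^{n+1}$, all of dimension $\ge\dim W+1$; this is the shift you observed yourself. So a hypothesis on $\dim W\ge\kappa$ yields the conclusion for $T_n(\alpha)$ on $\dim W\ge\kappa-1$. By induction on $\ell$, $T_n^\ell(\alpha)(W)$ is $((n+1)\dim W-b+\ell)$-connected whenever $\dim W\ge\kappa-\ell$. For fixed $W$ this applies once $\ell\ge\kappa-\dim W$, so the connectivity tends to infinity at every $W$, and your colimit argument finishes the proof for all $W$ at once. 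This is precisely the paper's proof. Your reduction to $T_n^\kappa(\alpha)$ via the natural isomorphism $\sigma_\kappa$ is valid but unnecessary: the constant $b'=b-\kappa$ is just the case $\ell=\kappa$ of the same induction.
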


\begin{proof} 
Under the assumptions on $\alpha$, Weiss shows in Lemma~\ref{lem:e.3} that the connectivity of the map $T_n(\alpha)$ is $\geq (n+1)\dim(W)-b+1$ for all $W$ of dimension $\ge \kappa-1$.
It follows by induction on $\ell$ that the connectivity of the map $T^{\ell}\alpha(W)$ is $\geq (n+1)\dim(W)-b+\ell$ for all $W$ of dimension $\ge \kappa-l$.
Hence the connectivity of the map $T^{\ell}(\alpha)(W)$  tends to infinity with $\ell$ for all objects of $\mc{J}$.
\end{proof}

\begin{proposition}\label{prop:Pn-zetan-WeissA}
The map $z_{n+1}:\sph(-)^{\star n+1} \to 1$  in $\Fun(\mc{J},\mc{S})$ is tidy. 
\end{proposition}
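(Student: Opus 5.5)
Tidyness asks for two things: that $\sph(-)^{\star n+1}$ is compact in $\Fun(\mc{J},\mc{S})$, and that $P_n(z_{n+1})$ is invertible. Compactness is exactly Corollary~\ref{cor:S-tothe-n+1-is-compact}. So the work is in the second condition, which I will deduce from Lemma~\ref{prop:PnisoW}. Concretely, I will show that the map $z_{n+1}(W):\sph(W)^{\star n+1}\to 1$ is $((n+1)\dim(W)-b)$\=/connected for a suitable constant $b$ and all $W$ with $\dim W\ge\kappa$.

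The connectivity estimate is classical. The space $\sph(W)$ is the unit sphere $\mathbb{S}^{d-1}$, where $d=\dim W$, and this sphere is $(d-2)$\=/connected. The join of an $a$\=/connected space and a $b'$\=/connected space is $(a+b'+2)$\=/connected. Iterating this over $n+1$ factors shows that $\sph(W)^{\star n+1}$ is $((n+1)(d-2)+2n)$\=/connected. Alternatively one can see this directly: the $(n+1)$\=/fold join of $\mathbb{S}^{d-1}$ is the sphere $\mathbb{S}^{(n+1)d-1}$, which is the unit sphere of $\mb{R}^{n+1}\otimes W$ (compare Lemma~\ref{Zn-finite} with $k=0$). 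Hence $\sph(W)^{\star n+1}\to 1$ is $((n+1)d-2)$\=/connected. The convention in the paper, that a map is $m$\=/connected when its fibers are $m$\=/connected, matches this, since the fiber of a map to $1$ is the space itself.

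With these bounds, the hypothesis of Lemma~\ref{prop:PnisoW} holds with $b=2$ and $\kappa=0$. For $d=0$ the sphere is empty and the bound $-2$ is vacuous, so no lower dimension restriction is even needed. Lemma~\ref{prop:PnisoW} then gives that $P_n(z_{n+1})$ is invertible, and together with compactness this proves that $z_{n+1}$ is tidy.

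The only delicate point is identifying the objectwise value of the Day-convolution-internal fiberwise join $j_0^{\Box n+1}$ with the ordinary join of spaces $\sph(W)^{\star n+1}$. The join is formed by cocartesian gap maps of cartesian products in $\Fun(\mc{J},\mc{S})$, which are computed objectwise. Colimits and products in the functor category are pointwise, so the identification is immediate, and the value is the sphere $S(\mb{R}^{n+1}\otimes W)$ as in Weiss' description. I expect no serious obstacle beyond this bookkeeping. The substantive input, Weiss' estimate in Lemma~\ref{lem:e.3}, is already packaged into Lemma~\ref{prop:PnisoW}.
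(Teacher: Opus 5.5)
Your proposal is correct and matches the paper's proof: compactness from Corollary~\ref{cor:S-tothe-n+1-is-compact}, the objectwise connectivity estimate $(n+1)(\dim W-2)+2n=(n+1)\dim W-2$, and then Lemma~\ref{prop:PnisoW} with $b=2$. The only difference is harmless: you take $\kappa=0$ where the paper takes $\kappa=1$, which is fine because the required bound $-2$ is vacuous when $\dim W=0$.
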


\begin{proof} 
The source of $z_{n+1}$ is compact by Corollary~\ref{cor:S-tothe-n+1-is-compact}.
If $W$ in \mc{J} is of dimension $m$, then $\sph(W)= S^{m-1}$ and
$\bigl(\sph(W)\bigr)^{\star n+1}=(S^{m-1})^{\star n+1}$ and its con\-nec\-ti\-vity is $(n+1)(m-2)+2n= (n+1)\dim(W)-2$. 
Hence $z_{n+1}$ satisfies the hypothesis of Lemma~\ref{prop:PnisoW} with $b=2$ and $\kappa=1$.
\end{proof}

\begin{definition}[{\cite[Definition 5.1]{Weiss95}}]
A functor $F:\mc{J} \to \mc{S}$ is {\it polynomial of degree $\leq n$} if the map $F\to T_n(F)$ is invertible. In other words, a functor $F:\mc{J} \to \mc{S}$ is polynomial of degree $\leq n$ if and only if it is $T_n$\=/closed. 
\end{definition}

\begin{theorem}[Weiss]\label{thm:orthog-tower} 
\emph{Theorem~\ref{thm:monoidal-localization}} applies to the orthogonal tower.
The functor $P_n$ defined above is a left exact reflector onto the subcategory of polynomial functors of degree $\le n$. 
\end{theorem}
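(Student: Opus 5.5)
The plan mirrors the proof of Theorem~\ref{thm:Pn-reflects-onto-Tn} in the Goodwillie case. First, the ambient category $\Fun(\mc{J},\mc{S})$, with Day convolution for the orthogonal sum, is a confined symmetric monoidal category by Examples~\ref{exam:confined-sym-mon-cats}\,\ref{exam:confined-sym-mon-cats:6}. Its unit is the terminal functor $\st(0,-)=1$.

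Second, we need that $z_{n+1}:\sph(-)^{\star n+1}\to 1$ is tidy; this is exactly Proposition~\ref{prop:Pn-zetan-WeissA}. Recall what it uses. Compactness of the source comes from Corollary~\ref{cor:S-tothe-n+1-is-compact}, which in turn rests on the Stiefel pushout of Lemma~\ref{lem:product-fin-pres}. Invertibility of $P_n(z_{n+1})$ comes from Weiss' connectivity estimate (Lemma~\ref{lem:e.3}, via Lemma~\ref{prop:PnisoW}), applied with $b=2$ and $\kappa=1$. With tidiness in hand, Theorem~\ref{thm:monoidal-localization} applies verbatim to $Z=\sph(-)^{\star n+1}$.

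Third, we identify the endofunctors. By Lemma~\ref{Zn-finite} with $k=0$, the map $z_{n+1}$ is $\colim_{0\neq U\subset\mb{R}^{n+1}}\st(U,-)\to 1$. The Day convolution internal hom out of a colimit of representables is then
\[
T_nF=[Z,F]=\lim_{0\neq U\subset\mb{R}^{n+1}}F(U\oplus -),
\]
which is Weiss' $\tau_n$. Consequently $P_n=\colim_\ell T_n^\ell$ is Weiss' functor (his $T_n$). Theorem~\ref{thm:monoidal-localization}\,\ref{thm:monoidal-localization:3} then says that $P_n$ is a reflector onto $\mc{V}^{P_n}$, and that it is $\mc{V}$\=/left exact, in particular left exact. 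Part~\ref{thm:monoidal-localization:1} says $P_n$\=/closed equals $T_n$\=/closed, and $T_n$\=/closed is by definition polynomial of degree $\le n$. This gives the identification of the image.

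The only delicate point is the formula for $T_n$. It requires that Day convolution cotensoring by the representable $\st(U,-)$ equals precomposition with $U\oplus-$, which holds by the formula $[R^a,N]=N(a\oplus-)$ of Example~\ref{ex:Day-action-from-action}. It also requires that the colimit over the topological poset of nonzero subspaces $U\subset\mb{R}^{n+1}$ is turned into the corresponding limit, which is formal since $[-,F]$ sends colimits to limits. Everything else is a direct citation of earlier results.
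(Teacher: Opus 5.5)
Your proposal is correct and follows essentially the paper's own proof. Tidiness of $z_{n+1}$ (Proposition~\ref{prop:Pn-zetan-WeissA}) is fed into Theorem~\ref{thm:monoidal-localization}, and part (i) of that theorem, together with the definition of ``polynomial of degree $\le n$'' as $T_n$\=/closed, identifies the image. Your explicit check that $[\sph(-)^{\star n+1},F]=\lim_{0\neq U\subset\mb{R}^{n+1}}F(U\oplus -)$ recovers Weiss' $\tau_n$ is correct; the paper leaves it to the computation following Lemma~\ref{Zn-finite}.
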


\begin{proof}
Proposition~\ref{prop:Pn-zetan-WeissA} serves as input into Theorem~\ref{thm:monoidal-localization}. By Weiss' definition a $T_n$\=/closed object is exactly a polynomial functor of degree $\le n$. So the theorem is proved.
\end{proof}

\subsection{The orthogonal tower is a completion tower}
\label{subsec:ortho-tower-is-completion-tower}
The orthogonal tower $\hdots\to P_2\to P_1\to P_0$ is a tower of left exact localizations of the functor category $\Fun(\mc{J},\mc{S})$. 
If $\scr{A}_n$ denotes the class of $P_n$\=/equivalences in $\Fun(\mc{J},\mc{S})$, then Proposition~\ref{prop:enrichedsatgen} tells us that $\scr{A}_n$ is the left class of a fac\-torization system which is in fact a left exact modality. In particular, $\scr{A}_n$ is a congruence (see  Subsection~\ref{sec:associated-fs}).

In~\cite[Section 4.2]{ABFJ:product} we in\-tro\-duce, for any congruence $\scr{C}_0$ in a topos, its completion tower as a tower of left exact localizations whose $n$\=/th congruence $\scr{C}_n$ is obtained as the ${(n+1)}$\=/fold acyclic power $\scr{C}_n=\scr{C}_0\,\Box\ac\hdots\Box\ac\,\scr{C}_0$ of its $0$\=/th level. Here $-\Box\ac-$ denotes the acyclic product of congruences constructed in~\cite[Section 3.2]{ABFJ:product}. 
We will need the properties that the acyclic product $\scr{C}\,\Box\ac\,\scr{C}'$ of two congruences is again a congruence~\cite[Theorem 3.3.3]{ABFJ:product} 
and the formula~\cite[Corollary 3.3.8]{ABFJ:product} to compute $\scr{C}\,\Box\ac\,\scr{C}'$ in terms of generators for $\scr{C}$ and $\scr{C}'$, see Equation~\eqref{eq:lex-generator-formula}.
The proof that the orthogonal tower is a completion tower we result from the equalities $\scr{A}_n=\scr{C}_n$, for all $n\ge 0$. 

In any presheaf category, the representable functors form a dense subcategory. Since any object of \mc{J} is isomorphic to $\mb{R}^k$ for some $k$. The family of representable functors at $\mb{R}^k, k\ge 0$, form a dense subcategory of compact objects of $\Fun(\mc{J},\mc{S})$.
Thus, according to Proposition~\ref{prop:enrichedsatgen}, the congruence $\scr{A}_n$ is generated (as a saturated class) by the set
\[
S_n=\left\{\sigma_{n+1}^k=z_{n+1}\otimes\st(\mb{R}^k,-):\sph(-)^{\star n+1}\otimes \st(\mb{R}^k,-)\to \st(\mb{R}^k,-)\,|\, k\ge 0 \right\}\,.
\]
Since the Day product preserves colimits, Lemma~\ref{Zn-finite} gives:
\begin{align}
\begin{split}  \label{eq:nice-parallel}
   \sph(-)^{\star n+1} \,\otimes\, \st(\mb{R}^k,-) &= \left(\colim_{0\neq U\subset \mb{R}^{n+1}} \st(U, -) \right) \,\otimes\,   \st(\mb{R}^k,-) \\
       &= \colim_{0\neq U\subset \mb{R}^{n+1}} \Big( \st(U,-) \,\otimes\, \st(\mb{R}^k, -) \Big) \\
       &= \colim_{0\neq U\subset \mb{R}^{n+1}} \st( \mb{R}^k\oplus U,-)  
\end{split}
\end{align}
More precisely, Lemma~\ref{Zn-finite} identifies the map $\sigma_{n+1}^k$ with the ${(n+1)}$\=/st fiberwise join power $j_k\star_B\hdots\star_B j_k$. This map fits into a cartesian square 
\begin{equation}
\label{eq:Ganea-pullback-orthogonal-calc}
\begin{tikzcd}
\colim\limits_{0\neq U\subset \mb{R}^{n+1}} \st( \mb{R}^k\oplus U,-)  
\ar[rr] \ar[d,"{\sigma_{n+1}^k=j_k\star_B\hdots\star_B j_k}"'] 
&& F_{n+1}^k \ar[d,"{j_k^{\square n+1}}"] \\
B=\st(\mb{R}^k,-) \ar[rr,"\Delta"]
&& \st(\mb{R}^k,-)^{n+1} 
\end{tikzcd}
\end{equation}
by Lemma~\ref{lem:fiberwise-join-is-base-change}\,\ref{lem:fiberwise-join-is-base-change:4} applied objectwise. (The source $F_{n+1}^k$ of the join power is a relative fat wedge described in Lemma~\ref{lem:fiberwise-join-is-base-change}\,\ref{lem:fiberwise-join-is-base-change:3}.)
For $n=0$ and all $k$, we have
\[
\sigma_1^k=j_k:\st(\mb{R}^{k+1},-)\to\st(\mb{R}^k,-)
\]
and $S_0=\{j_k\,|\, k\ge  0\}$. Note that an $F:\mc{J}\to\mc{S}$ is $S_0$\=/local if and only if it is constant and the reflector $P_0$ can be identified with Weiss' description
\begin{equation}\label{eq:P0-is-F-at-Rinf}
   P_0F=\colim_{k\in\mb{N}} F(\mb{R}^k)=: F(\mb{R}^{\infty})\,. 
\end{equation}

\begin{remark}
An easy computation shows that the functor $i:\mb N\to \mc J$ sending $n$ to $\mb R^n$ and $n\leq n+1$ to $i_n$ is cofinal.
This shows that one can replace the colimit of \eqref{eq:P0-is-F-at-Rinf} by a colimit over $\mc J$, \ie that $P_0$ is simply the colimit functor $\Fun(\mc J,\mc S)\to \mc S$.
The cofinality also shows also that the category $\mc J$ is filtered, which gives another proof that $P_0$ is a left exact localization.
\end{remark}

At this point let us recall from~\cite{ABFJ:higher-sheaves,ABFJ:product} about congruences that, first, an acyclic class can be defined as a saturated class of maps which is closed under base change, and second, a congruence can be defined as an acyclic class which is closed under diagonals.
Any class of maps $\Sigma$ is contained in a smallest acyclic class $\Sigma\ac$, and a smallest congruence $\Sigma\cg$.
Since we have always $\Sigma\sat \subset\Sigma\ac \subset \Sigma\cg$, the formula $\scr{A}_0 = S_0\sat$ shows that we have also $\scr{A}_0 = S_0\ac = S_0\cg$ since $\scr{A}_0$ is a congruence.
This makes $S_0$ into a lex generator for the congruence $\scr{A}_0$ in the sense of \cite[Definition 2.4.4]{ABFJ:product}.
Consequently, by~\cite[Corollary 3.3.8]{ABFJ:product}, we get the following formula for the acyclic powers of $\scr{A}_0$:
\begin{equation}\label{eq:lex-generator-formula}
\scr{C}_n:=\underbrace{\ \scr{A}_0\,\Box\ac\hdots\Box\ac\,\scr{A}_0\, }_{n+1} =\big\{j_{k_1}\pp\hdots\pp j_{k_{n+1}}\,\big|\, k_1,\hdots,k_{n+1}\ge 0\big\}\ac\,.
\end{equation}

\begin{theorem}\label{thm:ortho-is-compl}
The orthogonal tower is a special case of a completion tower: $\scr{A}_n=\scr{C}_n$ for all $n\ge 0$.
\end{theorem}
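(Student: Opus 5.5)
We prove the two inclusions $\scr{C}_n\subset\scr{A}_n$ and $\scr{A}_n\subset\scr{C}_n$ separately. Both classes are congruences: $\scr{A}_n$ by Proposition~\ref{prop:enrichedsatgen}, and $\scr{C}_n$ by \cite[Theorem 3.3.3]{ABFJ:product}. So each inclusion reduces to showing that a set of generators of one class lies in the other. For $\scr{A}_n$ we use the generators $S_n=\{\sigma^k_{n+1}\}$ as a saturated class. For $\scr{C}_n$ we use the generators $j_{k_1}\pp\cdots\pp j_{k_{n+1}}$ as an acyclic class, by formula~\eqref{eq:lex-generator-formula}.

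\emph{Inclusion $\scr{A}_n\subset\scr{C}_n$.} Since $\scr{A}_n=S_n\sat$ and $\scr{C}_n$ is saturated, it suffices to show $\sigma^k_{n+1}\in\scr{C}_n$ for every $k$. The cartesian square~\eqref{eq:Ganea-pullback-orthogonal-calc} exhibits $\sigma^k_{n+1}$ as the base change of $j_k^{\pp n+1}$ along the diagonal. The map $j_k^{\pp n+1}$ is a generator of $\scr{C}_n$ (take $k_1=\dots=k_{n+1}=k$), and $\scr{C}_n$ is an acyclic class, hence closed under base change. So $\sigma^k_{n+1}\in\scr{C}_n$. This direction is essentially formal.

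\emph{Inclusion $\scr{C}_n\subset\scr{A}_n$.} Since $\scr{A}_n$ is acyclic, it suffices to show that each map $f=j_{k_1}\pp\cdots\pp j_{k_{n+1}}$ is a $P_n$-equivalence. For this we use the connectivity criterion of Lemma~\ref{prop:PnisoW}.

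\begin{enumerate}
\item Evaluate $f$ at $W$ with $\dim W=m$. Each $j_{k}(W):\st(\mb R^{k+1},W)\to\st(\mb R^k,W)$ is a fibre bundle with fibre the sphere $S^{m-k-1}$, so it is $(m-k-2)$-connected in the paper's convention.
\item Pushout products of maps in $\mc S$ add connectivities plus $2$ for each extra factor, just as $(S^{a})^{\star n+1}$ was computed in Proposition~\ref{prop:Pn-zetan-WeissA}. Hence $f(W)$ is $\sum_i(m-k_i-2)+2n=(n+1)m-b$-connected, where $b=\sum_i k_i+2$.
\item This bound holds for all $m\ge\kappa:=\max_i k_i+1$. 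The remark after Lemma~\ref{lem:e.3} allows the hypothesis to be imposed only in large dimensions, so Lemma~\ref{prop:PnisoW} gives that $P_n(f)$ is invertible.
\end{enumerate}

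Combining the two inclusions gives $\scr{A}_n=\scr{C}_n$.

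The main obstacle is the connectivity bookkeeping in the second inclusion. The additivity estimate for pushout products must be stated correctly in the paper's shifted convention for connectivity, and it must hold for the pushout product as the cocartesian gap map of a cube of bundles, not just for joins of spaces. I would justify it by induction on the number of factors, using the fibrewise description: each fibre of $g\pp h$ is the join of fibres of $g$ and of $h$, and this follows from universality of colimits as in Lemma~\ref{lem:fiberwise-join-is-base-change}.
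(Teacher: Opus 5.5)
Your proof is correct and follows the paper's argument. The inclusion $\scr{A}_n\subset\scr{C}_n$ comes from the cartesian square exhibiting $\sigma^k_{n+1}$ as a base change of $j_k^{\pp n+1}$, and the reverse inclusion from the same connectivity count $\sum_i(m-k_i-2)+2n$ fed into Lemma~\ref{prop:PnisoW}. The only cosmetic difference is that the paper passes through the auxiliary congruence $\scr{B}_n=\{j_k^{\pp n+1}\,|\,k\ge 0\}\cg$, whereas you go directly to $\scr{C}_n$ using that it is acyclic and hence closed under base change, which is equally valid.
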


\begin{proof}
Let us define an auxiliary congruence $\scr{B}_n=\big\{ j_k^{\Box n+1}\,\big|\, k\ge 0\big\}\cg$. Then:
\[
\scr{A}_n\subset\scr{B}_n\subset\scr{C}_n\,.
\]
The first inclusion holds since the generating maps of $\scr{A}_n=\{\sigma_{n+1}^k\,|\, k\ge 0 \}\sat$ are obtained from the generators of $\scr{B}_n$ by base change as exhibited by the cartesian square~\eqref{eq:Ganea-pullback-orthogonal-calc} and congruences are closed under base change. The second inclusion is clear since the generators of $\scr{B}_n$ are among those of $\scr{C}_n$.

To show the reverse inclusion we prove that the generators $j_{k_1}\pp\hdots\pp j_{k_{n+1}}$ of $\scr{C}_n$ from~\eqref{eq:lex-generator-formula} belong to $\scr{A}_n$. In other words, we need to explain that they are $P_n$\=/equivalences. This happens via a connectivity estimate. Let $W$ be a Euclidean vector space of dimension $m$. Then the fiber of the map $j_{\ell}:\st(\mb{R}^{\ell+1},W)\to\st(\mb{R}^{\ell},W)$ is a sphere of dimension $m-\ell-1$ as long as $\ell<m$. So $j_{\ell}$ is ${(m-\ell-2)}$\=/connected. Therefore, if $m>\max\{k_i\}$, the connectivity of the map $j_{k_1}\pp\hdots\pp j_{k_{n+1}}$ is
\[
\sum_{i=1}^{n+1}(m-k_i-2)+2n=(n+1)m-2-\sum_{i=1}^{n+1}k_i\,.
\]
Hence, $j_{k_1}\pp\hdots\pp j_{k_{n+1}}$ satisfies the assumptions of Lemma~\ref{prop:PnisoW} which implies that it is a $P_n$\=/equivalence. 
Thus, $\scr{C}_n\subset\scr{A}_n$ and the equality follows.
\end{proof}

\subsection{Blakers-Massey theorems}
\label{subsec:blakers-massey}

For any completion tower, there is an associated Blakers-Massey theorem and a ``dual'' version. As a consequence, we obtain new results for the orthogonal tower. The respective versions for the Goodwillie tower were proved as Theorems 3.4.1 and 3.4.2 in~\cite{ABFJ:gbm2}. 

\begin{corollary}\label{thm:orthog-BMT}
Consider in the category $\Fun(\mc{J},\mc{S})$ a pushout square
\[
\begin{tikzcd}
A \ar[r, "g"] \ar[d,"{f}"']& C \ar[d]   \\
B  \ar[r] & D
\end{tikzcd}
\]
where $f$ is a $P_m$\=/equivalence and $g$ is a $P_n$\=/equivalence. Then the gap map
\[
A\xto{(f,g)} B\times_D C
\]
is a  $P_{m+n+1}$\=/equivalence.
\end{corollary}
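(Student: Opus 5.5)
This is a direct instance of the generalized Blakers–Massey theorem for completion towers, and I would derive it from Theorem~\ref{thm:ortho-is-compl} together with the general theory of \cite{ABFJ:product,ABFJ:gbm2}. First, by Theorem~\ref{thm:ortho-is-compl} the class $\scr{A}_k$ of $P_k$\=/equivalences coincides with $\scr{C}_k$, the $(k+1)$\=/fold acyclic power of the congruence $\scr{A}_0$. So the hypothesis says that $f\in\scr{C}_m=\scr{A}_0^{\Box\ac (m+1)}$ and $g\in\scr{C}_n=\scr{A}_0^{\Box\ac (n+1)}$. The conclusion asks that the gap map lie in $\scr{C}_{m+n+1}=\scr{A}_0^{\Box\ac (m+n+2)}$.

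Next I would invoke the generalized Blakers–Massey theorem of \cite{ABFJ:gbm} (in the form used in \cite{ABFJ:gbm2}, Theorem~3.4.1). For a pushout square in an \oo topos with $f$ in an acyclic class $\mc L$ and $g$ in an acyclic class $\mc L'$, it states that the cartesian gap map $A\to B\times_D C$ lies in the acyclic class generated by $\Delta f\,\Box\,\Delta g$, the pushout product of the diagonals. Equivalently, it lies in the acyclic product of the classes generated by the diagonals. Since $\scr{C}_m$ and $\scr{C}_n$ are congruences, they are closed under diagonals, so $\Delta f\in\scr{C}_m$ and $\Delta g\in\scr{C}_n$. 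Hence the gap map lies in $\scr{C}_m\,\Box\ac\,\scr{C}_n$.

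Finally, associativity of the acyclic product of congruences \cite[Section~3.3]{ABFJ:product} gives
\[
\scr{C}_m\,\Box\ac\,\scr{C}_n=\scr{A}_0^{\Box\ac(m+1)}\,\Box\ac\,\scr{A}_0^{\Box\ac(n+1)}=\scr{A}_0^{\Box\ac(m+n+2)}=\scr{C}_{m+n+1}.
\]
Applying Theorem~\ref{thm:ortho-is-compl} once more identifies this class with $\scr{A}_{m+n+1}$, the $P_{m+n+1}$\=/equivalences. This is the same route as the Goodwillie case in \cite{ABFJ:gbm2}, so the argument is formal once the identification $\scr{A}_k=\scr{C}_k$ is available.

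The main obstacle is bookkeeping: matching the exact form of the Blakers–Massey statement (diagonals versus the maps themselves, and acyclic versus saturated closure) with the product of congruences, and checking that $\Box\ac$ is associative with the indexing $\scr{C}_k=\scr{A}_0^{\Box\ac(k+1)}$, so that the exponents add to $(m+1)+(n+1)=m+n+2$. I would first try to quote directly the abstract completion-tower Blakers–Massey statement, namely that $\scr{C}_m$ and $\scr{C}_n$ imply $\scr{C}_{m+n+1}$ for any completion tower, proved in \cite{ABFJ:product}/\cite{ABFJ:gbm2}. With that, the corollary follows immediately from Theorem~\ref{thm:ortho-is-compl}.
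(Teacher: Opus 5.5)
Your argument is correct and is essentially the paper's proof. Both use Theorem~\ref{thm:ortho-is-compl} to see that the congruences $\scr{A}_m=\scr{A}_0^{m+1}$ and $\scr{A}_n=\scr{A}_0^{n+1}$ contain $\Delta f$ and $\Delta g$, apply the generalized Blakers--Massey theorem of \cite{ABFJ:gbm} to place the gap map in $(\Delta f\pp\Delta g)\ac$, and conclude via $(\scr{A}_0^{m+1}\pp\scr{A}_0^{n+1})\ac\subset\scr{A}_0^{m+n+2}$; the paper notes that only this inclusion is needed, not the full equality you obtain from associativity.
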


In the proof we are using the notation from the previous section: $\scr{A}_n$ are the $P_n$\=/equivalences. We also denote the acyclic powers simply as powers (since the cartesian product is never considered). 
\begin{proof}
By assumption $f\in\scr{A}_m$ and $g\in\scr{A}_n$. Congruences are closed under finite limits, hence $(\Delta f:A\to A\times_B A)\in\scr{A}_m=\scr{A}_0^{m+1}$ and $\Delta g\in\scr{A}_n=\scr{A}_0^{n+1}$ since the orthogonal tower is a completion tower by Theorem~\ref{thm:ortho-is-compl}. 
By the generalized Blakers-Massey \cite[Theorem 4.1.1]{ABFJ:gbm} we deduce
\[
(f,g)\in \bigl(\Delta f\pp \Delta g\bigr)\ac\subset (\scr{A}_0^{m+1}\pp \scr{A}_0^{n+1})\ac\subset\scr{A}_0^{m+n+2}
\]
proving the claim. (The inclusion to the right is actually an equality by~\cite[Theorem 3.3.3]{ABFJ:product}, but this is not really needed here.) 
\end{proof}

\begin{corollary}\label{thm:orthog-dual-BMT}
Consider in the category $\Fun(\mc{J},\mc{S})$ a pullback square
\[
\begin{tikzcd}
A \ar[r] \ar[d]& C \ar[d, "g"]   \\
B  \ar[r, "f"] & D
\end{tikzcd}
\]
where $f$ is a $P_m$\=/equivalence and $g$ is a $P_n$\=/equivalence. Then the cogap map
\[
B\sqcup_A C \to D
\]
is a  $P_{m+n+1}$\=/equivalence.
\end{corollary}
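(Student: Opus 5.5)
The proof will be formally dual to that of Corollary~\ref{thm:orthog-BMT}. The ingredients are Theorem~\ref{thm:ortho-is-compl}, which gives $\scr{A}_m=\scr{A}_0^{m+1}$ and $\scr{A}_n=\scr{A}_0^{n+1}$ as acyclic powers, and the dual generalized Blakers--Massey theorem of \cite{ABFJ:gbm} (the one used for the dual Goodwillie statement in \cite[Theorem 3.4.2]{ABFJ:gbm2}). That theorem says that, for a pullback square in an \oo topos, the cogap map $B\sqcup_A C\to D$ lies in the acyclic class generated by $f\pp g$. Equivalently, it is the fiberwise join of $f$ and $g$ over $D$, i.e.\ the base change of $f\pp g$ along the diagonal of $D$; this is Lemma~\ref{lem:fiberwise-join-is-base-change}\,\ref{lem:fiberwise-join-is-base-change:4} applied objectwise in $\Fun(\mc J,\mc S)$.

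The steps, in order, are as follows.

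First, $f\in\scr{A}_0^{m+1}$ and $g\in\scr{A}_0^{n+1}$ by hypothesis and Theorem~\ref{thm:ortho-is-compl}.

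Second, the cogap map is the fiberwise join $f\star_D g$, which is the base change of $f\pp g$ along $\Delta:D\to D\times D$. Since the square is cartesian, $A=B\times_D C$, so the cogap map is the gap map of the cube $f\boxtimes_D g$, and Lemma~\ref{lem:fiberwise-join-is-base-change} applies.

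Third, acyclic classes are closed under base change, so it suffices to know that $f\pp g\in(\scr{A}_0^{m+1}\pp\scr{A}_0^{n+1})\ac\subset\scr{A}_0^{m+n+2}$. This is precisely the defining property of the acyclic product of congruences from \cite[Section 3.2]{ABFJ:product}. By associativity of the acyclic product, $\scr{A}_0^{m+1}\Box\ac\scr{A}_0^{n+1}=\scr{A}_0^{m+n+2}$.

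Fourth, applying Theorem~\ref{thm:ortho-is-compl} once more gives $\scr{A}_0^{m+n+2}=\scr{A}_{m+n+1}$, so the cogap map is a $P_{m+n+1}$\=/equivalence.

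The main obstacle is the third step, namely checking that the pushout product of two maps taken from congruences $\scr C,\scr C'$ lands in $\scr C\Box\ac\scr C'$, and that acyclic powers combine associatively. I would cite \cite[Theorem 3.3.3]{ABFJ:product} and the definition of $\Box\ac$ for this rather than reprove it. Note that, unlike in Corollary~\ref{thm:orthog-BMT}, no diagonals are needed here; we apply the pushout product directly to $f$ and $g$.
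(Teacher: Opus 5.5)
Your proposal is correct and follows essentially the paper's proof. The paper cites the dual generalized Blakers--Massey theorem \cite[Theorem 3.5.1]{ABFJ:gbm} to place the cogap map in $(\scr{A}_0^{m+1}\pp\scr{A}_0^{n+1})\ac\subset\scr{A}_0^{m+n+2}$ and then concludes with Theorem~\ref{thm:ortho-is-compl}; you instead unpack the cited theorem directly, identifying the cogap map as the base change of $f\pp g$ along the diagonal via Lemma~\ref{lem:fiberwise-join-is-base-change}\,\ref{lem:fiberwise-join-is-base-change:4}, which is precisely its content.
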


\begin{proof}
Similarly to the proof above, by~\cite[Theorem 3.5.1]{ABFJ:gbm} the cogap map is in $(\scr{A}_0^{m+1}\pp \scr{A}_0^{n+1})\ac\subset\scr{A}_0^{m+n+2}$.
\end{proof}

In fact, all the appropriate analogues of the statements in~\cite[Section 2.5]{ABFJ:gbm2} about stability and delooping results hold for orthogonal calculus as they hold for any completion tower in the sense of~\cite{ABFJ:product}. For example, it follows from Corollary~\ref{thm:orthog-BMT}, Corollary~\ref{thm:orthog-dual-BMT} that the category of functors that are ${(2n-1)}$\=/polynomial (\ie $P_{2n-1}F=F$), $n$\=/reduced (\ie $P_{n-1}F=1$), and pointed (equipped with a global section) is stable.

\section{Localizing module categories}
\label{sec:modules}

\subsection{Tidy Localizations of confined module categories}

\begin{definition}\label{def:QZ}
From a map $z:Z\to \one$ in a confined symmetric monoidal category \mc{V} we obtain by cotensoring a \mc{V}\=/functor 
\[
S:=\{Z,-\}:\mc{M} \to \mc{M}
\]
and $s=\{z,-\}:\Id\to S$.  
We repeat Construction~\ref{def:PZ} with $S$ (instead of $T$) in the context of a confined \mc{V}\=/module \mc{M}. We will denote by $Q:\mc{M} \to \mc{M}$ the colimit
\[
Q:=\colim \bigl( \Id\xto{s} S \xto{sS} S^2\xto{sS^2} S^3\xto{sS^3} S^4\to \hdots \bigr)\,.
\]
Even though $z$ is a map in \mc{V}, the resulting $Q$ is an endofunctor of \mc{M}.
\end{definition}

The analogue of Lemma~\ref{lem:Qisgood} for $S$, $s$, $Q$ and $q$ holds and the proof given there goes through because all the statements referenced there, \eg Lemma~\ref{fact:1}, Proposition~\ref{prop:Heine-rules}, Proposition~\ref{prop:filt-colim-good}, were proved in sufficient generality. We again point out that the theory of enriched higher categories, as developped in the papers~\cite{Heine:equiv-enriched-inf-cat-inf-cat-weak-action,Heine:bi-enriched} by Heine, is a necessary foundation.

Theorem~\ref{thm:module-localization} about localizing module categories is analogous to and a consequence of Theorem~\ref{thm:monoidal-localization} if we use the trick of putting a symmetric monoidal category and a closed module over it into a single symmetric monoidal category. This construction, explained just below, is analogous to a square zero extension from ordinary commutative algebra. 

Let us consider the category $\mc{V}\times \mc{M}$. Since limits and colimits are computed factorwise, for every objects $(V,M)$ in $\mc{V}\times \mc{M}$ we have
\begin{equation*} 
(V,M)=(V,0)\sqcup (0, M) \quad {\rm and }\quad
(V,M)=(V,1)\times (1, M)\ .
\end{equation*}
There are fully faithful inclusion functors 
\[
i_{\mc{V}}:\mc{V}\cong\mc{V}\times\{0\}\to\mc{V}\times\mc{M}
\]
and
\[
i_{\mc{M}}:\mc{M}\cong\{0\}\times\mc{M}\to\mc{V}\times\mc{M}
\]
and we will identify the categories \mc{V} and $\mc{M}$ with the respective full subcategories of $\mc{V}\times\mc{M}$.

\begin{proposition} \label{lem:square0construction} 
The category $\mc{V}\times\mc{M}$ has the structure of a symmetric monoidal closed category with the tensor product defined by
\begin{equation*}
(V_1,M_1)\otimes (V_2,M_2)=(V_1\otimes V_2, V_1\otimes M_2\sqcup V_2\otimes M_1)\ .
\end{equation*}
The unit is $\one:=(\one_{\mc{V}},0)$. 
Moreover,
\begin{equation*}
\big[(V_1,M_1), (V_2,M_2)\big]_{\mc{V}\times\mc{M}}=\big([V_1,V_2]\times [M_1,M_2], \{V_1,M_2\}\big)\,.
\end{equation*}
The symmetric monoidal category $\mc{V}\times\mc{M}$ is confined, if \mc{V} and $\mc{M}$ are confined.
\end{proposition}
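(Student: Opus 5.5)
The plan is to avoid building the $\infty$\=/categorical coherence data for this tensor product by hand and instead realize it as a Day convolution. Consider the symmetric monoidal $1$\=/category $\mathbb{I}=\{0,1\}$ with product $0\cdot 0=0$, $0\cdot 1=1\cdot 0=1$, and $1\cdot 1=\emptyset$, which is not an object. More honestly, I would work with the $\infty$\=/operad of ``commutative algebra together with a module'' and use Lurie's theory of square zero extensions. Concretely, $\mc{V}\times\mc{M}$ should be the category of sections over the operad $\mathrm{LM}$ (or the commutative version $\mathcal{CM}$). The pair $(\mc{V},\mc{M})$ is an $\mathcal{CM}$\=/monoidal category, hence an algebra in presentable categories over $\mathcal{CM}$. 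The square zero construction $\mc{V}\oplus\mc{M}$ is then obtained as the pushforward of this algebra along the map of operads $\mathcal{CM}\to\mathrm{Comm}$ using operadic left Kan extension in $\mathrm{Pr}^L$, following \cite[4.8]{HA} and Heine's framework. On underlying categories this left Kan extension is the coproduct $\mc{V}\times\mc{M}$, since in $\mathrm{Pr}^L$ coproducts are products. The binary operation is computed by summing over all $\mathcal{CM}$\=/operations with two inputs landing in each colour. For output colour $\mathfrak a$ the only contribution is $V_1\otimes V_2$. For output colour $\mathfrak m$ the contributions are $V_1\otimes M_2$ and $V_2\otimes M_1$, and there is no operation $\mathfrak m,\mathfrak m\to\mathfrak m$. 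This gives the displayed formula, with unit $(\one_{\mc V},0)$.

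Next I would check closedness directly. The tensor preserves colimits separately in each variable because $\otimes_{\mc V}$ and the action do, and coproducts commute with colimits. So by presentability an internal hom exists, and I would identify it by adjunction computations. First,
\[
\map\big((V,M)\otimes(V_1,M_1),(V_2,M_2)\big)=\map(V\otimes V_1,V_2)\times\map(V\otimes M_1,M_2)\times\map(V_1\otimes M,M_2).
\]
Using $\map(V\otimes V_1,V_2)=\map(V,[V_1,V_2])$, $\map(V\otimes M_1,M_2)=\map(V,[M_1,M_2]_{\mc V})$, and $\map(M,\{V_1,M_2\})$, this equals $\map\big((V,M),([V_1,V_2]\times[M_1,M_2],\{V_1,M_2\})\big)$. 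This gives the stated internal hom.

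For confinedness, $\mc{V}\times\mc{M}$ is $\omega$\=/presentable, being a product of two such, and its compact objects are exactly the pairs of compact objects. Filtered colimits and mapping spaces are computed factorwise, and a pair is compact iff both components are, since $(V,M)=(V,0)\sqcup(0,M)$ and compacts are closed under finite colimits and retracts. The unit $(\one_{\mc V},0)$ is compact. For compact $(V_i,M_i)$, the object $V_1\otimes V_2$ is compact because $\mc V$ is confined, and $V_1\otimes M_2\sqcup V_2\otimes M_1$ is compact by the confined module axiom together with closure under finite coproducts.

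I expect the main obstacle to be the first step: making the symmetric monoidal structure coherent in the $\infty$\=/setting rather than just writing formulas. I would first try the operadic left Kan extension along $\mathcal{CM}\to\mathrm{Comm}$ in $\mathrm{Pr}^L$. If the relevant existence theorem is not available in the needed form, the fallback is as follows. For presheaf-type cases, realize $\mc{V}\times\mc{M}$ as a Day convolution on $\mathbb{V}\sqcup\mathbb{M}$ with the monoidal structure extended by declaring $\mathbb M\times\mathbb M$ products to go to a formal zero. For the general case, cite Lurie's trivial square zero extension for the analogous construction.
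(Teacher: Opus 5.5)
\textbf{The gap is in the step you yourself flag as the main obstacle.} It is also the only non-formal part of the statement; the paper leaves the proof to the reader.

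Operadic left Kan extension along $f\colon\mathcal{CM}\to\mathrm{Comm}$ is left adjoint to restriction $f^*$, and $f^*$ sends a commutative algebra $B$ in $\mathrm{Pr}^L$ to the pair $(B,B)$. Hence $f_!(\mc V,\mc M)$ is the free commutative $\mc V$-algebra on the $\mc V$-module $\mc M$, namely $\mathrm{Sym}_{\mc V}(\mc M)=\prod_{j\geq 0}\mathrm{Sym}^j_{\mc V}(\mc M)$, with one summand for each number $j$ of $\mathfrak m$-inputs. The target operad has a single colour, and the colimit computing $f_!$ runs over all tuples of colours. So the absence of an operation $(\mathfrak m,\mathfrak m)\to\mathfrak m$ does not make $M_1\otimes M_2$ vanish; it makes it land in a new summand $\mathrm{Sym}^2_{\mc V}(\mc M)$. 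Already for $\mc V=\mc M=\mc S$ you get symmetric sequences $\prod_{j\geq 0}\Fun(B\Sigma_j,\mc S)$ with Day convolution, the free presentably symmetric monoidal category on one object, not $\mc S\times\mc S$. Thus your identification of the underlying category and your formula for the binary operation are both wrong, and no coherent structure on $\mc V\times\mc M$ has been produced.

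Your fallbacks do not close the gap either. For presheaves on $\mathbb V\sqcup\mathbb M$ with a formal zero adjoined, Day convolution does not keep the values at the new object empty (they receive $F(m_1)\times G(m_2)$), so the same killing problem reappears. Lurie's trivial square-zero extension \cite[Section 7.3.4]{HA} is developed in the stable setting, and $\mathrm{Pr}^L$ is not stable.

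\textbf{Repair: kill everything of weight $\geq 2$.}
\begin{enumerate}[label=(\arabic*)]
\item Work in $\Fun(\mathbb N,\mathrm{Pr}^L)$ with Day convolution, and form $\mathrm{Sym}_{\mc V}(\mc M)$ with $\mc V$ in weight $0$ and $\mc M$ in weight $1$.
\item Since $\mathrm{Pr}^L$ is pointed (the terminal category is a zero object), truncation to weights $\leq 1$ is a localization. It is compatible with Day convolution in the sense of \cite[Proposition 4.1.7.4]{HA}: the weight $0$ and weight $1$ parts $X_0\otimes Y_0$ and $(X_0\otimes Y_1)\oplus(X_1\otimes Y_0)$ of $X\otimes Y$ only involve weights $\leq 1$ of $X$ and $Y$.
\item Truncating $\mathrm{Sym}_{\mc V}(\mc M)$ gives $(\mc V,\mc M)$ as a commutative algebra for the truncated convolution.
\item Push this along the lax symmetric monoidal functor ``include into all weights, then take the total object''. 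The result is a commutative algebra in $\mathrm{Pr}^L$ with underlying category $\mc V\times\mc M$. Its multiplication is the tensor of $\mc V$ on $\mc V\otimes\mc V$, the action on the two mixed terms, and the zero functor on $\mc M\otimes\mc M$.
\item Decomposing bicontinuous functors via $(V,M)=(V,0)\sqcup(0,M)$ recovers exactly the displayed tensor and the unit $(\one_{\mc V},0)$.
\end{enumerate}

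Once this is in place, your mapping-space computation of the internal hom is correct. Your confinedness check is also fine, with one slip: $(V,0)$ need not be a retract of $(V,M)$ when $\mc M$ is not pointed. Instead, deduce compactness of the components from $\map((V,M),(X,1))\simeq\map(V,X)$ and its mirror image, or simply apply Lemma~\ref{sufficientforconfinedtensor} to the dense subcategory of pairs of compact objects.
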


The proof is left to the reader. 

Now let us consider the image of the map $z:Z\to\one$ in $\mc{V}\times\mc{M}$:
\[
i_{\mc{V}}(z)=\bigl((Z,0)\to(\one,0)\bigr)=(z,\id_0)\,.
\]
Following the recipe in Construction~\ref{def:PZ} we write down the associated cotensor
\begin{align*}
  \scr{T}(F):=[Z,F]_{\mc{V}\times\mc{M}}
\end{align*}
and
\begin{align*}
  \tau(F):=[i_{\mc{V}}(z),F]_{\mc{V}\times\mc{M}}: F=(V,M)\to \big[(Z,0),(V,M)\big]_{\mc{V}\times\mc{M}}
\end{align*}
for every $F=(V,M)$ in $\mc{V}\times\mc{M}$, and obtain an endofunctor $\scr{T}:\mc{V}\times\mc{M} \to \mc{V}\times\mc{M}$ together with a natural transformation $\tau: \Id \to \scr{T}$.
By construction we have
\begin{align*}
   \scr{T}(F) &= [Z,F]_{\mc{V}\times\mc{M}}=\big[(Z,0), (V,M)\big]_{\mc{V}\times\mc{M}}=\big([Z,V]_{\mc{V}}, \{Z, M\}\big)  \\
              &=(TV,SM)
\end{align*}
since $[0, M]=1$. Moreover
\begin{equation}\label{eq:tau-in-VM} 
   \tau(F) =[i_{\mc{V}}(z),F]_{\mc{V}\times\mc{M}}=\big([z,V], \{z, M\}\big) =(tV,sM)
\end{equation}
with $t:\Id\to T$ and $s:\Id\to S$ from Construction~\ref{def:PZ}, Definition~\ref{def:QZ}.
We also define an endofunctor $\scr{P}: \mc{V}\times\mc{M} \to \mc{V}\times\mc{M}$ together with a natural transformation $\pi: \Id\to \scr{P}$ by the colimit
\[
\scr{P}:=\colim \bigl(\ \Id\xto{\tau} \scr{T}\xto{\tau\scr{T}}  \scr{T}^2\xto{\tau\scr{T}^2}  \scr{T}^3\xto{\tau\scr{T}^3}  \scr{T}^4\to \hdots\ \bigr)\,.
\]
Then for every $F=(V,M)$ in \mc{V} we have 
\begin{equation} \label{eq:cP}
   \scr{P}(F)=( PV, QM)\quad {\rm and} \quad 
   \pi(F)    =(pV,qM) .
\end{equation}
With this construction in place we are ready to prove the following

\begin{theorem}\label{thm:module-localization}
Let \mc{V} be a confined symmetric monoidal category and let $\mc{M}$ be a confined \mc{V}\=/module.
Suppose that the map $z:Z\to \one$ in \mc{V} is tidy. 
Let $\mc{M}^Q$ denote the full subcategory of $Q$\=/closed objects of \mc{M}.
\begin{enumerate}[label={\rm (\roman*)}]
  \item\label{thm:module-localization:1} 
An object in $\mc{M}$ is $S$\=/closed if and only if it is $Q$\=/closed.
  \item\label{thm:module-localization:2}
The subcategory $\mc{M}^Q$ is \mc{V}\=/reflective, and the natural transformation $q:\Id\to Q$ is \mc{V}\=/reflecting into $\mc{M}^Q$. The reflector $Q:\mc{M}\to \mc{M}^Q$ is \mc{V}\=/left exact.
  \item\label{thm:module-localization:3}
The category $\mc{M}^Q$ is a closed $\mc{V}^P$\=/module with the action $\otimes_Q$ defined by letting $F\otimes_Q M:=Q(F\otimes M)$ for $F$ in $\mc{V}^P$ and $M$ in $\mc{M}^Q$. 
  \item\label{thm:module-localization:4}
The closed $\mc{V}^P$\=/module $\mc{M}^Q$ is confined and $Q:\mc{M}\to\mc{M}^Q$ is confined.
Every compact object of $\mc{M}^Q$ is a retract of an object in $Q(\comp(\mc{M}))$. The subcategory $Q(\comp(\mc{M}))$ is dense in $\mc{M}^Q$.
\end{enumerate}
\end{theorem}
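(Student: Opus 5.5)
The plan is to reduce everything to Theorem~\ref{thm:monoidal-localization} applied to the square-zero extension $\mc{V}\times\mc{M}$ of Proposition~\ref{lem:square0construction}, which is confined. The first step is to check that $i_{\mc{V}}(z)=(z,\id_0):(Z,0)\to(\one_{\mc V},0)$ is tidy there. The object $(Z,0)$ is compact because compact objects in a product are pairs of compacts. By \eqref{eq:cP}, $\scr{P}(i_{\mc V}(z))=(Pz,Q(\id_0))$; here $Pz$ is invertible by tidiness of $z$, and $Q(\id_0)$ is an identity. So $\scr{P}(Z,0)\to\scr{P}(\one)$ is invertible and $i_{\mc V}(z)$ is tidy. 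The only subtle point is justifying \eqref{eq:cP}: colimits in $\mc{V}\times\mc{M}$ are computed factorwise, and $\scr T=(T,S)$ with $\tau=(t,s)$ by \eqref{eq:tau-in-VM}.

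Next, Theorem~\ref{thm:monoidal-localization} gives all its conclusions for $(\scr T,\scr P)$, and I read them off on objects $(0,M)$ and $(V,M)$.
\begin{itemize}
\item For (i): $(V,M)$ is $\scr T$-closed iff $tV$ and $sM$ are invertible, and $\scr P$-closed iff $pV,qM$ are. Taking $V=1$ (which is both $T$- and $P$-closed) gives that $M$ is $S$-closed iff it is $Q$-closed.
\item For (ii): $\pi$ is $(\mc V\times\mc M)$-reflecting into $(\mc V\times\mc M)^{\scr P}=\mc V^P\times\mc M^Q$. For $F=(0,M)$ and $W=(1,N)$ with $N\in\mc M^Q$, the internal hom formula gives $[(0,M),(1,N)]=(1\times[M,N],\{0,N\})=([M,N],1)$. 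Hence $[qM,N]:[QM,N]\to[M,N]$ is invertible, so $q$ is $\mc V$-reflecting. Left exactness of $Q$ comes from Lemma~\ref{lem:Qisgood} (its module analogue): $Q$ is docile. Together with $\mc M^Q$ being closed under cotensors $\{A,-\}$, this gives $\mc V$-left exactness. Closure under cotensors follows from part (ii) of the theorem applied to $[(A,0),(1,N)]=([A,1],\{A,N\})$.
\item For (iii): the symmetric monoidal structure on $\mc V^P\times\mc M^Q$ is $\scr P((V_1,M_1)\otimes(V_2,M_2))$. Restricting to $(F,0)\otimes(0,M)=(0,F\otimes M)$ gives the action $F\otimes_Q M=Q(F\otimes M)$. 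Its associativity and unitality are inherited from the symmetric monoidal structure, since $\mc V^P$ and $\mc M^Q$ sit as full subcategories and the product formula restricts correctly.
\item For closedness of the action: the enrichment is $[M,N]$, which lies in $\mc V^P$ because $[(0,M),(1,N)]$ is $\scr P$-closed. The cotensor is $\{F,N\}$, which lies in $\mc M^Q$.
\end{itemize}

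Finally, (iv) follows from part (v) of Theorem~\ref{thm:monoidal-localization}. Compact objects and density in a product are factorwise, so $\scr P$ confined gives $Q$ confined, and compacts of $\mc M^Q$ are retracts of $Q(\comp(\mc M))$, which is dense. Compactness of $F\otimes_Q M$ for compact $F,M$ follows because the tensor of compacts in the confined category $\mc V^P\times\mc M^Q$ is compact.

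The main obstacle I expect is making (iii) rigorous as a coherent $\infty$-categorical module structure rather than just on objects. I would try to deduce it from the fact that $\mc V^P$ embeds in $\mc V^P\times\mc M^Q$ as a symmetric monoidal subcategory and $\mc M^Q$ as a square-zero ideal, so that it becomes a module over it. If this is too delicate, the fallback is to redo \cite[Proposition 4.1.7.4]{HA} in its module version, using that $Q$-equivalences are stable under $V\otimes-$ by the analogue of Lemma~\ref{lem:P-equivalences}.
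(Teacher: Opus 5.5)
Your proposal is correct and is essentially the paper's proof: you check that $(z,\id_0)$ is tidy in the confined square-zero extension $\mc V\times\mc M$, apply Theorem~\ref{thm:monoidal-localization} there, and read off each item on the $\mc M$-coordinate. Your test objects with $1$ in the $\mc V$-slot (e.g.\ $(1,N)$ in (ii)) are a slight improvement on the paper's computation, which tacitly treats $0$ as $P$-closed; that fails e.g.\ for the tidy map $\emptyset\to 1$ in $\mc S$.
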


\begin{proof} 
We want to apply Theorem~\ref{thm:monoidal-localization} to the confined symmetric monoidal category $\mc{V}\times\mc{M}$ but we need to first show that the map $i_{\mc{V}}(z)=(z,\id_0)$ is tidy. 
Observe that $i_{\mc{V}}(Z)=(Z,0)$ and $i_{\mc{V}}(\one)=(\one,0)$ are clearly compact in $\mc{V}\times\mc{M}$ and
\[
\scr{P}(i_{\mc{V}}(z))=\big(P(z),Q(\id_0)\big)
\]
is invertible if and only if $P(z)$ is invertible. But $P(z)$ is invertible by the assumption that $z$ in \mc{V} is tidy. So $i_{\mc{V}}(z)=(z,\id_0)$ is tidy.
Now we just need to read off, what Theorem~\ref{thm:monoidal-localization} applied to $\mc{V}\times\mc{M}$ says for \mc{M} viewed as a subcategory via $i_{\mc{M}}:\mc{M}\to\mc{V}\times\mc{M}$.

\smallskip
\noindent\ref{thm:module-localization:1}
Theorem~\ref{thm:monoidal-localization}\,\ref{thm:monoidal-localization:1} states for $\mc{V}\times\mc{M}$ that \scr{T}\=/closed is equivalent to being \scr{P}\=/closed, and for \mc{V} that $T$\=/closed is equivalent to being $P$\=/closed. 
It follows from~\eqref{eq:tau-in-VM} that an object $(V,M)$ in $\mc{V}\times\mc{M}$ is \scr{T}\=/closed if and only if $V$ is $T$\=/closed and $M$ is $S$\=/closed.
Similarly, from~\eqref{eq:cP}, $(V,M)$ is \scr{P}\=/closed if and only if $V$ is $P$\=/closed and $M$ is $Q$\=/closed. Hence, $M$ in \mc{M} is $S$\=/closed if and only if it is $Q$\=/closed.

Let $(\mc{V}\times\mc{M})^{\scr{P}}$ denote the full subcategory of \scr{P}\=/closed objects of $\mc{V}\times\mc{M}$. From \ref{thm:module-localization:1}, we know that the standard inclusion factors as $(\mc{V}\times\mc{M})^{\scr{P}}\cong\mc{V}^P\times\mc{M}^Q\subset\mc{V}\times\mc{M}=\mc{V}\times\mc{M}$.

\smallskip
\noindent\ref{thm:module-localization:2}
Theorem~\ref{thm:monoidal-localization}\,\ref{thm:monoidal-localization:3} applied to $\mc{V}\times\mc{M}$ states that $\pi:\Id\to\scr{P}$ is \mc{V}\=/reflecting onto $(\mc{V}\times\mc{M})^{\scr{P}}$: 
\begin{align*} 
   \big([PV_1,V_2]\times & [QM_1,M_2], \{PV_1,M_2\}\big)=
   \big[\scr{P}(V_1,M_1),(V_2,M_2)\big]_{\mc{V}\times\mc{M}}
   \xto{\cong} \\
   & \big[(V_1,M_1),(V_2,M_2)\big]_{\mc{V}\times\mc{M}} = \big( [V_1,V_2]\times [M_1,M_2],\{V_1,M_2\} \big)
\end{align*}
for all $(V_1,M_1)$ in $\mc{V}\times\mc{M}$ and $(V_2,M_2)$ in $(\mc{V}\times\mc{M})^{\scr{P}}=\mc{V}^P\times\mc{M}^Q$. 
When $V_1=0$, for all $M_1$ in \mc{M} and $M_2$ in $\mc{M}^Q$, we have 
\begin{align*}
    \bigl( [QM_1,M_2], 1\bigr)& =\bigl([P(0),0]\times [QM_1,M_2], \{P(0),M_2\}\bigr) \\
                            &=\bigl( [0,V_2]\times [M_1,M_2],\{0,M_2\} \bigr) \\
                            & =\bigl([M_1,M_2], 1\bigr)\,.
\end{align*}
So clearly $q:\Id\to Q$ is \mc{V}\=/reflecting \mc{M} onto $\mc{M}^Q$. The fact, that for all $M_1$ the object $QM_1$ is in $\mc{M}^Q$, was already clear since $\scr{P}(0,M_1)=(0,QM_1)$ is in $(\mc{V}\times\mc{M})^{\scr{P}}=\mc{V}^P\times\mc{M}^Q$.

The endofunctor $\scr{P}=(P,Q)$ is left exact and preserves compact cotensors by Theorem~\ref{thm:monoidal-localization}\,\ref{thm:monoidal-localization:3}. Since (co-)limits in $\mc{V}\times\mc{M}$ are computed separately in \mc{V} and \mc{M}, it follows that $Q$ preserves finite limits. Hence:
\begin{align*}
  \big[0, Q\{V_1,M_2\}\big]_{\mc{V}\times\mc{M}}  &=\scr{P}[(V_1,0),(0,M_2)]_{\mc{V}\times\mc{M}} =[(V_1,0),\scr{P}(0,M_2)]_{\mc{V}\times\mc{M}}  \\
         &= [(V_1,0),(0,QM_2)]_{\mc{V}\times\mc{M}} =[0,\{V_1,QM_2\}]_{\mc{V}\times\mc{M}} \,,
\end{align*}
for all compact $V_1$ in \mc{V} and all $M_2$ in \mc{M}. So $Q$ also preserves compact cotensors and is therefore \mc{V}\=/left exact.

\smallskip
\noindent\ref{thm:module-localization:3}
According to Theorem~\ref{thm:monoidal-localization}\,\ref{thm:monoidal-localization:4} is the category $(\mc{V}\times\mc{M})^{\scr{P}}$ a symmetric monoidal $\mc{V}^P$\=/module with tensor
\begin{align*}
   (V_1,M_1)\otimes_{\scr{P}} (V_2,M_2) & =\scr{P}\bigl((V_1,M_1)\otimes (V_2,M_2)\bigr) \\
      & = \scr{P}(V_1\otimes V_2, V_1\otimes M_2\sqcup V_2\otimes M_1) \\
      & = \bigl((P(V_1\otimes V_2), Q(V_1\otimes M_2\sqcup V_2\otimes M_1)\bigr) \,,
\end{align*}
for all $V_1,V_2$ in $\mc{V}^P$ and $M_1,M_2$ in $\mc{M}^Q$. Setting $V_2$ and $M_1$ equal to $0$ one obtains an action of $\mc{V}^P$ on $\mc{M}^Q$ given by 
\[
V_1\otimes_Q M_2 = Q(V_1\otimes M_2)\,,
\]
as claimed and $Q$ preserves the tensor action. 

\smallskip
\noindent\ref{thm:module-localization:4}
We know from~\ref{thm:module-localization:2} that the category $\mc{M}^Q$ is a reflective subcategory of \mc{M}. So it is cocomplete and the localization functor $Q:\mc{M}\to \mc{M}^Q$ is cocontinous. 
As an endofunctor, $\scr{P}=(P,Q)$ is docile by Lemma~\ref{lem:Qisgood} applied to $\mc{V}\times\mc{M}$. But colimits are computed separately in \mc{V} and \mc{M}. Thus, $Q$ commutes with filtered colimits. In~\ref{thm:module-localization:3} we showed that $Q$ is \mc{V}\=/left exact, hence it is a docile endofunctor of \mc{M}. Thus the inclusion $\mc{M}^{Q}\subset \mc{M}$ preserves filtered colimits. By Proposition~\ref{prop:confinedbasic2}, the left adjoint localization functor $Q:\mc{M}\to \mc{M}^Q$ is confined.
In particular, $Q(\comp(\mc{M})) \subset\comp(\mc{M}^Q)$.

From this, it follows that $Q(\comp(\mc{M}))$ is dense in $\mc{M}^Q$, that $\mc{M}^Q$ is $\omega$\=/presentable and that every object in $\comp(\mc{M}^Q)$ is a retract of an object in $Q(\comp(\mc{M}))$ exactly as in the proof of Theorem~\ref{thm:monoidal-localization}\,\ref{thm:monoidal-localization:5}.

It remains to show that the closed \mc{V}\=/category $\mc{M}^Q$ is confined. We have
\begin{align*}
    P(\comp(\mc{V}))\otimes_{Q} Q(\comp(\mc{M})=Q\bigl(\comp(\mc{V})\otimes \comp(\mc{M})\bigr)\subset Q(\comp(\mc{M})) \subset \comp(\mc{M}^{Q})\,.
\end{align*}
by the definition of $-\otimes_Q-$, by \mc{M} being confined as a closed \mc{V}\=/module, and by~\ref{thm:module-localization:3}. Also by~\ref{thm:module-localization:3}, the category $Q(\comp(\mc{V}))$ is dense in $\mc{M}^Q$. Lemma~\ref{sufficientforconfinedtensor} allows for an adaption to the module case with the same proof. It follows from this adaption that the closed \mc{V}\=/category $\mc{M}^Q$ is confined.  
\end{proof}

\begin{lemma} \label{lem:enrichedsatgen}
Let $z:Z\to \one$ be tidy. In $\mc{M}$ let $\mc{L}$ be the class of $Q$\=/equivalences and let $\mc{R}$ is the class of $Q$\=/closed maps. Write $\Sigma=\{Z\otimes M \xto{z\otimes M} M \,|\, M\ \text{compact}\,\}$.
\begin{enumerate}
   \item 
The pair $(\mc{L}, \mc{R})$ is a left exact modality in $\mc{M}$.
   \item 
$\Sigma^{\perp}=\mc{R}$ and $\mc{L}=\Sigma\sat=\mbox{}^{\perp}(\Sigma^{\perp})$
\end{enumerate}
\end{lemma}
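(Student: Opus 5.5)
The plan is to mimic the proof of Proposition~\ref{prop:enrichedsatgen}, now inside the module $\mc{M}$, with Theorem~\ref{thm:module-localization} playing the role of Theorem~\ref{thm:monoidal-localization}.

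\emph{Part (1).} By Theorem~\ref{thm:module-localization}\,\ref{thm:module-localization:2}, $Q:\mc{M}\to\mc{M}^Q$ is a left exact reflector. $\mc{M}$ is presentable, being $\omega$\=/presentable. Hence~\cite[Proposition 4.1.6]{ABFJ:higher-sheaves} shows that the $Q$\=/equivalences and the $Q$\=/closed maps form a factorization system, and that this factorization system is a left exact modality. Exactly as in Proposition~\ref{prop:enrichedsatgen}, I would refer to that result rather than redo it. Strictly, it is stated for topoi or categories with finite limits, so I would check that its proof only needs a lex reflector on a presentable category.

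\emph{Part (2), first step: from $Q$\=/closed to $S$\=/closed maps.} I would show that a map $f:X\to Y$ in $\mc{M}$ is $Q$\=/closed if and only if it is $S$\=/closed. The proof of Lemma~\ref{lem:Tn-local-Pn-local-mapsA}\,\ref{lem:Tn-local-Pn-local-mapsA:1} used only three facts:
\begin{itemize}
\item $T$ preserves limits;
\item $tP$ is invertible;
\item each $T^n$ preserves compact cotensors, and filtered colimits are left exact.
\end{itemize}
The cleanest route is via the square\=/zero category $\mc{V}\times\mc{M}$ of Proposition~\ref{lem:square0construction}, where $i_{\mc{V}}(z)$ is tidy (shown in the proof of Theorem~\ref{thm:module-localization}). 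Apply Lemma~\ref{lem:Tn-local-Pn-local-mapsA}\,\ref{lem:Tn-local-Pn-local-mapsA:1} to the map $(\id_0,f)$. By \eqref{eq:tau-in-VM} and \eqref{eq:cP}, its $\scr{T}$\=/naturality square is $(\text{trivial square}, \text{$S$\=/square of }f)$, and its $\scr{P}$\=/naturality square is $(\text{trivial square}, \text{$Q$\=/square of }f)$. Pullbacks in $\mc{V}\times\mc{M}$ are computed componentwise, so the equivalence follows.

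\emph{Part (2), second step: from $S$\=/closed to orthogonality.} $f$ is $S$\=/closed precisely when the square with rows $\{z,X\}:X\to\{Z,X\}$ and $\{z,Y\}$ is cartesian. The compact objects of $\mc{M}$ form a small dense subcategory, so by Lemma~\ref{lem:detectinvertible} (applied to the gap map) this holds iff $\map(M,-)$ of the square is cartesian for every compact $M$. The adjunction $\map(M,\{Z,X\})\simeq\map(Z\otimes M,X)$ holds naturally in $Z$, so it identifies $\map(M,\{z,X\})$ with $\map(z\otimes M,X)$. The square then becomes exactly the square expressing $z\otimes M\perp f$. Hence $\mc{R}=\Sigma^\perp$.

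\emph{Conclusion and main risk.} Since $(\mc{L},\mc{R})$ is a factorization system, $\mc{L}={}^\perp\mc{R}={}^\perp(\Sigma^\perp)=\Sigma\sat$. One detail: $\Sigma$ should be a set, so I would take $M$ in a small skeleton of the compact objects. The main obstacle I expect is the first step, specifically confirming that tidiness of $i_{\mc{V}}(z)$ lets Lemma~\ref{lem:Tn-local-Pn-local-mapsA} transfer cleanly to maps (not just objects) in $\mc{M}$. If the componentwise bookkeeping in $\mc{V}\times\mc{M}$ fails, the fallback is to rerun the cube argument directly with $S$, $Q$, using $sQ$ invertible, which is the $\mc{M}$\=/component of Lemma~\ref{BWlemma} for $\mc{V}\times\mc{M}$.
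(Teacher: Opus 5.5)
Your proposal is correct and follows the paper's proof, which just states that the argument is the same as for Proposition~\ref{prop:enrichedsatgen}. In both, the lex reflector gives the left exact modality, $Q$\=/closed maps are identified with $S$\=/closed maps, and density of compact objects together with $\map(M,\{Z,X\})\simeq\map(Z\otimes M,X)$ turns $S$\=/closedness into orthogonality to $\Sigma$. Your use of the square\=/zero category $\mc{V}\times\mc{M}$, applied to maps $(\id_0,f)$, validly supplies the map-level version of Lemma~\ref{lem:Tn-local-Pn-local-mapsA}\,\ref{lem:Tn-local-Pn-local-mapsA:1} in $\mc{M}$; the paper leaves this step implicit, since Theorem~\ref{thm:module-localization}\,\ref{thm:module-localization:1} only treats objects.
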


The proof of this lemma is the same as the proof of Proposition~\ref{prop:enrichedsatgen}.

\begin{proposition}\label{prop:localisationmodule} 
Let \mc{V} be a confined symmetric monoidal category and let $Z\to\one$ be tidy. 
Let $\phi:\mc{M}\to \mc{N}$ be a confined functor of confined \mc{V}\=/modules.
Then the functor $\phi$ takes $Q$\=/equivalences in $\mc{M}$ to $Q$\=/equivalences in $\mc{N}$. 
Its right adjoint $\phi_{*}$ takes $Q$\=/closed objects in $\mc{N}$ to $Q$\=/closed objects in $\mc{M}$, and the squares 
\begin{equation*} 
   \begin{tikzcd}[bo column sep=large]
   \mc{M} \ar[r, "\phi"] \ar[d, "Q"']   & \mc{N} \ar[d, "Q"] \\
   \mc{M}^Q  \ar[r, "{\phi^Q}" ]   & \mc{N}^Q 
   \end{tikzcd}
 \quad \quad \quad  
   \begin{tikzcd}[bo column sep=large]
   \mc{M}  & \ar[l, "{\phi_{*}}"']   \mc{N}   \\
   \mc{M}^Q \ar[u, "{\mathrm{inc}}"]   &\ar[l, "{\phi_{*}|}"' ]   \mc{N}^Q \ar[u, "{\mathrm{inc}}"']
   \end{tikzcd}
\end{equation*}
commute, where $\phi^Q(M):=Q(\phi(M))$ for every $M$ in $\mc{M}^Q$ and $\phi^{*}|$ is the restriction of $\phi^{*}$.
\end{proposition}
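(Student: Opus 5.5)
The plan is to work through the right adjoint $\phi_*$, using the characterization of $Q$\=/closed objects as $S$\=/closed objects from Theorem~\ref{thm:module-localization}\,\ref{thm:module-localization:1}. Here "confined functor of confined \mc{V}\=/modules" is understood to include that $\phi$ is \mc{V}\=/linear, i.e.\ compatible with the tensor action. There are then canonical isomorphisms $\phi(A\otimes M)\cong A\otimes\phi(M)$, natural in $A$ in \mc{V} and $M$ in \mc{M}. By adjunction these yield natural isomorphisms $\phi_*\{A,N\}\cong\{A,\phi_*N\}$ for $N$ in \mc{N}, and these are compatible with maps $A\to A'$.

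\textbf{Step 1: $\phi_*$ preserves closed objects.} Apply the compatibility above to $z:Z\to\one$. It identifies $\phi_*(sN):\phi_*N\to\phi_*\{Z,N\}$ with $s(\phi_*N):\phi_*N\to\{Z,\phi_*N\}$. Hence if $N$ is $S$\=/closed, then $\phi_*N$ is $S$\=/closed. By Theorem~\ref{thm:module-localization}\,\ref{thm:module-localization:1}, this says that $\phi_*$ takes $Q$\=/closed objects to $Q$\=/closed objects. This gives the restriction $\phi_*|$, and the right-hand square commutes tautologically.

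\textbf{Step 2: $\phi$ preserves $Q$\=/equivalences.} First I would establish the module analogue of Lemma~\ref{P-equiv}: a map $f$ is a $Q$\=/equivalence if and only if $\map(f,W)$ is invertible for all $W$ in $\mc{M}^Q$. The proof is identical, using that $q$ is reflecting by Theorem~\ref{thm:module-localization}\,\ref{thm:module-localization:2}. Now let $f$ be a $Q$\=/equivalence in \mc{M} and $W$ be $Q$\=/closed in \mc{N}. Then
\[
\map(\phi f,W)\cong\map(f,\phi_*W)
\]
is invertible by Step~1. So $\phi f$ is a $Q$\=/equivalence.

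\textbf{Step 3: the left-hand square.} Since $qM:M\to QM$ is a $Q$\=/equivalence, Step~2 shows that $\phi(qM)$ is a $Q$\=/equivalence. Therefore
\[
Q\phi(M)\cong Q\phi(QM)=\phi^Q(QM),
\]
naturally in $M$, which is the commutativity of the left square. Alternatively, $\phi^Q$ is left adjoint to $\phi_*|$ by the argument of Step~2, and the left square is the mate of the right one.

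The main obstacle is Step~1. It needs the compatibility of $\phi$ with the \mc{V}\=/action to be coherent enough that the induced isomorphism $\phi_*\{Z,-\}\cong\{Z,\phi_*-\}$ intertwines $\phi_*s$ with $s\phi_*$. I would obtain this by taking mates of the natural isomorphism of functors of two variables $\phi(-\otimes-)\cong-\otimes\phi(-)$ and evaluating at the arrow $z$. If a more careful treatment is needed, one can use the enriched adjunction of Heine~\cite{Heine:bi-enriched}. In that case $\phi_*$ is a \mc{V}\=/functor, and Lemma~\ref{fact:1} supplies the naturality.
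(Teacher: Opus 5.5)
Your proof is correct, but it runs in the opposite direction from the paper's.

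\textbf{The paper's route.} It starts from the left class. By Lemma~\ref{lem:enrichedsatgen}, the $Q$\=/equivalences in $\mc M$ and $\mc N$ are the saturations $\Sigma_1\sat$, $\Sigma_2\sat$ of the maps $z\otimes M$ with $M$ compact. $\mc V$\=/linearity gives $\phi(z\otimes M)\cong z\otimes\phi(M)$, and confinedness of $\phi$ puts this map in $\Sigma_2$. Cocontinuity makes $\phi^{-1}(\mc L_2)$ saturated, so $\phi(\mc L_1)\subset\mc L_2$. Preservation of closedness then follows by adjunction from $\mc R_1=\mc L_1^{\perp}$, and it comes out for $Q$\=/closed maps, not just objects.

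\textbf{Your route.} You start instead from the right class. The mate of the linearity isomorphism gives $\phi_*\{A,-\}\cong\{A,\phi_*-\}$ naturally in $A$. Together with ``$Q$\=/closed $=$ $S$\=/closed'' (Theorem~\ref{thm:module-localization}\,\ref{thm:module-localization:1}), this handles closed objects. The equivalences then follow from the Yoneda characterization.

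\textbf{What each buys.} Your route bypasses the generation lemma and the saturation argument. It never uses that $\phi$ preserves compact objects, only that it is cocontinuous and $\mc V$\=/linear, so it proves a slightly more general statement. Compactness is in fact inessential in the paper's route as well, since $z\otimes N$ is a $Q$\=/equivalence for every $N$. The paper's route, for its part, needs the linearity isomorphism only on the left-adjoint side, so no mate construction is required.

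Finally, the coherence you flag as the main obstacle is milder than it looks. Invertibility is invariant under isomorphism of arrows, so you only need naturality of $\phi_*\{A,N\}\cong\{A,\phi_*N\}$ in $A$, applied to $z$. No compatibility with the unit isomorphism $\{\one,N\}\cong N$ is needed.
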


\begin{proof} 
We write $\mc{L}_1, \mc{L}_2$ for the classes of $Q$\=/equivalences in $\mc{M}$ and $\mc{N}$ with $\mc{R}_2$ the corresponding right class of $Q$\=/closed maps. Similarly, we let $\Sigma_1=\{ z\otimes M \,|\, M\text{ in }\comp(\mc{M})\}$ and $\Sigma_2=\{ z\otimes N \,|\, N\text{ in }\comp(\mc{N})\}$. From Lemma~\ref{lem:enrichedsatgen} we know $\mc{L}_1=\Sigma_1\sat$ and $\mc{L}_2=\Sigma_2\sat$.

Since $\phi$ is a morphism of \mc{V}\=/modules, we have $\phi(z\otimes M)=z\otimes \phi(M)$. Since $\phi$ is confined, $\phi(M)$ is a compact object. It follows $\phi(\Sigma_1)\subset \Sigma_2 \subset \mc{L}_2$ or equivalently $\Sigma_1\subset \phi^{-1}(\mc{L}_2)$. But $\phi$ is cocontinuous. Therefore $\phi^{-1}(\mc{L}_2)$ is saturated and we have $\mc{L}_1=\Sigma\sat\subset \phi^{-1}(\mc{L}_2)$ or equivalently $\phi(\mc{L}_1)\subset \mc{L}_2$. This yields the first claim.
 
Now let $g$ be $Q$\=/closed in \mc{N}. For $\phi_{*}(g)$ to be $Q$\=/closed, it suffices by Lemma~\ref{lem:enrichedsatgen} that we have $f\perp \phi_{*}(g)$ for every $f\in  \mc{L}_2$. Since the functor $\phi_{*}$ is right adjoint to $\phi$, this is equivalent to $\phi(f)\perp g$ which is true, since $\phi(\mc{L}_1)\subset \mc{L}_2=\mbox{}^{\perp}\mc{R}_2$ by Lemma~\ref{lem:enrichedsatgen}.

Consequently, there is an induced functor $\phi_{*} |:\mc{N}^Q\to \mc{M}^Q$.
The right hand square above then clearly commutes. It is also easy to verify that the functor $\phi^Q:\mc{M}^Q\to \mc{N}^Q$ is left adjoint to $\phi_{*} |$ and so the left hand square above commutes.
\end{proof}

\subsection{Applications to module categories}

The first application of the module version of our localization technique in Theorem~\ref{thm:monoidal-localization} is to construct the Goodwillie tower in the category $\Fun(\mc{C},\mc{S})$ for any small category \mc{C} possessing finite colimits and a terminal object. 
The idea is that such a $\mc C$ admits an action of $\Fin$ equipped with the join product and this action can be extended into an action of $\Fun(\Fin,\mc{S})$ on $\Fun(\mc{C},\mc{S})$.

\medskip
To construct the action $-\star-:\Fin\times\mc{C}\to\mc{C}$ note first that $\Fin$ acts on any finitely cocomplete category:
\[
K\times C := \bigsqcup_{K} C\,.
\]
This is a tensor action in the ${(\infty,1)}$\=/categorical sense and this is proved in~\cite[Section 2.4.3]{HA}. Note that $1\times C=C$.
Using the terminal object of \mc{C} we can promote this action to a join. We define $K\star C$ as the pushout:
\[
  \begin{tikzcd}
     K\times C \ar[r,"{\pr_C}"] \ar[d,"{\pr_K}"'] & C \ar[d] \\
     K \ar[r]  & K\star C  
  \end{tikzcd}
\]   
Here the maps $\pr_C$ and $\pr_K$ are induced by the maps $C\to 1$ and $K\to 1$.

In this way we obtain the action $-\star-:\Fin\times\mc{C}\to\mc{C}$. Now we deduce from Example~\ref{ex:Day-action-from-action}, that the functor category $\mc{M}=\Fun(\mc{C},\mc{S})$ with its Day convolution product derived from this join action is a confined \mc{V}\=/module where $\mc{V}=\Fun(\Fin,\mc{S})$ with its Day convolution product derived from the join on $\Fin$. 

In Subsection~\ref{subsection:unpointed-Goodwillie} we showed that the map $z:\Id^{\star n+1}\to 1$ in $\Fun(\Fin,\mc{S})$ is tidy and for varying $n\ge 0$ produces the $n$\=/th stage of the Goodwillie tower. It is a consequence of Theorem~\ref{thm:module-localization} that this is now also the case in $\Fun(\mc{C},\mc{S})$. All we need to check is that Goodwillie's endofunctor $T_n$ coincides with the one here. But this is clearly the case since
\[
\{\Id^{n+1},F\}=\left\{\colim_{\emptyset\neq U\subset\ul{n+1}} \map(U,-), F \right\}=\lim_{\emptyset\neq U\subset\ul{n+1}} F(U\star -)
\]
by Lemma~\ref{lem:calculate-zeta2}, where $U\star -$ is the action of $\Fin$ on \mc{C}.

\begin{theorem}[Goodwillie] \label{thm:Goodwillie-all}
Let \mc{C} be a small finitely cocomplete category with a terminal object.
Then the tidy map $\Id^{\star n+1}\to 1$ in $\Fun(\Fin,\mc{S})$ yields Goodwillie's reflector $P_n$ in the category $\Fun(\mc{C},\mc{S})$.
\end{theorem}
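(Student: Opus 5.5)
The plan is to apply Theorem~\ref{thm:module-localization} with $\mc V=\Fun(\Fin,\mc S)$ (Day convolution for the join) and $\mc M=\Fun(\mc C,\mc S)$ (the confined $\mc V$\=/module of Example~\ref{ex:Day-action-from-action}, built from the join action $-\star-:\Fin\times\mc C\to\mc C$). Then we identify the resulting $Q$ with Goodwillie's $P_n$.

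The first step is to check the hypotheses. We need that the join action is coherently associative and unital, so that Example~\ref{ex:Day-action-from-action} gives a confined module structure. Unitality holds because $\emptyset\star C=C$. For associativity, I would argue via the slice $\mc C$ under the terminal object, or simply invoke the $(\infty,1)$\=/tensor action of $\Fin$ on $\mc C$ from \cite[Section 2.4.3]{HA} together with the pushout defining $K\star C$. We also need that $z_{n+1}:\Id^{\star n+1}\to 1$ is tidy in $\mc V$, which is Lemma~\ref{r-star-is-On-hence-Pn-equivalence137}. Theorem~\ref{thm:module-localization} then makes $Q=\colim_k S^k$ a left exact, $\mc V$\=/left exact reflector onto the $S$\=/closed objects, with $S=\{\Id^{\star n+1},-\}$.

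The second step is to compute $S$. By Lemma~\ref{lem:calculate-zeta2}, $\Id^{\star n+1}$ is the finite colimit $\colim_{\emptyset\neq U\subset\ul{n+1}}\map(U,-)$ of representables. The cotensor $\{-,F\}$ turns colimits in the first variable into limits, and Example~\ref{ex:Day-action-from-action} gives $\{R^U,F\}=F(U\star -)$. Hence
\[
SF=\lim_{\emptyset\neq U\subset\ul{n+1}} F(U\star -)\,.
\]
This is exactly Goodwillie's $T_nF$, since $U\star C$ is the fiberwise cone/join construction used in \cite{Good03}: for a finite set $U$, $U\star C$ is the colimit of $C\to 1$ over $U$, i.e.\ the $|U|$\=/fold join with $C$. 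The natural map $s=\{z_{n+1},F\}$ corresponds to the canonical map $F(C)=F(\emptyset\star C)\to\lim F(U\star C)$. It follows that $Q=\colim(F\to T_nF\to T_n^2F\to\cdots)$ is Goodwillie's $P_n$.

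I expect the main obstacle to be the identification of the structure maps, not of the objects. We must show that the transformation $sS^k$ used in Definition~\ref{def:QZ} agrees with Goodwillie's $t_nT_n^k$; Goodwillie uses $t_nT_n^k$, but one could worry about $T_n^kt_n$. I would handle this as in Proposition~\ref{prop:pPandoO}: the symmetry of $\otimes$ identifies $z\otimes Z^{\otimes k}$ with $Z^{\otimes k}\otimes z$, so both choices give isomorphic sequences. To finish, I would note via Theorem~\ref{thm:module-localization}\ref{thm:module-localization:1} that the $P_n$\=/closed objects are the $T_n$\=/closed ones. By \cite[Lemma 1.9]{Good03}, or the argument of \cite[Theorem 4.4.5]{ABFJ:product} transported to $\mc C$, these are the $n$\=/excisive functors.
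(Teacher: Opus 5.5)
Your proposal is correct and follows the paper's argument: make $\Fun(\mc C,\mc S)$ a confined $\Fun(\Fin,\mc S)$-module through the join action and Example~\ref{ex:Day-action-from-action}, apply Theorem~\ref{thm:module-localization} to the tidy map of Lemma~\ref{r-star-is-On-hence-Pn-equivalence137}, and identify $\{\Id^{\star n+1},F\}$ with Goodwillie's $T_nF=\lim_{\emptyset\neq U\subset\ul{n+1}}F(U\star -)$ via Lemma~\ref{lem:calculate-zeta2}. Two small remarks: $U\star C$ is the join of $C$ with the discrete set $U$ (that is, $|U|$ cones on $C$ glued along $C$), not a join power of $C$, and your worry about $sS^k$ versus $S^ks$ is unnecessary, since Definition~\ref{def:QZ} already uses $sS^k$, which is exactly Goodwillie's $t_nT_n^k$.
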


When $\mc C=\Fin_\ast$ this provides another proof of Theorem~\ref{thm:pointed-Pn-reflects-onto-Tn}.

\subsection{The image of a tidy map via a confined functor} 

Consider a confined symmetric monoidal functor $\phi: \mc{V} \to \mc{E}$ between confined symmetric monoidal categories. In this situation Example~\ref{exam:conf-sym-mon-fun-yields-conf-mod} states that \mc{E} becomes a confined \mc{V}\=/module. Also $\phi(Z)$ is compact in \mc{E}.
Now we define $S:=[\phi(Z),-]:\mc{E}\to \mc{E}$, $s:=[\phi(z),-]=\{z,-\}:\Id \to S$, $Q=\colim_n{S^n}$ and $q:\Id \to Q$ as in Construction~\ref{def:PZ}. 
Note that $S$ can be viewed as an \mc{E}-functor $S=[\phi(Z),-]$ or as a \mc{V}\=/functor $S=\{Z,-\}$. Accordingly, $s:=[\phi(z),-]=\{z,-\}:\Id \to S$ is a natural transformation that is both \mc{V}\=/enriched and \mc{E}\=/enriched, and the same is true for the resulting $Q$. 
The functor $Q$ is docile both as a \mc{V}\=/functor and as an $\mc{E}$\=/functor by Lemma~\ref{lem:Qisgood} as it preserves cotensors by compact objects in \mc{V} and \mc{E}.

\begin{theorem} \label{thm:imagetidy} 
A confined symmetric monoidal functor $\phi: \mc{V} \to \mc{E}$ between confined symmetric monoidal categories takes a tidy map $z:Z\to \one$ in $\mc{V} $ to a tidy map $\phi(z):\phi(Z)\to \phi(\one)=\one$ in $ \mc{E}$.
\end{theorem}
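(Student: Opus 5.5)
We must show that $\phi(Z)$ is compact in $\mc{E}$ and that $Q(\phi(z)):Q\phi(Z)\to Q(\one)$ is invertible, where $Q=\colim_n S^n$ with $S=[\phi(Z),-]=\{Z,-\}$ is the endofunctor of $\mc{E}$ built from $\phi(z)$. Compactness is immediate because $\phi$ is confined. For the invertibility we view $\mc{E}$ as a confined $\mc{V}$\=/module, as in Example~\ref{exam:conf-sym-mon-fun-yields-conf-mod}. The $Q$ of Definition~\ref{def:QZ}, built from the tidy map $z$ of $\mc{V}$, then coincides with the $Q$ built from $\phi(z)$ in $\mc{E}$, as observed just before the statement.

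The key tool is Proposition~\ref{prop:localisationmodule}, applied to $\phi:\mc{V}\to\mc{E}$ regarded as a confined functor of confined $\mc{V}$\=/modules. Here $\mc{V}$ is a module over itself (Example~\ref{exam:module-over-itself}), and $\phi(A\otimes X)\cong\phi(A)\otimes\phi(X)=A\otimes\phi(X)$ since $\phi$ is symmetric monoidal. On $\mc{V}$, the functor $Q$ associated to $z$ is just $P$. Hence Proposition~\ref{prop:localisationmodule} says that $\phi$ sends $P$\=/equivalences in $\mc{V}$ to $Q$\=/equivalences in $\mc{E}$. Since $z$ is tidy, $P(z)$ is invertible, so $z$ is a $P$\=/equivalence. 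Therefore $\phi(z)$ is a $Q$\=/equivalence, i.e. $Q(\phi(z))$ is invertible, and $\phi(z)$ is tidy.

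The main obstacle is checking that Proposition~\ref{prop:localisationmodule} really applies. Its proof uses Lemma~\ref{lem:enrichedsatgen}, which describes the $Q$\=/equivalences in a module as the saturation of $\{z\otimes M\}$ with $M$ compact. That lemma rests on Theorem~\ref{thm:module-localization}, which needs $\mc{E}$ to be a confined $\mc{V}$\=/module and $z$ to be tidy in $\mc{V}$; both hold here.

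We must also confirm two identifications. First, the saturated class generated by $\{z\otimes D\}$ in $\mc{V}$ is the class of $P$\=/equivalences; this is Proposition~\ref{prop:enrichedsatgen}. Second, $\phi(z\otimes D)=\phi(z)\otimes\phi(D)$, which holds because $\phi$ is monoidal. The inclusion $\phi(\mc{L}_1)\subset\mc{L}_2$ then follows from cocontinuity of $\phi$, which makes $\phi^{-1}(\mc{L}_2)$ saturated.

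As a sanity check, one could argue directly instead. Since $\mc{E}$ has its own structure, $Q(\phi(z))$ is invertible precisely when $[\phi(z),W]$ is invertible for every $Q$\=/closed $W$ (Lemma~\ref{P-equiv} in module form). By adjunction this is $\phi_\ast[\phi(z),W]\cong[z,\phi_\ast W]$. Moreover $\phi_\ast$ preserves $Q$\=/closed objects, so $\phi_\ast W$ is $P$\=/closed and $[z,\phi_\ast W]$ is invertible. This alternative only gives invertibility after applying $\phi_\ast$, so I would rely on the saturation argument instead.
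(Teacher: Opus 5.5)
Your argument is correct, but it takes a different route from the paper.

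\textbf{Your route.} You obtain the result as a formal consequence of Proposition~\ref{prop:localisationmodule}, applied to $\phi:\mc{V}\to\mc{E}$ viewed as a confined morphism of confined $\mc{V}$\=/modules. This rests on the generator description $\mc{L}=\Sigma\sat$ of Lemma~\ref{lem:enrichedsatgen} and on cocontinuity of $\phi$. You could even skip the passage through $P$\=/equivalences: $\phi(z)=z\otimes\one_{\mc{E}}$ is itself one of the generators in $\Sigma_2$, since $\one_{\mc{E}}$ is compact.

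\textbf{The paper's route.} The paper argues directly inside $\mc{E}$, using only parts (i) and (ii) of Theorem~\ref{thm:module-localization}. In Steps 1--3, which copy Lemma~\ref{lem:tensorlocalnew}, it shows that $Q$\=/equivalences in $\mc{E}$ are detected by the internal homs $[-,N]_{\mc{E}}$ into $Q$\=/closed $N$. It then observes that $[\phi(z),N]_{\mc{E}}=\{z,N\}=sN$ is invertible, because $Q$\=/closed objects are $S$\=/closed.

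\textbf{Comparison.} Your route is shorter and gives the stronger statement that $\phi$ carries every $P$\=/equivalence to a $Q$\=/equivalence. It leans, however, on Lemma~\ref{lem:enrichedsatgen}, whose proof the paper only sketches by analogy. That proof needs the left exact modality coming from $Q$, and the map-level equivalence of $S$\=/closed and $Q$\=/closed in the module setting. The paper's route avoids the factorization-system machinery. As a by-product, it shows that $\mc{E}^Q$ is an $\mc{E}$\=/enriched reflective subcategory.

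\textbf{Your sanity check.} You dismiss the direct argument too quickly; it works. Without $\phi_*$ at all: for $Q$\=/closed $W$, the map $[\phi(z),W]_{\mc{E}}=\{z,W\}=sW$ is invertible outright by Theorem~\ref{thm:module-localization}\,(i). This is precisely the paper's Step~4. With $\phi_*$: invertibility of $\phi_*[\phi(z),W]\cong[z,\phi_*W]$ already gives invertibility of $\map_{\mc{E}}(\phi(z),W)\cong\map_{\mc{V}}(\one,\phi_*[\phi(z),W])$. That suffices, because $\mc{E}^Q$ is reflective, so mapping spaces into $Q$\=/closed objects detect $Q$\=/equivalences.
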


\begin{proof} 
Taking $Q$ as an endofunctor of the closed \mc{V}\=/module \mc{E}, Theorem~\ref{thm:module-localization}\,\ref{thm:module-localization:2} applies and the natural transformation $q:\Id \to Q$ reflects \mc{E} onto the subcategory $\mc{E}^Q$ of $Q$\=/closed objects. So now we know already that the localization in \mc{E} exists, is confined and \mc{V}\=/left exact. Since $\{z,-\}=[\phi(z),-]$, the localization $Q$ is constructed according to Construction~\ref{def:PZ}, but we do not yet know that $\phi(z)$ is tidy. (In particular, we do not yet know that $Q$ is symmetric monoidal. But this will follow, once tidyness is proved.) It remains to work somewhat in reverse. 
We claim that $Q(\phi(z))$ is invertible.

\smallskip
\noindent Step 1: Take a compact object $K$ and a $Q$\=/closed object $N$ in \mc{E}. Then the triangle
\[
\begin{tikzcd}
       & {[K,N]} \ar[dl, "{q[K,N]}"']\ar[dr, "{[K,qN]}", "\cong"'] & \\
     {Q[K,N]} \ar[rr, "{\gamma(K,N)}"',"\cong"] && {[K,QN]}
\end{tikzcd}
\]
commutes. Since $Q$ is a docile \mc{E}-functor, the coassembly map is an isomorphism. The right hand map is an isomorphism since $N$ is $Q$\=/closed. It implies that the left hand map $q[K,N]$ is invertible. Hence $[K,N]$ is $Q$\=/closed for every compact $K$ in \mc{E}. 

\smallskip
\noindent Step 2: Now let $M$ be another object in \mc{E} and consider the map $[qM,N]$. We have a commuting diagram
\[
\begin{tikzcd}
       \map(K, [QM,N]) \ar[d, "\cong"']\ar[rrr, "{\map(K, [qM,N])}"] &&& \map(K, [M,N]) \ar[d, "\cong"'] \\
       \map(QM, [K,N]) \ar[rrr, "{\map(qM, [K,N])}"] &&& \map(M, [K,N]) 
\end{tikzcd}
\]
where the vertical maps are invertible. By Step 1 the object $[K,N]$ is $Q$\=/closed and so the lower map is an isomorphism. Thus the top map is also an isomorphism. Since \mc{E} is $\omega$\=/presentable, by Lemma~\ref{Whiteheadcolimit17} the map $[qM,N]$ is invertible.

\smallskip
\noindent Step 3: Now we copy the proof of Lemma~\ref{lem:tensorlocalnew}. For any map $u:M\to M'$ in \mc{E} the square
\begin{equation*} 
   \begin{tikzcd}[bo column sep=large]
   {[M,N]} && \ar[ll, "{[qM,N]}", "\cong"']   [QM,N] \\
   {[M',N]} \ar[u, "{[u,N]}"]   && \ar[ll, "{[qM',N]}", "\cong"'] \ar[u, "{[Q(u),N]}"']  [QM',N]  
   \end{tikzcd}
\end{equation*}   
commutes. The horizontal maps are invertible by Step 2. So the map $[u,N]$ is invertible if and only if the map $[Q(u),N]$ is invertible and, letting $N$ vary in all $\mc{E}^Q$, if and only if $Q(u)$ is invertible.

Step 4: $Q(\phi(z))$ is invertible if and only if the map $[\phi(z),N]=\{z,N\}=sN$ is invertible for all $Q$\=/closed $N$. 
Now Lemma~\ref{lem:Tn-local-Pn-local-mapsA}\,\ref{lem:Tn-local-Pn-local-mapsA:3} applies and states that $N$ is also $S$\=/closed. And the definition of being $S$\=/closed is that the map $sN$ is an isomorphism. So the theorem follows from Step 3.
\end{proof}

Let $\sigma:\mc S \to \mc S_\ast$ be the suspension functor, viewed as pointed by the north pole.
We consider $\mc S$ is equipped with the join monoidal structure and $\mc S_\ast$ with the smash monoidal structure.
\begin{lemma}
The functor $\sigma$ is symmetric monoidal.
\end{lemma}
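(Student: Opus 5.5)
We want to show that unreduced suspension $\sigma:(\mc S,\star,\emptyset)\to(\mc S_\ast,\wedge,S^0)$, pointed at the north pole, is symmetric monoidal. The plan is to exhibit $\sigma$ as a composite of functors that are already known to be symmetric monoidal.

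The key observation is that unreduced suspension is a pushout. Precisely, $\sigma(A)$ is the cofiber of $A_+\to S^0$, the map induced by $A\to 1$. Equivalently, $\sigma(A)$ is the pushout of $1\leftarrow A\to 1$, pointed at one of the cone points. This suggests factoring $\sigma$ through arrow categories:
\[
\mc S \xto{\ A\mapsto (A\to 1)\ } \mc S^{[1]} \xto{\ (-)_+\ } (\mc S_\ast)^{[1]} \xto{\ \mathrm{cofib}\ } \mc S_\ast .
\]

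We check the three factors in turn.

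\begin{enumerate}[label=\textup{(\arabic*)}]
\item The first functor is symmetric monoidal from the join to the pushout product on $\mc S^{[1]}$. This is exactly the content of the proof of Lemma~\ref{lem:join-is-inf-sym-mon}: the join is by definition the restriction of the pushout product to the full subcategory of maps $A\to 1$. That subcategory is closed under $\pp$, and its unit $\emptyset\to 1$ is the unit of $\pp$.
\item The functor $(-)_+:(\mc S,\times)\to(\mc S_\ast,\wedge)$ is symmetric monoidal and cocontinuous. Applying it levelwise therefore carries the pushout product for $\times$ to the pushout product for $\wedge$, since the pushout product is a Day convolution over $([1],\min)$.
\item Taking the cofiber, $(\mc S_\ast^{[1]},\pp_\wedge)\to(\mc S_\ast,\wedge)$, is symmetric monoidal. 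This holds because it is left Kan extension along the monoidal functor $([1],\min)\to(\ast)$, that is, the colimit functor precomposed with the cofiber identification. The classical computation behind it is
\[
\mathrm{cofib}(f\pp g)\cong \mathrm{cofib}(f)\wedge\mathrm{cofib}(g).
\]
\end{enumerate}

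Composing the three gives a symmetric monoidal structure on $\sigma$. Its unit check is $\sigma(\emptyset)=S^0$, and on objects it gives $\sigma(A\star B)\cong \sigma A\wedge\sigma B$, recovering the familiar identity $\Sigma(A\star B)\simeq\Sigma A\wedge\Sigma B$ for unreduced suspensions.

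The main obstacle is step (3) at the $\infty$-categorical level, that is, obtaining all the coherences rather than just the objectwise isomorphism. We plan to obtain it by identifying $\mathrm{cofib}$ with the left adjoint of the symmetric monoidal inclusion $\mc S_\ast\to\mc S_\ast^{[1]}$, $X\mapsto(\ast\to X)$. More precisely, cofib is the composite of restricting to arrows and the symmetric monoidal localization of the Day convolution structure that inverts maps with invertible cofiber. We then invoke \cite[Proposition 4.1.7.4]{HA}, which requires checking that the cofiber-equivalences are stable under $\pp$. That stability follows from the cofiber formula above together with the fact that $\wedge$ preserves colimits in each variable. Finally, a check against the north-pole basepoint confirms that the pointing agrees with the one chosen in the statement.
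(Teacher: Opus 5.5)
Your argument is correct and is essentially the paper's. The paper also embeds $(\mc S,\star,\emptyset)$ into $(\mathrm{Arr}(\mc S),\pp,\emptyset\to 1)$ via $A\mapsto(A\to 1)$. It then uses the cofiber functor $\mathrm{Arr}(\mc S)\to\mc S_\ast$, $(X\to Y)\mapsto Y\sqcup_X 1$, as a reflection onto the arrows $1\to A$, and asserts without proof that this reflection is monoidal for the smash product. Since $Y\sqcup_X 1$ is the pointed cofiber of $X_+\to Y_+$, your composite of $(-)_+$ and the pointed cofiber is the same functor. Your compatibility check for \cite[Proposition 4.1.7.4]{HA} via $\mathrm{cofib}(f\pp g)\simeq\mathrm{cofib}(f)\wedge\mathrm{cofib}(g)$ is a sound way to supply the verification the paper omits.

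One sentence in step (3) is wrong and should be deleted. Left Kan extension along $([1],\min)\to\ast$ is $\colim_{[1]}$, i.e.\ evaluation at the terminal object $1$, so it sends $f\colon A\to B$ to its target $B$, not to its cofiber. The Kan-extension fact is what makes the inclusion $X\mapsto(\ast\to X)$ symmetric monoidal, since that inclusion is left Kan extension along the symmetric monoidal inclusion $\{1\}\hookrightarrow([1],\min)$. Step (3) itself is proved by the localization argument in your last paragraph.
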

\begin{proof}
By definition of the join product, the symmetric monoidal category $(\mc S,\star,\emptyset)$ is a submonoidal category of the arrow category with the pushout product $(\mathrm{Arr}(\mc S), \pp,\emptyset\to 1)$ where the embedding is sending a space $A$ to the arrow $A\to 1$.
The cofiber functor $\mathrm{Arr}(\mc S)\to \mc S_\ast$ is a reflection into the subcategory spanned by arrows $1\to A$.
The pushout product ${(1\to A)\pp(1\to B)}$ of two pointed objets is the arrow $A\vee B\to A\times B$, whose cofiber is the smash $1\to A\wedge B$.
We leave the reader to check that the cofiber functor is monoidal for the smash product in $\mc S_\ast$.
By composition, we get a symmetric monoidal functor $\mc S \to \mathrm{Arr}(\mc S)\to \mc S_\ast$, whose values on $A$ is the suspension $\Sigma A$.
\end{proof}

The functor $\sigma$ restricts to a symmetric monoidal functor between the monoidal subcategories of finite spaces
$\sigma:\Fin \to \Fin_\ast$.
By Example~\ref{ex:sym-mon-conf} we get a confined symmetric monoidal functor
$\sigma_!:\Fun(\Fin,\mc S)\to \Fun(\Fin_\ast,\mc S)$.
We can apply Theorem~\ref{thm:imagetidy} to get a new proof that the map $1\to \Id_\circ$ of is tidy.

\begin{corollary}
\label{cor:transport-tidy}
The symmetric monoidal functor $\sigma_!:\Fun(\Fin,\mc S)\to \Fun(\Fin_\ast,\mc S)$ is confined and the image of the tidy map $\Id \to 1$ is the map $1\to \Id_\circ$, which is then tidy.
\end{corollary}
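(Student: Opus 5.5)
The plan is to combine Example~\ref{ex:sym-mon-conf} with Theorem~\ref{thm:imagetidy}; the only genuine computation is identifying $\sigma_!$ of the map $\Id\to 1$.

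First, confinedness. The preceding lemma shows that $\sigma:(\mc S,\star,\emptyset)\to(\mc S_\ast,\wedge,S^0)$ is symmetric monoidal. Finite spaces have finite suspensions, so $\sigma$ restricts to a symmetric monoidal functor $\sigma:\Fin\to\Fin_\ast$ between small symmetric monoidal categories. Example~\ref{ex:sym-mon-conf} then says directly that the left Kan extension $\sigma_!$ is symmetric monoidal for the Day convolutions and is confined, by Example~\ref{exam:lKan-confined}.

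Second, identify the image of $z_1:\Id\to 1$. Both objects are representable in $\Fun(\Fin,\mc S)$. The source is $\Id=\map(1,-)$, represented by the point. The target is $1=\map(\emptyset,-)$, represented by the empty space, which is the unit of the join. The map $z_1$ is the Yoneda image of $\emptyset\to 1$. Since $\sigma_!$ sends the representable functor $\map(K,-)$ to $\map_\ast(\sigma K,-)$ (Corollary~\ref{cor:shriek-confined}), we get:
\begin{itemize}
\item the image of $1=\map(\emptyset,-)$ is $\map_\ast(\Sigma\emptyset,-)=\map_\ast(S^0,-)=\Id_\circ$, as it must be for the unit;
\item the image of $\Id=\map(1,-)$ is $\map_\ast(\Sigma 1,-)$, where the unreduced suspension $\Sigma 1$ is contractible, so this is $\map_\ast(1,-)=1$, the terminal functor.
\end{itemize}
The map $\sigma_!(z_1)$ is therefore the Yoneda image of $\sigma(\emptyset\to 1)=(S^0\to \Sigma 1\simeq 1)$. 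This is precisely the natural transformation $z_0:1=\map_\ast(1,-)\to\map_\ast(S^0,-)=\Id_\circ$ from the pointed-to-unpointed subsection.

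Third, apply Theorem~\ref{thm:imagetidy} to the tidy map $z_1$, whose tidiness is Lemma~\ref{r-star-is-On-hence-Pn-equivalence137} with $n=0$. This shows that $1\to\Id_\circ$ is tidy in $\Fun(\Fin_\ast,\mc S)$.

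The main point of care is the bookkeeping of variance. Representables are covariant here, so $\sigma_!$ is the Kan extension along the Yoneda embedding of $\Fin^{\op}$. I would check that a map $K\to L$ in $\Fin$ induces $\map(L,-)\to\map(K,-)$, so that $\emptyset\to 1$ induces $\map(1,-)\to\map(\emptyset,-)$, that is, $\Id\to 1$. The same reasoning on the pointed side sends $S^0\to 1$ to $1\to\Id_\circ$, which is consistent with the identification above.

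If one also wants the higher powers, note that $\sigma_!$ is symmetric monoidal and preserves colimits, hence finite limits of representables are not needed. It preserves pushout products computed via Day convolution with respect to the join and smash structures, which would send $z_{n+1}$ to $z_0^{\star n+1}$. The corollary as stated only requires the case $n=0$, however.
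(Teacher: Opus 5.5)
Your argument is correct and follows the paper's proof: confinedness of $\sigma_!$ from the Kan-extension examples, the Yoneda identification of $\sigma_!(\Id\to 1)$ with the image of $\sigma(\emptyset\to 1)=(S^0\to\Sigma 1\simeq 1)$, i.e.\ $1\to\Id_\circ$, and then Theorem~\ref{thm:imagetidy} applied to the tidy map of Lemma~\ref{r-star-is-On-hence-Pn-equivalence137} with $n=0$. Only your optional closing aside is wrongly justified: $z_{n+1}$ is a pushout-product power for the cartesian product (the pointwise join), not for Day convolution, where $\Id\otimes\Id\simeq\Id$ because $1\star 1\simeq 1$, so the Day powers of $\Id\to 1$ are all $\Id\to 1$; moreover $\sigma_!$ does not preserve products (e.g.\ $\sigma_!(\Id\times\Id)=\map_\ast(S^1,-)$ while $\sigma_!\Id\times\sigma_!\Id=1$), so the identity $\sigma_!(z_{n+1})=z_0^{\star n+1}$, which does hold, should instead be obtained by comparing the colimit formulas of Lemmas~\ref{lem:calculate-zeta2} and~\ref{lem:pointed:calculate-zeta2}, using that $\sigma_!$ preserves colimits and sends $\map(U,-)$ to $\map_\ast(\Sigma U,-)$.
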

\begin{proof}
The proof that $\sigma_!$ is confined is given in Corollary~\ref{cor:shriek-confined}.
By Yoneda, the map $\Id \to 1$ corresponds to the map $0\to 1$ in $\Fin$ and the map $1\to \Id_\ast$ corresponds to the map $S^0\to 1$ in $\Fin_\ast$.
The latter is the suspension of the former, thus 
$1\to \Id_\ast = \sigma_!(\Id \to 1)$ and its tidyness follows from Theorem~\ref{thm:imagetidy}.
\end{proof}

\end{document}